\newtheorem{lem}{Lemma}[section]
\newtheorem{definition}[lem]{Definition}
\newtheorem{thm}[lem]{Theorem}
\newtheorem{prop}[lem]{Proposition}
\newtheorem{cor}[lem]{Corollary}
\newtheorem{convention}[lem]{Convention}
\newtheorem{construction}[lem]{Construction}
\newtheorem{Definition and Proposition}[lem]{Definition/Proposition}
\numberwithin{equation}{section}
\newtheorem*{propS}{Proposition}
\newtheorem*{thmA}{Theorem A}
\newtheorem*{thmB}{Theorem B}
\newtheorem*{thmC}{Theorem C}
\theoremstyle{remark}
\newtheorem{rk}[lem]{Remark}
\newtheorem*{rkS}{Remark}
\newenvironment{proofof}[1][]{{\noindent\it Proof of  {#1}.}\hspace{.5em}}{\hfill $\square$\par}
\newcommand{\bb}[1]{\mathbb{#1}}
\newcommand{\ca}[1]{\mathcal{#1}}
\newcommand{\und}[1]{\und{#1}}
\newcommand{\sbst}{\subset}
\newcommand{\lra}{\longrightarrow}
\newcommand{\mbf}[1]{\mathbf{#1}}
\newcommand{\iso}{\cong}
\newcommand{\mrm}[1]{\mathrm{#1}}
\newcommand{\wat}{\widehat}
\newcommand{\dr}{\mathrm{dR}}
\newcommand{\der}{\mrm{der}}
\newcommand{\ad}{\mrm{ad}}
\newcommand{\sh}{\mrm{Sh}}
\newcommand{\wdtd}{\widetilde}
\newcommand{\A}{\bb{A}_f}
\newcommand{\bss}{\backslash}
\newcommand{\disju}{\coprod}
\newcommand{\stb}{\mrm{Stab}}
\newcommand{\zbkp}{{\bb{Z}_{(p)}}}
\DeclareMathOperator{\lie}{\mrm{Lie}}
\newcommand{\mmin}{\mrm{min}}
\newcommand{\cusp}{\mrm{Cusp}}
\newcommand{\Ap}{\bb{A}_f^p}
\DeclareMathOperator{\spec}{\mrm{Spec}}
\newcommand{\gal}{\mrm{Gal}}
\newcommand{\aut}{\underline{\mrm{Aut}}}
\newcommand{\Hom}{\mrm{Hom}}
\newcommand{\G}{\ca{G}}
\newcommand{\lcj}[2]{{}^{#2}{#1}}
\newcommand{\Q}{\ca{Q}}
\DeclareMathOperator{\im}{\mrm{Im}}
\newcommand{\ul}[1]{\underline{#1}}
\DeclareSymbolFont{cyrletters}{OT2}{wncyr}{m}{n}
\DeclareMathSymbol{\Sha}{\mathalpha}{cyrletters}{"58}
\newcommand{\can}{\mrm{can}}
\newcommand{\red}{\mrm{red}}
\newcommand{\cpl}[2]{{(#1)}{}^{\wedge{\ }}_{#2}}
\newcommand{\ag}{\mathscr{A}}
\newcommand{\agsb}{\mathscr{A}^\circ}
\newcommand{\K}{\wdtd{K}}
\newcommand{\vbd}{\mrm{Vec}}
\newcommand{\rep}{\mrm{Rep}}
\newcommand{\fil}{\mrm{Fil}}
\newcommand{\lsys}{\mrm{Loc}}
\newcommand{\autoshbetti}[1]{{}_{B}\underline{#1}}
\newcommand{\autoshdr}[1]{{}_{\dr}\underline{#1}}
\title[Canonical extensions of principal bundles]{Principal bundles on toroidal compactifications of integral canonical models of abelian-type Shimura varieties}
\author{Peihang Wu}
\address{BICMR, Peking University, Beijing 100871, China}
\email{wuph@pku.edu.cn} 
\date{Jan. 11th, 2026}
\begin{document}
\begin{abstract}
In this paper, we construct canonical extensions of principal $\G^c$- (and $M^c$-)bundles on toroidal compactifications of integral canonical models of abelian-type Shimura varieties with hyperspecial levels.
\end{abstract}
\maketitle
\tableofcontents
\section{Introduction}\label{sec-intro}
The purpose of this paper is to construct canonical and subcanonical extensions of integral models of principal bundles, as well as of automorphic vector bundles, over toroidal compactifications of integral canonical models of abelian-type Shimura varieties with hyperspecial level structure at a fixed prime number $p$.\par
Principal bundles and automorphic vector bundles are natural objects on Shimura varieties whose cohomology theory is related to representation theory and automorphic forms. If the underlying Shimura varieties are not proper, one needs to consider the \emph{canonical extensions} and \emph{subcanonical extensions} of these objects to toroidal compactifications.\par
Let $(G,X)$ be a Shimura datum. For each $K\sbst G(\A)$ neat open compact, the double coset $\sh_K(G,X)(\bb{C}):=G(\bb{Q})\bss X\times G(\A)/K$ has a unique canonical model $\sh_K:=\sh_K(G,X)$ over the reflex field $E:=E(G,X)$. Let $G^c$ be the quotient of $G$ by the anti-cuspidal part of the connected center, which can be assumed to be identical to $G$ for simplicity in this introduction. There is a \emph{standard principal $G^c_\bb{C}$-bundle} 
$$\ca{E}_\bb{C}:=G(\bb{Q})\bss (X\times G^c_\bb{C}\times G(\A)/K)$$ over $\sh_K(G,X)(\bb{C})$ with a $G^c_\bb{C}$-equivariant map $\ca{E}_\bb{C}\to \breve{X}$, such that it fits into the diagram
\begin{equation*}
\begin{tikzcd}
    &\ca{E}_\bb{C}\arrow[dl]\arrow[dr]&\\
    \sh_K(G,X)(\bb{C})&&\breve{X}.
    \end{tikzcd}
\end{equation*}
where $\breve{X}$ is the Borel embedding of $X$.
By series of works of Harris and Milne (see, e.g., \cites{Har85,Har86} and \cites{Mil88,Mil90}), this analytic $G^c_\bb{C}$-bundle algebraizes and canonically descends to an algebraic $G^c$-bundle $\ca{E}$ on the canonical model of the Shimura variety $\sh_K$. The construction of such bundles $\ca{E}$ plays an important role in studying the arithmetic properties of certain automorphic forms (see \cite{Har90survey}).\par
In order to correctly define and study log de Rham cohomology with coefficients in automorphic sheaves on integral models, or on the special fibers, of noncompact Shimura varieties, it is necessary to construct canonical extensions of certain principal $\G^c$-bundles (where $\G^c$ is a certain reductive model of $G^c$) to toroidal compactifications to produce these coefficient sheaves (see \cite{LS13} and \cite{Lan16iccm}). One can also construct coefficients for coherent cohomology on toroidal compactifications of integral models or special fibers using canonical extensions of certain $M^c$-bundles, where $M^c$ is a Levi component of $\G^c$ defined by a Hodge cocharacter. In contrast to the situation over $\bb{C}$, where Deligne’s existence theorem \cite{Del70} can be applied directly, the corresponding construction over integral models typically depends in an essential way on the explicit construction of the integral models themselves. Since we have already had integral canonical models of principal bundles and good toroidal compactifications for integral canonical models of abelian-type Shimura varieties by \cite{Lov17} and \cite{Wu25}, it can be expected that such canonical extensions should exist on these compactifications.\par 
In the PEL-type or, more generally, Hodge-type cases, these canonical extensions are constructed by Lan \cite{Lan12} and Madapusi \cite[Sec. 4.3]{Mad19}, respectively. The first work constructed canonical extensions by toroidal compactifications of Kuga families, while the second one achieved this by gluing vector bundles on a cover of the toroidal compactification. Although the two methods appear to be different, they both constructed the unique vector bundle that simultaneously extends a certain vector bundle on the union of the integral canonical model $\ca{S}_K$ and the toroidal compactification of $\sh_K$. In this paper, we will use the construction in \cite{Wu25} to generalize their constructions to abelian-type cases.\par
Let us fix some conventions to state the main result. 
Let $(G_2,X_2)$ be an abelian-type Shimura datum. Fix a prime $p>0$ and a place $v_2|p$ of $E_2:=E(G_2,X_2)$. Let $\ca{O}_2:=\ca{O}_{E_2,(v_2)}$. Assume that $G_{2,\bb{Q}_p}$ is (quasi-split and) unramified. Choose a hyperspecial subgroup $K_{2,p}\sbst G_2(\bb{Q}_p)$ and a neat open compact subgroup $K_2^p\sbst G_2(\Ap)$. There is a smooth reductive model $\G_2$ of $G_2$ over $\zbkp$ such that $\G_2(\bb{Z}_p)=K_{2,p}$. Denote $K_2:=K_{2,p}K^p_2$. 
Let $\{\ca{S}_{K_{2,p}K_2^p}(G_2,X_2)\}_{K_2^p}$ be the inverse system of smooth integral models constructed in \cite{Kis10} and \cite{KM15}. By \cite[Thm. 4.39]{Wu25} Case (HS), the integral model $\ca{S}_{K_2}:=\ca{S}_{K_{2,p}K_2^p}(G_2,X_2)$ admits smooth and projective toroidal compactifications. We choose an admissible projective and smooth cone decomposition $\Sigma_2$ such that the toroidal compactification $\ca{S}_{K_2}^{\Sigma_2}$ determined by $\Sigma_2$ is constructed in \cite{Wu25}.
The main theorem of this paper is stated as follows.
\begin{thmA}[{See Theorem \ref{thm-extend-main-theorem}}]
The principal $\G^c_2$-bundle $\mathscr{E}$ on $\ca{S}_{K_2}$ defined in \cite{Lov17} extends uniquely to a principal $\G^c_2$-bundle $\mathscr{E}^\can$ on $\ca{S}_{K_2}^{\Sigma_2}$, such that the restriction of $\mathscr{E}^\can$ to $\sh_{K_2}^{\Sigma_2}$ coincides with the canonical extension of the $G^c_2$-bundle $\ca{E}:=\mathscr{E}|_{\sh_{K_2}}$ to $\sh_{K_2}^{\Sigma_2}$.\par 
Let $W$ be an algebraic representation of $\G^c_2$. Then $\autoshdr{W}_{K_2}^{\can}:=\mathscr{E}^\can\times^{\G^c_2} W$ is a vector bundle equipped with an integrable log connection extending $\autoshdr{W}_{K_2}:=\mathscr{E}\times^{\G^c_2}W$.\par
The bundle $\mathscr{E}^\can$ determines and is determined by an exact tensor functor
$$\omega^\can: \rep(\G^c_2)\lra \mrm{Vec}_{\ca{S}^{\Sigma_2}_{K_2}}$$
from the category of $\G^c_2$-representations to the category of vector bundles with integrable log connections on $\ca{S}^{\Sigma_2}_{K_2}$.
\end{thmA}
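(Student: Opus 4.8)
The plan is to bootstrap from the Hodge-type case, where the analogous statement is due to Madapusi~\cite[Sec.~4.3]{Mad19} (building on Lan~\cite{Lan12} in the PEL case), and to transport the canonical extension through the construction of abelian-type integral canonical models and of their toroidal compactifications. Recall the Deligne--Kisin recipe: one fixes an auxiliary Hodge-type datum $(G_1,X_1)$ together with a central isogeny $G_1^{\der}\to G_2^{\der}$ inducing an isomorphism of adjoint groups, so that $\ca{S}_{K_2}$ is obtained, after a twist, from a geometrically connected component $\ca{S}_{K_1}^{+}$ of the Hodge-type integral canonical model $\ca{S}_{K_1}$ by a suitable group action. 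By the construction of~\cite{Wu25}, the chosen smooth projective toroidal compactification $\ca{S}_{K_2}^{\Sigma_2}$ arises from a compatible smooth projective toroidal compactification $\ca{S}_{K_1}^{\Sigma_1}$ of $\ca{S}_{K_1}$ by the very same recipe, with matching boundary strata and transition and quotient maps. Likewise, by~\cite{Lov17}, the principal $\G_2^{c}$-bundle $\mathscr{E}$ on $\ca{S}_{K_2}$ is itself produced from the $\G_1^{c}$-bundle $\mathscr{E}_1$ on $\ca{S}_{K_1}$ by this descent together with a change of structure group from $\G_1^{c}$ to $\G_2^{c}$; it is therefore natural to extend $\mathscr{E}_1$ first and then to descend.

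The first step is to record the canonical extension $\mathscr{E}_1^{\can}$ of $\mathscr{E}_1$ to $\ca{S}_{K_1}^{\Sigma_1}$ from~\cite[Sec.~4.3]{Mad19}, together with its description as an exact tensor functor $\rep(\G_1^{c})\to\mrm{Vec}_{\ca{S}_{K_1}^{\Sigma_1}}$ into vector bundles with integrable log connections and its defining property of restricting, on the generic fibre, to the Deligne--Milne canonical extension of $\ca{E}_1:=\mathscr{E}_1|_{\sh_{K_1}}$. The essential point to extract is \emph{functoriality} of this construction for the finite transition maps in the tower $\{\ca{S}_{K_1^{p}K_{1,p}}^{\Sigma_1}\}_{K_1^{p}}$ (\'etale away from the boundary) and for the prime-to-$p$ Hecke action: this follows from the uniqueness clause in Madapusi's theorem, namely that a vector bundle extending a given one on $\ca{S}_{K_1}$ and restricting to a given one on the generic fibre is unique, because the union of these two loci has complement of codimension $\geq 2$. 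Consequently the group entering the abelian-type quotient lifts canonically to $\mathscr{E}_1^{\can}$, compatibly with the change of structure group to $\G_2^{c}$.

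Granting this equivariance, I would descend $\mathscr{E}_1^{\can}$ along the twist and the quotient and perform the same change of structure group from $\G_1^{c}$ to $\G_2^{c}$ as for $\mathscr{E}$ in~\cite{Lov17}, obtaining a principal $\G_2^{c}$-bundle $\mathscr{E}^{\can}$ on $\ca{S}_{K_2}^{\Sigma_2}$; effective \'etale descent applies here because the relevant quotient maps are finite \'etale at sufficiently deep level, including along the boundary, by~\cite{Wu25}. By construction $\mathscr{E}^{\can}|_{\ca{S}_{K_2}}=\mathscr{E}$; and since $\mathscr{E}_1^{\can}|_{\sh_{K_1}^{\Sigma_1}}$ is the canonical extension of $\ca{E}_1$ while the characteristic-zero canonical extension is itself functorial for the covers and Hecke operators appearing in the quotient (Harris--Milne, \cites{Har85,Har86,Mil88,Mil90}), the same descent shows that $\mathscr{E}^{\can}|_{\sh_{K_2}^{\Sigma_2}}$ is the canonical extension of $\ca{E}=\mathscr{E}|_{\sh_{K_2}}$, as required. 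For uniqueness, note that $U:=\ca{S}_{K_2}\cup\sh_{K_2}^{\Sigma_2}$ is open in the regular scheme $\ca{S}_{K_2}^{\Sigma_2}$ with complement $(\partial\ca{S}_{K_2}^{\Sigma_2})\cap(\text{special fibre})$ of codimension $\geq 2$; passing through the dictionary between principal $\G_2^{c}$-bundles and fibre functors $\rep(\G_2^{c})\to\mrm{Vec}$, any $\G_2^{c}$-bundle extending $\mathscr{E}$ and restricting over $\sh_{K_2}^{\Sigma_2}$ to the canonical extension agrees with $\mathscr{E}^{\can}$ on $U$, so the associated vector bundles agree on $U$, hence (the sheaves $\uhom$ between vector bundles on a normal scheme being reflexive, hence determined on $U$) on all of $\ca{S}_{K_2}^{\Sigma_2}$; this family of isomorphisms, being compatible with the tensor structures, comes from an isomorphism of $\G_2^{c}$-bundles.

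The two complements then follow formally. Setting $\autoshdr{W}_{K_2}^{\can}:=\mathscr{E}^{\can}\times^{\G_2^{c}}W$ yields a vector bundle extending $\autoshdr{W}_{K_2}$, and it carries an integrable log connection descended from the one on the corresponding Hodge-type associated bundle (equivalently, reconstructed from the Hodge filtration and Kodaira--Spencer as in~\cite{Lan12}), regular along the boundary by construction; while $\omega^{\can}\colon W\mapsto\autoshdr{W}_{K_2}^{\can}$ is an exact symmetric-monoidal functor $\rep(\G_2^{c})\to\mrm{Vec}_{\ca{S}_{K_2}^{\Sigma_2}}$ (exactness being that of the associated-bundle functor for the flat affine group scheme $\G_2^{c}$) from which $\mathscr{E}^{\can}$ is recovered by the Tannakian dictionary for torsors under a reductive group scheme, the forgetful functor to $\mrm{Vec}_{\ca{S}_{K_2}^{\Sigma_2}}$ matching the underlying bundle. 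I expect the main obstacle to be precisely the equivariance asserted in the second step: one must verify that Madapusi's construction of $\mathscr{E}_1^{\can}$ is genuinely functorial for the finite covers and Hecke correspondences underlying the abelian-type quotient, that this functoriality is compatible both with the change of structure group from $\G_1^{c}$ to $\G_2^{c}$ and with the boundary combinatorics of the compactifications of~\cite{Wu25}, and, relatedly, that effective \'etale descent applies in a neighbourhood of the boundary divisor --- for which one must lean on the explicit quotient descriptions of $\ca{S}_{K_2}^{\Sigma_2}$ in~\cite{Wu25} and of $\mathscr{E}$ in~\cite{Lov17}.
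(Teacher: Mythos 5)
Your skeleton --- extend the torsor in the Hodge-type case, lift the group action entering the abelian-type quotient, descend, change structure group, and invoke uniqueness via a codimension-$\geq 2$ argument --- is the same as the paper's. The gap is in the descent step. You justify it by asserting that ``the relevant quotient maps are finite \'etale at sufficiently deep level, including along the boundary.'' That is false, and it is exactly the difficulty this paper is organized around. The map $\ca{S}_{K_0^\alpha}^{\Sigma_0^\alpha}\to \ca{S}_{K_0^\alpha}^{\Sigma_0^\alpha}/\Delta^\alpha$ is only finite Kummer \'etale: the action is free on the open integral model and on each individual boundary stratum, but boundary strata have nontrivial stabilizers; near a stratum the quotient is modelled on an isogeny of split-torus torsors $\mbf{E}_{K_{\Phi_0^\alpha}}\to\mbf{E}_{\wdtd{K}_\Phi}$ extended over twisted affine torus embeddings, which ramifies along the boundary (as $z\mapsto z^n$ does at $0$). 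Such a map is not \'etale, indeed not even flat, so effective \'etale descent of $G^c_\zbkp$-torsors does not apply, and shrinking the prime-to-$p$ level does not remove the ramification because it comes from the torus isogeny, not from a non-neat level. Concretely, the descended object is a priori only the torsion-free coherent sheaf $(\pi_*\ca{O})^{\Delta}$ (plus its torsor analogue), and its local freeness is precisely Theorem \ref{thm-loc-free-quo} (Theorem C), for which your proposal offers no argument; your own closing caveat flags this but supplies no mechanism.

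What the paper does at this point is the part you are missing: it first proves an integral, boundary-adapted characterization of the Hodge-type canonical extension (Lemma \ref{lem-tensor-mixsh-gen}, Definition \ref{def-torsors-bd}, Proposition \ref{prop-torsor-hz}), showing that on the formal completion along each boundary stratum $\mathscr{E}^{\can}_{G_{0,\zbkp},K_0^\alpha}$ coincides with a torsor $\mathscr{E}_{G_{0,\zbkp},K_{\Phi_0^\alpha}}(\sigma_0^\alpha)$ which is pulled back from the base $\overline{\ca{S}}_{K_{\Phi_0^\alpha}}$ of the torus torsor (the $\mbf{E}_{K_{\Phi_0^\alpha}}$-invariance isomorphisms in Proposition \ref{prop-torsor-hz}). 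Since the finite group does act freely on $\overline{\ca{S}}_{K_{\Phi_0^\alpha}}$, the quotient of a torsor pulled back from there is again a torsor (Lemma \ref{lem-quotient-tors-general}), and torsor-ness of the global quotient is then checked on an fpqc cover by formal completions along the $ZP$-strata (Propositions \ref{prop-recall-constr-wu25}--\ref{prop-boundary-torsor-quo}). Without this boundary analysis, or a substitute for it, your descent step does not go through. Two smaller corrections: the $\Delta$-action does not exist on the $G_{0,\zbkp}$-torsor itself --- one must first push out to $G^c_\zbkp$ (see the remark following Proposition \ref{prop-action-delta}) --- and extending that action over the integral model uses Lovering's prime-to-$p$ Hecke equivariance together with adjusting representatives so the $p$-component lies in $K_{0,p}$ (Proposition \ref{prop-action-delta}), not merely the uniqueness of extensions off a codimension-$\geq 2$ locus.
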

The vector bundles $\autoshdr{W}_{K_2}$ are called automorphic sheaves and $\autoshdr{W}^\can_{K_2}$ are called their canonical extensions. Moreover,
\begin{thmB}[See Corollary \ref{cor-gr-extend}]
There is a correspondence 
\begin{equation*}
    \begin{tikzcd}
    &\mathscr{E}^\can\arrow[ld,"\pi^\can_1"']\arrow[dr,"\pi_2^\can"]&\\
    \ca{S}^{\Sigma_2}_{K_2}&& \ca{GR}_{\mu_2^c}
    \end{tikzcd}
    \end{equation*}
where the right arrow is $\G_{2,\ca{O}_2}^c$-equivariant. Equivalently, there is a $\mu_2^c$-filtration on $\mathscr{E}^\can$ in the sense of \cite[Lem. 3.3.1]{Lov17}.
\end{thmB}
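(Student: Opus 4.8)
The plan is to construct $\pi_2^\can$ — equivalently, the $\mu_2^c$-filtration on $\mathscr{E}^\can$ — by gluing the $\mu_2^c$-filtrations that already exist on the two dense open subschemes $\ca{S}_{K_2}$ and $\sh_{K_2}^{\Sigma_2}$ of $\ca{S}_{K_2}^{\Sigma_2}$, and then extending the result across the remaining boundary stratum; the substantive content is concentrated in this last, codimension-two, extension.

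Over the integral canonical model $\ca{S}_{K_2}$ the bundle $\mathscr{E}$ of \cite{Lov17} carries a $\mu_2^c$-filtration in the sense of \cite[Lem.~3.3.1]{Lov17}: equivalently, a $\G_{2,\ca{O}_2}^c$-equivariant morphism $\mathscr{E}\to\ca{GR}_{\mu_2^c}$, or a reduction of $\mathscr{E}$ to the parabolic $P_{\mu_2^c}\sbst\G_{2,\ca{O}_2}^c$; over the generic fibre this is the Hodge filtration underlying the definition of the canonical extension $\ca{E}^\can$ of $\ca{E}:=\mathscr{E}|_{\sh_{K_2}}$ to $\sh_{K_2}^{\Sigma_2}$ used in Theorem~A. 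On the other hand, over $\sh_{K_2}^{\Sigma_2}$ the canonical extension $\ca{E}^\can$ itself carries a $\mu_2^c$-filtration: this is the functorial Hodge filtration on the canonical extension of automorphic vector bundles (following Deligne \cite{Del70}, Harris, and Milne), which at the level of the principal bundle is a $G_2^c$-equivariant map $\ca{E}^\can\to\ca{GR}_{\mu_2^c}\otimes_{\ca{O}_2}E_2$ refining the map to the compact dual.

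By Theorem~A we have $\mathscr{E}^\can|_{\ca{S}_{K_2}}=\mathscr{E}$ and $\mathscr{E}^\can|_{\sh_{K_2}^{\Sigma_2}}=\ca{E}^\can$, and these identifications agree over the overlap $\ca{S}_{K_2}\cap\sh_{K_2}^{\Sigma_2}=\sh_{K_2}$, where both sides are $\ca{E}$ equipped with its Hodge filtration — here one uses the compatibility of Lovering's construction with Deligne's over the reflex field, together with the uniqueness clause in Theorem~A. The two $\mu_2^c$-filtrations therefore glue to a single $\G_{2,\ca{O}_2}^c$-equivariant morphism $\pi_2^\can\colon\mathscr{E}^\can|_U\to\ca{GR}_{\mu_2^c}$ over $U:=\ca{S}_{K_2}\cup\sh_{K_2}^{\Sigma_2}$. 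Its complement $Z:=\ca{S}_{K_2}^{\Sigma_2}\smn U$ is the intersection of the boundary divisor with the special fibre of $\ca{S}_{K_2}^{\Sigma_2}$, hence closed of codimension $\geq 2$; and since $\mathscr{E}^\can\to\ca{S}_{K_2}^{\Sigma_2}$ is smooth and surjective, the preimage of $Z$ in $\mathscr{E}^\can$ is again of codimension $\geq 2$.

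It remains to extend $\pi_2^\can$ across $Z$. By \cite[Lem.~3.3.1]{Lov17} this amounts to extending, for every $W\in\rep(\G_2^c)$, the Hodge filtration on $\autoshdr{W}_{K_2}^\can|_U$ to a filtration of $\autoshdr{W}_{K_2}^\can$ by $\ca{O}$-subbundles, compatibly in $W$. Since $\ca{S}_{K_2}^{\Sigma_2}$ is regular and $\autoshdr{W}_{K_2}^\can$ locally free, pushing the filtration steps forward to $\ca{S}_{K_2}^{\Sigma_2}$ and intersecting with $\autoshdr{W}_{K_2}^\can$ produces canonical coherent extensions, automatically compatible with one another and with tensor products; the point — which I expect to be the main obstacle — is that these extended subsheaves remain \emph{subbundles} along the codimension-two stratum $Z$, exactly where a bare rational section of a projective bundle may fail to extend. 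I would establish this from the explicit boundary description of $\ca{S}_{K_2}^{\Sigma_2}$ and of $\mathscr{E}^\can$ supplied by \cite{Wu25} and by the construction underlying Theorem~A: {\'e}tale-locally near $Z$ the relevant data is pulled back from the Hodge-type, and ultimately the Siegel, situation, where the extended Hodge filtration is that of the de Rham homology of the universal semi-abelian scheme over the toroidal compactification and is manifestly a subbundle, as in \cite{Lan12} and \cite[Sec.~4.3]{Mad19}; local-freeness then propagates through the twisting and descent used to pass from Hodge-type to abelian-type models. Once $\pi_2^\can$ is defined on all of $\mathscr{E}^\can$, its $\G_{2,\ca{O}_2}^c$-equivariance holds by density of $U$, and the remaining assertions — that $\pi_2^\can$ is equivalently a $\mu_2^c$-filtration on $\mathscr{E}^\can$ whose associated graded is the canonically extended $M_2^c$-bundle, and that the resulting filtered tensor functor is compatible with the log connection of Theorem~A — follow from \cite[Lem.~3.3.1]{Lov17}.
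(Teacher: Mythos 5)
Your proposal follows the same global strategy as the paper's proof (Proposition \ref{prop-reduction-para-mu} combined with Corollary \ref{cor-gr-extend}): take the filtration on $\mathscr{E}$ over $\ca{S}_{K_2}$ from \cite[Thm. 4.8.1]{Lov17}, the filtration on $\ca{E}^\can$ over $\sh^{\Sigma_2}_{K_2}$ from \cite[Lem. 4.4.2]{HZ94}, observe they agree over $\sh_{K_2}$, extend across the codimension-$\geq 2$ locus $Z$ (the boundary inside the special fibre), and convert the result into $\pi_2^\can$ via \cite[Lem. 3.3.1]{Lov17}. The genuine difference lies in the codimension-two step: the paper passes to $\ca{O}_{L,(w)}$ where a cocharacter $\mu_2'$ exists, forms the $P^c_{\mu_2'}$-reductions on the two opens by pulling back along the point of $\ca{GR}_{\mu_2^c}$ determined by $P^c_{\mu_2'}$, and then extends the glued reduction across $Z$ invoking only the codimension bound, whereas you work with the filtrations on associated bundles and explicitly flag --- correctly --- that a section of the flag bundle (equivalently, a filtration by subbundles) does \emph{not} automatically extend across a codimension-two locus, so the subbundle property along $Z$ has to come from the boundary structure: formally/\'etale-locally along $Z$ the canonical extension is pulled back from the Hodge-type boundary torsors (Proposition \ref{prop-torsor-hz}, \cite[Sec. 4.3]{Mad19}, \cite{Lan12}), where the extended Hodge filtration of the de Rham homology of the degenerating (log) $1$-motive is manifestly a subbundle, and this propagates through the finite free quotient as in Lemma \ref{lem-quotient-tors-general} and Theorem \ref{thm-loc-free-quo}. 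This buys extra justification precisely at the step the paper treats most tersely; what it costs is that your argument there is only sketched: to complete it you would need the $\mu$-adaptedness of the filtration on the boundary torsors from \cite[Sec. 4.3]{Mad19} and the equivariance of that filtration under the $\Delta^{g_0^\alpha}_{G^c}$-action (checked on the generic fibre by density, in the style of Proposition \ref{prop-delta-g0a-action}) so that it descends to the quotient --- but these are exactly the ingredients you name, so the plan is sound. One small imprecision: over $\ca{O}_2$ there is in general no parabolic subgroup $P_{\mu_2^c}\sbst \G^c_{2,\ca{O}_2}$; the parabolic reduction only exists after the unramified extension $\ca{O}_{L,(w)}$ where $\mu_2'$ is defined, while the object defined over $\ca{O}_2$ is the equivariant map $\pi_2^\can$ to $\ca{GR}_{\mu_2^c}$, and your appeal to saturation being ``automatically compatible with tensor products'' should be replaced by the compatibility one gets once the subbundle statement is established near $Z$.
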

Consequently, we can also define canonical extensions of the principal bundles under the parabolic subgroup or the Levi subgroup of $\G_2^c$ determined by a Hodge cocharacter over a certain ring extension. 
\subsubsection*{Idea of proof}
We essentially reduce the problem to answering the following question:
\begin{itemize}
    \item \textbf{Q:} Let $X$ be a proper smooth scheme over a base ring $\ca{O}$ (which is, for example, a DVR) with a normal crossings boundary divisor $D$. Let $\pi:Y\to X$ be a finite Kummer {\'e}tale cover of $X$ that is the quotient map of $Y$ by a finite group $\Delta$ acting on it. Moreover, assume that the action of $\Delta$ on $Y^\circ:=Y\times_X (X\bss D)$ is free. (Note that $Y$ might not be smooth while it is so on $Y^\circ$, and that a Kummer {\'e}tale map might not be flat!)\par 
    Given a vector bundle $\ca{V}$ on $Y$ with an integrable connection with log poles along $Y\bss Y^\circ$, and an equivariant $\Delta$-action, when does $\ca{W}:=(\pi_*\ca{O}_\ca{V})^\Delta$ again correspond to a vector bundle (with an integrable log connection)?
\end{itemize}
In our situation, $X$ is a toroidal compactification $\ca{S}_K^\Sigma(G,X_b)$ associated with a smooth and projective cone decomposition $\Sigma$, and $Y$ is a cover of $\ca{S}_K^\Sigma(G,X_b)$ by a disjoint union of Hodge-type toroidal compactifications, whose cone decompositions are chosen as those induced by $\Sigma$. The sheaf $\ca{W}$ is a locally free sheaf associated with the canonical extensions of certain automorphic vector bundles on the disjoint union. In this situation, reorganizing the results in the literature shows that $\ca{W}$ has the desired property on a locus of codimension $\geq 2$. Hence, we only have to show the \textbf{local-freeness} of $\ca{W}$.\par
Roughly speaking,
\begin{thmC}[{Theorem \ref{thm-loc-free-quo}}]
In our situation, which will be clear in the paper, the torsion-free coherent sheaf $\ca{W}$ is always locally free. 
\end{thmC}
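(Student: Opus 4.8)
The plan is to test local freeness of $\mathcal W=(\pi_*\mathcal O_{\mathcal V})^\Delta$ pointwise on $X$ and, at the only interesting points — those on the boundary $D$ — to reduce to two structural properties of the cover $\pi$: that it is \emph{finite flat} and that it is \emph{tamely ramified} along $D$. Write $\mathcal F:=\mathcal O_{\mathcal V}$ for the locally free $\mathcal O_Y$-module underlying $\mathcal V$, so $\mathcal W=(\pi_*\mathcal F)^\Delta$. Recall from \textbf{Q} that $X$ is regular (it is smooth over $\mathcal O$, being a smooth toroidal compactification), $D\subset X$ is a normal crossings divisor, $\pi\colon Y\to X$ is finite and Kummer \'etale with $X=Y/\Delta$ for a finite group $\Delta$ acting on $Y$ and freely on $Y^\circ=Y\times_X(X\setminus D)$, and $\mathcal F$ is $\Delta$-equivariant. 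Over $X\setminus D$ there is nothing to prove: there $\pi$ is finite \'etale and $Y^\circ\to X\setminus D$ is a $\Delta$-torsor, so $\mathcal W|_{X\setminus D}$ is the descent of $\mathcal F|_{Y^\circ}$ and is locally free. Fix $x\in D$; we argue over $\widehat{\mathcal O}_{X,x}$.

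\emph{Flatness.} I would deduce that $\pi$ is finite flat from miracle flatness (the local flatness criterion). Indeed $X$ is regular; $Y$ — a disjoint union of Hodge-type toroidal compactifications carrying the cone decompositions induced by $\Sigma$ — is Cohen--Macaulay, because \'etale- or formally-locally over $\mathcal O$ it is built from $\mathcal O$-smooth schemes and affine toric varieties, all of which are Cohen--Macaulay (the toric ones by Hochster's theorem, which survives the base change to $\mathcal O$); and $\pi$ is finite, surjective and equidimensional, since each component of $Y$ finitely dominates a component of $X$ of the same dimension. Hence $\pi$ is finite locally free, so $\pi_*\mathcal F$ is a locally free $\mathcal O_X$-module and $\widehat{\mathcal O}_{Y,y}$ is finite free over $\widehat{\mathcal O}_{X,x}$ for every $y\in\pi^{-1}(x)$.

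\emph{Tameness and conclusion.} Since $\pi$ is Kummer \'etale \emph{over $\mathcal O$}, all its ramification indices along $D$ are units in $\mathcal O$; so, \'etale-locally near a point $y\in\pi^{-1}(x)$, $\pi$ is a toric Kummer cover of prime-to-$p$ degree, whence the stabilizer $H:=\Delta_y$ has order prime to $p$ and is invertible in $\widehat{\mathcal O}_{X,x}$. As $\Delta$ acts transitively on $\pi^{-1}(x)$ with stabilizer $H$, there is an isomorphism of $\widehat{\mathcal O}_{X,x}[\Delta]$-modules $\widehat{(\pi_*\mathcal F)}_x\cong\operatorname{Ind}_H^\Delta(\widehat{\mathcal F}_y)$, where $\widehat{\mathcal F}_y$ (the completed stalk) is finite free over $\widehat{\mathcal O}_{X,x}$ — being free over $\widehat{\mathcal O}_{Y,y}$, which is free over $\widehat{\mathcal O}_{X,x}$ — with a semilinear $H$-action. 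By Frobenius reciprocity $\widehat{\mathcal W}_x=\widehat{(\pi_*\mathcal F)}_x^{\,\Delta}=(\widehat{\mathcal F}_y)^H$. Since $|H|$ is invertible, the idempotent $\frac1{|H|}\sum_{h\in H}h$ realizes $(\widehat{\mathcal F}_y)^H$ as an $\widehat{\mathcal O}_{X,x}$-direct summand of the free module $\widehat{\mathcal F}_y$; it is therefore projective, hence free over the local ring $\widehat{\mathcal O}_{X,x}$. By faithfully flat descent along $\mathcal O_{X,x}\to\widehat{\mathcal O}_{X,x}$, $\mathcal W$ is locally free at $x$, and since $x$ was arbitrary, $\mathcal W$ is locally free. (Once $\mathcal W$ is locally free and carries an integrable log connection off a closed set of codimension $\ge 2$, the connection extends to all of $X$ by a Hartogs-type argument, recovering the statements of Theorem A.)

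The main obstacle I anticipate is not this formal argument but putting the geometric input in place: one must show that the cover produced in \cite{Wu25} from the abelian-type construction is, \'etale-locally, the quotient of a Cohen--Macaulay toroidal $\mathcal O$-scheme by a finite group acting freely over the open part and by a tame Kummer cover along the boundary. Concretely, this means matching it to the standard local charts of toroidal compactifications, controlling the cone decompositions induced on the Hodge-type factors (which need not be smooth, so $Y$ need not be regular, though it remains Cohen--Macaulay), and checking that the ramification introduced in passing from the Hodge-type group to $G_2$ is prime to $p$ — exactly what being Kummer \'etale over $\mathcal O$ encodes. Note that reflexivity of $\mathcal W$ by itself would only yield local freeness outside codimension $\ge 3$, so the flatness and tameness inputs are genuinely necessary.
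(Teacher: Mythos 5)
Your formal skeleton (finite flat by miracle flatness, plus prime-to-$p$ stabilizers, plus the averaging idempotent $\tfrac1{|H|}\sum_h h$) is sound as algebra, but the proof hinges on two inputs that are asserted rather than established, and the decisive one is a genuine gap: the claim that every boundary stabilizer $H=\Delta_y$ has order invertible in $\ca{O}$ (prime to $p$). The phrase ``finite Kummer {\'e}tale'' in the introduction's question \textbf{Q} is an informal description of the situation, not a verified hypothesis carrying the prime-to-$p$ condition over a base of residue characteristic $p$; nothing in the paper (or in your argument) proves that the inertia of $\Delta_{\lcj{K}{g_0^\alpha g_\alpha}}(P_0,ZP_\Phi)$ at points of the closed stratum of $\ca{S}_{K_{\Phi_0^\alpha}}(\sigma_0^\alpha)$ — which is governed by indices of lattices in $U_{\Phi}$ attached to the two levels — is prime to $p$. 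You concede this yourself in your closing paragraph (``one must show \dots a tame Kummer cover along the boundary''), so the hard arithmetic content is deferred, not proved. Your flatness step is also more fragile than it looks: miracle flatness needs $X=\ca{S}^\Sigma_K$ regular, hence $\Sigma$ smooth, whereas the paper notes the smoothness assumption can be dropped in its construction, and it explicitly cautions that the cover need not be flat.

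For comparison, the paper's proof of Theorem \ref{thm-loc-free-quo} never uses tameness or flatness. Its mechanism is: by Proposition \ref{prop-recall-constr-wu25} the formal completion of $\ca{S}^\Sigma_K$ along a boundary stratum is an explicit finite quotient of $\cpl{\ca{S}_{K_{\Phi_0^\alpha}}(\sigma_0^\alpha)}{\ca{S}_{K_{\Phi_0^\alpha},\sigma_0^\alpha}}$; by Proposition \ref{prop-torsor-hz} the canonical extension there coincides with $\mathscr{E}_{G_{0,\zbkp},K_{\Phi_0^\alpha}}(\sigma_0^\alpha)$, which by (\ref{eq-hz-int-ext}) is pulled back along $\mbf{p}_1(\sigma_0^\alpha)$ from the base $\overline{\ca{S}}_{K_{\Phi_0^\alpha}}$ of the torus torsor; the finite group acts freely on that base (the subgroup acting trivially there is killed by the neatness computation in the proof of Proposition \ref{prop-boundary-torsor-quo}); hence the invariants are computed by descent on the base followed by pullback along the quotient twisted torus embedding (Lemma \ref{lem-quotient-tors-general}), with no hypothesis on the orders of the inertia groups. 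In other words, the sheaf is constant along exactly the (toric) directions in which the cover ramifies, which is why local freeness survives the quotient whether or not the ramification is tame. Note also that a substantial part of the paper's work (Constructions \ref{const-delta-G-act-generic} and \ref{const-delta-g0a-generic}, Propositions \ref{prop-action-delta} and \ref{prop-delta-g0a-action}) goes into constructing and matching the equivariant $\Delta$-action on the canonical extension at the boundary, which your formulation of \textbf{Q} takes as given. To salvage your route you would have to actually prove the prime-to-$p$ ramification statement at hyperspecial level; as written, that step would fail by assumption rather than by argument.
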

The proof of the above theorem relies on an understanding of the canonical extensions at the boundary in the Hodge-type case.
Let $(G_0,X_0)$ be a Hodge-type Shimura datum. 
We need to understand the extension of the Hodge tensors to the boundary. Based on the works of Kisin (see \cite[Cor. 2.3.9]{Kis10}) and Madapusi (see \cite[Prop. 4.3.7]{Mad19}), the de Rham tensors $s_{\lambda,\dr}\in (\ca{V}_{\zbkp}\otimes \bb{Q})^\otimes$ uniquely extend to $\wdtd{s}_{\lambda,\dr}^\can\in (\ca{V}^\can_\zbkp)^\otimes$.
The canonical extension of the principal $\G_0$-bundle $\mathscr{E}_{\G_0,K_0}$ to the toroidal compactification $\ca{S}_{K_0}^{\Sigma_0}$ is given by 
$$\ul{\mrm{Isom}}_{s_{\lambda}\mapsto\wdtd{s}_{\dr,\lambda}}(V_{\zbkp},\ca{V}_{\zbkp}^\can),$$ where $V_\zbkp$ is a lattice of a $(V,\psi)$ defining the Siegel Shimura datum that $(G_0,X_0)$ embeds, and $\ca{V}_{\zbkp}^\can$ is the canonical extension of the 1st relative de Rham homology of the pullback of the universal abelian scheme. \par
We show that
\begin{propS}
For each $\lambda$ and each cusp label representative $\Phi_0$ with a cone $\sigma\in \Sigma_0^+(\Phi_0)$, there is a vector bundle $\ca{V}_{\Phi_0}(\sigma)$ on the boundary toric scheme $\ca{S}_{K_{\Phi_0}}(\sigma)$ and a tensor $\wdtd{s}_{\Phi_0,\lambda,\dr}(\sigma)\in \ca{V}_{\Phi_0}(\sigma)^\otimes$ such that $\wdtd{s}_{\lambda,\dr}^\can$ and $\wdtd{s}_{\Phi_0,\lambda,\dr}(\sigma)$ coincide on $\cpl{\ca{S}^{\Sigma_0}_{K_0}}{\ca{Z}_{[(\Phi_0,\sigma)],K_0}}$. 
\end{propS}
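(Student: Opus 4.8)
The plan is to realize $\ca{V}_{\Phi_0}(\sigma)$ as the canonical log de Rham homology of the tautological degenerating family over the boundary toric scheme, and to obtain $\wdtd{s}_{\Phi_0,\lambda,\dr}(\sigma)$ by transporting the de Rham tensors through the weight filtration of that family. The first ingredient is the local structure of $\ca{S}^{\Sigma_0}_{K_0}$ along the stratum $\ca{Z}_{[(\Phi_0,\sigma)],K_0}$, as recorded in \cite{Wu25} (following \cite{Lan12} and \cite{Mad19}): the completion $\cpl{\ca{S}^{\Sigma_0}_{K_0}}{\ca{Z}_{[(\Phi_0,\sigma)],K_0}}$ is canonically the completion along its boundary divisor of the smooth torus-torsor embedding $\ca{S}_{K_{\Phi_0}}(\sigma)$, which is obtained from the abelian-part integral model $\ca{S}_{K_{\Phi_0,h}}$ by forming first an abelian-scheme torsor and then a torus embedding; over $\ca{S}_{K_{\Phi_0}}(\sigma)$ there is a canonical semi-abelian scheme $G^\natural$, an extension $0\lra T\lra G^\natural\lra A\lra 0$ of the pullback $A$ of the universal abelian scheme on $\ca{S}_{K_{\Phi_0,h}}$ by a torus $T$ with character group $\ul{X}$, together with the tautological polarized degeneration datum, and the pullback to the completion of the universal abelian scheme on $\ca{S}_{K_0}$ is recovered from this datum by Mumford's construction.

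Given this, I would define $\ca{V}_{\Phi_0}(\sigma)$ to be the locally free sheaf with integrable log connection built from $G^\natural$ and the tautological datum by the explicit recipe of \cite{Lan12} and \cite[Sec.~4.3]{Mad19} (equivalently, the de Rham homology of the log $1$-motive $[\ul{X}\to G^\natural]$). By that computation $\ca{V}_{\Phi_0}(\sigma)$ has the same rank as $\ca{V}^\can_\zbkp$, its residue along the boundary divisor is nilpotent, and it carries a weight filtration
\[ 0\sbst W_{-2}\sbst W_{-1}\sbst W_0=\ca{V}_{\Phi_0}(\sigma),\qquad W_{-2}\iso \ul{X}^\vee\otimes_{\bb{Z}}\ca{O},\quad \gr^W_{-1}\iso \ca{H}^{\dr}_1(A),\quad \gr^W_0\iso \ul{X}\otimes_{\bb{Z}}\ca{O}, \]
where $\ca{O}$ is the structure sheaf of $\ca{S}_{K_{\Phi_0}}(\sigma)$ and $\ul{X}^\vee$ is the cocharacter group of $T$, with extension classes and residue read off from the degeneration datum. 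The essential point — and this is what ``understanding the canonical extension at the boundary'' amounts to — is that, by construction, the formal completion of $\ca{V}_{\Phi_0}(\sigma)$ along the boundary divisor is canonically identified, compatibly with log connections, with the restriction of $\ca{V}^\can_\zbkp$ to $\cpl{\ca{S}^{\Sigma_0}_{K_0}}{\ca{Z}_{[(\Phi_0,\sigma)],K_0}}$.

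For the tensor, recall from \cite[Cor.~2.3.9]{Kis10} and \cite[Prop.~4.3.7]{Mad19} that $s_{\lambda,\dr}$ extends to a horizontal tensor $\wdtd{s}^\can_{\lambda,\dr}\in(\ca{V}^\can_\zbkp)^\otimes$, and that such an extension is unique. I would then argue that, restricted to the completion above and transported through the identification of the previous paragraph, $\wdtd{s}^\can_{\lambda,\dr}$ is the formal completion of a horizontal tensor already defined on $\ca{V}_{\Phi_0}(\sigma)$ over all of $\ca{S}_{K_{\Phi_0}}(\sigma)$. The construction of this global tensor uses the Hodge-type cusp-label formalism of \cite{Wu25} and \cite{Mad19}: the abelian-part datum is itself of Hodge type, so $\ca{H}^{\dr}_1(A)$ carries horizontal de Rham tensors $s_{\lambda,h,\dr}$ pulled back from $\ca{S}_{K_{\Phi_0,h}}$; moreover $W_\bullet$ is the weight filtration attached to $\Phi_0$, so that a horizontal tensor on $\ca{V}^\can$ near the stratum is forced to be assembled from $s_{\lambda,h,\dr}$ on $\gr^W_{-1}$ together with the tautological duality between $\ul{X}$ on $\gr^W_0$ and $\ul{X}^\vee$ on $W_{-2}$ — data which all live, ``constantly'' in the toric directions, over $\ca{S}_{K_{\Phi_0}}(\sigma)$. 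Calling the resulting global horizontal tensor $\wdtd{s}_{\Phi_0,\lambda,\dr}(\sigma)$, its restriction to the interior is $s_{\lambda,\dr}$, so its formal completion along the boundary is a horizontal extension of $s_{\lambda,\dr}$ and hence, by the uniqueness just recalled, coincides with $\wdtd{s}^\can_{\lambda,\dr}$ over $\cpl{\ca{S}^{\Sigma_0}_{K_0}}{\ca{Z}_{[(\Phi_0,\sigma)],K_0}}$, which is the assertion.

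The main obstacle is the step hidden in ``a horizontal tensor near the stratum is forced to be assembled from its graded pieces'': one must control the degeneration of the de Rham tensor integrally and in characteristic $p$, where Hodge-theoretic inputs are unavailable, and one must know that the abstract extension $\wdtd{s}^\can_{\lambda,\dr}$ of \cite{Kis10} and \cite{Mad19} is genuinely of filtered type for $W_\bullet$ with graded pieces as described, so that it descends from the formal completion to a tensor on the non-proper toric scheme $\ca{S}_{K_{\Phi_0}}(\sigma)$ and not merely on its completion. Both points rest on the explicit boundary charts of \cite{Wu25} and on the compatibility of Mumford's construction with the Hodge-type structure on the cusp-label data; once these are in place, the remaining verifications — horizontality, the matching on the interior, and the comparison on the completion — are formal.
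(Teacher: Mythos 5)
Your definition of $\ca{V}_{\Phi_0}(\sigma)$ (the de Rham realization of the log $1$-motive extending the tautological degeneration datum) is the same as the paper's, and the identification of its completion along the boundary with $\ca{V}^{\can}_{\zbkp}$ is indeed quoted input (Theorem \ref{thm-lov-mad}(2)). The genuine gap is precisely the step you flag but do not carry out: the existence of the global tensor $\wdtd{s}_{\Phi_0,\lambda,\dr}(\sigma)$ over all of $\ca{S}_{K_{\Phi_0}}(\sigma)$, integrally. The assertion that a horizontal tensor near the stratum ``is forced to be assembled from'' $s_{\lambda,h,\dr}$ on $\gr^W_{-1}$ together with the $\ul{X}$--$\ul{X}^\vee$ duality is not valid: $s_\lambda$ is a $G_{0,\zbkp}$-invariant element of $V_{\zbkp}^{\otimes}$, it lies in $W_0(\ca{V}^\otimes)$ but is neither determined by nor canonically reconstructible from its weight-graded image (the weight filtration admits no canonical splitting over $\ca{S}_{K_{\Phi_0}}(\sigma)$, and the fiber of $W_0(\ca{V}^\otimes)\to\gr^W_0(\ca{V}^\otimes)$ is large), so ``constancy in the toric directions'' of the graded data does not produce the tensor. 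Uniqueness, horizontality, and the match on the interior are the easy parts; existence over the non-proper toric scheme in mixed characteristic is the whole content of the statement, and your proposal defers it.

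For comparison, the paper does not argue through the weight-graded structure at all. It starts from Madapusi's tensors $s_{\lambda,\Phi_0,\dr}$ on the generic fiber of the boundary mixed Shimura variety and his compatibility with $s_{\lambda,\dr}$ on the open part of the completion, and it uses crucially that $\ca{V}_{\Phi_0}(\sigma)$ is pulled back from a bundle $\overline{\ca{V}}_{\Phi_0}$ on $\overline{\ca{S}}_{K_{\Phi_0}}$ (Lemma \ref{lem-E-action-bd-v} and Lemma \ref{lem-hz-int-v}, i.e.\ the $\mbf{E}_{K_{\Phi_0}}$-equivariance of the de Rham realization). On a Zariski chart $\spec R_\omega$ of $\ca{S}_{K_{\Phi_0}}(\sigma)$ pulled back from a trivializing open of $\overline{\ca{S}}_{K_{\Phi_0}}$, one then has the already-constructed tensor $\wdtd{s}^{\can}_{\lambda,\dr}$ over the completion along each boundary component (Kisin--Madapusi on the Hodge-type toroidal compactification, Theorem \ref{thm-lov-mad}(3)) and the tensor $s_{\lambda,\Phi_0,\dr}$ over the locus with $p$ and the boundary inverted, agreeing on the overlap; the elementary intersection $\cpl{R_\omega[s_j^{-1}]_{j\neq i}}{I_i}\cap R_\omega[p^{-1},s_j^{-1}]_{j}=R_\omega[s_j^{-1}]_{j\neq i}$, followed by intersecting over all boundary components $i$, yields the tensor over $R_\omega$, and these glue (Lemma \ref{lem-tensor-mixsh-gen}); the coincidence with $\wdtd{s}^{\can}_{\lambda,\dr}$ on $\cpl{\ca{S}^{\Sigma_0}_{K_0}}{\ca{Z}_{[(\Phi_0,\sigma)],K_0}}$ then holds by construction (Proposition \ref{prop-torsor-hz}(4)), and non-smooth $\sigma$ is handled by refining, descending to $\overline{\ca{S}}_{K_{\Phi_0}}$, and pulling back. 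To repair your argument you would need to supply an integral extension mechanism of exactly this kind; the graded-piece heuristic cannot replace it.
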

With the above proposition, we can characterize the canonical extensions of principal $\G_0$-bundles on the integral model $\ca{S}_{K_0}^{\Sigma_0}$ using a property in \cite{Har89} and \cite{HZ01} (see Proposition \ref{prop-torsor-hz}). We note that, to the best of our knowledge, this result concerning $\G_0$-bundles on integral models has not previously appeared in the literature, even in the context of PEL-type Shimura varieties.\par 
The next step is to pass the torsors and vector bundles to a quotient by a finite Kummer {\'e}tale group action.
By the results proved in \cite{Wu25}, we know that this finite Kummer {\'e}tale quotient is finite {\'e}tale when restricted to each individual stratum, and one can explicitly compute such a group quotient.
The first task now is to extend such a group quotient to the torsor. To do this, we need to have a robust and functorial theory of canonical models of canonical extensions over the generic fiber, and over the integral models of Shimura varieties. We can almost extract it from the literature, although most references had the assumptions such as assuming $G=G^c$ or assuming $Z(G)^\circ$ splits over a CM field extension. For our purpose, we define the desired canonical models of canonical extensions on $\sh_K^\Sigma(G,X)$ or integral canonical models on $\ca{S}_K(G,X)$ by pulling back the corresponding objects defined by $(G^c,X^c)$.\par
Next, by an explicit computation of the quotient $\ca{W}$ at the formal completion at the boundary, we show that $\ca{W}$ is still formal locally a locally free sheaf. This computation uses toric charts and Proposition \ref{prop-torsor-hz} crucially (see Lemma \ref{lem-quotient-tors-general}). The proof also relies on a generalization of Deligne's induction functor in \cite{Del79} and \cite{Mil88}, especially a generalization of the $\ag$-type groups, to treat torsors. See \S\ref{subsec-ind-torsor}.\par
Finally, by adopting the construction in \cite{Wu25}, we extend the construction to all abelian-type Shimura data.
\subsubsection*{Structure of the paper}
In Section \ref{sec-can-ex-0}, we review the notion of the canonical model of a principal $G^c$-bundle and record the related results. In \S\ref{subsec-f-u-f}, we give a list of facts in \cite{Pin89}, which will be used freely in the paper. In \S\ref{subsec-prelim}, we collect some basic facts about $G^c$ when $G$ is a general connected linear algebraic group. In \S\ref{subsubsec-can-ext-can-mod}, we consider a general mixed Shimura datum $(Q,\ca{X})$. We review how the (canonical models of) automorphic vector bundles and principal bundles on $\sh_K(Q,\ca{X})$ are defined. When $(Q,\ca{X})=(G,X)$ is a usual Shimura datum, we explain how the (canonical models of) canonical extensions are defined and characterized. We will emphasize on how the action of $\mbf{E}_{K_\Phi}$ is defined on the principal bundles on mixed Shimura varieties and on the functoriality between the objects. In \S\ref{subsec-ind-torsor}, we give a general formalism of Deligne's induction construction for torsors. \par
In Section \ref{sec-can-ex-hyper}, we construct the integral models of the canonical extensions on toroidal compactifications of the integral canonical models of Shimura varieties of abelian type, based on the known results due to Madapusi in the case of Hodge-type Shimura varieties. In \S\ref{subsec-tor-ab-review}, we list some useful steps in the construction of integral models of toroidal compactifications for abelian-type Shimura varieties in \cite{Wu25}. (We will only focus on the case of hyperspecial levels, which will to some extent simplify the procedures.) In \S\ref{subsec-can-ext-hodge}, we give a characterization of the canonical extensions constructed in \cite{Mad19} by studying the boundary in more detail. 
In \S\ref{subsec-can-ext-ab}, we finish the construction and show the desired properties. In \S\ref{subsec-auto-vb}, we define the principal bundles under the parabolic and Levi subgroups of $G^c_{2,\zbkp}$ defined by the Hodge cocharacter, and define subcanonical extensions. 
\subsubsection*{Acknowledgments}
This note was extracted and revised from a very early and preliminary draft of \cite{Wu25}, so part of it was written when the author was a graduate student at the University of Minnesota. The author thanks his advisor Kai-Wen Lan for his guidance and advice on doing and writing math. The author is also grateful to Kentaro Inoue and Shengkai Mao for helpful communications.\par
The author extends his gratitude to BICMR for an excellent academic environment.
\subsection*{Notation and conventions}
\begin{itemize}
\item The general system of conventions in this note is compatible with \cite{Wu25}. Although we will review the notation in this note, the readers are encouraged to check the list of symbols and the list of conventions in \emph{loc. cit.}. 
\item The torsors over Shimura varieties will be denoted by the symbol ``$\ca{E}$''. The extensions of these torsors to integral models will be written in the symbol ``$\mathscr{E}$''.
\item In this paper, a representation of an affine group scheme $\G$ over a base ring $R$ means a finite free $R'$-module for some $R$-algebra $R'$ equipped with a $\G$-action. Let $W$ be a representation. The corresponding vector bundles will be denoted by the symbol ``$\ul{W}$''. Denote by $\rep(\G)$ the category of $\G$-representations, by $\rep_{R'}(\G)$ the $\G$-representations $W$ over $R'$.
\item Canonical extensions are denoted by ``$(-)^\can$''.
\item The symbol ``$\mrm{Vec}_S$'' in this paper always means the category of vector bundles with (log) connections on a scheme $S$.
\item (\textbf{Vector bundles and torsors coincide on formal completions}) Let $X$ and $Y$ be schemes with stratifications of locally closed subschemes. Let $C\sbst X$ (resp. $D\sbst Y$) be a stratum of $X$ (resp. $Y$). Suppose that there is an isomorphism $\cpl{X}{C}\iso\cpl{Y}{D}$ that induces an isomorphism between $C$ and $D$. Let $V_1$ and $V_2$ be vector bundles/torsors on $X$ and $Y$, respectively. We say that the pullback of $V_1$ and $V_2$ to $\cpl{X}{C}\iso\cpl{Y}{D}$ \textbf{coincide} if, for any open affine formal subscheme $\mrm{Spf}(A,I)$ of $\cpl{X}{C}\iso\cpl{Y}{D}$ that induces morphisms $c_1:\spec A\to X$ and $c_2:\spec A\to Y$, there is an isomorphism $f_{(A,I)}: c_1^*V_1\iso c_2^*V_2$, and if the isomorphisms satisfy the compatibility for varying $(A,I)$.
\end{itemize}
\section{Canonical models and canonical extensions of principal bundles}\label{sec-can-ex-0}
We recall the characteristic $0$ theory of canonical extensions of canonical models of principal bundles on Shimura varieties and their boundary mixed Shimura varieties due to Harris, Harris-Zucker (see \cite{Har85}, \cite{Har89}, \cite{HZ94} and \cite{HZ01}), and Milne (see \cite{Mil88} and \cite{Mil90}). 
\subsection{Mixed Shimura varieties}\label{subsec-f-u-f}
The following definitions and facts on mixed Shimura varieties and their canonical models are needed for this note:
\begin{enumerate}
    \item For any connected linear algebraic group $H$ over $\bb{Q}$, define $H^c=H/Z_{ac}(H)$ where $Z_{ac}(H)$ is the minimal $\bb{Q}$-subgroup in the connected multiplicative center $Z(H)^\circ$ of $H$ such that $Z(H)^\circ/Z_{ac}(H)$ has equal $\bb{Q}$-split rank and $\bb{R}$-split rank.
    \item A mixed Shimura datum is denoted by $(Q,\ca{X})$. More precisely, let $Q$ be a connected linear algebraic group over $\bb{Q}$. Denote by $W_Q$ the unipotent radical of $Q$ and by $U_Q\sbst W_Q$ a normal subgroup of $Q$. Let $\ca{X}$ be a homogeneous space with a $Q(\bb{R})U_Q(\bb{C})$-equivariant map $\hbar:\ca{X}\to \Hom(\bb{S}_\bb{C},Q_\bb{C})$. Then $(Q,\ca{X})$ is a mixed Shimura datum if it satisfies the conditions listed in \cite[Def. 2.1, (i)-(viii)]{Pin89}.
    \item For mixed Shimura data, one can define mixed Shimura varieties, morphisms between mixed Shimura data and varieties, and quotients in a way similar to the usual Shimura data/varieties. See \cite[Ch. 2 and Ch. 3]{Pin89}. 
    \item If $W_Q=1$, the mixed Shimura datum is called a pure Shimura datum. Then $\ca{X}$ is, by \cite[Cor. 2.12]{Pin89}, a finite cover of $\hbar(\ca{X})$ that is a homeomorphism over each connected component; $(Q,\hbar(\ca{X}))$ is a usual Shimura datum. 
    \item There is a sequence of morphisms between Shimura data defined by the quotient of $Q$ by $U_Q$ and $W_Q$: 
    $$(Q,\ca{X})\lra (Q/U_Q,\overline{\ca{X}})\lra (Q/W_Q, \ca{X}_h).$$
    Denote $\overline{Q}:=Q/U_Q$ and $Q_h:=Q/W_Q$.
    \item For any mixed Shimura datum $(Q,\ca{X})$ and any neat open compact subgroup $K\sbst Q(\A)$, denote $\sh_K(Q,\ca{X})(\bb{C}):=$
    $$Q(\bb{Q})\bss \ca{X}\times Q(\A)/K.$$
    This complex manifold uniquely algebraizes to a complex algebraic variety $\sh_K(Q,\ca{X})_\bb{C}$. 
    Denote by $\sh_K(Q,\ca{X})$ the canonical model of $\sh_K(Q,\ca{X})_\bb{C}$ over the reflex field $E(Q,\ca{X})$. Such a variety uniquely exists by \cite[Thm. 11.18]{Pin89}.
    \item Denote by $\overline{K}$ and $K_h$ the projection of $K$ to $\overline{Q}(\A)$ and $Q_h(\A)$. Then there is a sequence of mixed Shimura varieties
    \begin{equation}\label{eq-projection}\sh_K(Q,\ca{X})\xrightarrow{\ \mbf{p}_1\ } \sh_{\overline{K}}(\overline{Q},\overline{\ca{X}})\xrightarrow{\ \mbf{p}_2\ } \sh_{K_h}(Q_h,\ca{X}_h).\end{equation}
    The first morphism is a torsor under a split torus $\mbf{E}_K$, and the second morphism is a torsor under an abelian scheme over $\sh_{K_h}(Q_h,\ca{X}_h)$. We usually use the abbreviations $\sh_K:=\sh_K(Q,\ca{X})$, $\overline{\sh}_K:=\sh_{\overline{K}}(\overline{Q},\overline{\ca{X}})$, and $\sh_{K_h}:=\sh_{K_h}(Q_h,\ca{X}_h)$.
    \item Let $(G,X)$ be a usual Shimura datum. Denote by $\ca{CLR}(G,X)$ the set of cusp label representatives $\Phi$ of the form $\Phi=(Q_\Phi,X^+_\Phi,g_\Phi)$, where $Q_\Phi$ is an admissible $\bb{Q}$-parabolic subgroup of $G$, $X^+_\Phi\sbst X$ a connected component and $g_\Phi\in G(\A)$ (see \cite[\S 2.1]{Mad19}).
    There is a $Q_\Phi(\bb{R})$-equivariant map 
    $$\tau:X\to \pi_0(X)\times \Hom(\bb{S}_\bb{C},P_{\Phi,\bb{C}}),$$ 
    where $P_\Phi$ is a normal subgroup of $Q_\Phi$ with the same unipotent radical as $Q_\Phi$ defined as in \cite[4.7]{Pin89}. Note that $P_\Phi$ is already defined and determined by $Q_\Phi$, so one does not need the entire datum $(Q_\Phi,X^+_\Phi,g_\Phi)$; this group is called the \emph{canonical subgroup} of $Q_\Phi$. Denote by $D_\Phi$ the $P_\Phi(\bb{R})U_\Phi(\bb{C})$-orbit of the image of $X^+_\Phi$. Let $K_\Phi:=P_\Phi(\A)\cap g_\Phi K g_\Phi^{-1}$.\par 
    In summary, one associates with $\Phi$ a mixed Shimura datum $(P_\Phi,D_\Phi)$ and a mixed Shimura variety $\sh_{K_\Phi}:=\sh_{K_\Phi}(P_\Phi,D_\Phi)$. We call these mixed Shimura data (resp. varieties) boundary mixed Shimura data (resp. varieties).
    \item We denote by $\Sigma$ (usually followed by super/sub-scipts) a (rational polyhedral) cone decomposition defined on $\ca{CLR}(G,X)$. In this note, we say that a cone decomposition is admissible, if it is admissible in the sense of \cite{Pin89}, is complete and finite, and has no self-intersections (cf. \cite[Convention 1.17]{Wu25}); in fact, \textbf{we will always assume that $\Sigma$ is admissible}. We often choose a smooth and projective refinement of $\Sigma$ (cf. \cite[Def. 1.16]{Wu25}), which always exists. 
    \item Let $S$ be an $\mbf{E}$-torsor over $\overline{S}$. Let $\sigma$ be a rational cone in the cocharacter $\bb{R}$-group of $\mbf{E}$. Denote by $S\to S(\sigma)$ the twisted affine torus embedding, and by $S_\sigma$ the $\sigma$-stratum of $S(\sigma)$.
    \item There are compatible equivalence relations and partial orders defined on $\ca{CLR}(G,X)$ and the set of cusp label representatives $\Phi$ with cones in $\Sigma^+(\Phi)$'s. The quotients by the equivalence relations are called cusp labels $\cusp_K(G,X)$ and cusp labels with cones $\cusp_K(G,X,\Sigma)$. 
    \item Let $K$ be a neat open compact subgroup in $G(\A)$. For an admissible cone decomposition $\Sigma$ of $(G,X,K)$, there is a proper algebraic space $\sh^\Sigma_K(G,X)$ over the reflex field $E(G,X)$. Assuming that $\Sigma$ is projective (resp. smooth), $\sh_K^\Sigma$ is projective (resp. smooth) over $\spec E(G,X)$. There is a stratification of $\sh_K^\Sigma$ labeled by the finite set of cusp labels with cones $\cusp_K(G,X,\Sigma)$. Pick any $\Upsilon=[(\Phi,\sigma)]\in \cusp_K(G,X,\Sigma)$. There is a mixed Shimura variety $\sh_{K_\Phi}$ and a locally closed stratum $\mrm{Z}_{\Upsilon,K}\sbst \sh_K^\Sigma$ with a canonical isomorphism $\mrm{Z}_{\Upsilon,K}\iso \Delta^\circ_{\Phi,K}\bss \sh_{\Phi,K,\sigma}$. The group $\Delta^\circ_{\Phi,K}$ acts on $\sh_{K_\Phi}(\sigma)$ through the free action of a finite group.
    \item Let us consider the analytification $\sh^\Sigma_K(\bb{C})$ of $\sh^\Sigma_K$. There is a commutative diagram
    \begin{equation}\label{eq-ana-chart}
    \begin{tikzcd}
    \Delta^\circ_{\Phi,K}\bss\sh_{K_\Phi}(\bb{C})\arrow[rr,"\text{open dense}","\sbst"']&&\Delta^\circ_{\Phi,K}\bss\sh_{K_\Phi}(\sigma)(\bb{C})&\Delta^\circ_{\Phi,K}\bss \sh_{K_\Phi,\sigma}(\bb{C})\arrow[l,"\supset"']\\
   \Delta^\circ_{\Phi,K}\bss U(\Phi)\arrow[u,hook,"\mrm{open}"]\arrow[rr,"\text{open dense}","\sbst"']&& \Delta^\circ_{\Phi,K}\bss U(\Phi,\sigma)\arrow[u,hook,"\mrm{open}"]& \Delta^\circ_{\Phi,K}\bss U_\sigma\arrow[u,equal]\arrow[d,equal]\arrow[l,"\supset"']\\
   V(\Phi)\arrow[u,hook,"\text{open}"]\arrow[rr,"\text{open dense}","\sbst"']\arrow[d,hook,"\text{open}"]&& V(\Phi,\sigma)\arrow[u,hook,"\text{open}"]\arrow[d,hook,"\text{open}"]&\Delta^\circ_{\Phi,K}\bss U_\sigma\arrow[d,equal]\arrow[l,"\supset"']\\
    \sh_{K}(\bb{C})\arrow[rr,"\text{open dense}","\sbst"']&&\sh_K^\Sigma(\bb{C})&\mrm{Z}_{\Upsilon,K}(\bb{C}).\arrow[l,"\supset"']
    \end{tikzcd}
    \end{equation}
In a word, there is an open analytic neighborhood $V(\Phi,\sigma)$ around $\Delta^\circ_{\Phi,K}\bss \sh_{K_\Phi,\sigma}(\bb{C})$ isomorphic to an open analytic neighborhood $V(\mrm{Z}_{\Upsilon,K})$ of $\mrm{Z}_{\Upsilon,K}(\bb{C})$ in $\sh^\Sigma_K(\bb{C})$. This isomorphism induces a canonical algebraic isomorphism $\cpl{\sh^\Sigma_K}{\mrm{Z}_{\Upsilon,K}}\iso \Delta^\circ_{\Phi,K}\bss\cpl{\sh_{K_\Phi}(\sigma)}{\sh_{K_\Phi,\sigma}}$. See \cite[Ch. 6, Prop. 7.15, Prop. 9.37, Thm. 12.4(c)]{Pin89}.
\end{enumerate}
\subsection{Preliminaries}\label{subsec-prelim}
Let $(T,\ca{Y})$ be a pure Shimura datum (in the sense of Pink) associated with a torus $T$ over $\bb{Q}$. There is a natural morphism 
$$(T,\ca{Y})\lra (T,\hbar(\ca{Y}))\lra (T^c,\hbar(\ca{Y})^c)$$ 
defined by mapping $(T,\ca{Y})$ to the Shimura datum in the usual sense, and then to the quotient datum $(T^c,\hbar(\ca{Y})^c)$ induced by $T\to T^c$. 
\begin{lem}\label{lem-cusp-cm}
Denote $\{h:\bb{S}\to T^c_\bb{R}\}:=\hbar(\ca{Y})^c$. The weight cocharacter $\omega: \bb{G}_{m,\bb{C}}\lra T^c_{\bb{C}}$ associated with $h$ is defined over $\bb{Q}$. Moreover, $T^c$ does not contain nontrivial $\bb{R}$-split but $\bb{Q}$-anisotropic subtori. In particular, $(T^c,\{h\})$ satisfies the conditions (2.1.4) and (2.1.2*) in \cite[II. 2.]{Mil90}, (1.1.3) and (1.1.4) in \cite{Har89}, and (1.1.7.2) and (1.1.7.3) in \cite{HZ01}.
\end{lem}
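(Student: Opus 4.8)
The plan is to split the statement into its two substantive parts --- the rationality of $\omega$, and the absence of nontrivial $\bb{R}$-split $\bb{Q}$-anisotropic subtori in $T^c$ --- and then to observe that each of the conditions quoted from \cite{Mil90}, \cite{Har89} and \cite{HZ01} is exactly a restatement of one of these. For the rationality of the weight, I would invoke the axioms for pure Shimura data in the sense of Pink (\cite[Def.\ 2.1]{Pin89}, recalled in \S\ref{subsec-f-u-f}): one of them requires the weight cocharacter of $(T,\ca{Y})$ --- the composite $\bb{G}_{m,\bb{C}}\to\bb{S}_\bb{C}\xrightarrow{h_x}T_\bb{C}$ for $x\in\ca{Y}$, whose first map is the weight cocharacter of $\bb{S}$, and which agrees with the weight of the corresponding point of $\hbar(\ca{Y})$ since $\ca{Y}\to\hbar(\ca{Y})$ is a finite cover --- to be defined over $\bb{Q}$ and independent of $x$. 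Since $T\to T^c$ is a morphism of $\bb{Q}$-tori, and the weight of $h\in\hbar(\ca{Y})^c$ is the image of that cocharacter under the $\gal(\qbar/\bb{Q})$-equivariant map $X_*(T)\to X_*(T^c)$, this image is again $\gal(\qbar/\bb{Q})$-fixed, hence defined over $\bb{Q}$; the same computation shows independence of the choice of $h$.

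For the assertion about subtori, I would first note that, $T$ being a torus, $Z(T)^\circ=T$, so the definition of $Z_{ac}$ recalled in \S\ref{subsec-f-u-f} says exactly that $T^c=T/Z_{ac}(T)$ has equal $\bb{Q}$-split and $\bb{R}$-split rank. It then suffices to prove the elementary fact that a $\bb{Q}$-torus $A$ whose $\bb{Q}$-split and $\bb{R}$-split ranks agree has no nontrivial $\bb{R}$-split $\bb{Q}$-anisotropic subtorus. Given such an $S\subseteq A$: a maximal $\bb{Q}$-split subtorus $P$ of $A$ intersects $S$ in a finite subgroup (any subtorus of $P\cap S$ is at once $\bb{Q}$-split and $\bb{Q}$-anisotropic, hence trivial), so $P$ has the same dimension as its image in $A/S$, and the $\bb{Q}$-split rank of $A/S$ is at least that of $A$; on the other hand, the preimage in $A_\bb{R}$ of a maximal $\bb{R}$-split subtorus of $(A/S)_\bb{R}$ is an extension of split tori by the split torus $S_\bb{R}$, hence split, so the $\bb{R}$-split rank of $A/S$ is exactly $\dim S$ less than that of $A$. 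Together with the general fact that $\bb{Q}$-split rank never exceeds $\bb{R}$-split rank, and with the balanced-rank hypothesis, this forces $\dim S=0$; applying it to $A=T^c$ gives the claim. (Equivalently, this is the usual defining property of the functor $(-)^c$ applied to $Z(T^c)=T^c$; cf.\ \cite{Pin89}, \cite{Mil90}.) Finally, the conditions (2.1.4) and (2.1.2*) of \cite[II.2]{Mil90}, (1.1.3) and (1.1.4) of \cite{Har89}, and (1.1.7.2) and (1.1.7.3) of \cite{HZ01} are, respectively, just the requirements that the weight homomorphism be defined over $\bb{Q}$ and that the group in question contain no nontrivial $\bb{R}$-split $\bb{Q}$-anisotropic subtorus, so they follow at once from what precedes.

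I do not expect a genuine obstacle here: the whole lemma amounts to unwinding Pink's axioms and the definition of $Z_{ac}$. The one ingredient that is not purely formal is the behavior of $\bb{R}$-split rank in the exact sequence $1\to S\to A\to A/S\to 1$, i.e.\ the fact that an extension of a split torus by a split torus over a field is again split; I would deduce this from the anti-equivalence between $k$-tori and finitely generated $\gal(\bar k/k)$-lattices, under which such an extension corresponds to an extension of trivial lattices, hence to a continuous $1$-cocycle valued in a torsion-free trivial lattice, which vanishes because $\gal(\bar k/k)$ is profinite.
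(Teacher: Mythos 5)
Your second assertion (that a $\bb{Q}$-torus with equal $\bb{Q}$-split and $\bb{R}$-split rank has no nontrivial $\bb{R}$-split $\bb{Q}$-anisotropic subtorus, applied to $T^c$) is proved correctly, and your elementary rank computation is a legitimate replacement for the paper's appeal to the equivalence (ii)$\Leftrightarrow$(iv) of \cite[Lem.\ 1.5.5]{KSZ21}. The first assertion, however, has a genuine gap. Your argument rests on the claim that Pink's axioms for a pure Shimura datum force the weight cocharacter of $(T,\ca{Y})$ itself to be defined over $\bb{Q}$, so that rationality of $\omega$ for $(T^c,\{h\})$ would follow by pushing forward along $X_*(T)\to X_*(T^c)$. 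This claim is false: for a torus, all of the conditions in \cite[Def.\ 2.1]{Pin89} are vacuous or automatic, so \emph{any} $h:\bb{S}\to T_\bb{R}$ (together with a finite cover $\ca{Y}$) defines a pure Shimura datum, and the weight need not be rational. For instance, take $T=\mathrm{Res}_{F/\bb{Q}}\bb{G}_m$ with $F$ real quadratic and $h(z)=(z\bar z,1)$ under $T_\bb{R}\iso\bb{G}_{m,\bb{R}}\times\bb{G}_{m,\bb{R}}$: the weight cocharacter $t\mapsto(t^2,1)$ is not Galois-stable. Indeed, the whole point of the lemma is that rationality of the weight is \emph{created} by dividing out $Z_{ac}(T)$; if it held already for $(T,\ca{Y})$ the statement would be trivial. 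The missing step is the one the paper supplies: $\omega=\mu+\iota\mu$ is fixed by a complex conjugation $\iota$, and a torus with equal $\bb{Q}$- and $\bb{R}$-split ranks satisfies the Serre condition $(\sigma-1)(\iota+1)=0$ on its cocharacter lattice (equivalently, it is isogenous to a product of a $\bb{Q}$-split and an $\bb{R}$-anisotropic torus, and splits over a CM field; this is again \cite[Lem.\ 1.5.5]{KSZ21}), whence $\sigma\omega=\omega$ for all $\sigma\in\gal(\overline{\bb{Q}}/\bb{Q})$. Your rank argument alone does not yield this implication, and you would need to prove (or cite) the passage from ``balanced ranks'' to the Serre condition to close the gap.

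A smaller but related issue is the closing ``in particular'': you assert that each of the quoted conditions from \cite{Mil90}, \cite{Har89}, \cite{HZ01} is literally one of the two statements you prove, but at least one of them amounts to the connected center (here $T^c$ itself) splitting over a CM field, which the paper deduces from the Serre condition; since your argument never establishes the Serre condition or CM-splitting, this verification is also incomplete as written.
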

\begin{proof}
Fix a complex conjugation $\iota\in\gal(\overline{\bb{Q}}/\bb{Q})$. Denote the Hodge cocharacter of $h$ by $\mu$. Since $\omega= \mu+\iota\mu$, it is fixed by $\iota$. By the equivalence of statements (vii) and (iv) in \cite[Lem. 1.5.5]{KSZ21}, $\omega$ is fixed by all $\sigma\in\gal(\overline{\bb{Q}}/\bb{Q})$.
The second statement follows from the equivalence of statements (ii) and (iv) in \emph{loc. cit.} Also in \emph{loc. cit.}, the second statement is equivalent to saying that $T^c$ is isogenous to the product of a $\bb{Q}$-split torus and an $\bb{R}$-anisotropic $\bb{Q}$-torus. By the Serre condition (vii) of \emph{loc. cit.}, $T^c$ splits over a CM field. Then the last statement is true.
\end{proof}
\begin{lem}\label{lem-tc-emb}
    Let $(Q,\ca{X})$ be a mixed Shimura datum. Let $(T,\ca{Y})\to (Q,\ca{X})$ be an embedding of a pure Shimura datum $(T,\ca{Y})$ for a $\bb{Q}$-torus $T$ into $(Q,\ca{X})$.\par 
    Suppose that $T$ is $\bb{Q}$-maximal, i.e., any $\bb{Q}$-torus $T'$ in $G$ containing $T$ is $T$ itself. Then $Z_{ac}(T)\iso Z_{ac}(Q)$ and $T^c\iso T/Z_{ac}(Q)$. Hence, there is an embedding $T^c\hookrightarrow Q^c$. As a consequence, this embedding induces an embedding $(T,\ca{Y})/Z_{ac}(T)\hookrightarrow (Q,\ca{X})/Z_{ac}(Q)$.
\end{lem}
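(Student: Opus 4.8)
The plan is to reduce the whole statement to the single equality $Z_{ac}(T)=Z_{ac}(Q)$ inside $T$, and to deduce that from one geometric input together with elementary behaviour of anti‑cuspidal parts under exact sequences of $\bb{Q}$‑tori. First the formal part: since $Z(Q)^\circ$ is a central $\bb{Q}$‑torus it commutes with $T$, so $T\cdot Z(Q)^\circ$ is a $\bb{Q}$‑torus containing $T$; by $\bb{Q}$‑maximality $Z(Q)^\circ\sbst T$, and in particular $Z_{ac}(Q)\sbst Z(Q)^\circ\sbst T$. Granting $Z_{ac}(T)=Z_{ac}(Q)$, we then have $T^c=T/Z_{ac}(T)=T/Z_{ac}(Q)$, and since $\ker\bigl(T\hookrightarrow Q\twoheadrightarrow Q^c\bigr)=T\cap Z_{ac}(Q)=Z_{ac}(Q)=Z_{ac}(T)$ the induced map $T^c\hookrightarrow Q^c$ is a closed immersion; the assertion about Shimura data then follows from the functoriality of $(-)/Z_{ac}(-)$ recorded in \S\ref{subsec-f-u-f}, the inclusion $T\hookrightarrow Q$ and the compatible map $\ca{Y}\to\ca{X}$ descending to $(T^c,\ca{Y}^c)\hookrightarrow(Q^c,\ca{X}^c)$. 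For the equality, conjugate $T$ into a Levi factor $L$ of $Q$ (possible since $T$ is a maximal torus of $Q$): then $T\sbst L$ is $\bb{Q}$‑maximal in $L\iso Q/W_Q$, $Z(L)^\circ\sbst T$, and $Z(Q)^\circ=\ker\bigl(Z(L)^\circ\to\mrm{Aut}(W_Q)\bigr)^\circ$.

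The geometric input is this. For $h$ attached to a point of $\ca{Y}$, the composite $\bb{S}\to T_\bb{R}\to L^{\ad}_\bb{R}$ is a point of the adjoint Shimura datum, so by Pink's axioms $\mrm{int}(h(\sqrt{-1}))$ is a Cartan involution of $L^{\ad}_\bb{R}$; as $h(\sqrt{-1})$ lies in the (abelian) image of $T$, this involution centralizes that image, which therefore lies in the compact subgroup of $L^{\ad}(\bb{R})$ fixed by $\mrm{int}(h(\sqrt{-1}))$ and is hence $\bb{R}$‑anisotropic. Because $Z(L)^\circ\sbst T$ this says $T/Z(L)^\circ$ is $\bb{R}$‑anisotropic. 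On the other hand $Z(L)^\circ$ acts on the unipotent group $W_Q$ through the $\bb{Q}$‑rational cocharacters scaling its weight‑$(-1)$ and weight‑$(-2)$ parts, so the image of $Z(L)^\circ$ in $\mrm{Aut}(W_Q)$ is $\bb{Q}$‑split, i.e.\ $Z(L)^\circ/Z(Q)^\circ$ is $\bb{Q}$‑split. Thus $T/Z(Q)^\circ$ sits in an exact sequence $1\to Z(L)^\circ/Z(Q)^\circ\to T/Z(Q)^\circ\to T/Z(L)^\circ\to 1$ with $\bb{Q}$‑split left term and $\bb{R}$‑anisotropic right term, both of which have equal $\bb{Q}$‑ and $\bb{R}$‑split rank.

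Now I would invoke the elementary fact that in an exact sequence of $\bb{Q}$‑tori $1\to A\to E\to B\to 1$, if two of $A,E,B$ have equal $\bb{Q}$‑split and $\bb{R}$‑split rank then so does the third: with $\Gamma=\gal(\qbar/\bb{Q})$, a complex conjugation $c\in\Gamma$, and $V_{(-)}:=X^*(-)_\bb{Q}/X^*(-)_\bb{Q}^{\Gamma}$, the sequence $0\to V_B\to V_E\to V_A\to 0$ of $\langle c\rangle$‑modules is exact, "equal rank" is exactly the condition "$c$ acts by $-1$ on $V_{(-)}$", and this property passes to submodules, quotients, and (using $c^2=1$) extensions. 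Applying it three times: $T/Z(Q)^\circ$ has equal rank, so by minimality in the definition of $Z_{ac}(T)$ we get $Z_{ac}(T)\sbst Z(Q)^\circ$; the extension $1\to Z(Q)^\circ/Z_{ac}(Q)\to T/Z_{ac}(Q)\to T/Z(Q)^\circ\to 1$ of equal‑rank tori shows $T/Z_{ac}(Q)$ has equal rank, whence $Z_{ac}(T)\sbst Z_{ac}(Q)$ by the same minimality; and $Z(Q)^\circ/Z_{ac}(T)$ is a subtorus of the equal‑rank torus $T^c=T/Z_{ac}(T)$ (using $Z_{ac}(T)\sbst Z(Q)^\circ$), hence has equal rank, so $Z_{ac}(Q)\sbst Z_{ac}(T)$ by minimality in the definition of $Z_{ac}(Q)$. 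Therefore $Z_{ac}(T)=Z_{ac}(Q)$, which completes the proof. (If one prefers, the sharper structure of $T^c$ from Lemma \ref{lem-cusp-cm} can be used in place of the bare equal‑rank condition, but it is not needed.)

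The step I expect to cost the most care is the second paragraph: extracting from Pink's list of axioms for a mixed Shimura datum, first, that $\mrm{int}(h(\sqrt{-1}))$ induces a Cartan involution of the adjoint group of the pure part $Q/W_Q$, and second, that the connected multiplicative center of the Levi $L$ acts on $W_Q$ through $\bb{Q}$‑split automorphisms. In the pure case $W_Q=1$, these are just the classical condition SV2 and the map $Z(Q)^\circ\sbst T$, and the remainder of the argument is elementary torus combinatorics.
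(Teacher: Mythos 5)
Your treatment of the core equality $Z_{ac}(T)=Z_{ac}(Q)$ follows essentially the same route as the paper: $\bb{Q}$-maximality gives $Z(Q)^\circ\sbst T$, the Cartan-involution axiom gives $\bb{R}$-anisotropy of the image of $T$ in the adjoint group of the pure part, and the rest is equal-rank bookkeeping (your two-out-of-three lemma and the minimality arguments are fine, and your explicit handling of the discrepancy between $Z(L)^\circ$ and $Z(Q)^\circ$ is actually more careful than the paper, which passes directly to the assertion that $T/Z(Q)^\circ$ is $\bb{R}$-anisotropic). However, your stated justification for the extra term is incorrect: the image of $Z(L)^\circ$ in $\Aut(W_Q)$ is \emph{not} $\bb{Q}$-split in general, and it does not act only through weight cocharacters. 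Pink's axioms only force the center of $Q/W_Q$ to act on $U_Q$ and on $W_Q/U_Q$ through an almost direct product of a $\bb{Q}$-split torus with a torus of compact type (this is the condition cited as \cite[2.1 (viii)]{Pin89} in the proof of Lemma \ref{lem-hc-func}); for instance, for the boundary mixed Shimura data of Picard modular surfaces the center acts on $W_Q/U_Q$ through $\mrm{Res}_{K/\bb{Q}}\bb{G}_m$ with $K$ imaginary quadratic, which has a genuinely compact factor. This does not break your argument, because all you use of the term $Z(L)^\circ/Z(Q)^\circ$ is that it has equal $\bb{Q}$- and $\bb{R}$-split rank, and an almost product of a $\bb{Q}$-split torus with a compact-type torus has that property; but the sentence claiming $\bb{Q}$-splitness should be replaced by an appeal to that axiom.

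The genuine gap is the final assertion of the lemma. An embedding of mixed Shimura data requires injectivity on the underlying spaces, not only on the groups, and this is exactly where the paper does nontrivial work: using \cite[Prop. 2.9]{Pin89} one writes $\ca{X}/Z_{ac}(Q)\iso Q^c(\bb{R})U_Q(\bb{C})/\im\stb_{Q^c(\bb{R})U_Q(\bb{C})}(x_0)$ and $\ca{Y}/Z_{ac}(Q)\iso T^c(\bb{R})/\im\stb_{T(\bb{R})}(x_0)$, and one must show that any $a\in T^c(\bb{R})$ whose image lies in $\im\stb_{Q^c(\bb{R})U_Q(\bb{C})}(x_0)$ already lies in $\im\stb_{T(\bb{R})}(x_0)$; the paper proves this by lifting along the identification of $T$ with the fiber product of $T^c$ and $Q/U_Q$ over $Q^c/U_Q$ (which uses $Z_{ac}(Q)\sbst T$). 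Your appeal to ``functoriality of $(-)/Z_{ac}(-)$'' only produces a morphism $(T^c,\ca{Y}^c)\to(Q^c,\ca{X}^c)$ together with injectivity of $T^c\to Q^c$; it does not rule out that the quotient of $\ca{X}$ identifies distinct points in the image of $\ca{Y}$, so the injectivity of $\ca{Y}^c\to\ca{X}^c$ still needs this stabilizer computation (or an equivalent argument) to be supplied.
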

\begin{proof}
Denote by $U_Q$ the unipotent radical of $Q$. By assumption, $T$ contains $Z(Q)^\circ$. Since $Z_{ac}(T)\supset Z_{ac}(Q)$ by definition, we show the other direction. Since for any $x\in \ca{X}$, $ ad h_x(i)$ induces a Cartan involution of $(Q/U_Q)^\ad_{\bb{R}}$, $T/Z(Q)^\circ$ is $\bb{R}$-anisotropic. Then $T/Z_{ac}(Q)\cap T$ is isogenous to the product of a $\bb{Q}$-split torus and an $\bb{R}$-anisotropic $\bb{Q}$-torus. This shows that $Z_{ac}(T)\iso Z_{ac}(Q)$ and $T^c\iso T/Z_{ac}(Q)$. Now we show the last statement. We choose any $x_0\in \ca{Y}$, whose image in $\ca{X}$ is also denoted by $x_0$. Denote the image of $x_0$ in the quotients $\ca{Y}/Z_{ac}(Q)$ and $\ca{X}/Z_{ac}(Q)$ abusively by $\overline{x}_0$. From \cite[Prop. 2.9]{Pin89}, there is a commutative diagram:
 \begin{equation*}
     \begin{tikzcd}
         \ca{Y}\iso T(\bb{R})/\stb_{T(\bb{R})}(x_0)\arrow[r,hook,"\varepsilon"]\arrow[d]&\ca{X}\iso Q(\bb{R})U_Q(\bb{C})/\stb_{Q(\bb{R})U_Q(\bb{C})}(x_0)\arrow[d]\\
    \ca{Y}/Z_{ac}(Q)\iso T^c(\bb{R})/\im \stb_{T(\bb{R})}(x_0)\arrow[r,"\overline{\varepsilon}"]&\ca{X}/Z_{ac}(Q)\iso Q^c(\bb{R})U_Q(\bb{C})/\im \stb_{Q^c(\bb{R})U_Q(\bb{C})}(x_0).
     \end{tikzcd}
 \end{equation*}
Suppose that there is an element $a\in T^c(\bb{R})$ which maps into $\im \stb_{Q^c(\bb{R})U_Q(\bb{C})}(x_0)$. We denote $b:=\overline{\varepsilon}(a)$. By construction, $b$ lifts to some $c\in \stb_{Q(\bb{R})U_Q(\bb{C})}(x_0)$. Denote by $\overline{b}$ (resp. $\overline{c}$) the image of $b$ (resp. $c$) in $(Q^c/U_Q)(\bb{R})$ (resp. $(Q/U_Q)(\bb{R})$). Finally, we find an element $d\in T(\bb{R})$ since $T$ is the fiber product of $T^c$ and $Q/U_Q$ over $Q^c/U_Q$. Since $\varepsilon (d)=c$ and $\varepsilon$ is injective, we see that $d\in \stb_{T(\bb{R})}(x_0)$. We now have the desired result.
\end{proof}
\begin{lem}[{\cite[Lem. 3.1.3]{Lov17}}]\label{lem-hc-func} Suppose that $(Q_2,\ca{X}_2)$ is a mixed Shimura datum, and that there is a homomorphism $f: Q_1\to Q_2$ from another connected linear algebraic group $Q_1$ over $\bb{Q}$ to $Q_2$. Then $f(Z_{ac}(Q_1))\sbst Z_{ac}(Q_2)$, and $f$ induces a homomorphism $f^c:Q_1^c\to Q_2^c$. 
In particular, if there is a morphism between the mixed Shimura data $f:(Q_1,\ca{X}_1)\to (Q_2,\ca{X}_2)$, then $f$ induces a morphism between mixed Shimura data $f^c:(Q_1,\ca{X}_1)/Z_{ac}(Q_1)\to (Q_2,\ca{X}_2)/Z_{ac}(Q_2)$. 
\end{lem}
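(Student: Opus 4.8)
The plan is to reduce the lemma to a statement about $\bb{Q}$-tori. Since $Z_{ac}(H)$ depends only on the torus $Z(H)^\circ$, I first want an intrinsic description of it: I claim that $Z_{ac}(H)$ is the \emph{maximal} $\bb{R}$-split $\bb{Q}$-anisotropic subtorus of $Z(H)^\circ$. Such a maximal subtorus is well defined, because the subgroup generated by two $\bb{R}$-split $\bb{Q}$-anisotropic subtori is a quotient of their product, and both properties pass to quotients of tori by an immediate computation on character lattices. To match it with $Z_{ac}(H)$, recall that by (the equivalence of conditions in) \cite[Lem. 1.5.5]{KSZ21} a $\bb{Q}$-torus has equal $\bb{Q}$-split and $\bb{R}$-split rank exactly when it has no nontrivial $\bb{R}$-split $\bb{Q}$-anisotropic subtorus; writing $S_{\max}$ for the maximal such subtorus of $Z(H)^\circ$, the quotient $Z(H)^\circ/S_{\max}$ then has no nontrivial $\bb{R}$-split $\bb{Q}$-anisotropic subtorus (an extension of an $\bb{R}$-split $\bb{Q}$-anisotropic torus by $S_{\max}$ is again $\bb{R}$-split $\bb{Q}$-anisotropic, which would contradict maximality), while for a proper subtorus $S\subsetneq S_{\max}$ the image $S_{\max}/S\sbst Z(H)^\circ/S$ is a nontrivial one; hence $S_{\max}$ is precisely the minimal subtorus of $Z(H)^\circ$ with equal-rank quotient.

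Granting this, the functoriality is almost formal. Given $f\colon Q_1\to Q_2$, the image $f(Z_{ac}(Q_1))$ is a quotient of the $\bb{R}$-split $\bb{Q}$-anisotropic torus $Z_{ac}(Q_1)$, hence is itself an $\bb{R}$-split $\bb{Q}$-anisotropic subtorus of $Q_2$; so as soon as we know $f(Z_{ac}(Q_1))\sbst Z(Q_2)^\circ$, it must be contained in the maximal $\bb{R}$-split $\bb{Q}$-anisotropic subtorus of $Z(Q_2)^\circ$, namely $Z_{ac}(Q_2)$. Passing to quotients then yields the homomorphism $f^c\colon Q_1^c\to Q_2^c$, and in particular $f(Z_{ac}(Q_1))\sbst Z_{ac}(Q_2)$.

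The one step that is not pure linear algebra — and the one I expect to be the main obstacle — is the centrality assertion $f(Z_{ac}(Q_1))\sbst Z(Q_2)^\circ$: for a completely arbitrary homomorphism of linear algebraic groups it can fail, and it is here that one uses that $(Q_2,\ca{X}_2)$ is a mixed Shimura datum and that $f$ is compatible with the $\bb{S}$-structures (as it is whenever we invoke the lemma, e.g. when $f$ underlies a morphism of mixed Shimura data). I would then run the Cartan-involution argument from the proof of Lemma \ref{lem-tc-emb}: picking $x_1\in\ca{X}_1$ with image $x_2\in\ca{X}_2$ so that $h_{x_2}=f\circ h_{x_1}$ and $\Ad h_{x_2}(i)$ induces a Cartan involution of $(Q_2/U_{Q_2})^{\ad}_\bb{R}$, the image $f(Z_{ac}(Q_1))$ commutes with $h_{x_2}$ (it is central in $f(Q_1)$), so its image in $(Q_2/U_{Q_2})^{\ad}_\bb{R}$ is fixed by this Cartan involution and hence $\bb{R}$-anisotropic; being simultaneously $\bb{R}$-split it is trivial, which — exactly as in \emph{loc. cit.} — pins $f(Z_{ac}(Q_1))$ inside $Z(Q_2)^\circ$.

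Finally, for the ``in particular'' clause: once $f^c$ is available on the level of groups, the induced map on homogeneous spaces $\ca{X}_1/Z_{ac}(Q_1)\to\ca{X}_2/Z_{ac}(Q_2)$ is the one descended from $f$, and the verification that it is a morphism of mixed Shimura data — in particular that it remains equivariant for the maps into $\Hom(\bb{S}_\bb{C},-)$ — follows from \cite[Prop. 2.9]{Pin89} by the same diagram chase as in the last paragraph of the proof of Lemma \ref{lem-tc-emb}.
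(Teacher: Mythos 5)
Your torus preliminaries are sound and are essentially the same mechanism the paper relies on: identifying $Z_{ac}$ with the maximal $\mathbb{R}$-split, $\mathbb{Q}$-anisotropic subtorus of the connected center via the equivalences of \cite[Lem. 1.5.5]{KSZ21} (to complete your minimality claim, note that for \emph{any} subtorus $S$ with equal-rank quotient the image of $S_{\max}$ in $Z(H)^\circ/S$ is again $\mathbb{R}$-split and $\mathbb{Q}$-anisotropic, hence trivial, so $S\supseteq S_{\max}$). The divergence, and the genuine gap, is in the main step. First, you prove the lemma only under an added hypothesis (that $f$ is compatible with the $\mathbb{S}$-structures), whereas the statement is for an arbitrary homomorphism and, more importantly, the paper applies it to maps such as $P_\Phi\hookrightarrow G$ in \S\ref{subsubsec-E-action-mix-sh-complex} and $Y_\Phi\hookrightarrow G$ in \S\ref{subsubsec-ind-mixed-sh}, which are \emph{not} morphisms of (mixed) Shimura data and come with no point $x_2\in\ca{X}_2$ satisfying $h_{x_2}=f\circ h_{x_1}$; so, even taking your hedge at face value, the proposal does not deliver the lemma in the form in which it is used. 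Your instinct that some interaction with $\ca{X}_2$ must enter is reasonable (that is exactly what the Cartan-involution axiom is for), but replacing the statement by a standing compatibility hypothesis is not what the paper does.

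Second, even granting that hypothesis, your concluding inference fails in the genuinely mixed case: from triviality of the image in $(Q_2/U_{Q_2})^{\mathrm{ad}}_{\mathbb{R}}$ (or $(Q_2/W_{Q_2})^{\mathrm{ad}}_{\mathbb{R}}$) you conclude $f(Z_{ac}(Q_1))\subset Z(Q_2)^\circ$, but when $W_{Q_2}\neq 1$ this only places the torus in the preimage of the center of the reductive quotient, whose elements (e.g.\ the central torus of the Levi of a boundary group) typically act nontrivially on $W_{Q_2}$ and are far from central in $Q_2$; nothing in the proof of Lemma \ref{lem-tc-emb} supplies this missing step. To control the unipotent direction one needs Pink's axiom \cite[2.1 (viii)]{Pin89} — that the center of $Q_2/W_{Q_2}$ acts on $U_{Q_2}$ and $W_{Q_2}/U_{Q_2}$ through a product of a $\mathbb{Q}$-split and an $\mathbb{R}$-anisotropic torus — which your proposal never invokes and which is one of the two pillars of the paper's proof. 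Indeed, the paper's proof avoids asserting centrality in $Q_2$ altogether: it considers the image of the \emph{whole} connected center $Z(Q_1)^\circ$ inside $f(Q_1)/(f(Q_1)\cap Z_{ac}(Q_2))$, where centrality (in the image of $Q_1$) is automatic, and shows by d\'evissage through $Z(Q_2)W_{Q_2}$ — combining the definition of $Z_{ac}(Q_2)$, Pink's 2.1 (viii), and the Cartan involution on $(Q_2/W_{Q_2})^{\mathrm{ad}}_{\mathbb{R}}$ — that this torus is isogenous to a product of a $\mathbb{Q}$-split and an $\mathbb{R}$-anisotropic torus, hence has no nontrivial $\mathbb{R}$-split $\mathbb{Q}$-anisotropic subtorus; the image of $Z_{ac}(Q_1)$ therefore dies. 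Your argument omits both the dévissage and the axiom (viii) input, so as written it neither reproduces the paper's proof nor closes the mixed case on its own terms.
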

\begin{proof}
The proof of it is essentially identical to \cite[Lem. 3.1.3]{Lov17}. To show that $f(Z_{ac}(Q_1))\sbst Z_{ac}(Q_2)$, it suffices to show that the image of the connected multiplicative center $Z(Q_1)^\circ$ in $f(Q_1)/f(Q_1)\cap Z_{ac}(Q_2)$ is isogenous to a product of a $\bb{Q}$-split torus and an $\bb{R}$-anisotropic torus. Since $Z(Q_2)/Z_{ac}(Q_2)$ and the center of $Q_2/Z(Q_2)W_{Q_2}$ are products of $\bb{Q}$-split tori and $\bb{R}$-anisotropic tori by the definition of $Z_{ac}(Q_2)$ and by \cite[2.1 (viii)]{Pin89}, and since, for any $x\in \ca{X}_2$, $\ad h_x(i)$ induces a Cartan involution of $(Q_2/W_{Q_2})^\ad_\bb{R}$, the image of $Z(Q_1)^\circ$ has the desired property. We have the proof of the first statement. Other statements follow from the definitions.
\end{proof}
\subsection{Canonical extensions and their canonical models}\label{subsubsec-can-ext-can-mod}
\subsubsection{}\label{subs-can-ex-summary}Fix a mixed Shimura datum $(Q,\ca{X})$. 
Let us recall some background materials for principal bundles over $\sh_K(\bb{C}):=\sh_K(Q,\ca{X})(\bb{C})$ and $\sh_{K,\bb{C}}:=\sh_K(Q,\ca{X})_\bb{C}$. (We assume that $K$ is neat as usual.) The main references are \cite{Mil90} and the summary in \cite[Sec. 5.2]{DLLZ} for usual Shimura data.\par
Define the principal $Q^c(\bb{C})$-bundle over $\sh_{K}(Q,\ca{X})(\bb{C})$ to be $$\ca{E}_{Q^c,K}(Q,\ca{X})(\bb{C}):=Q(\bb{Q})\bss \ca{X}\times Q^c(\bb{C})\times Q(\A)/K.$$
We have, by definition, that $[(x,\ q_c,\ q_f)]=[(q\cdot x,\ q\cdot q_c,\ q\cdot q_f\cdot k)]$ for any $x\in X,\ q_c\in Q^c(\bb{C}),\ q_f\in Q(\A),\ q\in Q(\bb{Q})$ and $k\in K$.\par
The principal bundle $\ca{E}_{Q^c,K}(Q,\ca{X})(\bb{C})$ determines and is determined by a tensor functor $\omega_{B,Q^c,K,\bb{C}}^{an}(Q,\ca{X}): \mrm{Rep}_\bb{C}(Q^c)\to \lsys_{\sh_K(\bb{C})}$, from the category $\rep_\bb{C}(Q^c)$ of finite dimensional complex algebraic representations of $Q^c$ to the category of local systems of finite dimensional $\bb{C}$-vector spaces over $\sh_K(\bb{C})$. \par 
Let $(W,\rho)\in \rep_\bb{C}(Q^c)$. Denote $\autoshbetti{W}_K^{an}:=\omega_{B,Q^c,K,\bb{C}}^{an}(Q,\ca{X})(W)= \ca{E}_{Q^c,K}(Q,\ca{X})(\bb{C})\times^{Q^c(\bb{C})} W$; if $W\in \rep_F(Q^c)$ for some subfield $F\sbst \bb{C}$, $\autoshbetti{W}_K^{an}$ descends to a local system of $F$-vector spaces.\par 
Moreover, we can choose any smooth projective (complete) admissible cone decomposition of $\sh_K(Q,\ca{X})$ and there is an open embedding $j:\sh_K(\bb{C})\to \sh^\Sigma_K(\bb{C}):=\sh^\Sigma_K(Q,\ca{X})(\bb{C})$. The boundary divisor is a normal crossings divisor. See \cite[Prop. 9.20]{Pin89}.
\begin{lem}\label{lem-mono-uni}The monodromy action on $j_*\autoshbetti{W}_K^{an}$ of the topological fundamental group of each connected component along the boundary divisor is unipotent.
\end{lem}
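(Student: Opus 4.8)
The plan is to reduce the assertion to a purely local computation of monodromy around each irreducible component of the boundary divisor, and then to recognise the local monodromy operator as the action of an element of a unipotent group, after which unipotence follows from Kolchin's theorem.

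First, unipotence of the monodromy ``along the boundary divisor'' is a condition local at a generic point of each irreducible component of $\sh^\Sigma_K(\bb{C})\smn\sh_K(\bb{C})$: such a punctured neighbourhood is homotopy equivalent to a punctured disk times a polydisk, so it is enough to treat, for each codimension-one stratum $\mrm{Z}_{[(\Phi,\tau)],K}$ with $\Phi$ a cusp label representative and $\tau\in\Sigma^+(\Phi)$ a ray, the monodromy of $\autoshbetti{W}_K^{an}$ around the boundary component whose generic point lies in $\mrm{Z}_{[(\Phi,\tau)],K}$.

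Next, I would invoke the analytic chart \eqref{eq-ana-chart} (item~(14) of \S\ref{subsec-f-u-f}, based on \cite[Ch.~6, Prop.~7.15, Prop.~9.37, Thm.~12.4(c)]{Pin89}): near such a stratum $\sh^\Sigma_K(\bb{C})$ is the quotient, by a finite group $\Delta^\circ_{\Phi,K}$ acting freely, of an open subset of the twisted affine torus embedding $\sh_{K_\Phi}(\tau)(\bb{C})$ of the $\mbf{E}_{K_\Phi}$-torsor $\sh_{K_\Phi}(\bb{C})\to\overline{\sh}_{K_\Phi}(\bb{C})$, and the pullback of $\autoshbetti{W}_K^{an}$ to $\sh_{K_\Phi}(\bb{C})$ through this chart is the automorphic local system $\ca{E}_{P_\Phi^c,K_\Phi}(P_\Phi,D_\Phi)(\bb{C})\times^{P_\Phi^c(\bb{C})}W$ attached to the boundary mixed Shimura datum $(P_\Phi,D_\Phi)$, where $W$ is viewed as a $P_\Phi^c$-representation via $P_\Phi\hookrightarrow Q_\Phi\sbst Q$ and the induced morphism $P_\Phi^c\to Q^c$ of Lemma \ref{lem-hc-func}. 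Because $K$ is neat, a loop around the boundary component lifts to a loop in $\sh_{K_\Phi}(\tau)(\bb{C})$; there it lies in the fibre direction of the torsor and has the form $\exp(2\pi i\,v_\tau)$, with $v_\tau$ a primitive generator of the ray $\tau$ inside $U_\Phi(\bb{Z})$, acting on $W$ through $P_\Phi^c\to Q^c\xrightarrow{\ \rho\ }\mrm{GL}(W)$. (If $W_Q\neq 1$, one also gets the boundary components arising from the torus embedding of the $\mbf{E}_K$-direction; these are handled in the same way, with $U_\Phi$ replaced by $U_Q$.)

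Finally, $U_\Phi$ is the unipotent radical of the canonical subgroup $P_\Phi$, hence a unipotent $\bb{Q}$-group; since $Z_{ac}(P_\Phi)\sbst Z(P_\Phi)^\circ$ is a torus, $U_\Phi$ meets it trivially and maps isomorphically onto a unipotent subgroup of $P_\Phi^c$, whose image under the algebraic representation $\rho$ is therefore a unipotent subgroup of $\mrm{GL}(W)$ by Kolchin's theorem. Consequently $d\rho(v_\tau)$ is nilpotent and the monodromy operator $\rho(\exp(2\pi i\,v_\tau))=\exp\big(2\pi i\,d\rho(v_\tau)\big)$ is unipotent. The only delicate point is the chart identification in the third step---matching the restricted local system with the one attached to $(P_\Phi,D_\Phi)$ and checking, via neatness of $K$, that the relevant loops lift through the $\Delta^\circ_{\Phi,K}$-quotient as loops rather than merely as paths; everything else is formal, and once this is in place Kolchin's theorem concludes at once. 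Alternatively, one may observe that the local systems in the image of $\rep_\bb{C}(Q^c)\to\lsys_{\sh_K(\bb{C})}$ are direct summands of local systems underlying polarizable variations of Hodge structure, so that Borel's and Schmid's theorems yield quasi-unipotence directly, which neatness of $K$ then upgrades to unipotence; but the direct computation above is cleaner.
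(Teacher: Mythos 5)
Your overall strategy is the paper's: localize at a codimension-one boundary stratum, pass through the analytic chart to the twisted torus embedding of the boundary mixed Shimura variety, and compute the monodromy of a loop in the fibre direction of the torus torsor. But there is a genuine gap at the decisive step, namely your identification of the monodromy operator with $\rho(\exp(2\pi i\,v_\tau))$ for $v_\tau$ a lattice generator of the ray inside $U_\Phi$. By \cite[3.13, 4.10]{Pin89}, the fibre of $\sh_{K_\Phi}(\bb{C})\to\overline{\sh}_{K_\Phi}(\bb{C})$ is $\Lambda\bss U_\Phi(\bb{C})$ where $\Lambda$ is only the \emph{projection} to $U_\Phi(\bb{Q})$ of a neat congruence subgroup $\wdtd{\Lambda}$ of $(Z_Q\cdot U_\Phi)(\bb{Q})\cap K_\Phi$ (up to conjugation); a primitive generator $u\in\Lambda$ of the ray need not itself lie in the arithmetic group, only some product $zu$ with $z\in Z_Q(\bb{Q})$ does, and the deck transformation attached to the loop acts on $W$ through this lift, so the monodromy is $\rho(z)\rho(u)$, not $\rho(u)$. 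When $Q\neq Q^c$ — exactly the situation this lemma is needed for — $Z_Q(\bb{Q})\cap K$ can contain elements of infinite order, so the central factor is not automatically trivial; indeed, your argument as written would prove unipotence for arbitrary $Q$-representations, contradicting the Remark immediately after the lemma, which notes that for representations with nontrivial $Z_{ac}(Q)(\bb{Q})$-action the monodromy is in general not unipotent.

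What is missing is precisely the paper's final step: since $W$ is a $Q^c$-representation, the action of $\wdtd{\Lambda}$ factors through $\wdtd{\Lambda}/(Z_{ac}(Q)(\bb{Q})\cap\wdtd{\Lambda})$, and the image of $\wdtd{\Lambda}$ in $(Z_Q/Z_{ac}(Q))(\bb{Q})$ is a neat congruence subgroup of a group having none that are nontrivial (by the defining property of $Z_{ac}(Q)$ the quotient has equal $\bb{Q}$- and $\bb{R}$-split rank, so its arithmetic subgroups are finite, hence trivial once neat). Only after this central reduction is the monodromy the image of a unipotent element, and then your Kolchin-type conclusion applies. Your fallback via Borel--Schmid has the same issue hidden in it: on a mixed Shimura variety these local systems underlie (graded-polarizable) variations of \emph{mixed} Hodge structure at best, and the claim that they do — equivalently, quasi-unipotence of the local monodromy — already requires controlling the central factor, so it does not bypass the argument above.
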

\begin{proof}The proof is identical to \cite[p.18]{Har89} and \cite[p.34]{LS13}, which treated the case where $(Q,\ca{X})$ is a usual Shimura datum such that $Q=Q^c$. The completion of any point at the boundary is isomorphic to the completion of a twisted torus embedding $\sh_{K_\Phi}(\Sigma)(\bb{C}):=\sh_{K_\Phi}(P_\Phi,\ca{D}_\Phi,\Sigma)(\bb{C})$, where $(P_\Phi,\ca{D}_\Phi)$ is a rational boundary component of $(Q,\ca{X})$. Each of the connected components of $\sh_{K_\Phi}(\Sigma)(\bb{C})$ can be written as $\Gamma\bss\ca{D}_\Phi\to \overline{\Gamma}\bss \overline{\ca{D}}_\Phi$, which is a torus torsor under $\mbf{E}:=\Lambda \bss U_\Phi(\bb{C})$ for some lattice $\Lambda$ in a unipotent group $U_\Phi(\bb{R})(-1)$. More precisely, the group $\Lambda$ is (up to twisting $-1$) the projection to $U_\Phi(\bb{Q})$ of a \emph{neat} congruence subgroup $\wdtd{\Lambda}$ of $Z_Q(\bb{Q})\times U_\Phi(\bb{Q})$ by \cite[3.13, 4.10]{Pin89} (cf. $\Lambda_{K_\Phi}$ in \cite{Wu25}). Hence, the monodromy along the boundary is determined by an action $\rho:\wdtd{\Lambda}\to \wdtd{\Lambda}/(Z_{ac}(Q)(\bb{Q})\cap \wdtd{\Lambda})\to \mrm{GL}(W)$, which is unipotent because $(Z_Q/Z_{ac}(Q))(\bb{Q})$ has no nontrivial neat congruence subgroups. 
\end{proof}
\begin{rk}
If $W$ is a $Q(\bb{C})$-representation with nontrivial action of $Z_{ac}(Q)(\bb{Q})$, the monodromy above is only quasi-unipotent.
\end{rk}
By classical Riemann-Hilbert correspondence, the category $\lsys_{\sh_K(\bb{C})}$ is equivalent to $\vbd_{\sh_K(\bb{C})}$, the category of finite dimensional analytic vector bundles over $\sh_K(\bb{C})$ equipped with integrable connections. Hence, composing with this equivalence, we see that $\omega_{B,Q^c,K,\bb{C}}^{an}(Q,\ca{X})$ is equivalent to a tensor functor $$\omega^{an}_{\dr,Q^c,K,\bb{C}}(Q,\ca{X}):\rep_\bb{C}(Q^c)\lra \vbd_{\sh_K(\bb{C})}.$$ 
Denote $(\autoshdr{W}_{K,\bb{C}}^{an},\nabla_\bb{C}^{an})$ the value of $W$ under $\omega_{\dr,Q^c,K,\bb{C}}^{an}(Q,\ca{X})$ where $\nabla_\bb{C}^{an}$ denotes the integrable connection. 
There is a decreasing filtration $\fil^\bullet_x$ on $W$ defined by the Hodge cocharacter $\mu_{x,\bb{C}}$ of any $x\in \ca{X}$; these filtrations induce a decreasing filtration $\fil^\bullet$ on $\autoshbetti{W}_{K,\bb{C}}^{an}$, and via Riemann-Hilbert correspondence, on $\autoshdr{W}_{K,\bb{C}}^{an}$. The integrable connection $\nabla_\bb{C}^{an}$ satisfies Griffiths transversality with respect to $\fil^\bullet$.\par
The tensor functor $\omega^{an}_{\dr,Q^c,K,\bb{C}}(Q,\ca{X})$ determines and is determined by a $Q^c_\bb{C}$-bundle in complex analytic topology
$$\ca{E}^{an}_{Q^c,K,\bb{C}}(Q,\ca{X}):=Q(\bb{Q})\bss \ca{X}\times Q^c_\bb{C}\times Q(\A)/K.$$

By \cite[II, 5.2(d)]{Del70} and \cite[p.235]{Sch73}, the triple $(\autoshdr{W}_{K,\bb{C}}^{an},\nabla_{\bb{C}}^{an},\fil^\bullet)$ uniquely extends to a triple $(\autoshdr{W}_{K,\bb{C}}^{\Sigma,an},\nabla^{\Sigma,an}_\bb{C},\fil^\bullet)$ on 
$\sh_K^\Sigma(\bb{C})$, where $\autoshdr{W}_{K,\bb{C}}^{\Sigma,an}$ is an analytic vector bundle with an integrable log connection $\nabla_\bb{C}^{\Sigma,an}$ with nilpotent residues along the boundary divisor, and the decreasing filtration $\fil^\bullet$ extends the one on $\autoshdr{W}_{K,\bb{C}}^{an}$ (and is denoted abusively by the same notation). \par
By \cite{Del70}, the triple $(\autoshdr{W}_{K,\bb{C}}^{\Sigma,an},\nabla^{\Sigma,an}_\bb{C},\fil^\bullet)$ algebraizes to a tuple $(\autoshdr{W}_{K,\bb{C}}^\Sigma,\nabla_\bb{C}^\Sigma,\fil^\bullet)$ of algebraic vector bundle $\autoshdr{W}_{K,\bb{C}}^\Sigma$ over $\sh^\Sigma_{K,\bb{C}}:=\sh^\Sigma_K(Q,\ca{X})_\bb{C}$ equipped with an integrable log connection $\nabla^{\Sigma}_\bb{C}$ with nilpotent residues along the boundary divisor and a decreasing filtration $\fil^\bullet$. The restriction of $(\autoshdr{W}_{K,\bb{C}}^{\can},\nabla^{\Sigma}_\bb{C},\fil^\bullet)$ to $\sh_{K,\bb{C}}$ defines an algebraization $(\autoshdr{W}_{K,\bb{C}},\nabla_\bb{C},\fil^\bullet)$ of $(\autoshdr{W}_{K,\bb{C}}^{an},\nabla^{an}_\bb{C},\fil^\bullet)$. The extension $(\autoshdr{W}_{K,\bb{C}}^{\Sigma},\nabla^{\Sigma}_\bb{C},\fil^\bullet)$ is called the \textbf{canonical extension} of $(\autoshdr{W}_{K,\bb{C}},\nabla_\bb{C},\fil^\bullet)$.\par 
By the last paragraph, $\omega^{an}_{\dr,Q^c,K,\bb{C}}(Q,\ca{X})$ algebraizes to a tensor functor 
$$\omega_{\dr,Q^c,K,\bb{C}}(Q,\ca{X}):\rep_\bb{C}(Q^c)\lra \vbd_{\sh_{K,\bb{C}}},$$
and this tensor functor canonically extends to a tensor functor
$$\omega_{\dr,Q^c,K,\bb{C}}^\Sigma(Q,\ca{X}):\rep_{\bb{C}}(Q^c)\lra \vbd_{\sh_{K,\bb{C}}^\Sigma}.$$
The functor $\omega_{\dr,Q^c,K,\bb{C}}(Q,\ca{X})$ (resp. $\omega_{\dr,Q^c,K,\bb{C}}^\Sigma(Q,\ca{X})$) determines a principal (algebraic) $Q^c_\bb{C}$-bundle $\ca{E}_{Q^c,K,\bb{C}}(Q,\ca{X})$ (resp. a canonical extension of principal $Q^c_\bb{C}$-bundle $\ca{E}_{Q^c,K,\bb{C}}^\Sigma(Q,\ca{X})$).\par
\begin{convention}We will replace superscripts such as ``$\Sigma$'' specifying the cone decompositions with just ``$\can$'' if the underlying compactifications are clear in the context. For example, we will just write the algebraic canonical extensions as $(\autoshdr{W}_{K,\bb{C}}^\can,\nabla_\bb{C}^{\can},\fil^\bullet)$ when $\Sigma$ is clear. We will also omit the brackets $(Q,\ca{X})$ in the symbols when it is clear in the context.\end{convention}
\begin{lem}\label{lem-funct-complex-principal-bundle}
For a morphism between mixed Shimura data $f:(Q_1,\ca{X}_1)\to (Q_2,\ca{X}_2)$ and neat open compact $K_1\sbst Q_1(\A)\to K_2\sbst Q_2(\A)$, we have an algebraic isomorphism
$$\ca{E}_{Q_1^c,K_1,\bb{C}}(Q_1,\ca{X}_1)\times^{Q_1^c}Q_2^c\xrightarrow{\sim}f^*\ca{E}_{Q_2^c,K_2,\bb{C}}(Q_2,\ca{X}_2)$$
whose analytification is the one constructed by functoriality.
\end{lem}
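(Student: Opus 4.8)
The plan is to construct the map first over $\bb{C}$ in the analytic category, where both sides are explicit double-coset spaces, check there that it is an isomorphism of torsors, and then descend it to an algebraic isomorphism over $\bb{C}$ using Deligne's full faithfulness theorem for regular-singular connections. Throughout I abusively write $f$ also for the induced morphisms $\sh_{K_1}(\bb{C})\to\sh_{K_2}(\bb{C})$ and $\sh_{K_1,\bb{C}}\to\sh_{K_2,\bb{C}}$.

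First I would treat the analytic side. By Lemma \ref{lem-hc-func}, $f$ induces a homomorphism $f^c:Q_1^c\to Q_2^c$, and in particular $f(\ca{X}_1)\sbst\ca{X}_2$, $f^c(Q_1^c(\bb{C}))\sbst Q_2^c(\bb{C})$ and $f(K_1)\sbst K_2$. Hence $(x,q_c,q_f)\mapsto(f(x),f^c(q_c),f(q_f))$ gives a map
\[
\ca{X}_1\times Q_1^c(\bb{C})\times Q_1(\A)/K_1\lra \ca{X}_2\times Q_2^c(\bb{C})\times Q_2(\A)/K_2
\]
which is $Q_1(\bb{Q})$-equivariant (the target seen as a $Q_1(\bb{Q})$-space through $f$), so it descends to the double quotients, and which is equivariant for the right $Q_1^c(\bb{C})$-action on the source, the target being a $Q_1^c(\bb{C})$-space via $f^c$. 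Thus it factors through the contracted product and lands in the fibre product defining $f^*\ca{E}_{Q_2^c,K_2}(Q_2,\ca{X}_2)(\bb{C})$, yielding a $Q_2^c(\bb{C})$-equivariant map of right $Q_2^c(\bb{C})$-torsors over $\sh_{K_1}(\bb{C})$. A morphism of torsors under a fixed group is automatically an isomorphism, which settles the analytic statement; via the Riemann--Hilbert correspondence it becomes an isomorphism of the associated tensor functors $\rep_\bb{C}(Q_2^c)\to\vbd_{\sh_{K_1}(\bb{C})}$, compatible with the integrable connections (and with the Hodge filtrations, since $\mu_{f(x)}=f^c\circ\mu_x$).

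Next I would algebraize. Recall from \S\ref{subsubsec-can-ext-can-mod} that $\ca{E}_{Q^c,K,\bb{C}}$ is the bundle attached to the algebraization $\omega_{\dr,Q^c,K,\bb{C}}$ of $\omega^{an}_{\dr,Q^c,K,\bb{C}}$, and that the analytic connections on the $\autoshdr{W}^{an}_K$ have (quasi-)unipotent monodromy at the boundary by Lemma \ref{lem-mono-uni}, hence are regular-singular. By \cite{Del70}, on the smooth variety $\sh_{K_1,\bb{C}}$ the analytification functor from algebraic vector bundles with regular-singular integrable connection to their analytic counterparts is fully faithful; moreover algebraic pullback along $f$ preserves regular-singularity and commutes with analytification. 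Applying full faithfulness to the analytic isomorphism of the previous paragraph, evaluated at each $W\in\rep_\bb{C}(Q_2^c)$ and compatibly with connections, I obtain a unique algebraic isomorphism $f^*\autoshdr{W}_{K_2,\bb{C}}\iso\autoshdr{(f^{c,*}W)}_{K_1,\bb{C}}$ in $\vbd_{\sh_{K_1,\bb{C}}}$, functorial and tensor-compatible in $W$; equivalently an isomorphism of tensor functors $\omega_{\dr,Q_1^c,K_1,\bb{C}}\circ(f^c)^*\iso f^*\circ\omega_{\dr,Q_2^c,K_2,\bb{C}}$. This is exactly an algebraic isomorphism $\ca{E}_{Q_1^c,K_1,\bb{C}}(Q_1,\ca{X}_1)\times^{Q_1^c}Q_2^c\iso f^*\ca{E}_{Q_2^c,K_2,\bb{C}}(Q_2,\ca{X}_2)$, whose analytification is the map built above. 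A variant, which will in any case be needed for the integral-model statements in \S\ref{subsec-can-ext-hodge}, is instead to choose compatible smooth projective toroidal compactifications so that $f$ extends to a log morphism, extend all objects by Deligne's canonical extension (using that nilpotent residues pull back to nilpotent residues along a log morphism), and conclude by GAGA on the proper models.

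The only genuinely non-formal ingredient is the analytic-to-algebraic passage: it relies on the regularity of the monodromy (Lemma \ref{lem-mono-uni}) together with Deligne's full faithfulness / GAGA, and, in the compactification variant, on the compatibility of canonical extensions with pullback along log morphisms. Everything in the analytic step — well-definedness of $f^c$ from Lemma \ref{lem-hc-func}, the reduction to a morphism of torsors under a common group, and the compatibility with connections and filtrations — is formal, so I expect the write-up to be short once these citations are pinned down precisely.
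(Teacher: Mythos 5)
Your argument is correct, but your primary route differs from the paper's. The paper's proof immediately chooses compatible admissible smooth projective cone decompositions $\Sigma_1,\Sigma_2$, extends $f$ to $f^{\mrm{tor}}$ between the toroidal compactifications, identifies $\ca{E}^{an,\Sigma_1}_{Q^c_1,K_1,\bb{C}}\times^{Q^c_1}Q^c_2$ with $f^{\mrm{tor},*}\ca{E}^{an,\Sigma_2}_{Q^c_2,K_2,\bb{C}}$ by the uniqueness of canonical extensions together with the Tannakian formalism, and then concludes by GAGA on the proper compactifications followed by restriction to the open Shimura varieties; this is exactly the ``variant'' you sketch at the end. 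Your main route instead stays on the open varieties and invokes Deligne's full faithfulness of analytification for algebraic bundles with regular integrable connection, with regularity supplied by the unipotent-monodromy statement of Lemma \ref{lem-mono-uni} and by stability of regularity under pullback; this is legitimate, since the algebraic bundles $\autoshdr{W}_{K_i,\bb{C}}$ are by construction restrictions of Deligne canonical extensions, so both sides of your comparison carry regular connections, and full faithfulness applied functorially and tensor-compatibly in $W\in\rep_\bb{C}(Q_2^c)$ reassembles, via Tannaka, into the desired torsor isomorphism whose analytification is the explicit double-coset map. What each approach buys: yours avoids any discussion of compatible cone decompositions and is more self-contained on the open locus, while the paper's compactified formulation is the one reused later (e.g., in Theorem \ref{thm-can-ex-Harris-complex}, Proposition \ref{prop-functoriality-generic} and the algebraicity argument of Lemma \ref{lem-u-action-alg}), where the identification of canonical extensions on the boundary, not just of the restrictions to the interior, is what is actually needed. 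Your explicit analytic construction of the functoriality map is also a useful addition, since the paper takes that map for granted when asserting that the algebraic isomorphism analytifies to it.
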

\begin{proof}
Choose compatible admissible smooth projective cone decompositions $\Sigma_1$ and $\Sigma_2$. The corresponding morphism between toroidal compactifications extending $f$ is denoted by $f^{\mrm{tor}}$. By the uniqueness of canonical extensions and the Tannakian formalism, $\ca{E}^{an,\Sigma_1}_{Q^c_1,K_1,\bb{C}}(Q_1,\ca{X}_1)\times^{Q^c_1}Q^c_2\iso f^{\mrm{tor},*}\ca{E}^{an,\Sigma_2}_{Q^c_2,K_2,\bb{C}}(Q_2,\ca{X}_2)$. The desired result follows from GAGA and restriction.
\end{proof}
\subsubsection{}\label{subsubsec-E-action-mix-sh-complex} We now assume that $(Q,\ca{X})$ is a Shimura datum $(G,X)$ in the usual sense. 
Let us pick any $\Phi=(Q_\Phi,X^+_\Phi,g_\Phi)$ that determines a boundary component contained in $\sh_K^\mmin(G,X)\bss \sh_K(G,X)$. This defines an embedding $P_\Phi\hookrightarrow Q_\Phi\hookrightarrow G$ of algebraic groups.\par 
Let $K_\Phi:=P_\Phi(\A)\cap g_\Phi Kg_\Phi^{-1}$. Denote by $\sh_{K_\Phi}:=\sh_{K_\Phi}(P_\Phi,D_\Phi)$ the mixed Shimura variety associated with $\Phi$, whose analytification is 
$$\sh_{K_\Phi}(P_\Phi,D_\Phi)(\bb{C}):=P_\Phi(\bb{Q})\bss D_\Phi\times P_\Phi(\A)/K_\Phi.$$

Since $W\in \rep_F(G^c)$ is lifted to $\rep_F(P_\Phi^c)$ by Lemma \ref{lem-hc-func}, for any $W\in \rep_F(G^c)$ (recall that $F\sbst \bb{C}$ is a subfield), define
$$\autoshbetti{W}_{K_\Phi}^{an}:= P_\Phi(\bb{Q})\bss D_\Phi \times W\times P_\Phi(\A)/K_\Phi\iso \ca{E}_{P_\Phi^c,K_\Phi}(\bb{C})\times^{P_\Phi^c(\bb{C})}W(\bb{C}).$$
The group $P_\Phi(\bb{Q})$ acts on $W$ by viewing $W$ as a representation $\rho: P^c_\Phi\to \mrm{GL}(W)$ in $\rep_F(P^c_\Phi)$. \par
Similarly, let $\autoshdr{W}_{K_\Phi,\bb{C}}^{?}:=\omega^{?}_{\dr,P^c_\Phi,K,\bb{C}}(W)$ for $?=an$ or $\emptyset$. We can associate a filtered integrable connection with nilpotent residues $(\nabla^{an}_\bb{C},\fil^\bullet)$ (resp. $(\nabla_\bb{C},\fil^\bullet)$) with $\autoshdr{W}_{K_\Phi,\bb{C}}^{an}$ (resp. ($\autoshdr{W}_{K_\Phi,\bb{C}}$)) as explained before. \par 
Let $U_\Phi$ be the center of the unipotent radical $W_\Phi$ of $P_\Phi$. Then, by viewing $W(\bb{C})$ as a $U_\Phi(\bb{C})$-representation via restriction, there is a morphism $U_\Phi(\bb{C})\times W(\bb{C})\to W(\bb{C})$ determined by $\rho$. There is also an action of $U_\Phi(\bb{C})$ on $D_\Phi$ by conjugation. Consider the composition of maps 
\begin{equation*}
\begin{tikzcd}
    {(x,w)\in D_\Phi\times W(\bb{C})}\arrow[rr,"{\rho\mapsto \rho(u)\rho\rho(u)^{-1}}"]&&{(x,u\cdot w)\in D_\Phi\times W(\bb{C})}\arrow[r,"\mrm{int}(u)"]& {(u\cdot x,u\cdot w)\in D_\Phi\times W(\bb{C}).}
   \end{tikzcd}
\end{equation*}
This induces an isomorphism $[u]: \ca{E}_{P^c_\Phi,K_\Phi,\bb{C}}^{an}\xrightarrow{\sim} \mrm{int}(u)^* \ca{E}^{an}_{P^c_\Phi,K_\Phi,\bb{C}}$. In other words, there is a map 
$$U_\Phi(\bb{C})\times \ca{E}_{P^c_\Phi,K_\Phi,\bb{C}}^{an}\to \ca{E}^{an}_{P^c_\Phi,K_\Phi,\bb{C}}$$
covering the left action of $U_{\Phi}(\bb{C})$ on $\sh_{K_\Phi}(\bb{C})$.\par
Write $\sh_{K_\Phi}(\bb{C})=P_\Phi(\bb{Q})_+\bss D_\Phi^+\times P_\Phi(\A)/K_\Phi$. For any $p\in P_\Phi(\A)$, the morphism $\sh_{K_\Phi}(\bb{C})\to \overline{\sh}_{K_\Phi}(\bb{C})$ is a torsor under $\Lambda_p\bss U_\Phi(\bb{C})$ for $\Lambda_p$ the projection to $U_\Phi(\bb{Q})$ of $\wdtd{\Lambda}_p:=(Z_G\cdot U_\Phi)^\circ(\bb{Q})\cap pK_\Phi p^{-1} \sbst P_\Phi(\bb{Q})_+\cap pK_\Phi p^{-1}$. If $u\in \wdtd{\Lambda}_p$, we have 
$$[(u\cdot x,u\cdot w,p)]=[(x,w, u^{-1}\cdot p)]=[(x,w,p)]$$
for points on $\ca{E}_{P_\Phi^c,K_\Phi,\bb{C}}^{an}$.\par
In summary, 
\begin{lem}\label{lem-u-action-complex}
There is a commutative diagram of complex manifolds 
\begin{equation}\label{eq-conj-torus-complex}
    \begin{tikzcd}
    \mbf{E}_{K_\Phi}(\bb{C})\times_\bb{C} \ca{E}_{P^c_\Phi,K_\Phi,\bb{C}}^{an}\arrow[r]\arrow[d]&\ca{E}_{P^c_\Phi,K_\Phi,\bb{C}}^{an}\arrow[d]\\
\mbf{E}_{K_\Phi}(\bb{C})\times_\bb{C} \sh_{K_\Phi}(\bb{C})\arrow[r]&\sh_{K_\Phi}(\bb{C})
    \end{tikzcd}
\end{equation}
induced by the action $U_\Phi(\bb{C})\times \ca{E}^{an}_{P^c_\Phi,K_\Phi,\bb{C}}\to \ca{E}^{an}_{P^c_\Phi,K_\Phi,\bb{C}}$ above.
\end{lem}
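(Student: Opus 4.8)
The plan is to take the map $U_\Phi(\bb{C})\times \ca{E}^{an}_{P^c_\Phi,K_\Phi,\bb{C}}\to \ca{E}^{an}_{P^c_\Phi,K_\Phi,\bb{C}}$ constructed just above — which one readily checks defines a left action of $U_\Phi(\bb{C})$ on the bundle lying over the conjugation action of $U_\Phi(\bb{C})$ on $\sh_{K_\Phi}(\bb{C})$, the abelianness of $U_\Phi$ (the centre of $W_\Phi$) making the verification that the isomorphisms $[u]$ compose a direct unwinding of the defining recipe ``conjugate the $W(\bb{C})$-coordinate by $\rho(u)$, then apply $\mrm{int}(u)$ to $D_\Phi$'' — and to show that it descends along the surjection $U_\Phi(\bb{C})\twoheadrightarrow \mbf{E}_{K_\Phi}(\bb{C})$ to the asserted action of the torus $\mbf{E}_{K_\Phi}(\bb{C})$. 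The square \eqref{eq-conj-torus-complex} then falls out, because the descended action automatically covers the descent on the base, which is exactly the $\mbf{E}_{K_\Phi}$-torsor action on $\sh_{K_\Phi}(\bb{C})\to \overline{\sh}_{K_\Phi}(\bb{C})$ recorded in \S\ref{subsec-f-u-f}.

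For the descent I would argue component by component. Fix a connected component of $\sh_{K_\Phi}(\bb{C})$, labelled by $p\in P_\Phi(\A)$, over which the projection to $\overline{\sh}_{K_\Phi}(\bb{C})$ is a torsor under $\Lambda_p\bss U_\Phi(\bb{C})=\mbf{E}_{K_\Phi}(\bb{C})$, where $\Lambda_p$ is the image in $U_\Phi(\bb{Q})$ of $\wdtd{\Lambda}_p=(Z_G\cdot U_\Phi)^\circ(\bb{Q})\cap pK_\Phi p^{-1}$. The key point is that $[u]=\id$ for $u\in \wdtd{\Lambda}_p$: this is the identity $[(u\cdot x,u\cdot w,p)]=[(x,w,u^{-1}p)]=[(x,w,p)]$ already recorded above, valid because $u^{-1}\in P_\Phi(\bb{Q})$ absorbs the left $P_\Phi(\bb{Q})$-translation and $u^{-1}p\in pK_\Phi$ absorbs the right $K_\Phi$-translation, and using Lemma \ref{lem-hc-func} to know that $W$ is inflated along $P_\Phi\to P^c_\Phi$ so that the action of $\wdtd{\Lambda}_p$ on $W(\bb{C})$ appearing in the recipe really is the one through $\rho$. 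Hence on this component the $U_\Phi(\bb{C})$-action factors through $\mbf{E}_{K_\Phi}(\bb{C})$; since $\mbf{E}_{K_\Phi}$ is a single split $\bb{Q}$-torus acting compatibly on all components (\S\ref{subsec-f-u-f}), these factorizations glue to a global action $\mbf{E}_{K_\Phi}(\bb{C})\times \ca{E}^{an}_{P^c_\Phi,K_\Phi,\bb{C}}\to \ca{E}^{an}_{P^c_\Phi,K_\Phi,\bb{C}}$.

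To finish I would simply observe that each $[u]$ covers $\mrm{int}(u)$ on $\sh_{K_\Phi}(\bb{C})$ and that, on each component, the descent of the $U_\Phi(\bb{C})$-action on the base to $\mbf{E}_{K_\Phi}(\bb{C})$ is precisely the torsor translation along the fibres of $\sh_{K_\Phi}(\bb{C})\to \overline{\sh}_{K_\Phi}(\bb{C})$; therefore the descended action on $\ca{E}^{an}_{P^c_\Phi,K_\Phi,\bb{C}}$ covers the $\mbf{E}_{K_\Phi}(\bb{C})$-action on $\sh_{K_\Phi}(\bb{C})$ compatibly with the two vertical projections, which is the commutativity of \eqref{eq-conj-torus-complex}. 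I expect the only genuine obstacle to be bookkeeping rather than conceptual: one must keep the component-wise lattices $\Lambda_p$ and torsor structures organized into the single torus $\mbf{E}_{K_\Phi}$ and check the gluing is consistent, but this is exactly what Pink's construction of the boundary mixed Shimura varieties and their $\mbf{E}_{K_\Phi}$-torsor structure (\cite[Ch. 6]{Pin89}) provides, so nothing beyond a careful invocation is needed.
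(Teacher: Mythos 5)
Your proposal is correct and follows essentially the same route as the paper: the paper states the lemma as a summary of the preceding construction, namely the map $U_\Phi(\bb{C})\times \ca{E}^{an}_{P^c_\Phi,K_\Phi,\bb{C}}\to \ca{E}^{an}_{P^c_\Phi,K_\Phi,\bb{C}}$ covering the $U_\Phi(\bb{C})$-action on the base, together with the identity $[(u\cdot x,u\cdot w,p)]=[(x,w,u^{-1}p)]=[(x,w,p)]$ for $u\in\wdtd{\Lambda}_p$, which is precisely the componentwise descent along $U_\Phi(\bb{C})\twoheadrightarrow \Lambda_p\bss U_\Phi(\bb{C})=\mbf{E}_{K_\Phi}(\bb{C})$ that you carry out. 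Your added bookkeeping (compatibility with the torsor structure from \cite[Ch. 6]{Pin89}) is exactly what the paper implicitly relies on.
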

\begin{lem}\label{lem-u-action-alg}
The commutative diagram (\ref{eq-conj-torus-complex}) algebraizes to a commutative diagram of complex algebraic varieties
\begin{equation}\label{eq-conj-torus-complex-alg}
    \begin{tikzcd}
    \mbf{E}_{K_\Phi,\bb{C}}\times_{\bb{C}} \ca{E}_{P^c_\Phi,K_\Phi,\bb{C}}\arrow[r]\arrow[d]&\ca{E}_{P^c_\Phi,K_\Phi,\bb{C}}\arrow[d]\\
\mbf{E}_{K_\Phi,\bb{C}}\times_{\bb{C}} \sh_{K_\Phi,\bb{C}}\arrow[r]&\sh_{K_\Phi,\bb{C}}.
    \end{tikzcd}
\end{equation}
\end{lem}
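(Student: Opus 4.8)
The plan is to imitate the proof of Lemma~\ref{lem-funct-complex-principal-bundle}: use the Tannakian formalism and the theory of canonical extensions to reduce to a statement about coherent sheaves on a \emph{proper} $\bb{C}$-scheme, and then invoke GAGA. First, the lower arrow of (\ref{eq-conj-torus-complex-alg}) is already algebraic, being the action morphism $a\colon\mbf{E}_{K_\Phi,\bb{C}}\times\sh_{K_\Phi,\bb{C}}\to\sh_{K_\Phi,\bb{C}}$ of the algebraic $\mbf{E}_{K_\Phi,\bb{C}}$-torsor $\sh_{K_\Phi,\bb{C}}\to\overline{\sh}_{K_\Phi,\bb{C}}$ of (\ref{eq-projection}). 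By the Tannakian dictionary, a $P^c_\Phi$-equivariant isomorphism $\mrm{pr}_2^*\ca{E}_{P^c_\Phi,K_\Phi,\bb{C}}\xrightarrow{\sim}a^*\ca{E}_{P^c_\Phi,K_\Phi,\bb{C}}$ covering $a$ is the same datum as a tensor-compatible and functorial family of isomorphisms of vector bundles with integrable connections
$$\phi_W\colon \mrm{pr}_2^*\bigl(\autoshdr{W}_{K_\Phi,\bb{C}},\nabla_\bb{C}\bigr)\xrightarrow{\ \sim\ }a^*\bigl(\autoshdr{W}_{K_\Phi,\bb{C}},\nabla_\bb{C}\bigr),\qquad W\in\rep_\bb{C}(P^c_\Phi),$$
on $\mbf{E}_{K_\Phi,\bb{C}}\times\sh_{K_\Phi,\bb{C}}$. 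Its analytification is furnished by Lemma~\ref{lem-u-action-complex} (that $\phi_W$ is horizontal is seen by noting that it is induced through Riemann--Hilbert from the equivariance of the Betti local systems). Hence it suffices to prove that each $\phi_W$ is algebraic, the compatibility for varying $W$ being then automatic since it holds analytically.

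Next I would spread the picture over a proper base. Trivializing the torsor $\sh_{K_\Phi,\bb{C}}\to\overline{\sh}_{K_\Phi,\bb{C}}$ identifies $\mbf{E}_{K_\Phi,\bb{C}}\times\sh_{K_\Phi,\bb{C}}\iso\sh_{K_\Phi,\bb{C}}\times_{\overline{\sh}_{K_\Phi,\bb{C}}}\sh_{K_\Phi,\bb{C}}$, under which $a$ and $\mrm{pr}_2$ become the two projections. I would then choose compatible admissible smooth projective cone decompositions so that $\mbf{p}_1$ extends to a log smooth morphism $\sh_{K_\Phi}(\Sigma)_{\bb{C}}\to\overline{\sh}_{K_\Phi}(\Sigma)_{\bb{C}}$, a relative torus embedding between toroidal compactifications that are smooth and projective over $\bb{C}$ (see \cite{Pin89}), and set
$$\mbf{P}:=\sh_{K_\Phi}(\Sigma)_{\bb{C}}\times_{\overline{\sh}_{K_\Phi}(\Sigma)_{\bb{C}}}\sh_{K_\Phi}(\Sigma)_{\bb{C}},$$
a smooth projective $\bb{C}$-scheme containing $\mbf{P}^\circ:=\sh_{K_\Phi,\bb{C}}\times_{\overline{\sh}_{K_\Phi,\bb{C}}}\sh_{K_\Phi,\bb{C}}\iso\mbf{E}_{K_\Phi,\bb{C}}\times\sh_{K_\Phi,\bb{C}}$ as a dense open with normal crossings complement. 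The two projections $q_1,q_2\colon\mbf{P}\to\sh_{K_\Phi}(\Sigma)_{\bb{C}}$ are base changes of the relative torus embedding, hence again relative torus embeddings, in particular log smooth; they restrict on $\mbf{P}^\circ$ to $a$ and $\mrm{pr}_2$ respectively.

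Now I would extend $\phi_W$ across $\mbf{P}\smn\mbf{P}^\circ$ and conclude. Let $(\autoshdr{W}^\can_{K_\Phi,\bb{C}},\nabla^\can_\bb{C})$ denote the canonical extension of $(\autoshdr{W}_{K_\Phi,\bb{C}},\nabla_\bb{C})$ to $\sh_{K_\Phi}(\Sigma)_{\bb{C}}$. Since $q_1$ and $q_2$ are log smooth and extend $a$ and $\mrm{pr}_2$, the bundles with log connections $q_1^*\autoshdr{W}^\can_{K_\Phi,\bb{C}}$ and $q_2^*\autoshdr{W}^\can_{K_\Phi,\bb{C}}$ are the canonical extensions to $\mbf{P}$ of $a^*\autoshdr{W}_{K_\Phi,\bb{C}}$ and $\mrm{pr}_2^*\autoshdr{W}_{K_\Phi,\bb{C}}$ on $\mbf{P}^\circ$. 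By the uniqueness and functoriality of canonical extensions (\cite{Del70}), the horizontal isomorphism $\phi_W$ over $\mbf{P}^\circ$ therefore extends uniquely to an isomorphism $\phi_W^\can\colon q_2^*\autoshdr{W}^\can_{K_\Phi,\bb{C}}\xrightarrow{\sim}q_1^*\autoshdr{W}^\can_{K_\Phi,\bb{C}}$ of bundles with log connections on $\mbf{P}$. In particular $\phi_W^\can$ is a global \emph{analytic} section of the algebraic vector bundle $\uhom\bigl(q_2^*\autoshdr{W}^\can_{K_\Phi,\bb{C}},q_1^*\autoshdr{W}^\can_{K_\Phi,\bb{C}}\bigr)$ on the projective $\bb{C}$-scheme $\mbf{P}$; by GAGA it is algebraic, and hence so is its restriction $\phi_W$ to $\mbf{P}^\circ$. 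Letting $W$ vary, $\{\phi_W\}$ is an algebraic isomorphism of tensor functors, so by Tannakian duality it corresponds to an algebraic $P^c_\Phi$-equivariant isomorphism $\mrm{pr}_2^*\ca{E}_{P^c_\Phi,K_\Phi,\bb{C}}\xrightarrow{\sim}a^*\ca{E}_{P^c_\Phi,K_\Phi,\bb{C}}$ covering $a$; this is the upper arrow of (\ref{eq-conj-torus-complex-alg}), and by construction it analytifies to the map of Lemma~\ref{lem-u-action-complex}, producing the asserted commutative diagram.

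The step I expect to be the main obstacle is verifying that $q_1^*\autoshdr{W}^\can_{K_\Phi,\bb{C}}$ and $q_2^*\autoshdr{W}^\can_{K_\Phi,\bb{C}}$ are genuinely two canonical extensions of the \emph{same} bundle-with-connection on $\mbf{P}^\circ$, so that the extension of $\phi_W$ is available. This splits into: (i) arranging the cone decompositions so that $\mbf{P}$ is (log) smooth with $\mbf{P}\smn\mbf{P}^\circ$ a normal crossings divisor, i.e. putting the torus-embedding boundaries of the two factors together with the pullback of the boundary of $\overline{\sh}_{K_\Phi}(\Sigma)_{\bb{C}}$ in normal crossings position; and (ii) checking that pullback along the log smooth $q_i$ preserves canonical extensions, i.e. preserves nilpotence of the residues along the boundary — which one sees by factoring $q_i$ through the log automorphism $(t,x)\mapsto(t,t\cdot x)$ of $\mbf{E}_{K_\Phi}\times\sh_{K_\Phi}(\Sigma)$ (well defined because the relative boundary of $\sh_{K_\Phi}(\Sigma)$ over $\overline{\sh}_{K_\Phi}$ is $\mbf{E}_{K_\Phi}$-invariant) which interchanges $a$ and $\mrm{pr}_2$. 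Everything else — the torsor trivialization, the existence of the compactification data, the Tannakian formalism, and GAGA — is routine and parallels the proof of Lemma~\ref{lem-funct-complex-principal-bundle}.
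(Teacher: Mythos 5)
There is a genuine gap at the central step: the claim that $\phi_W\colon \mrm{pr}_2^*(\autoshdr{W}_{K_\Phi,\bb{C}},\nabla_\bb{C})\to a^*(\autoshdr{W}_{K_\Phi,\bb{C}},\nabla_\bb{C})$ is \emph{horizontal} is false whenever $U_\Phi$ acts nontrivially on $W$ (which is the typical case, e.g.\ the standard representation in the Siegel situation). Under Riemann--Hilbert, a horizontal isomorphism would be an isomorphism of local systems $\mrm{pr}_2^{-1}\autoshbetti{W}^{an}_{K_\Phi}\iso a^{-1}\autoshbetti{W}^{an}_{K_\Phi}$ on $\mbf{E}_{K_\Phi}(\bb{C})\times\sh_{K_\Phi}(\bb{C})$, hence would intertwine monodromy representations. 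But along a loop $\gamma$ in the $\mbf{E}_{K_\Phi}$-factor (with the $\sh_{K_\Phi}$-coordinate fixed at $x$), the source $\mrm{pr}_2^{-1}\autoshbetti{W}$ has trivial monodromy, whereas $a\circ\gamma$ is a loop in the torus fibre of $\mbf{p}_1$ through $x$, along which the monodromy of $\autoshbetti{W}$ is $\rho(\lambda)$ for a nonzero lattice element $\lambda\in\Lambda$ --- a nontrivial unipotent element precisely by the computation in Lemma \ref{lem-mono-uni}. So no horizontal isomorphism exists: the Betti equivariance you invoke holds for each \emph{fixed} $u\in U_\Phi(\bb{C})$ (giving $\autoshbetti{W}\iso\mrm{int}(u)^*\autoshbetti{W}$), but not as a family over $\mbf{E}_{K_\Phi}$. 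Since your mechanism for extending $\phi_W$ across $\mbf{P}\smn\mbf{P}^\circ$ rests entirely on Deligne's uniqueness/functoriality for horizontal maps between canonical extensions, that step has no basis, and the subsequent GAGA argument never gets off the ground. (The secondary issue you flag --- smoothness and normal crossings for the self-fibre product of toroidal compactifications over the compactified base --- could be repaired by resolution or by a finer choice of cone decompositions, but it is moot given the first problem.) The statement itself only asserts an algebraization of the diagram of varieties; it does not, and cannot, assert compatibility with the connections in the $\mbf{E}_{K_\Phi}$-direction.

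The paper's proof avoids connections altogether and stays at the level of torsors: using the torsor structure of $\mbf{p}_1$ in (\ref{eq-projection}), one identifies $\mbf{E}_{K_\Phi,\bb{C}}\times_\bb{C}\sh_{K_\Phi,\bb{C}}\iso\sh_{K_\Phi,\bb{C}}\times_{\overline{\sh}_{K_\Phi,\bb{C}}}\sh_{K_\Phi,\bb{C}}$ via $(e,x)\mapsto(x,e\cdot x)$, so the action becomes the second projection (this gives the bottom arrow, algebraic by \cite[Prop.\ 9.24]{Pin89}); for the top arrow one writes down an explicit algebraic isomorphism of $\mbf{E}_{K_\Phi,\bb{C}}\times_\bb{C}\ca{E}_{P^c_\Phi,K_\Phi,\bb{C}}$ with a contracted product built out of $\ca{E}_{P^c_\Phi,K_\Phi,\bb{C}}\times_{\overline{\sh}_{K_\Phi,\bb{C}}}\ca{E}_{P^c_\Phi,K_\Phi,\bb{C}}$, whose analytification is the obvious one, and then the action map is that isomorphism followed by the projection to the second $\ca{E}_{P^c_\Phi,K_\Phi,\bb{C}}$-factor, which is algebraic by the functoriality statement of Lemma \ref{lem-funct-complex-principal-bundle}. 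If you want to keep a Tannakian flavour, you would have to work with plain vector bundles (no connections) and find a different algebraization device; as written, your argument does not prove the lemma.
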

\begin{proof}We only need to check the algebracity. 
The $\mbf{E}_{K_\Phi}$-torsor structure of $\sh_{K_\Phi}\to\overline{\sh}_{K_\Phi}$ induces a canonical algebraic isomorphism $\mbf{E}_{K_\Phi,\bb{C}}\times_{\bb{C}} \sh_{K_\Phi,\bb{C}}\iso \sh_{K_\Phi,\bb{C}}\times_{\overline{\sh}_{K_\Phi,\bb{C}}}\sh_{K_\Phi,\bb{C}}$ given by $(e,x)\mapsto (x,e\cdot x)$. Then the bottom arrow is given by 
$$\mbf{E}_{K_\Phi,\bb{C}}\times_{\bb{C}} \sh_{K_\Phi,\bb{C}}\iso \sh_{K_\Phi,\bb{C}}\times_{\overline{\sh}_{K_\Phi,\bb{C}}}\sh_{K_\Phi,\bb{C}}\xrightarrow{p_2} \sh_{K_\Phi,\bb{C}},$$
the second morphism above is the projection to the second factor, which is algebraic by \cite[Prop. 9.24]{Pin89} and the page following it.\par
Similarly, $\mbf{E}_{K_\Phi}(\bb{C})\times \ca{E}^{an}_{P^c_\Phi,K_\Phi,\bb{C}}\iso $
$$(P_\Phi(\bb{Q})\times_{\overline{P}_\Phi(\bb{Q})}P_\Phi(\bb{Q}))\bss( D_\Phi\times_{\overline{D}_\Phi}D_\Phi)\times P^c_{\Phi,\bb{C}}\times( P_\Phi(\A)\times_{\overline{P}_\Phi(\A)} P_\Phi(\A))/(K_\Phi\times_{\overline{K}_\Phi}K_\Phi)$$
given by $(e,(x,c,p))\mapsto [((x,\wdtd{e}\cdot x),\wdtd{e}c,(p,p))]$ for any lifting of $e\in \mbf{E}_{K_\Phi}(\bb{C})$ to $\wdtd{e}\in U_\Phi(\bb{C})$. 
This is the analytification of an algebraic isomorphism
\begin{equation*}
 \begin{split}   
&(U_{\Phi,\bb{C}}\times_\bb{C}\mbf{E}_{K_\Phi,\bb{C}}\times_\bb{C}\ca{E}_{P^c_\Phi,K_\Phi,\bb{C}})\times^{U_{\Phi,\bb{C}}\times_\bb{C} \mbf{E}_{K_\Phi,\bb{C}}}\mbf{E}_{K_{\Phi},\bb{C}}\iso \\
&(\ca{E}_{P^c_\Phi,K_\Phi,\bb{C}}\times_{\overline{\sh}_{K_\Phi,\bb{C}}}\ca{E}_{P^c_\Phi,K_\Phi,\bb{C}})\times^{U_{\Phi,\bb{C}}\times_\bb{C} \mbf{E}_{K_\Phi,\bb{C}}}\mbf{E}_{K_\Phi,\bb{C}}.
\end{split}
\end{equation*}
Thus, the top arrow is induced by this algebraic isomorphism composed with the projection of the last displayed expression to the second factor $\ca{E}_{P^c_\Phi,K_\Phi,\bb{C}}$ in the bracket, which is algebraic by Lemma \ref{lem-funct-complex-principal-bundle}.
\end{proof}
Fix a cusp label with cones $\Upsilon=[(\Phi,\sigma)]$. Define 
\begin{equation}\label{eq-ext-W-boundary}\autoshdr{W}_{K_\Phi,\bb{C}}^{?}(\sigma):=\mbf{p}_1(\sigma)^*((\mbf{p}_{1,*}\autoshdr{W}_{K_\Phi,\bb{C}}^{?})^{\mbf{E}_{K_\Phi}}).
\end{equation}
and 
\begin{equation}\label{eq-ext-W-boundary-e}\ca{E}_{P_\Phi^c,K_\Phi,\bb{C}}(\sigma):=\mbf{p}_1(\sigma)^*((\mbf{p}_{1,*}\ca{E}_{P^c_\Phi,K_\Phi,\bb{C}})^{\mbf{E}_{K_\Phi}}).
\end{equation}
In the definition, $\mbf{p}_1$ is the first projection in (\ref{eq-projection}) over $\bb{C}$, $\mbf{p}_1(\sigma)$ is the projection $\sh_{K_\Phi}(\sigma)_\bb{C}\to\overline{\sh}_{K_\Phi,\bb{C}}$, and $?=an$ or $\emptyset$. The $\mbf{E}_{K_\Phi}$-invariance makes sense by (\ref{eq-conj-torus-complex-alg}) above.\par
In addition, note that inclusion $P_\Phi\to G$ induces a map $P^c_\Phi\to G^c$.
\begin{thm}[{Harris, see \cite{Har89}}]\label{thm-can-ex-Harris-complex}Let $\Sigma$ be an admissible projective cone decomposition of $\sh_{K,\bb{C}}:=\sh_K(G,X)_\bb{C}$ (which might not be smooth). Denote by $\sh_{K,\bb{C}}^\Sigma$ the toroidal compactification associated with it.\par
There is a unique extension of $\autoshdr{W}_{K,\bb{C}}$ from a locally free sheaf $\sh_{K,\bb{C}}$ to a locally free sheaf $\autoshdr{W}_{K,\bb{C}}^{\Sigma}$ on $\sh_{K,\bb{C}}^\Sigma$. The sheaf $\autoshdr{W}^\Sigma_{K,\bb{C}}$ is uniquely characterized by the following property:\par 
For any cusp label with cones $\Upsilon=[(\Phi,\sigma)]$, the pullback of the analytification of $\autoshdr{W}_{K,\bb{C}}^\can$ to the open analytic neighborhood $V(\mrm{Z}_{\Upsilon,K})$ of $\mrm{Z}_{\Upsilon,K}(\bb{C})$ is isomorphic to the pullback of the analytification of $\autoshdr{W}_{K_\Phi,\bb{C}}(\sigma)$ to the open analytic neighborhood $V(\Phi,\sigma)$ of $\Delta^\circ_{\Phi,K}\bss \sh_{K_\Phi,\sigma}(\bb{C})$ such that $V(\mrm{Z}_{\Upsilon,K})\iso V(\Phi,\sigma)$ in the analytic construction of toroidal compactification (see \cite[Ch, 6]{Pin89} and \S\ref{subsec-f-u-f} (13)).\par
When $\Sigma$ is smooth, it coincides with the canonical extension defined by \cite{Del70}. In particular, the integrable connection $\nabla_\bb{C}$ of $\autoshdr{W}_{K,\bb{C}}$ uniquely extends to a log connection $\nabla_\bb{C}^\can$ with nilpotent residues along irreducible components of the boundary divisor.\par
Moreover, let $\Sigma'$ be a refinement of $\Sigma$. This refinement determines a proper morphism $\pi_{\Sigma',\Sigma,\bb{C}}: \sh_{K,\bb{C}}^{\Sigma'}\to \sh_{K,\bb{C}}^\Sigma$. Then $\pi^*_{\Sigma',\Sigma,\bb{C}}\autoshdr{W}_{K,\bb{C}}^\Sigma\iso \autoshdr{W}_{K,\bb{C}}^{\Sigma'}$ and $\pi_{\Sigma',\Sigma,\bb{C},*}\autoshdr{W}_{K,\bb{C}}^{\Sigma'}\iso \autoshdr{W}_{K,\bb{C}}^{\Sigma}$.\par
\end{thm}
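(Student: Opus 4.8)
The plan is to reduce everything to the case where $\Sigma$ is smooth and projective. There Deligne's construction \cite{Del70}, already invoked above to produce $(\autoshdr{W}^{\Sigma,an}_{K,\bb{C}},\nabla^{\Sigma,an}_\bb{C},\fil^\bullet)$ and its algebraization $(\autoshdr{W}^{\Sigma}_{K,\bb{C}},\nabla^{\Sigma}_\bb{C})$, supplies an extension together with the log connection with nilpotent residues; and since the Deligne canonical extension of a connection with unipotent local monodromy is the unique locally free extension whose residues are nilpotent, existence, the connection statement, and uniqueness are all immediate in the smooth case. What remains is therefore: (a) to verify that in the smooth case this extension satisfies the stated boundary-chart characterization; (b) to pass to an arbitrary admissible projective $\Sigma$ by descent along a smooth projective refinement, establishing along the way the functoriality under refinements.

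For (a), fix $\Upsilon=[(\Phi,\sigma)]$ and work inside the analytic uniformization of \S\ref{subsec-f-u-f}(13), identifying a neighborhood $V(\mrm{Z}_{\Upsilon,K})$ of $\mrm{Z}_{\Upsilon,K}(\bb{C})$ with a neighborhood $V(\Phi,\sigma)$ of $\Delta^\circ_{\Phi,K}\bss\sh_{K_\Phi,\sigma}(\bb{C})$ in $\Delta^\circ_{\Phi,K}\bss\sh_{K_\Phi}(\sigma)(\bb{C})$. I would argue in three steps. First, unwinding the double-coset descriptions — and using that near the rational boundary component the relevant Hodge structures are governed by $P_\Phi$ — one identifies $\autoshbetti{W}^{an}_K$ restricted to $V(\Phi)$ with the pullback, under the $\Delta^\circ_{\Phi,K}$-quotient, of the local system $\autoshbetti{W}^{an}_{K_\Phi}$ attached to $W$ regarded in $\rep(P^c_\Phi)$ via Lemma \ref{lem-hc-func}. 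Second, since $\Delta^\circ_{\Phi,K}$ acts on $\sh_{K_\Phi}(\sigma)_\bb{C}$ through the free action of a finite group (\S\ref{subsec-f-u-f}(13)), the quotient map is finite {\'e}tale and Deligne's canonical extension commutes with pullback along it; so it suffices to compute the canonical extension of $\autoshdr{W}_{K_\Phi,\bb{C}}$ on the twisted torus embedding $\sh_{K_\Phi}(\sigma)_\bb{C}$, whose monodromy along the toroidal directions is unipotent by Lemma \ref{lem-mono-uni}. Third — the crux — one identifies this canonical extension along $\sh_{K_\Phi}(\sigma)_\bb{C}\to\overline{\sh}_{K_\Phi,\bb{C}}$ with $\mbf{p}_1(\sigma)^*\bigl((\mbf{p}_{1,*}\autoshdr{W}_{K_\Phi,\bb{C}})^{\mbf{E}_{K_\Phi}}\bigr)=\autoshdr{W}_{K_\Phi,\bb{C}}(\sigma)$ of (\ref{eq-ext-W-boundary}): the $\mbf{E}_{K_\Phi}$-invariant pushforward — which makes sense by the explicit algebraic $\mbf{E}_{K_\Phi}$-action of Lemmas \ref{lem-u-action-complex}--\ref{lem-u-action-alg} and (\ref{eq-conj-torus-complex-alg}) — descends $\autoshdr{W}_{K_\Phi,\bb{C}}$ to a bundle with connection on $\overline{\sh}_{K_\Phi,\bb{C}}$, and one checks its pullback along the torus embedding, with its log connection, is Deligne's local model, the residues being identified with the logarithm of the unipotent monodromy. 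Uniqueness then follows because the $V(\mrm{Z}_{\Upsilon,K})$ together with $\sh_{K,\bb{C}}$ form an analytic open cover of $\sh^\Sigma_{K,\bb{C}}$, so two locally free extensions compatibly satisfying the boundary characterization are isomorphic; alternatively, that characterization endows any such extension with a log connection with nilpotent residues extending $\nabla_\bb{C}$, which by \cite{Del70} forces it to be the Deligne canonical extension.

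For (b), choose a smooth projective refinement $\Sigma'$ of $\Sigma$ (which exists by \S\ref{subsec-f-u-f}(10)), yielding the proper morphism $\pi=\pi_{\Sigma',\Sigma,\bb{C}}:\sh^{\Sigma'}_{K,\bb{C}}\to\sh^\Sigma_{K,\bb{C}}$, and set $\autoshdr{W}^\Sigma_{K,\bb{C}}:=\pi_*\autoshdr{W}^{\Sigma'}_{K,\bb{C}}$. The key point is local along the boundary: there $\pi$ is, $\Delta^\circ_{\Phi,K}$-equivariantly, a proper birational morphism of twisted torus embeddings of the $\mbf{E}_{K_\Phi}$-torsor $\sh_{K_\Phi}\to\overline{\sh}_{K_\Phi}$ coming from the subdivision of cones, and by (a) the canonical extension on the finer embedding is locally the pullback, along the toric projection, of the fixed bundle $(\mbf{p}_{1,*}\autoshdr{W}_{K_\Phi,\bb{C}})^{\mbf{E}_{K_\Phi}}$ on $\overline{\sh}_{K_\Phi,\bb{C}}$. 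Since $\sh^\Sigma_{K,\bb{C}}$ is normal with at worst toroidal singularities, $\pi_*\ca{O}_{\sh^{\Sigma'}_{K,\bb{C}}}=\ca{O}_{\sh^\Sigma_{K,\bb{C}}}$, so the projection formula shows that $\pi_*$ of such a pulled-back bundle is the pullback of the same base bundle along the coarser toric projection; this gives that $\autoshdr{W}^\Sigma_{K,\bb{C}}$ is locally free and, compared with (\ref{eq-ext-W-boundary}), satisfies the boundary characterization for $\Sigma$, while the adjunction map yields $\pi^*\autoshdr{W}^\Sigma_{K,\bb{C}}\iso\autoshdr{W}^{\Sigma'}_{K,\bb{C}}$ and therefore $\pi_*\autoshdr{W}^{\Sigma'}_{K,\bb{C}}\iso\autoshdr{W}^\Sigma_{K,\bb{C}}$. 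Independence of the auxiliary $\Sigma'$ comes by comparing two smooth refinements through a common one; the same argument with $\Sigma$ and $\Sigma'$ only admissible gives the general refinement statements $\pi^*_{\Sigma',\Sigma,\bb{C}}\autoshdr{W}^\Sigma_{K,\bb{C}}\iso\autoshdr{W}^{\Sigma'}_{K,\bb{C}}$ and $\pi_{\Sigma',\Sigma,\bb{C},*}\autoshdr{W}^{\Sigma'}_{K,\bb{C}}\iso\autoshdr{W}^\Sigma_{K,\bb{C}}$, and uniqueness for general $\Sigma$ reduces to the smooth case by pullback to $\sh^{\Sigma'}_{K,\bb{C}}$.

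I expect the third step of part (a) to be the main obstacle: matching the Hodge-theoretic Deligne canonical extension along the toroidal directions with the group-theoretic bundle $\autoshdr{W}_{K_\Phi,\bb{C}}(\sigma)$ defined via $\mbf{E}_{K_\Phi}$-invariants. This is exactly where the explicit structure of the boundary mixed Shimura varieties and the precise form of the $\mbf{E}_{K_\Phi}$-action of Lemma \ref{lem-u-action-alg} must be used to pin down the unipotent monodromy, and it does not reduce to formal considerations; by contrast, the input $\pi_*\ca{O}=\ca{O}$ used in part (b) is standard for toroidal (hence rational) singularities.
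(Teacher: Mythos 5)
Your proposal is correct in outline, but it takes a genuinely different route from the paper. The paper's proof is essentially a citation of Harris's \cite{Har89}*{Sec. 4.2--4.3} together with a reduction trick: it observes (via Lemma \ref{lem-cusp-cm}) that the quotient datum $(G^c,X^c)=(G,X)/Z_{ac}(G)$ does satisfy the hypotheses (1.1.3)--(1.1.4) of \emph{loc. cit.}, pulls back the canonical extension along $f^{\mrm{tor}}:\sh^\Sigma_{K,\bb{C}}\to\sh^{\Sigma^c}_{K^c,\bb{C}}$ for compatible cone decompositions, checks that this pullback satisfies the boundary-chart characterization (which also yields the refinement compatibilities), and identifies it with Deligne's extension in the smooth case via nilpotence of residues (Lemma \ref{lem-mono-uni}). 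You instead reconstruct Harris's argument from scratch: build the extension in the smooth case by \cite{Del70}, verify the characterization by explicitly matching Deligne's extension with the $\mbf{E}_{K_\Phi}$-invariant descent-and-pullback bundle $\autoshdr{W}_{K_\Phi,\bb{C}}(\sigma)$ of (\ref{eq-ext-W-boundary}) using the algebraic $\mbf{E}_{K_\Phi}$-action of Lemma \ref{lem-u-action-alg}, and then treat general projective $\Sigma$ by $\pi_{\Sigma',\Sigma,*}$ along a smooth refinement with $\pi_*\ca{O}=\ca{O}$ and the projection formula (the same mechanism the paper uses later in Proposition \ref{prop-functoriality-generic}). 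What the paper's route buys is brevity and the ability to lean on Harris's published computations at the price of the auxiliary reduction to $(G^c,X^c)$ and compatibility of toroidal compactifications under $f^{\mrm{tor}}$; what your route buys is self-containedness and the fact that the refinement statements come out of the construction rather than being checked afterwards. Two cautions: the step you yourself flag as the crux---identifying the residues of the log connection on $\mbf{p}_1(\sigma)^*\bigl((\mbf{p}_{1,*}\autoshdr{W}_{K_\Phi,\bb{C}})^{\mbf{E}_{K_\Phi}}\bigr)$ with logarithms of the unipotent monodromy, i.e. recognizing the $\mbf{E}_{K_\Phi}$-equivariant frames as Deligne's twisted flat frames---is exactly the content supplied by Harris's \S4.2--4.3 and is only sketched in your write-up; and in the non-smooth case your local-freeness argument for $\pi_*$ must be run on the formal/analytic boundary charts of \S\ref{subsec-f-u-f}(13) (where the toric-bundle description is actually available, after the finite free $\Delta^\circ_{\Phi,K}$-quotient), not Zariski-locally, so the bookkeeping with completions/GAGA needs to be made explicit. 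Neither point is a gap in the strategy, but both are where the real work hides.
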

\begin{proof}
    All proofs can be found in \cite[Sec. 4.2-4.3]{Har89}. Note that all these proofs work without the assumptions (1.1.3) and (1.1.4) there. We can also sketch a direct proof as follows. Consider the morphism $f:(G,X)\to (G^c,X^c)=(G,X)/Z_{ac}(G)$. Choose any special pair $(T,\{h\})\hookrightarrow (G,X)$. By Lemma \ref{lem-cusp-cm}, we see that $(G^c,X^c)$ satisfies those assumptions in \emph{loc. cit}. Choose neat open compact subgroups $K\sbst G(\A)$ and $K^c\sbst G^c(\A)$ such that $f(K)\sbst K^c$. Also, choose any pair of compatible admissible cone decompositions $(\Sigma,\Sigma^c)$ for $(G,X,K)$ and $(G^c,X^c,K^c)$, respectively. The pullback of $\autoshdr{W}_{K^c,\bb{C}}^{\Sigma^c}$ under $f^{\mrm{tor}}: \sh_{K,\bb{C}}^\Sigma\to \sh_{K^c,\bb{C}}^{\Sigma^c}$ satisfies the property that uniquely characterizes the canonical extension in the statement. The compatibility in the second paragraph is checked using this characterization. Finally, when $\Sigma$ is smooth, since the log connections associated with $\autoshdr{W}_{K_\Phi,\bb{C}}(\sigma)$ has nilpotent residues by the proof of Lemma \ref{lem-mono-uni}, this construction of canonical extensions coincides with that of \cite{Del70}.
\end{proof}
\begin{cor}
The theorem above also gives a characterization of $\ca{E}_{G^c,K,\bb{C}}^\can(G,X)$:\par
The pullback of the analytification of $\ca{E}_{G^c,K_\Phi,\bb{C}}(\sigma):=\ca{E}_{P^c_\Phi,K_\Phi,\bb{C}}(\sigma)\times^{P^c_\Phi}G^c$ to $V(\Phi,\sigma)$ is isomorphic to the pullback of the analytification of $\ca{E}_{G^c,K,\bb{C}}^{\can}$ to $V(\mrm{Z}_{\Upsilon,K})$.
\end{cor}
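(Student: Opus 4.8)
The plan is to deduce this corollary formally from Theorem~\ref{thm-can-ex-Harris-complex} by passing through the Tannakian dictionary between principal $G^c_\bb{C}$-bundles and exact $\otimes$-functors. Over any complex manifold $S$, giving a principal $G^c_\bb{C}$-bundle is the same as giving an exact $\otimes$-functor $\rep_\bb{C}(G^c)\to\vbd_S$, $W\mapsto\ca{E}\times^{G^c}W$, and giving an isomorphism between two such bundles is the same as giving a $\otimes$-isomorphism of the corresponding fibre functors (one recovers the bundle as the $\isom^\otimes$ of the fibre functor against the standard one); see \cite{Mil90} and \S\ref{subs-can-ex-summary}. So, working over $S=V(\mrm{Z}_{\Upsilon,K})\iso V(\Phi,\sigma)$, it suffices to produce, for each $W\in\rep_\bb{C}(G^c)$, an isomorphism between the pullback of the analytification of $\ca{E}_{G^c,K_\Phi,\bb{C}}(\sigma)\times^{G^c}W$ and the pullback of the analytification of $\ca{E}^\can_{G^c,K,\bb{C}}\times^{G^c}W$, compatibly with composition and with $\otimes$ in $W$.

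The first task is to match these two families of vector bundles with those appearing in Theorem~\ref{thm-can-ex-Harris-complex}. On the one hand, by the very definition of $\ca{E}^\can_{G^c,K,\bb{C}}$ in \S\ref{subsubsec-can-ext-can-mod} one has $\ca{E}^\can_{G^c,K,\bb{C}}\times^{G^c}W\iso\autoshdr{W}^\can_{K,\bb{C}}$, functorially and monoidally in $W$. On the other hand, viewing $W$ as a $P^c_\Phi$-representation along $P^c_\Phi\to G^c$ (Lemma~\ref{lem-hc-func}), one has $\ca{E}_{G^c,K_\Phi,\bb{C}}(\sigma)\times^{G^c}W\iso\ca{E}_{P^c_\Phi,K_\Phi,\bb{C}}(\sigma)\times^{P^c_\Phi}W$, and I claim the right-hand side is canonically $\autoshdr{W}_{K_\Phi,\bb{C}}(\sigma)$. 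Unwinding (\ref{eq-ext-W-boundary}) and (\ref{eq-ext-W-boundary-e}), this reduces to the statement that forming the associated bundle $(-)\times^{P^c_\Phi}W$ commutes with the three operations used there: the pushforward $\mbf{p}_{1,*}$, which is exact since $\mbf{p}_1$ is affine, being a torsor under the split torus $\mbf{E}_{K_\Phi}$; the passage to $\mbf{E}_{K_\Phi}$-invariants, where one must check that the $\mbf{E}_{K_\Phi}$-action on $\ca{E}_{P^c_\Phi,K_\Phi,\bb{C}}$ of Lemma~\ref{lem-u-action-alg} induces, after $(-)\times^{P^c_\Phi}W$, precisely the $\mbf{E}_{K_\Phi}$-action governing $\autoshdr{W}_{K_\Phi,\bb{C}}$; and the pullback $\mbf{p}_1(\sigma)^*$. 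The resulting identification is manifestly functorial and compatible with tensor products in $W$.

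It then remains to invoke Theorem~\ref{thm-can-ex-Harris-complex}, which for each $W$ gives an isomorphism of analytic vector bundles between the pullback of $\autoshdr{W}^\can_{K,\bb{C}}$ to $V(\mrm{Z}_{\Upsilon,K})$ and the pullback of $\autoshdr{W}_{K_\Phi,\bb{C}}(\sigma)$ to $V(\Phi,\sigma)$, compatibly with $V(\mrm{Z}_{\Upsilon,K})\iso V(\Phi,\sigma)$. These isomorphisms can be taken naturally and monoidally in $W$: both $W\mapsto\autoshdr{W}^\can_{K,\bb{C}}|_{V(\mrm{Z}_{\Upsilon,K})}$ and $W\mapsto\autoshdr{W}_{K_\Phi,\bb{C}}(\sigma)|_{V(\Phi,\sigma)}$ are $\otimes$-functors restricting to the \emph{same} $\otimes$-functor $W\mapsto\autoshdr{W}_{K,\bb{C}}$ over the open dense locus $\sh_K(\bb{C})\cap V(\mrm{Z}_{\Upsilon,K})$, and by the uniqueness clause of Theorem~\ref{thm-can-ex-Harris-complex} a coherent sheaf on $V(\mrm{Z}_{\Upsilon,K})$ is determined by its restriction to that locus, so the object-wise isomorphisms are forced to assemble into a $\otimes$-isomorphism of fibre functors. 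Feeding this back through the Tannakian dictionary of the first paragraph yields the asserted isomorphism of $G^c_\bb{C}$-bundles, and the same reasoning shows conversely that any analytic $G^c_\bb{C}$-bundle on $\sh^\Sigma_K(\bb{C})$ extending $\ca{E}^{an}_{G^c,K,\bb{C}}$ with this gluing property must equal $\ca{E}^\can_{G^c,K,\bb{C}}$, so the property does characterize it.

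The only genuinely non-formal step is the compatibility asserted in the second paragraph, namely that $(-)\times^{P^c_\Phi}W$ commutes with $\mbf{p}_{1,*}$, with $(-)^{\mbf{E}_{K_\Phi}}$, and with $\mbf{p}_1(\sigma)^*$, and in particular that the $\mbf{E}_{K_\Phi}$-equivariant structures on the torsor side (Lemma~\ref{lem-u-action-alg}) and on the automorphic bundles correspond under forming associated bundles; once Harris's characterization in Theorem~\ref{thm-can-ex-Harris-complex} is granted, everything else is bookkeeping.
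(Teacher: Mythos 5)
Your proposal is correct and follows essentially the same route as the paper, whose proof is simply to combine Theorem \ref{thm-can-ex-Harris-complex} with the Tannakian formalism; you have merely spelled out the dictionary (identifying $\ca{E}_{P^c_\Phi,K_\Phi,\bb{C}}(\sigma)\times^{P^c_\Phi}W$ with $\autoshdr{W}_{K_\Phi,\bb{C}}(\sigma)$ and assembling the object-wise isomorphisms into a $\otimes$-isomorphism via restriction to the dense open locus) in more detail than the paper does.
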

\begin{proof}
    This follows from Theorem \ref{thm-can-ex-Harris-complex} and the Tannakian formalism.
\end{proof}
Note that the principal bundle $\ca{E}_{G^c,K,\bb{C}}$ and the vector bundle $\autoshdr{W}_{K,\bb{C}}$ constructed above are compatible with pulling back through the transition map $\pi_{K',K,\bb{C}}:\sh_{K',\bb{C}}\to \sh_{K,\bb{C}}$ for any $K'\sbst K$. Let $\ca{E}_{G^c,\bb{C}}:=\varprojlim_K \ca{E}_{G^c,K,\bb{C}}$ be the principal $G^c_\bb{C}$-bundle over $\sh_\bb{C}:=\sh(G,X)_\bb{C}$; there is an equivariant $G(\A)$-action on $\ca{E}_{G^c,\bb{C}}$ and $\autoshdr{W}_{\bb{C}}$ covering the right $G(\A)$-action on $\sh_{\bb{C}}$. \par
We can also define $\autoshdr{W}_{\bb{C}}^\can$ and $\ca{E}_{G^c,\bb{C}}^\can$ by pulling back the corresponding objects to $\sh_{\bb{C}}^\Sigma:=\varprojlim_K \sh_{K,\bb{C}}^\Sigma$ for a fixed $\Sigma$. Note that there might not be $G(\A)$-actions on those pullbacks in general.
\subsubsection{}\label{subsubsec-can-model-can-principal-bundle}
For mixed Shimura data, the definition of canonical models of the principal bundles is determined by the torus case and functoriality. \par
Let $(T,\ca{Y})$ be a pure Shimura datum of a torus $T$. There is a natural morphism $(T,\ca{Y})\to (T^c,\hbar(\ca{Y})^c)$. By Lemma \ref{lem-cusp-cm}, we can apply Milne and Harris-Zucker's theory of canonical models of principal bundles to $(T^c,\hbar(\ca{Y})^c)$. The principal $T^c_\bb{C}$-bundle $\ca{E}_{T^c,\bb{C}}$ over $\sh(T^c,\ca{Y}^c)_\bb{C}$ descends to a 
$T^c(\A)$-equivariant principal $T^c$-bundle $\ca{E}_{T^c}$ over $\sh(T^c,\ca{Y}^c)$, which is called the \emph{canonical model} of $\ca{E}_{T^c,\bb{C}}$; such a $\ca{E}_{T^c}$ is determined by the actions of $\sigma\in \aut (\bb{C}/E(T^c,\hbar(\ca{Y})^c))$ on the canonical point of the period torsor. (See \cite[III.]{Mil90} and \cite[1.2.A]{HZ01}.)
Define the canonical model of the principal bundle $\ca{E}_{T^c}(T,\ca{Y})_\bb{C}$ on $\sh(T,\ca{Y})_\bb{C}$ as the pullback of $\ca{E}_{T^c}$ along $\sh(T,\ca{Y})\to \sh(T^c,\hbar(\ca{Y})^c)$. We denote it by $\ca{E}_{T^c}(T,\ca{Y})$ but we will omit the bracket if the mixed Shimura datum is clear in the context.\par
\begin{prop}[{\cite[Prop. 1.2.4 and 1.2.8]{HZ01}}]\label{prop-can-mod-HZM}
Let $(Q,\ca{X})$ be a mixed Shimura datum. The principal bundle $\ca{E}_{Q^c,\bb{C}}$ on $\sh(Q,\ca{X})_\bb{C}$ admits a \emph{canonical model} $\ca{E}_{Q^c}(Q,\ca{X})$ on $\sh(Q,\ca{X})$, which is equipped with an equivariant $Q(\A)$-action. The canonical model $\ca{E}_{Q^c}(Q,\ca{X})$ is uniquely characterized by the following properties:\par
For any embedding $(T,\ca{Y})\hookrightarrow (Q,\ca{X})$ with $T$ a torus, the morphism $\ca{E}_{T^c,\bb{C}}\to \ca{E}_{Q^c,\bb{C}}$ descends to a $(T^c_{E}\to Q^c_{E})$-equivariant and $(T(\A)\to Q(\A))$-equivariant morphism $\ca{E}_{T^c}(T,\ca{Y})\to \ca{E}_{Q^c}(Q,\ca{X})$, where $E:=E(T,\ca{Y})$ is the reflex field of $(T,\ca{Y})$.\par
Let $f:(Q_1,\ca{X}_1)\to (Q_2,\ca{X}_2)$ be a morphism between mixed Shimura data. For any pair of open compact subgroups $K_1$ and $K_2$ of $Q_1(\A)$ and $Q_2(\A)$, respectively, such that $K_1\sbst K_2$, there is a morphism over $E(Q_1,\ca{X}_1)$,
$f: \ca{E}_{Q^c_1}(Q_1,\ca{X}_1)/K_1\to \ca{E}_{Q^c_2}(Q_2,\ca{X}_2)_{E(Q_1,\ca{X}_1)}/K_2$, 
descent from the morphism over $\bb{C}$ given by the functoriality.
\end{prop}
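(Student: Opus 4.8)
The plan is to follow the strategy of Milne and Harris--Zucker: pin down the sought canonical model by its behaviour at special points, where Lemma~\ref{lem-cusp-cm} and the torus case of \S\ref{subsubsec-can-model-can-principal-bundle} supply the reciprocity law via canonical points of period torsors, and then produce the descent, via Tannakian reconstruction, from the canonical models of the automorphic vector bundles. First I would reduce to the case $Q=Q^c$. Indeed $\ca{E}_{Q^c,\bb{C}}(Q,\ca{X})$ is by construction the pullback of $\ca{E}_{Q^c,\bb{C}}(Q^c,\ca{X}^c)$ along the quotient map $\sh(Q,\ca{X})_\bb{C}\to\sh(Q^c,\ca{X}^c)_\bb{C}$, and this map already descends to the canonical models over the common reflex field $E(Q,\ca{X})=E(Q^c,\ca{X}^c)$; so a canonical model for $(Q^c,\ca{X}^c)$ pulls back to one for $(Q,\ca{X})$, and the characterizing properties transport through embeddings of tori and morphisms of data. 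Together with Lemma~\ref{lem-cusp-cm} this reduction also removes the standing hypotheses of \cite{Mil90} and \cite{HZ01} (e.g.\ $G=G^c$, or $Z(G)^\circ$ splitting over a CM field), so that their results apply verbatim; I assume $Q=Q^c$ henceforth (note that $W_Q$ may still be nontrivial).

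For uniqueness: a principal $Q^c_E$-bundle on $\sh_K(Q,\ca{X})$ descending $\ca{E}_{Q^c,K,\bb{C}}$ is the same as a (continuous) descent datum $\{\phi_\tau\colon\tau^*\ca{E}_{Q^c,K,\bb{C}}\xrightarrow{\sim}\ca{E}_{Q^c,K,\bb{C}}\}_{\tau\in\Aut(\bb{C}/E)}$ satisfying the cocycle condition. Since $\sh_K(Q,\ca{X})_\bb{C}$ is of finite type and the special points --- images of embeddings $(T,\ca{Y})\hookrightarrow(Q,\ca{X})$ with $T$ a $\bb{Q}$-torus, which are Zariski dense, this density being part of how \cite[Ch.~11]{Pin89} characterizes the canonical model of the mixed Shimura variety --- form a dense subset, each $\phi_\tau$ is determined by its restrictions to the fibres over special points. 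But the fibre of $\ca{E}_{Q^c,\bb{C}}$ over the image of $x_0\in\ca{Y}$ contains the canonical point of the period torsor of $\ca{E}_{T^c,\bb{C}}$ pushed along $T^c\hookrightarrow Q^c$, whose $\tau$-conjugate is prescribed by the reciprocity description of $\ca{E}_{T^c}(T,\ca{Y})$; hence property~(i) forces $\phi_\tau$ on that fibre. Thus at most one bundle with the stated properties exists, and the analogous argument pins down the morphism $f$ in the functoriality statement.

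For existence: I would first descend the automorphic vector bundles. For $W\in\rep_F(Q^c)$ with $F\sbst\bb{C}$, the algebraization $(\autoshdr{W}_{K,\bb{C}},\nabla_\bb{C},\fil^\bullet)$ from \S\ref{subsubsec-can-ext-can-mod} carries, by Milne's theory applied through $(Q^c,\ca{X}^c)$ and the torus case, a canonical descent $\autoshdr{W}_K$ over $E(Q,\ca{X})$ compatible with special points, with the filtration and connection, with all tensor constructions, and with morphisms of mixed Shimura data; exactness and faithfulness of the functor $\omega_{\dr,Q^c,K}\colon W\mapsto\autoshdr{W}_K$ descend from $\bb{C}$ by faithfully flat descent. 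Tannakian reconstruction then gives a principal $Q^c_E$-bundle $\ca{E}_{Q^c,K}(Q,\ca{X}):=\isom^\otimes(\omega_{\std},\omega_{\dr,Q^c,K})$ descending $\ca{E}_{Q^c,K,\bb{C}}$. Passing to the limit over $K$ and using functoriality in $K$ equips $\ca{E}_{Q^c}(Q,\ca{X})=\varprojlim_K\ca{E}_{Q^c,K}(Q,\ca{X})$ with an equivariant $Q(\A)$-action, and properties~(i), (ii) hold because they hold for the automorphic vector bundles and, by the uniqueness step, are forced by compatibility at special points.

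The step I expect to be the main obstacle is verifying, in the genuinely mixed case, that the reciprocity laws available on each torus sub-datum by Lemma~\ref{lem-cusp-cm} glue coherently across the unipotent directions of $Q$: concretely, that the descent is compatible with the $U_Q(\bb{C})$-action entering $\ca{X}\to\Hom(\bb{S}_\bb{C},Q_\bb{C})$, and that special points remain dense and defined over abelian extensions of $E(Q,\ca{X})$ after the reductions. These compatibilities are exactly what \cite[\S1.2]{HZ01} establishes, building on \cite[II--III]{Mil90} in the pure case, so in the write-up this reduces to citing their arguments and observing that the reduction to $Q=Q^c$ together with Lemma~\ref{lem-cusp-cm} makes those arguments available without the auxiliary hypotheses under which they were originally stated.
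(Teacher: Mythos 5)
Your proposal is correct and takes essentially the same route as the paper: define $\ca{E}_{Q^c}(Q,\ca{X})$ as the pullback of the Harris--Zucker canonical model for $(Q^c,\ca{X}^c)$ (whose hypotheses are supplied by Lemma \ref{lem-cusp-cm}), verify the characterizing property by reducing torus embeddings to $(Q^c,\ca{X}^c)$ and citing \cite{HZ01}, and deduce uniqueness from density of special points (the paper does this via \cite[Lem. 11.6, 11.7]{Pin89} together with reducedness of $\sh(Q,\ca{X})$ and separatedness of $Q^c$). The only points of difference are minor: your Tannakian re-derivation of the $Q=Q^c$ case is unnecessary (as you concede, it amounts to citing \cite[\S 1.2]{HZ01}), and you gloss over the paper's use of Lemma \ref{lem-tc-emb} to reduce to $\bb{Q}$-maximal tori, where $Z_{ac}(T)\iso Z_{ac}(Q)$ identifies $T^c$ with the image of $T$ in $Q^c$ so that the torus sub-datum of $(Q^c,\ca{X}^c)$ is the correct one.
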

\begin{proof} Set $(Q^c,\ca{X}^c):=(Q,\ca{X})/Z_{ac}(Q)$.
Again, since $(Q^c,\ca{X}^c)$ satisfies the assumptions in \emph{loc. cit.} by Lemma \ref{lem-cusp-cm}, we can define the canonical model of the principal $Q^c_\bb{C}$-bundle on $\sh(Q,\ca{X})_\bb{C}$ to be the pullback to $\sh(Q,\ca{X})$ of the canonical model $\ca{E}_{Q^c}(Q^c,\ca{X}^c)$ of $\ca{E}_{Q^c}(Q^c,\ca{X}^c)_\bb{C}$ defined over $\sh(Q^c,\ca{X}^c)$. We denote this pullback by $\ca{E}_{Q^c}(Q,\ca{X})$. In fact, this pullback satisfies the condition in the proposition because we can verify the condition by applying \cite[Prop. 1.2.4]{HZ01} after assuming that $T$ is $\bb{Q}$-maximal and reducing to the case of $(T^c,\ca{Y}^c)\hookrightarrow (Q^c,\ca{X}^c)$, thanks to Lemma \ref{lem-tc-emb} and Lemma \ref{lem-hc-func}. The uniqueness follows from a standard argument by \cite[Lem. 11.6 and Lem. 11.7]{Pin89}, reducedness of $\sh(Q,\ca{X})$ and separatedness of $Q^c$.
\end{proof}
Define $\ca{E}_{Q^c,K}(Q,\ca{X}):=\ca{E}_{Q^c}(Q,\ca{X})/K$, and call it the canonical model of $\ca{E}_{Q^c,K,\bb{C}}(Q,\ca{X})$.
\subsubsection{}\label{subsubsec-comm-diag-reflex}
We now come back to the case where $(Q,\ca{X})=(G,X)$ is a usual Shimura datum.
\begin{cor}\label{cor-e-action}
The commutative diagram (\ref{eq-conj-torus-complex-alg}) descends to a commutative diagram of algebraic varieties over $E=E(G,X)$
\begin{equation}\label{eq-conj-torus-reflex}
    \begin{tikzcd}
    \mbf{E}_{K_\Phi,E}\times_{\spec E} \ca{E}_{P^c_\Phi,K_\Phi}\arrow[r]\arrow[d]&\ca{E}_{P^c_\Phi,K_\Phi}\arrow[d]\\
\mbf{E}_{K_\Phi,E}\times_{\spec E} \sh_{K_\Phi}\arrow[r]&\sh_{K_\Phi}.
    \end{tikzcd}
\end{equation}
\end{cor}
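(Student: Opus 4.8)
The statement we must prove is Corollary~\ref{cor-e-action}: the commutative diagram \eqref{eq-conj-torus-complex-alg} of complex algebraic varieties descends to a commutative diagram \eqref{eq-conj-torus-reflex} over the reflex field $E = E(G,X)$. The plan is to invoke the functoriality properties of the canonical models established in Proposition~\ref{prop-can-mod-HZM}, applied to the relevant morphisms of mixed Shimura data, and then use a standard descent/uniqueness argument to identify the resulting morphisms over $E$ with the algebraizations of the analytic maps already constructed in Lemma~\ref{lem-u-action-alg}. First I would recall that the bottom arrow of \eqref{eq-conj-torus-complex-alg}, namely the action $\mbf{E}_{K_\Phi,\bb{C}}\times_{\bb{C}}\sh_{K_\Phi,\bb{C}}\to\sh_{K_\Phi,\bb{C}}$, is already defined over $E$: indeed, as in the proof of Lemma~\ref{lem-u-action-alg}, it factors as $\mbf{E}_{K_\Phi,\bb{C}}\times_{\bb{C}}\sh_{K_\Phi,\bb{C}}\iso \sh_{K_\Phi,\bb{C}}\times_{\overline{\sh}_{K_\Phi,\bb{C}}}\sh_{K_\Phi,\bb{C}}\xrightarrow{p_2}\sh_{K_\Phi,\bb{C}}$, and both the $\mbf{E}_{K_\Phi}$-torsor structure of $\sh_{K_\Phi}\to\overline{\sh}_{K_\Phi}$ and the second projection are defined over $E$ by \cite[Prop.~9.24]{Pin89} (this is exactly the mixed Shimura variety analogue of the torsor structure recorded in \S\ref{subsec-f-u-f}~(7)). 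So the bottom square is no issue; the content is in the top arrow acting on $\ca{E}_{P^c_\Phi,K_\Phi}$.

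For the top arrow, the key observation is that in the proof of Lemma~\ref{lem-u-action-alg} the analytic action map was identified, after algebraization, with a composite built entirely out of (i) the $\mbf{E}_{K_\Phi}$-torsor structure on $\sh_{K_\Phi}\to\overline{\sh}_{K_\Phi}$ and the induced identification $\mbf{E}_{K_\Phi,\bb{C}}\times_\bb{C}\ca{E}_{P^c_\Phi,K_\Phi,\bb{C}}\iso \ca{E}_{P^c_\Phi,K_\Phi,\bb{C}}\times_{\overline{\sh}_{K_\Phi,\bb{C}}}\ca{E}_{P^c_\Phi,K_\Phi,\bb{C}}$ (twisted appropriately by $U_{\Phi}$), and (ii) the second projection $p_2$. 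Therefore I would argue: the identification in (i) descends to $E$ because it comes from the canonical model of $\ca{E}_{P^c_\Phi,K_\Phi}$ on the mixed Shimura variety $\sh_{K_\Phi}$ (Proposition~\ref{prop-can-mod-HZM}) together with the already-$E$-rational fibre product description of $\mbf{E}_{K_\Phi,E}\times_E\sh_{K_\Phi}$; and (ii) descends trivially. Concretely, one uses the morphism of mixed Shimura data $(P_\Phi,D_\Phi)\to(\overline{P}_\Phi,\overline{D}_\Phi)$ and the conjugation action of $U_\Phi$ on $(P_\Phi,D_\Phi)$ to produce, via the functoriality clauses of Proposition~\ref{prop-can-mod-HZM}, a morphism $\mbf{E}_{K_\Phi,E}\times_{\spec E}\ca{E}_{P^c_\Phi,K_\Phi}\to \ca{E}_{P^c_\Phi,K_\Phi}$ over $E$, compatible with the $\mbf{E}_{K_\Phi,E}$-action on $\sh_{K_\Phi}$; pulling back to $\bb{C}$ and using GAGA recovers exactly the top arrow of \eqref{eq-conj-torus-complex-alg}. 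The commutativity of \eqref{eq-conj-torus-reflex} then follows from the commutativity of \eqref{eq-conj-torus-complex-alg} together with the faithfulness of base change to $\bb{C}$: a diagram of $E$-schemes commutes iff it does so after the faithfully flat base change $E\to\bb{C}$, since $\sh_{K_\Phi}$ and $\ca{E}_{P^c_\Phi,K_\Phi}$ are separated over $E$ and $\sh_{K_\Phi}$ is reduced, so an $E$-morphism is determined by its restriction to $\bb{C}$ (cf. \cite[Lem.~11.6, Lem.~11.7]{Pin89}, as invoked already for the uniqueness in Proposition~\ref{prop-can-mod-HZM}).

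I would organize the write-up in two short steps. \textbf{Step 1:} descend the identification $\mbf{E}_{K_\Phi,\bb{C}}\times_\bb{C}\ca{E}_{P^c_\Phi,K_\Phi,\bb{C}}\iso \ca{E}_{P^c_\Phi,K_\Phi,\bb{C}}\times_{\overline{\sh}_{K_\Phi,\bb{C}}}\ca{E}_{P^c_\Phi,K_\Phi,\bb{C}}$ (twisted by $U_\Phi$) to one over $E$, using Proposition~\ref{prop-can-mod-HZM} applied to $(P_\Phi,D_\Phi)\to(\overline{P}_\Phi,\overline{D}_\Phi)$; this is the mixed Shimura variety, principal-bundle analogue of the fact that $\sh_{K_\Phi}\to\overline{\sh}_{K_\Phi}$ is an $\mbf{E}_{K_\Phi}$-torsor over $E$. \textbf{Step 2:} compose with the $E$-rational second projection to get the top arrow of \eqref{eq-conj-torus-reflex}, check compatibility with the bottom arrow (already over $E$), and deduce commutativity by faithfully flat base change to $\bb{C}$ plus the separatedness/reducedness uniqueness argument.

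The main obstacle — and the place where one must be slightly careful rather than merely routine — is making precise that the $U_\Phi$-conjugation twisting used in \eqref{eq-conj-torus-complex-alg} is itself defined over $E$. Over $\bb{C}$ this twisting was described via the concrete double-coset presentation of $\ca{E}^{an}_{P^c_\Phi,K_\Phi,\bb{C}}$ and the action $[u]\colon \ca{E}^{an}_{P^c_\Phi,K_\Phi,\bb{C}}\xrightarrow{\sim}\mrm{int}(u)^*\ca{E}^{an}_{P^c_\Phi,K_\Phi,\bb{C}}$; to descend it one should phrase it group-theoretically, as coming from the action of $U_\Phi$ on the pair $(P_\Phi,D_\Phi)$ by conjugation (an automorphism of the mixed Shimura datum), to which the functoriality of canonical models in Proposition~\ref{prop-can-mod-HZM} applies. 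Once this is set up, everything else is formal: one only needs the $E$-rationality of the $\mbf{E}_{K_\Phi}$-torsor structure from \cite{Pin89} and the uniqueness of $E$-models. I expect this to be a short corollary, essentially bookkeeping over the already-established Proposition~\ref{prop-can-mod-HZM} and Lemma~\ref{lem-u-action-alg}.
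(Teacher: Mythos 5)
Your proposal is correct and follows essentially the same route as the paper's (very terse) proof: both descend the fiber-product and contracted-product description from Lemma \ref{lem-u-action-alg} to $E$ by invoking Proposition \ref{prop-can-mod-HZM} applied to $(P_\Phi,D_\Phi)\to(\overline{P}_\Phi,\overline{D}_\Phi)$, together with the $E$-rationality of the $\mbf{E}_{K_\Phi}$-torsor structure and Pink's uniqueness-of-model lemmas. Your flagging of the $U_\Phi$-twisting as the one nonformal point is well placed, though phrasing it directly as "$U_\Phi$-conjugation is an automorphism of the mixed Shimura datum" works cleanly only for $u\in U_\Phi(\bb{Q})$; to get the full torus action one must, as you also indicate via the fiber-product reformulation, descend the twisted contracted-product isomorphism from Lemma \ref{lem-u-action-alg} rather than descending $\mathrm{int}(u)$ pointwise.
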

\begin{proof}
    This is again based on the argument in Lemma \ref{lem-u-action-alg}, \cite[Thm. 11.18]{Pin89} and Proposition \ref{prop-can-mod-HZM}. Note that the reflex field of $\sh_{K_\Phi}$ is equal to $E$ by \cite[Prop. 12.1]{Pin89}.
\end{proof}
\begin{lem}
The bundles $\ca{E}_{P^c_\Phi,K_\Phi,\bb{C}}$ and $\ca{E}_{P^c_\Phi,K_\Phi,\bb{C}}(\sigma)$, together with the isomorphism 
$$\mbf{p}^*_1((\mbf{p}_{1,*}\ca{E}_{P^c_\Phi,K_\Phi,\bb{C}})^{\mbf{E}_{K_\Phi}})\xrightarrow{\sim}\ca{E}_{P^c_\Phi,K_\Phi,\bb{C}},$$
all descend to $E(G,X)$.
\end{lem}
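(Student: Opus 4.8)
The plan is to obtain the $E$-models asserted in the lemma by transporting, through two standard geometric operations, the structure that is already available over $E=E(G,X)$: the canonical model $\ca{E}_{P^c_\Phi,K_\Phi}=\ca{E}_{P^c_\Phi}(P_\Phi,D_\Phi)/K_\Phi$ from Proposition \ref{prop-can-mod-HZM}, which is defined over $E(P_\Phi,D_\Phi)=E$ by \cite[Prop. 12.1]{Pin89}, together with the $\mbf{E}_{K_\Phi}$-action on it, which already descends to $E$ by Corollary \ref{cor-e-action}. So, over $E$, the bundle $\ca{E}_{P^c_\Phi,K_\Phi}$ comes as an $\mbf{E}_{K_\Phi}$-equivariant $P^c_{\Phi,E}$-torsor over the $\mbf{E}_{K_\Phi}$-torsor $\mbf{p}_1\colon\sh_{K_\Phi}\to\overline{\sh}_{K_\Phi}$, and this equivariant datum is exactly what is needed to run the descent below.

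First I would record that $\mbf{p}_1$, being a torsor under the split torus $\mbf{E}_{K_\Phi}\cong\bb{G}_m^r$, is affine (it is the relative $\spec$ of an $\mrm{X}^*(\mbf{E}_{K_\Phi})$-graded $\ca{O}_{\overline{\sh}_{K_\Phi}}$-algebra) and faithfully flat. Descent along $\mbf{p}_1$ --- equivalently, the equivalence between $\mbf{E}_{K_\Phi}$-equivariant $P^c_\Phi$-torsors on $\sh_{K_\Phi}$ and $P^c_\Phi$-torsors on the quotient $\overline{\sh}_{K_\Phi}$, torsor-ness being fppf-local --- then turns the equivariant $E$-torsor above into a $P^c_{\Phi,E}$-torsor on $\overline{\sh}_{K_\Phi}$ equipped with the counit isomorphism from its $\mbf{p}_1$-pullback back to $\ca{E}_{P^c_\Phi,K_\Phi}$; the formulation of this isomorphism over $E$ is licensed precisely by diagram (\ref{eq-conj-torus-reflex}) of Corollary \ref{cor-e-action}. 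Because $\mbf{p}_1$ is affine, the descended torsor is computed on relative coordinate rings by $(\mbf{p}_{1,*}(-))^{\mbf{E}_{K_\Phi}}$, i.e. by the degree-$0$ part for the $\mrm{X}^*(\mbf{E}_{K_\Phi})$-grading; and since $\mbf{p}_{1,*}$ is exact and commutes with the base change $\spec\bb{C}\to\spec E$, and extracting a graded piece commutes with flat base change, the base change to $\bb{C}$ of this $E$-object is $(\mbf{p}_{1,*}\ca{E}_{P^c_\Phi,K_\Phi,\bb{C}})^{\mbf{E}_{K_\Phi}}$ and the base change of the counit map is the displayed isomorphism.

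Next I would note that the twisted affine torus embedding $\mbf{p}_1(\sigma)\colon\sh_{K_\Phi}(\sigma)\to\overline{\sh}_{K_\Phi}$ is functorial in the pair $(\mbf{p}_1,\sigma)$ (see \S\ref{subsec-f-u-f} (11) and \cite[Ch. 6]{Pin89}), hence defined over $E$; therefore $\ca{E}_{P^c_\Phi,K_\Phi}(\sigma):=\mbf{p}_1(\sigma)^*\bigl((\mbf{p}_{1,*}\ca{E}_{P^c_\Phi,K_\Phi})^{\mbf{E}_{K_\Phi}}\bigr)$ is defined over $E$, and, pullback commuting with base change, it recovers $\ca{E}_{P^c_\Phi,K_\Phi,\bb{C}}(\sigma)$ of (\ref{eq-ext-W-boundary-e}) after base change to $\bb{C}$. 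Uniqueness of the $E$-bundles, and of the isomorphism, follows from the uniqueness clause of Proposition \ref{prop-can-mod-HZM}, the uniqueness of fppf descent data, and the faithful flatness of $\spec\bb{C}\to\spec E$.

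The main thing to be careful about is bookkeeping rather than a genuine obstacle: one must keep the interpretation of $\mbf{p}_{1,*}\ca{E}_{P^c_\Phi,K_\Phi}$ consistent --- at the level of the relative coordinate ring of the torsor, or equivalently, via the Tannakian dictionary, as an exact tensor functor $\rep(P^c_\Phi)\to\vbd_{(-)}$ --- and one must check that the operations ``take $\mbf{E}_{K_\Phi}$-invariants'' and ``pull back along $\mbf{p}_1(\sigma)$'' commute with the base change from $E$ to $\bb{C}$; as explained above, this rests only on $\mbf{p}_1$ being affine and $\mbf{E}_{K_\Phi}$ being a split torus, and no Shimura-theoretic input beyond Proposition \ref{prop-can-mod-HZM} and Corollary \ref{cor-e-action} is required.
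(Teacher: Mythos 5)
Your proposal is correct and follows essentially the same route as the paper: the paper's (very terse) proof likewise rests on the canonical model of Proposition \ref{prop-can-mod-HZM} and the $E$-rationality of the $\mbf{E}_{K_\Phi}$-action from Corollary \ref{cor-e-action}, declaring the remaining descent and base-change bookkeeping ``clear.'' Your spelling out of that bookkeeping --- fppf descent along the affine split-torus torsor $\mbf{p}_1$, identification of the descended object with the degree-zero graded piece $(\mbf{p}_{1,*}(-))^{\mbf{E}_{K_\Phi}}$, compatibility with flat base change to $\bb{C}$, and $E$-rationality of the twisted embedding $\mbf{p}_1(\sigma)$ --- is exactly the intended argument.
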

\begin{proof}
    By Proposition \ref{prop-can-mod-HZM} and Corollary \ref{cor-e-action} above, the morphism in the statement exists over $E(G,X)$. The rest of the lemma is clear.
\end{proof}
\begin{thm}[{\cite[Thm. 4.2 (iii)]{Har89}}]\label{thm-harris-can-ex-can-model}Let $(G,X)$ be a Shimura datum. Let $\Sigma$ be an admissible projective cone decomposition for $(G,X,K)$, where $K$ is neat open compact in $G(\A)$. Set $\sh^\Sigma_K:=\sh^\Sigma_K(G,X)$.\par For any $W\in \rep_\bb{Q}(G^c)$,
the canonical extension $\autoshdr{W}_{K,\bb{C}}^\can$ of $\autoshdr{W}_{K,\bb{C}}$ over $\sh^\Sigma(G,X)_\bb{C}$ descends to a locally free sheaf $\autoshdr{W}_K^\can$ over $\sh_K^\Sigma$. When $\Sigma$ is smooth, the log connection $\nabla^\can_\bb{C}$ also descends to a log connection $\nabla^\can$, and the residues of $\nabla^\can$ along irreducible components of the boundary divisor are nilpotent. \par
The principal $G^c$-bundle $\ca{E}^\can_{G^c,K,\bb{C}}$ uniquely descends to a principal $G^c$-bundle $\ca{E}^\can_{G^c,K}$ over $\sh_K^\Sigma$ extending $\ca{E}_{G^c,K}$.
\end{thm}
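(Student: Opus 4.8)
The plan is to reduce the descent of $\autoshdr{W}_K^{\can}$ and $\ca{E}^\can_{G^c,K}$ over $\sh_K^\Sigma$ to the descent already recorded over the open part $\sh_{K}$ (Proposition \ref{prop-can-mod-HZM}) together with the boundary description supplied by Theorem \ref{thm-can-ex-Harris-complex}, using the comparison isomorphism $\cpl{\sh^\Sigma_K}{\mrm{Z}_{\Upsilon,K}}\iso \Delta^\circ_{\Phi,K}\bss\cpl{\sh_{K_\Phi}(\sigma)}{\sh_{K_\Phi,\sigma}}$ from \S\ref{subsec-f-u-f}(13), which is already defined over $E=E(G,X)$. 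First I would treat the vector bundle case. The datum of $\autoshdr{W}_{K,\bb{C}}^\can$ over $\sh^\Sigma_{K,\bb{C}}$ can be glued from (i) the sheaf $\autoshdr{W}_{K,\bb{C}}$ over $\sh_{K,\bb{C}}$, which by Proposition \ref{prop-can-mod-HZM} (applied to the trivial boundary, i.e.\ the automorphic-vector-bundle version which follows from the principal-bundle version by the Tannakian formalism) descends to $\autoshdr{W}_K$ over $\sh_K$; and (ii) for each cusp label with cones $\Upsilon=[(\Phi,\sigma)]$, the boundary model $\autoshdr{W}_{K_\Phi,\bb{C}}(\sigma)$ of \eqref{eq-ext-W-boundary}, which by the Lemma preceding Theorem \ref{thm-harris-can-ex-can-model} (together with Corollary \ref{cor-e-action}) descends to an $E$-vector bundle $\autoshdr{W}_{K_\Phi}(\sigma)$ on $\sh_{K_\Phi}(\sigma)$ carrying the compatible $\Delta^\circ_{\Phi,K}$-action. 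Theorem \ref{thm-can-ex-Harris-complex} says precisely that the pullbacks of these two data to the overlaps $V(\mrm{Z}_{\Upsilon,K})\iso V(\Phi,\sigma)$ agree analytically; since the isomorphisms in \S\ref{subsec-f-u-f}(13) and the objects in (i), (ii) are all algebraic and defined over $E$, GAGA upgrades these to algebraic gluing isomorphisms over $E$, and fpqc (in fact Zariski, by properness and Grothendieck's formal-functions/algebraization) descent of the formal completions along the strata produces a coherent sheaf $\autoshdr{W}_K^\can$ on $\sh_K^\Sigma$ whose base change to $\bb{C}$ is $\autoshdr{W}_{K,\bb{C}}^\can$. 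Local freeness of $\autoshdr{W}_K^\can$, and (for smooth $\Sigma$) the descent of $\nabla^\can_\bb{C}$ to a log connection with nilpotent residues, can be checked after the faithfully flat base change $E\to\bb{C}$, so they follow from Theorem \ref{thm-can-ex-Harris-complex}.

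Next I would pass from vector bundles to the principal bundle. One route is to invoke the Tannakian dictionary: the assignment $W\mapsto \autoshdr{W}_K^\can$ is, by construction over $\bb{C}$ and by the compatibility with tensor products, duals, and the trivial representation recorded in \S\ref{subsubsec-can-ext-can-mod}, an exact faithful $\bb{Q}$-linear tensor functor $\rep_\bb{Q}(G^c)\to \vbd_{\sh_K^\Sigma}$; by Tannakian reconstruction (using that $G^c$ is of finite type over $\bb{Q}$ and $\sh_K^\Sigma$ is a proper $E$-scheme, so the relevant $\Hom$-sheaves are coherent) this tensor functor is the pushforward functor of a unique $G^c$-torsor $\ca{E}^\can_{G^c,K}$ on $\sh_K^\Sigma$, whose base change to $\bb{C}$ recovers $\ca{E}^\can_{G^c,K,\bb{C}}$ by the analogous statement over $\bb{C}$ and faithfully flat descent of the torsor structure. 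Alternatively, and perhaps more directly, one repeats the gluing argument of the previous paragraph with vector bundles replaced by $G^c$-torsors: glue $\ca{E}_{G^c,K}=\ca{E}_{G^c,K}(G,X)$ over $\sh_K$ (Proposition \ref{prop-can-mod-HZM}) to $\ca{E}_{G^c,K_\Phi}(\sigma):=\ca{E}_{P^c_\Phi,K_\Phi}(\sigma)\times^{P^c_\Phi}G^c$ over the $\Delta^\circ_{\Phi,K}$-quotients of the boundary charts, using the Corollary after Theorem \ref{thm-can-ex-Harris-complex} for the analytic matching and GAGA plus the Lemma before Theorem \ref{thm-harris-can-ex-can-model} for the algebraicity over $E$; the remark in the notation section that "vector bundles and torsors coincide on formal completions" legitimizes doing the descent at the level of formal completions along the strata. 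That $\ca{E}^\can_{G^c,K}$ restricts to $\ca{E}_{G^c,K}$ on $\sh_K$ and that it descends $\ca{E}^\can_{G^c,K,\bb{C}}$ are then built into the gluing data.

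The main obstacle I expect is the descent step from $\bb{C}$ to $E$ along the strata: one must know that all the ingredients of Harris's analytic gluing — the boundary bundles $\autoshdr{W}_{K_\Phi,\bb{C}}(\sigma)$ and $\ca{E}_{P^c_\Phi,K_\Phi,\bb{C}}(\sigma)$, the $\Delta^\circ_{\Phi,K}$-actions, the comparison isomorphism of formal completions in \S\ref{subsec-f-u-f}(13), and crucially the gluing isomorphisms themselves — are defined over $E$ and mutually compatible, so that faithfully flat descent actually applies rather than merely producing a $\bb{C}$-object with a descent datum that one must still check is effective and $\gal(\bb{C}/E)$-stable. The descent of the objects is handled by the Lemma preceding Theorem \ref{thm-harris-can-ex-can-model} and by Corollary \ref{cor-e-action}; the descent of the gluing isomorphisms reduces, by GAGA and the fact that $\sh_K^\Sigma$ is proper over $E$, to the statement that an isomorphism of coherent sheaves (resp.\ torsors) on a proper $E$-scheme that exists after base change to $\bb{C}$ already exists over $E$, which holds because the relevant $\Hom$- (resp.\ $\isom$-) spaces are finite-dimensional $E$-vector spaces (resp.\ $E$-schemes of finite type) whose $\bb{C}$-points are nonempty. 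Once this is in place, local freeness, the nilpotency of residues, uniqueness of the extension, and the behaviour under refinement $\pi_{\Sigma',\Sigma}$ all descend formally from the corresponding assertions over $\bb{C}$ in Theorem \ref{thm-can-ex-Harris-complex}.
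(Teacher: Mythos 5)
Your overall plan — construct $\autoshdr{W}_K^\can$ and $\ca{E}^\can_{G^c,K}$ by gluing the $E$-rational object on $\sh_K$ (from Proposition \ref{prop-can-mod-HZM}) to the $E$-rational boundary objects along the formal completions of \S\ref{subsec-f-u-f}(13), using Theorem \ref{thm-can-ex-Harris-complex} and GAGA to produce the gluing isomorphisms over $\bb{C}$ and then descending — is indeed the natural reconstruction of Harris's argument, to which the paper simply defers. However, the step where you justify descent of the gluing isomorphism is not correct as stated.

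For the torsor case you claim the required isomorphism exists over $E$ because ``the relevant $\isom$-spaces are $E$-schemes of finite type whose $\bb{C}$-points are nonempty.'' A finite-type $E$-scheme with a $\bb{C}$-point need not have an $E$-point (a conic without rational points already gives a counterexample); in fact $\isom$ of two $G^c$-torsors is a torsor under an inner form of $G^c$ and so may genuinely lack a section. For the vector bundle case the dimension argument (nonempty open of an affine $E$-space over an infinite field has $E$-points) does produce \emph{some} isomorphism over $E$, but it does not produce the \emph{specific} gluing isomorphism — the one that restricts to the canonical identification on the open dense locus of the formal completion — and an arbitrary isomorphism on the formal chart will not be compatible with the $E$-structure already fixed on $\sh_K$. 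What actually makes the descent go through is the \emph{uniqueness} of the gluing map: on the formal completion $\cpl{\sh^\Sigma_K}{\mrm{Z}_{\Upsilon,K}}$ there is at most one isomorphism between the two candidate extensions that restricts to the given identification on the dense open part sitting over $\sh_K$ (because the schemes involved are reduced and separated and the groups/bundles are affine/separated, so two such morphisms agreeing on a dense open agree). Uniqueness forces the gluing isomorphism to be $\Aut(\bb{C}/E)$-equivariant, which is precisely what one needs — together with the fact that everything is already defined over a finitely generated subfield of $\bb{C}$ containing $E$, so one can spread out and apply Galois descent — to descend the glued bundle from $\bb{C}$ to $E$. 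This uniqueness-based argument also immediately yields the uniqueness clause of the theorem and the behaviour under refinement, whereas the dimension-count route leaves those dangling. The rest of your proposal (GAGA over $\bb{C}$, the Tannakian step to pass from vector bundles to the $G^c$-torsor, local freeness and nilpotent residues being checkable after base change to $\bb{C}$) is fine.
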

\begin{proof}
The proof can be found in \cite[p. 20]{Har89}.
\end{proof}
\begin{prop}\label{prop-functoriality-generic}
Let $f:(G_1,X_1)\to (G_2,X_2)$ be a morphism between Shimura data. Fix $K_1\sbst G_1(\A)$ and $K_2\sbst G_2(\A)$ such that $f(K_1)\sbst K_2$. Fix admissible projective cone decompositions $\Sigma_1$ and $\Sigma_2$ for $(G_1,X_1,K_1)$ and $(G_2,X_2,K_2)$, respectively, such that $\Sigma_1$ and $\Sigma_2$ are compatible (see \cite[Def. 1.18(3)]{Wu25}). Then there is a $(G_1^c\to G_2^c)$-equivariant morphism $f:\ca{E}_{G_1^c,K_1}^\can\to\ca{E}^\can_{G_2^c,K_2}$ over $E(G_1,X_1)$ covering the morphism $f_{\Sigma_1,\Sigma_2}:\sh_{K_1}^{\Sigma_1}(G_1,X_1)\to \sh_{K_2}^{\Sigma_2}(G_2,X_2)$ given by the functoriality between the toroidal compactifications and extending the one given by Proposition \ref{prop-can-mod-HZM}. 
\end{prop}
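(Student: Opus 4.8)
The strategy is to reduce the statement to a descent problem over $\bb{C}$, and then to solve that descent by a rigidity argument for morphisms of vector bundles. Write $E_i:=E(G_i,X_i)$; note $E_2\sbst E_1$, since $\{\mu_{X_2}\}$ is the image under $f$ of $\{\mu_{X_1}\}$. The functorial morphism $f_{\Sigma_1,\Sigma_2}\colon\sh^{\Sigma_1}_{K_1}(G_1,X_1)\to\sh^{\Sigma_2}_{K_2}(G_2,X_2)$ of toroidal compactifications over $E_1$ is already available, and on the open parts Proposition~\ref{prop-can-mod-HZM} (together with Lemma~\ref{lem-hc-func}) gives an $(f^c\colon G^c_1\to G^c_2)$-equivariant morphism $\ca{E}_{G^c_1,K_1}\to\ca{E}_{G^c_2,K_2,E_1}$ over $E_1$, i.e.\ an isomorphism of $G^c_2$-torsors $\ca{E}_{G^c_1,K_1}\times^{G^c_1}G^c_2\iso f^*\ca{E}_{G^c_2,K_2,E_1}$ over $\sh_{K_1}$. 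The content of the proposition is that this extends over $\sh^{\Sigma_1}_{K_1}$.

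First I would reproduce the proof of Lemma~\ref{lem-funct-complex-principal-bundle} at the level of toroidal compactifications: the analytic functorial morphism of principal bundles over $\sh_{K_1}(\bb{C})\to\sh_{K_2}(\bb{C})$ extends, by the uniqueness of the Deligne canonical extensions (Theorem~\ref{thm-can-ex-Harris-complex}) and the Tannakian formalism, to a $(G^c_1\to G^c_2)$-equivariant isomorphism $\ca{E}^{\can}_{G^c_1,K_1,\bb{C}}\times^{G^c_1}G^c_2\iso f_{\Sigma_1,\Sigma_2,\bb{C}}^*\ca{E}^{\can}_{G^c_2,K_2,\bb{C}}$ of analytic $G^c_2$-bundles over $\sh^{\Sigma_1}_{K_1,\bb{C}}$, which is algebraic by GAGA. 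By GAGA and uniqueness of the analytic functorial morphism, its restriction over $\sh_{K_1,\bb{C}}$ is the base change to $\bb{C}$ of the $E_1$-rational isomorphism above. It therefore suffices to show that this $\bb{C}$-isomorphism of canonical extensions of torsors is defined over $E_1$; it will then automatically extend the $E_1$-morphism over $\sh_{K_1}$, and (when $\Sigma_1$ is smooth) be compatible with the log connections, since horizontality of a morphism of vector bundles can be checked on the dense open $\sh_{K_1}$.

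I would carry out the descent Tannakianly. Since the canonical extension of a principal $G^c$-bundle on $\sh^{\Sigma_1}_{K_1}$ is the same datum as an exact tensor functor $\rep(G^c)\to\vbd_{\sh^{\Sigma_1}_{K_1}}$, the desired morphism is the same datum as a tensor-compatible family of isomorphisms $\autoshdr{W}^{\can}_{K_1}\iso f_{\Sigma_1,\Sigma_2}^*\autoshdr{W}^{\can}_{K_2}$ for $W\in\rep_{E_1}(G^c_2)$ regarded as a $G^c_1$-representation via $f^c$. By Theorem~\ref{thm-harris-can-ex-can-model} each side is a vector bundle over $E_1$, and on $\sh_{K_1}$ the isomorphism is the $E_1$-rational one from Proposition~\ref{prop-can-mod-HZM}. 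So everything reduces to the following rigidity statement, which I would isolate as a lemma: let $\ca{A},\ca{B}$ be vector bundles on the normal proper $E_1$-scheme $X:=\sh^{\Sigma_1}_{K_1}$, let $U:=\sh_{K_1}\sbst X$ be the dense open Shimura variety, and let $\psi^0\colon\ca{A}|_U\to\ca{B}|_U$ be an $E_1$-morphism whose base change to $\bb{C}$ extends to a morphism $\ca{A}_\bb{C}\to\ca{B}_\bb{C}$ over $X_\bb{C}$; then $\psi^0$ extends uniquely to an $E_1$-morphism $\ca{A}\to\ca{B}$ on $X$, whose base change to $\bb{C}$ is the given extension. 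For the proof, set $\ca{H}:=\uhom_{\ca{O}_X}(\ca{A},\ca{B})$, a vector bundle; flat base change along $\bb{C}/E_1$ gives $\Gamma(X_\bb{C},\ca{H}_\bb{C})=\Gamma(X,\ca{H})\otimes_{E_1}\bb{C}$ and $\Gamma(U_\bb{C},\ca{H}_\bb{C})=\Gamma(U,\ca{H})\otimes_{E_1}\bb{C}$, while normality of $X$ and density of $U$ make $\Gamma(X,\ca{H})\hookrightarrow\Gamma(U,\ca{H})$ injective because $\ca{H}$ is torsion-free. Writing the given extension as $\sum_i\psi_i\otimes c_i$ with $\psi_i\in\Gamma(X,\ca{H})$ and $\{c_i\}\sbst\bb{C}$ linearly independent over $E_1$, $c_1=1$, restriction to $U_\bb{C}$ and comparison with $\psi^0\otimes 1$ forces $\psi_i|_U=0$ for $i>1$, hence $\psi_i=0$ for $i>1$ by injectivity; thus $\psi_1$ is the desired extension, and it is an isomorphism since it is one after $\otimes\bb{C}$. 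Applying this to every $W$ and using the same injectivity for uniqueness and tensor-compatibility yields the $E_1$-rational morphism of principal bundles extending the one over $\sh_{K_1}$.

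The Tannakian dictionary and the flat base change computations are routine; the only genuinely geometric input, and the point I would expect to need care, is the normality of the toroidal compactification $\sh^{\Sigma_1}_{K_1}$, which is what allows a section of a torsion-free sheaf to be detected on the dense open $\sh_{K_1}$ — this holds by the construction of $\sh^{\Sigma_1}_{K_1}$ for admissible $\Sigma_1$. Once that, the inclusion $E_2\sbst E_1$, and the (automatic) compatibility with connections are granted, the proof is formal.
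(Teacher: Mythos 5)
Your argument has a genuine gap in the generality of the statement: the cone decomposition $\Sigma_1$ is only assumed admissible and projective, not smooth, whereas the step in which you produce the isomorphism over $\bb{C}$ rests on ``the uniqueness of the Deligne canonical extensions.'' Deligne's characterization (unique locally free extension with integrable log connection with nilpotent residues) is only available when the boundary of $\sh^{\Sigma_1}_{K_1,\bb{C}}$ is a normal crossings divisor, i.e.\ when $\Sigma_1$ is smooth; Theorem \ref{thm-can-ex-Harris-complex} identifies the Harris extension with Deligne's only under that hypothesis. For non-smooth $\Sigma_1$ your proposal produces no isomorphism $\ca{E}^{\can}_{G^c_1,K_1,\bb{C}}\times^{G^c_1}G^c_2\iso f^*_{\Sigma_1,\Sigma_2,\bb{C}}\ca{E}^{\can}_{G^c_2,K_2,\bb{C}}$ at all, so your descent lemma has no input. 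One must either verify Harris's boundary-chart characterization directly for the pullback, or argue as the paper does: prove the smooth case first, then for general $\Sigma_1$ choose a smooth projective refinement $\Sigma_1'$, push forward along $\pi_{\Sigma_1',\Sigma_1}$, and use the projection formula together with $\pi_{\Sigma_1',\Sigma_1,*}\ca{O}_{\sh^{\Sigma_1'}_{K_1}}\iso\ca{O}_{\sh^{\Sigma_1}_{K_1}}$ (\cite{KKMS73}) and the compatibility $\pi_{\Sigma_1',\Sigma_1,*}\autoshdr{W}^{\Sigma_1'}_{K_1}\iso\autoshdr{W}^{\Sigma_1}_{K_1}$ recorded in Theorem \ref{thm-can-ex-Harris-complex}. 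A related smaller point, even in the smooth case: to apply Deligne's uniqueness you must check that $f^*_{\Sigma_1,\Sigma_2}\autoshdr{W}^{\can}_{K_2,\bb{C}}$ actually carries an integrable log connection with nilpotent residues along the boundary of $\sh^{\Sigma_1}_{K_1,\bb{C}}$, which is not automatic when $\Sigma_2$ is not smooth and deserves a sentence.

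Apart from this, your smooth-case skeleton coincides with the paper's first step (Deligne uniqueness plus the Tannakian formalism and GAGA, as in Lemma \ref{lem-funct-complex-principal-bundle}), and your route to $E(G_1,X_1)$-rationality differs from the paper's: where the paper simply invokes uniqueness of the canonical extension over the reflex field, you descend via a rigidity lemma for sections of $\uhom(\ca{A},\ca{B})$ on the normal compactification, using flat base change along $\bb{C}/E(G_1,X_1)$ and injectivity of restriction to the dense open Shimura variety. That lemma and its use (including checking the isomorphism property and tensor compatibility after restriction or base change) are correct, and arguably make the descent step more transparent than the paper's; but the proof is incomplete until the non-smooth $\Sigma_1$ case is treated.
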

\begin{proof}Let $W\in \rep(G_2^c)$. Assume that $\Sigma_1$ is smooth and projective.
Applying Theorem \ref{thm-harris-can-ex-can-model} and Proposition \ref{prop-can-mod-HZM}, we see that the pullback $f^*_{\Sigma_1,\Sigma_2}\ca{E}_{G_2^c,K_2}^\can\times^{G_2^c}W$ is uniquely characterized by Deligne's existence theorem and extends $(\ca{E}_{G_1^c,K_1}\times^{G_1^c}G_2^c)\times^{G_2^c}W$. So $f^*_{\Sigma_1,\Sigma_2}\ca{E}_{G_2^c,K_2}^\can\times^{G_2^c}W\iso \ca{E}_{G_1^c,K_1}^\can\times^{G_1^c}W$ over $\bb{C}$, and this isomorphism descend to $E$ again by Deligne's existence theorem.\par
When $\Sigma_1$ is not smooth, choose a smooth (projective) refinement $\Sigma_1'$. We then have $f^*_{\Sigma_1',\Sigma_2}\ca{E}_{G_2^c,K_2}^\can\times^{G_2^c}W\iso \ca{E}_{G_1^c,K_1}^\can\times^{G_1^c}W$ by the paragraph above. We then take $f_{\Sigma_1',\Sigma_1,*}$ on both sides of the isomorphism. By the projection formula, we are reduced to showing $f_{\Sigma_1',\Sigma_1,*}\ca{O}_{\sh^{\Sigma_1'}_{K_1}}\iso \ca{O}_{\sh^{\Sigma_1}_{K_1}}$, and this follows from \cite[Ch. I]{KKMS73}.
\end{proof}
The following result is also recorded for later use.
\begin{prop}\label{prop-functoriality-generic-conti}
Let $(G_1,X_1,K_1,\Sigma_1)$ be as in Proposition \ref{prop-functoriality-generic} above. Let $\mbf{g}\in G_1(\bb{Q})$ and $g\in G_1(\A)$. Let $\Sigma^g$ be the cone decomposition such that $\Sigma_1$ is induced by $\Sigma^g$ via the morphism $[g]:\sh_{K_1}^{\Sigma_1}\to \sh_{gK_1g^{-1}}^{\Sigma^g}$ as in \cite[6.7(a)]{Pin89}. Then, \begin{enumerate}
\item There is a $G^c_1$-equivariant morphism $\ca{E}^\can_{G_1^c,K_1}\to\ca{E}^\can_{G_1^c,\lcj{K_1}{g}}$ extending $\ca{E}_{G_1^c,K_1}\to\ca{E}_{G_1^c,{gK_1g^{-1}}}$ given by Proposition \ref{prop-can-mod-HZM} and covering $[g]$. 
\item There is a $G^c_1\to \mbf{g}G^c\mbf{g}^{-1}$-equivariant morphism defined by $\ca{E}^\can_{G_1^c,K_1}\to \ca{E}^\can_{G_1^c,K_1}\times^{G_1^c}\mbf{g}G_1^c\mbf{g}^{-1}$.
\end{enumerate}
\end{prop}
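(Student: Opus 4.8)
The plan is to prove both parts by transporting the corresponding statements over $\bb{C}$ — where the principal bundles admit the explicit double-coset presentations of \S\ref{subsubsec-E-action-mix-sh-complex} — down to $E(G_1,X_1)$, using the uniqueness of the canonical extension (Theorem \ref{thm-can-ex-Harris-complex}, via the Tannakian formalism) to identify the relevant pulled-back bundles, and the descent results of Proposition \ref{prop-can-mod-HZM} and Theorem \ref{thm-harris-can-ex-can-model} to come down from $\bb{C}$. For part (1) the argument will run in parallel with the proof of Proposition \ref{prop-functoriality-generic}, the structural difference being that here $[g]$ is an isomorphism rather than a finite morphism, so no flatness issues arise.

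For (1), I would first use that by Proposition \ref{prop-can-mod-HZM} the canonical model $\ca{E}_{G_1^c}(G_1,X_1)$ carries an equivariant $G_1(\A)$-action over $E(G_1,X_1)$, so $g$ induces a $G_1^c$-equivariant isomorphism $\ca{E}_{G_1^c,K_1}\xrightarrow{\sim}\ca{E}_{G_1^c,gK_1g^{-1}}$ over $E(G_1,X_1)$ covering $[g]\colon\sh_{K_1}\to\sh_{gK_1g^{-1}}$, which over $\bb{C}$ is the map induced by translating the $G_1(\A)$-coordinate of $G_1(\bb{Q})\bss X\times G_1^c(\bb{C})\times G_1(\A)/K_1$. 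Since $\Sigma_1$ is by hypothesis induced by $\Sigma^g$, the morphism $[g]$ extends to the isomorphism $[g]\colon\sh_{K_1}^{\Sigma_1}\to\sh_{gK_1g^{-1}}^{\Sigma^g}$ of toroidal compactifications (cf.\ \cite[6.7(a)]{Pin89}); then $[g]^*\ca{E}^\can_{G_1^c,gK_1g^{-1}}$ is a principal $G_1^c$-bundle on $\sh_{K_1}^{\Sigma_1}$ restricting to $\ca{E}_{G_1^c,K_1}$, and — this is the step that needs genuine care, and the main obstacle — one must check that on the formal completions along the boundary strata the $[g]$-matching of the boundary charts identifies it with the boundary model of the canonical extension featuring in Theorem \ref{thm-can-ex-Harris-complex}; this is exactly the content of $\Sigma_1$ being induced by $\Sigma^g$ together with the compatibility of the boundary charts with Hecke translation in \cite[Ch. 6]{Pin89}. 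Granting this, Theorem \ref{thm-can-ex-Harris-complex} and the Tannakian formalism give $[g]^*\ca{E}^\can_{G_1^c,gK_1g^{-1}}\iso\ca{E}^\can_{G_1^c,K_1}$ over $\bb{C}$ compatibly with the isomorphism on the open part; since both the extensions (Theorem \ref{thm-harris-can-ex-can-model}) and the open-part morphism (Proposition \ref{prop-can-mod-HZM}) are defined over $E(G_1,X_1)$ and $\sh_{K_1}$ is reduced and dense in $\sh_{K_1}^{\Sigma_1}$, this isomorphism descends to $E(G_1,X_1)$, yielding the desired morphism. When $\Sigma_1$ is not smooth, I would reduce to a smooth projective refinement and push forward exactly as in the last paragraph of the proof of Proposition \ref{prop-functoriality-generic}.

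Part (2) should be essentially formal. By Lemma \ref{lem-hc-func}, conjugation by $\mbf{g}\in G_1(\bb{Q})$ induces an automorphism $\mrm{int}(\mbf{g})^c\colon G_1^c\to\mbf{g}G_1^c\mbf{g}^{-1}$ of the $\bb{Q}$-group $G_1^c$, so the pushout $\ca{E}^\can_{G_1^c,K_1}\times^{G_1^c}\mbf{g}G_1^c\mbf{g}^{-1}$ is again a principal bundle on $\sh_{K_1}^{\Sigma_1}$ over $E(G_1,X_1)$, and the tautological map $e\mapsto[e,1]$ is the required morphism: it covers the identity of $\sh_{K_1}^{\Sigma_1}$ and intertwines the $G_1^c$-action on the source with the $\mbf{g}G_1^c\mbf{g}^{-1}$-action on the target through $\mrm{int}(\mbf{g})^c$. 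I would also note that over $\bb{C}$ this is the morphism induced by left translation by $\mbf{g}$ on the middle factor of the double-coset presentation of \S\ref{subsubsec-E-action-mix-sh-complex}, which makes its compatibility with part (1) and with the earlier constructions transparent.
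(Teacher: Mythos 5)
Your proposal is correct and follows essentially the same route as the paper: the paper's proof of part (1) is literally "the same argument as in Proposition \ref{prop-functoriality-generic}" (uniqueness of the canonical extension over $\bb{C}$ via Deligne's existence theorem, descent to $E(G_1,X_1)$ using Proposition \ref{prop-can-mod-HZM} and Theorem \ref{thm-harris-can-ex-can-model}, and a smooth refinement plus pushforward when $\Sigma_1$ is not smooth), which is exactly what you run, with your boundary-chart verification being a slightly more explicit form of the same uniqueness input. Part (2) is treated in the paper as self-explanatory, and your tautological pushout map $e\mapsto[e,1]$ is precisely that observation.
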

\begin{proof}
The first part follows from exactly the same argument as in Proposition \ref{prop-functoriality-generic}. The second part is self-explanatory.
\end{proof}
\subsection{Induction for torsors}\label{subsec-ind-torsor}
In this subsection, we generalize the construction in \cite{Del79} and \cite{Mil88} to deal with the principal bundles on mixed Shimura varieties. 
Note that our exposition will also use the theorems on canonical models of principal bundles on mixed Shimura varieties as known facts, while these theorems were proved by reducing to the pure Shimura data case. Hence, recording such results is still necessary for later use.\par
\subsubsection{}\label{subsubsec-ind-recall}Fix a base scheme $\ca{O}$. Let $\Gamma$ and $\Gamma'$ be locally profinite groups. Suppose that $\Gamma'$ acts on a profinite set $\wat{\pi}$ continuously and transitively. Suppose that there is a continuous homomorphism $f: \Gamma\to \Gamma'$ such that $\Gamma$ is surjectively mapped to $\Gamma'_e$, the stabilizer in $\Gamma'$ of some element $e\in \wat{\pi}$, with a compact kernel.\par 
Consider the inverse system $\{S_{\ca{H}}\}_{\ca{H}}$ consisting of quasi-projective schemes ${S}_{\ca{H}}$ over $\ca{O}$ indexed by a cofinal collection of open compact subgroups $\ca{H}$ in $\Gamma$. Suppose that there is a continuous right $\Gamma$-action on $S:=\varprojlim_{\ca{H}}S_{\ca{H}}$. See \cite[2.7.1]{Del79}.\par
Then, by \cite[2.7.2]{Del79}, 
\begin{lem}[{\cite[Lem. 2.7.3]{Del79}}]\label{lem-deligne-ind}
If we fix $f:\Gamma\to\Gamma'$, $\Gamma'\to \mrm{Aut}\wat{\pi}$ and $e\in \wat{\pi}$ as above, then there is a functor $\mrm{Ind}_{\Gamma}^{\Gamma'}$ from the category of schemes $S$ as above to the category of $(S',c)$ consisting of $\ca{O}$-schemes $S'=\varprojlim_{K}S'_K$ with $\{S'_K\}_K$ an inverse system of quasi-projective $\ca{O}$-schemes indexed by open compact subgroups of $\Gamma'$, and continuous morphisms $c:S'\to \wat{\pi}$ equipped with continuous equivariant $\Gamma'$-actions. The functor is defined as 
$$S\mapsto \mrm{Ind}_\Gamma^{\Gamma'}S:=(S\times \Gamma')/\Gamma,$$
called an induction functor.

Moreover, when the kernel of $f:\Gamma\to \Gamma'_e\hookrightarrow\Gamma'$ is trivial, this is an equivalence of categories.
\end{lem}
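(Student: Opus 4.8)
The plan is to imitate the proof of Deligne's Lemma 2.7.2–2.7.3 in \cite{Del79}, which handles exactly the same statement without the choice of $\wat\pi$ (i.e.\ in the case $\Gamma\hookrightarrow\Gamma'$ open), and adapt it to the present setting where $f\colon\Gamma\to\Gamma'$ has compact (but possibly nontrivial) kernel and lands in the stabilizer $\Gamma'_e$ of a point $e$ of a transitive profinite $\Gamma'$-set $\wat\pi$. The first step is to make sense of $S'=(S\times\Gamma')/\Gamma$ at finite level. Given an open compact $K'\subset\Gamma'$, I would set $S'_{K'}:=(S\times\Gamma'/K')/\Gamma$, where $\Gamma$ acts on $S$ on the right through its given action and on $\Gamma'/K'$ on the left through $f$; since $S\times\Gamma'/K'$ is a disjoint union over the finite set $\Gamma'/K'$ of copies of $S$, and the $\Gamma$-action permutes these copies with open compact (hence, after passing far enough down the system $\{S_{\ca H}\}$, free) stabilizers, each $S'_{K'}$ is realized as a \emph{finite} disjoint union of schemes of the form $S_{\ca H}$ for suitable open compact $\ca H\subset\Gamma$. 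This uses that $\Gamma\to\Gamma'_e$ is surjective with compact kernel: the stabilizer in $\Gamma$ of a coset $g'K'$ is $f^{-1}(\Gamma'_e\cap g'K'g'^{-1})$, which is open compact in $\Gamma$, so its image in the transition system is handled by some finite-level $S_{\ca H}$. Hence each $S'_{K'}$ is quasi-projective over $\ca O$, the transition maps for $K'\supset K''$ are the evident finite maps, and $S'=\varprojlim_{K'}S'_{K'}$ carries a continuous right $\Gamma'$-action by right translation on the $\Gamma'$-factor. The map $c\colon S'\to\wat\pi$ is induced by $(s,g')\mapsto g'^{-1}e$ (or $g'e$, depending on the handedness convention fixed in \S\ref{subsubsec-ind-recall}); it is $\Gamma$-invariant because $f(\Gamma)\subset\Gamma'_e$, continuous, and $\Gamma'$-equivariant by construction.

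Next I would check functoriality: a $\Gamma$-equivariant morphism $S\to T$ of such inverse systems induces $(S\times\Gamma')/\Gamma\to(T\times\Gamma')/\Gamma$ compatibly with the $c$'s and the $\Gamma'$-actions, and this is visibly compatible with composition and identities, so $\mrm{Ind}_\Gamma^{\Gamma'}$ is a functor to the category of pairs $(S',c)$ described in the statement.

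For the final assertion, when $\ker f$ is trivial—so that $f$ identifies $\Gamma$ with the \emph{open} subgroup $\Gamma'_e$ of $\Gamma'$—I would exhibit a quasi-inverse. Given $(S',c)$ with $S'=\varprojlim_{K'}S'_{K'}$ and equivariant $c\colon S'\to\wat\pi$, put $S:=c^{-1}(e)$, with its residual right $\Gamma'_e=\Gamma$-action; at finite level $S_{\ca H}$ is the preimage of the (open) point-orbit data under $S'_{K'}\to\wat\pi/(\text{suitable level})$ for $K'$ with $\Gamma'_e\cap K'\subset\ca H$ — concretely $S_{\ca H}$ is a clopen subscheme of some $S'_{K'}$, hence quasi-projective. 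The natural transformations $S\cong c^{-1}(e)$ applied to $\mrm{Ind}_\Gamma^{\Gamma'}S$ recovers $(S\times\Gamma')/\Gamma$ restricted over $e$, which is $(\{g'\in\Gamma':g'^{-1}e=e\}\text{-part})\cong S$; and conversely $\mrm{Ind}_\Gamma^{\Gamma'}(c^{-1}(e))\to S'$, $(s,g')\mapsto g'\cdot s$, is an isomorphism precisely because $\Gamma'$ acts transitively on $\wat\pi$ with stabilizer $\Gamma'_e=f(\Gamma)$, so every point of $S'$ lies in the $\Gamma'$-orbit of a point over $e$, uniquely up to $\Gamma$. Both are checked at finite level, where they are isomorphisms of quasi-projective $\ca O$-schemes, and are manifestly equivariant and compatible with $c$.

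The only real subtlety — and the step I expect to cost the most care — is the finite-level bookkeeping: verifying that $(S\times\Gamma'/K')/\Gamma$ is genuinely a \emph{scheme} (a finite disjoint union of $S_{\ca H}$'s), quasi-projective over $\ca O$, with the correct transition maps and a continuous $\Gamma'$-action, and that all of this is independent of the auxiliary choices and compatible down the tower. This is exactly the point where the hypotheses ``$\Gamma\twoheadrightarrow\Gamma'_e$ with compact kernel'' and ``$\{S_{\ca H}\}$ cofinal with continuous $\Gamma$-action'' are used, and it is the content of \cite[2.7.2]{Del79} in the case $\ker f=1$; here one just has to carry the compact kernel along harmlessly (it acts trivially on $\wat\pi$ and, after descending far enough in the tower, its image is already absorbed). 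I would organize this as: (i) reduce to a single $K'$; (ii) decompose $S\times\Gamma'/K'$ into $\Gamma$-orbits of connected-components-worth of copies of $S$; (iii) identify the quotient of each orbit with an $S_{\ca H}$; (iv) reassemble and check the transition and equivariance data. Once this is in place, functoriality and the quasi-inverse construction are formal.
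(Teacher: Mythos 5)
Your proposal matches the paper's (very terse) proof: both construct $S'_{K'}$ as a finite disjoint union coming from the $\Gamma$-orbits in $\Gamma'/K'$, use that $\ker f$ is compact — so the stabilizers $f^{-1}(g'K'g'^{-1}\cap\Gamma'_e)$ are open compact in $\Gamma$ — and, for the equivalence when $\ker f=1$, take the fiber over $e$ as the quasi-inverse, exactly as the paper's $S'\mapsto S'_e:=S'\times_{c,\wat\pi}e$.

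One small imprecision worth fixing: your parenthetical ``hence, after passing far enough down the system $\{S_{\ca{H}}\}$, free'' overstates what the hypotheses give you. The lemma does not assume that $\Gamma$ (nor $\ker f$) acts freely on $S$, so the finite quotients $\ca{H}'/\ca{H}$ of the stabilizer need not act freely on $S_{\ca{H}}$. Consequently $[S\times(\Gamma'/K')]/\Gamma$ is a finite disjoint union of \emph{finite quotients} of $S_{\ca{H}}$'s — which is how the paper phrases it — not literally a disjoint union of schemes of the form $S_{\ca{H}}$. This is harmless for the conclusion, since a finite group quotient of a quasi-projective $\ca{O}$-scheme is again quasi-projective, so the finite-level schemes $S'_{K'}$ exist and are quasi-projective regardless of freeness; but the freeness you invoke is only introduced in the paper as a hypothesis of the torsor version, Lemma \ref{lem-deligne-ind-torsor}, not here.
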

\begin{proof}
The functor in the first paragraph makes sense because the kernel of $f$ is compact, and therefore, for each open compact $K\sbst \Gamma'$, $[S\times (\Gamma'/K)]/\Gamma$ is a finite disjoint union of a finite quotient of $S_{\ca{H}}$ for sufficiently small $\ca{H}$ (see \cite[4.1.2]{Wu25} for a computation). Moreover, if $f$ is injective, the inverse is given by $S'\mapsto S'_e:=S'\times_{c,\wat{\pi}}e$. 
\end{proof}
\subsubsection{}
Let $G$ and $G'$ be linear algebraic groups over $\ca{O}$ with a homomorphism $g:G\to G'$. Fix $f:\Gamma\to\Gamma'$, $\Gamma'\to \mrm{Aut}\wat{\pi}$ and $e\in \wat{\pi}$ as above.\par
We fix an extension $\Gamma^G$ of $\Gamma$ by $G$, that is, an extension $1\to G\to \Gamma^G\to \Gamma\to 1$ viewed as group functors over $\ca{O}$.
Assume further that there is an extension $\Gamma^{G'}$ of $\Gamma'$ by $G'$ that fits into the following commutative diagram
\begin{equation*}
  \begin{tikzcd}
  1\arrow[r]& G\arrow[r]\arrow[d,"g"]&\Gamma^G\arrow[r,"\pi_G"]\arrow[d,"\wdtd{g}"]&\Gamma\arrow[r]\arrow[d,"f"]&1\\
  1\arrow[r]& G'\arrow[r]&\Gamma^{G'}\arrow[r,"\pi_{G'}"]& \Gamma'\arrow[r]& 1.
  \end{tikzcd}  
\end{equation*}
For any subgroup $H\sbst \Gamma$ (resp. $H'\sbst \Gamma'$), denote by $\Gamma^G_H$ (resp. $\Gamma^{G'}_{H'}$) the pullback of $\Gamma^G$ (resp. $\Gamma^{G'}$) along $H\to \Gamma$ (resp. $H'\to \Gamma'$).\par  
Consider the category of $G$-torsors $\ca{E}\to S$ with continuous equivariant $(\Gamma^G\to \Gamma)$-actions:\par
The objects consist of inverse systems in the form of $\{\ca{E}_{\ca{H}}\to \ca{S}_{\ca{H}}\}_{\ca{H}}$ such that $\{S_{\ca{H}}\}_{\ca{H}}$ is equipped with a continuous right $\Gamma$-action. Furthermore, we require that 
\begin{enumerate}
\item Each $\ca{E}_{\ca{H}}$ is a $G$-torsor over $S_{\ca{H}}$, and the morphism $\ca{E}_{\ca{H}}\to S_{\ca{H}}$ is $(\Gamma^G\xrightarrow{\pi_G}\Gamma)$-equivariant. 
\item The restriction to $G$ of the action of $\Gamma^G_{\ca{H}}$ on $\ca{E}_{\ca{H}}$ induces the $G$-action on $\ca{E}_{\ca{H}}$ given by the torsor structure. 
\item For any normal open compact subgroup $\ca{H}'\sbst \ca{H}$, there is an isomorphism between $G$-torsors $\ca{E}_{\ca{H}'}/(\Gamma^G_{\ca{H}}/\Gamma^G_{\ca{H}'})\xrightarrow{\sim} \ca{E}_{\ca{H}}$ compatible with the isomorphism $S_{\ca{H}'}/(\ca{H}/\ca{H}')\xrightarrow{\sim}S_{\ca{H}}$ and determines the transition map $\ca{E}_{\ca{H}'}\to \ca{E}_{\ca{H}}$ in the inverse system.
\end{enumerate}
Write $\ca{E}:=\varprojlim_{\ca{H}}\ca{E}_{\ca{H}}$.
Define an induction functor
$$\mrm{Ind}_{\Gamma^G}^{\Gamma^{G'}}\ca{E}:=\ca{E}\times \Gamma^{G'}/\Gamma^G.$$
\begin{lem}\label{lem-deligne-ind-torsor}
If $\ker(\Gamma^{G}\times^{G}G'\to \Gamma^{G'})$ acts on $\varprojlim_\ca{H}\ca{E}_{\ca{H}}\to \varprojlim_{\ca{H}} S_{\ca{H}}$ freely and equivariantly, then the quotient $\ca{E}\times \Gamma^{G'}/\Gamma^G$ exists as a $G'$-torsor $\ca{E}'$ over $S'=\mrm{Ind}_{\Gamma}^{\Gamma'}S$ with continuous equivariant $(\Gamma^{G'}\to \Gamma')$-action.\par
If we further assume that $f$ is injective and $G=G'$, the functor $\mrm{Ind}_{\Gamma^G}^{\Gamma^{G'}}$ is an equivalence from the category of $G$-torsors $\ca{E}\to S$ with continuous equivariant $(\Gamma^G\to \Gamma)$-actions to the category of $(\ca{E}'\to S',c)$ consisting of $G$-torsors $\ca{E}'\to S'$ with continuous equivariant $(\Gamma^{G'}\to \Gamma')$-actions, and morphisms $c:S'\to \wat{\pi}$ equipped with continuous equivariant $\Gamma'$-actions.
\end{lem}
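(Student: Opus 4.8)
The plan is to transport Deligne's induction of Lemma~\ref{lem-deligne-ind} along the torsor data and to locate exactly where the freeness hypothesis is used. I would begin with the base: Lemma~\ref{lem-deligne-ind} gives $S':=\mrm{Ind}_\Gamma^{\Gamma'}S=(S\times\Gamma')/\Gamma$ as an inverse system of quasi-projective $\ca{O}$-schemes with a continuous equivariant $\Gamma'$-action and a $\Gamma'$-equivariant map $c\colon S'\to\wat\pi$; from the proof of that lemma (and \cite[4.1.2]{Wu25}) I keep the finite-level description of $S'_K$ as a finite disjoint union of quotients $S_{\ca{H}}/F$ with $\ca{H}\sbst\Gamma$ a small open compact subgroup and $F$ finite, and in particular the identification $S'\times_{c,\wat\pi}e\iso S/\ker f$. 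Next I would set $\ca{E}':=(\ca{E}\times\Gamma^{G'})/\Gamma^G$, with $\Gamma^G$ acting by the given action on $\ca{E}$ and by left translation through $\wdtd g$ on $\Gamma^{G'}$, and with the right translations of $G'\sbst\Gamma^{G'}$ and of $\Gamma'$ descending to $\ca{E}'$. Using that torsors under affine group schemes are affine morphisms, each finite-level term of $\ca{E}'$ is again quasi-projective — a finite disjoint union of quotients of the $G$-torsors $\ca{E}_{\ca{H}}$ by the relevant finite-group extensions — and $\ca{E}'$ comes with a $G'$-action, a continuous equivariant $(\Gamma^{G'}\to\Gamma')$-action, and a compatible morphism $\ca{E}'\to S'$ over $\pi_{G'}$ and $c$. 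The only nontrivial point is then that $\ca{E}'\to S'$ is a $G'$-torsor.

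For this I would check torsor-ness fibrewise over $\wat\pi$: since the whole diagram is $\Gamma'$-equivariant and $\Gamma'$ acts transitively on $\wat\pi$, it suffices to treat the fibre over $e$, i.e.\ to show $\ca{E}'_e:=\ca{E}'\times_{c,\wat\pi}e\to S'_e:=S'\times_{c,\wat\pi}e$ is a $G'$-torsor. Unwinding the definitions — and using, on index sets, that $f(\Gamma)=\Gamma'_e$ acts transitively with stabiliser $\ker f$, as for the base — I would identify $S'_e\iso S/\ker f$ and $\ca{E}'_e\iso(\ca{E}\times^G G')/N$, where $\ca{E}\times^G G'$ is the pushforward $G'$-torsor over $S$ and $N:=\ker(\Gamma^G\times^G G'\to\Gamma^{G'})$ acts through its inclusion in $\Gamma^G\times^G G'$. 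A short diagram chase with the extensions $1\to G\to\Gamma^G\to\Gamma\to1$ and $1\to G'\to\Gamma^{G'}\to\Gamma'\to1$ and their relation through $g$ shows that $G'$ and $N$ are both normal in $\Gamma^G\times^G G'$, that $N\cap G'=1$, and that the projection to $\Gamma$ identifies $N\xrightarrow{\sim}\ker f$; so the $N$-action on $S=(\ca{E}\times^G G')/G'$ is exactly the $\ker f$-action appearing in the base.

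Now the hypothesis enters: the freeness of the $N$-action (equivalently, of the $\ker f$-action on $S$, hence on $\ca{E}\times^G G'$) forces the subgroup $G'N\sbst\Gamma^G\times^G G'$ to act freely on $\ca{E}\times^G G'$ — an element of $G'N$ fixing a point must have trivial $N$-component (its image in $\Gamma$ would fix a point of $S$, hence be trivial by freeness, hence the component is trivial since $N\xrightarrow{\sim}\ker f$), and then trivial $G'$-component by the torsor structure. Since $G'$ and $N$ are normal in $\Gamma^G\times^G G'$ with trivial intersection, $G'N\iso G'\times N$, and $\ca{E}\times^G G'\to(\ca{E}\times^G G')/(G'N)=S/\ker f=S'_e$ is a $(G'N)$-torsor (the quotients exist as schemes by quasi-projectivity together with freeness, exactly as in \cite[4.1.2]{Wu25}); dividing by $N$ exhibits $\ca{E}'_e=(\ca{E}\times^G G')/N\to S'_e$ as a torsor under $G'N/N\iso G'$. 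By the fibrewise reduction, $\ca{E}'\to S'$ is a $G'$-torsor with its equivariant $(\Gamma^{G'}\to\Gamma')$-action and map $c$, proving the first assertion. For the last assertion, when $f$ is injective and $G=G'$ one has $\ker f=1=N$ (so the hypothesis is automatic) and $\Gamma^G\hookrightarrow\Gamma^{G'}$ is injective with image $\pi_{G'}^{-1}(\Gamma'_e)$; I would take the candidate quasi-inverse $(\ca{E}'\to S',c)\mapsto(\ca{E}'\times_{c,\wat\pi}e\to S'\times_{c,\wat\pi}e)$, with the $(\Gamma^G\to\Gamma)$-action transported along $\Gamma^G\iso\pi_{G'}^{-1}(\Gamma'_e)$, and check that the two composites are naturally the identity: on the underlying schemes with group actions this is Lemma~\ref{lem-deligne-ind}, and the $G$-torsor structures and equivariant actions match under these identifications by the construction of $\mrm{Ind}_{\Gamma^G}^{\Gamma^{G'}}$.

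I expect the main obstacle to be bookkeeping rather than conceptual: keeping track of the several commuting left and right actions through the quotients, making the ``check fibrewise over $\wat\pi$'' step precise by descending to the finite levels where $\wat\pi/K$ is finite and $S'_K$, $\ca{E}'_K$ decompose accordingly, and invoking existence of quotients of quasi-projective schemes by free actions of (extensions by) affine algebraic groups. The one genuinely substantive point — and the reason the statement is phrased via $\ker(\Gamma^G\times^G G'\to\Gamma^{G'})$ — is the identification of this kernel with $\ker f$ and the observation that its free action is precisely what upgrades the induced object to an honest $G'$-torsor (without freeness one would at best obtain a $G'$-gerbe over $S'$); this is the step I would expect to need the most care.
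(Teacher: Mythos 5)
Your proposal is correct and takes essentially the same route as the paper: your ``fibrewise over $\wat{\pi}$'' reduction is, at finite level, exactly the paper's decomposition over the double cosets $f(\Gamma)\bss\Gamma'/\ca{H}''\iso\wat{\pi}/\ca{H}''$, and both arguments rest on identifying $\ker(\Gamma^{G}\times^{G}G'\to\Gamma^{G'})$ with $\ker f$ and then quotienting the $G'$-torsor $\ca{E}\times^G G'$ by a free action at a suitable finite level. The equivalence statement is also handled identically, via the quasi-inverse $(\ca{E}'\to S',c)\mapsto \ca{E}'\times_{c,\wat{\pi}}e$.
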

\begin{proof}
Let $\ca{H}''$ be an open compact subgroup of $\Gamma'$. Then $\ca{E}\times^{\Gamma^G} (\Gamma^{G'}/\Gamma^{G'}_{\ca{H}''})\iso \ca{E}\times^{\Gamma^G}(\Gamma^G\times^G G')\times^{(\Gamma^G\times^G G')}(\Gamma^{G'}/\Gamma^{G'}_{\ca{H}''})\iso (\ca{E}\times^G G')\times^{(\Gamma^G\times^G G')}(\Gamma^{G'}/\Gamma^{G'}_{\ca{H}''})$. As in \cite[4.1.2]{Wu25}, let $J:=f(\Gamma)\bss \Gamma'/\ca{H}''=\wdtd{g}(\Gamma^G)\bss \Gamma^{G'}/\Gamma^{G'}_{\ca{H}''}$ and choose a complete collection of representatives $j\in \Gamma'$ of this finite double coset.\par
We have $(\ca{E}\times^G G')\times^{(\Gamma^G\times^G G')}(\Gamma^{G'}/\Gamma^{G'}_{\ca{H}''})\iso$ 
$$\disju_{j\in J} (\ca{E}\times^G G')/\ker(\Gamma^G\times^G G'\to \Gamma^{G'}/\Gamma ^{G'}_{j\ca{H}''j^{-1}}).$$
The kernel in the last expression is isomorphic to
$$\ker(\Gamma\to \Gamma'/ {j\ca{H}''j^{-1}}).$$

Let $\ca{H}_0$ be any open compact subgroup of $\Gamma$ contained in $K:=\ker(\Gamma\to \Gamma'/j\ca{H}''j^{-1})$ as a (finite index) normal subgroup. The quotient of $\ca{E}_{\ca{H}_0}\times^G G'$ by the finite group $(\Gamma^G_{K}\times^G G')/(\Gamma^G_{\ca{H}_0}\times^G G')$ \emph{with free action} exists as a $G'$-torsor.\par
The second paragraph follows again from the construction of the inverse functor as $(\ca{E}'\to S',c)\mapsto (\ca{E}'_e:=\ca{E}'\times_{c,\wat{\pi}}e\to S_e')$.
\end{proof}
\subsubsection{}\label{subsubsec-ind-mixed-sh}Let $(G,X)$ be a Shimura datum. Choose a cusp label representative $\Phi$. Let $Y_\Phi$ be a connected normal subgroup of $Q_\Phi$ such that $P_\Phi\sbst Y_\Phi\sbst ZP_\Phi:=(Z_G\cdot P_\Phi)^\circ$. 
Then $Y_\Phi^c\hookrightarrow G^c$. 
Let $G_0$ be either $Y_\Phi^c$ or $ G^c$. Then $G_0$ is stable under the conjugation action of $Y^\ad_\Phi(\bb{Q})^+$.\par 
Define (cf. \cite[p. 106]{Mil88} and \cite[4.1.4]{Wu25})
$${}^{G_0}\ag(Y_\Phi):=(G_0\times\frac{Y_\Phi(\A)}{Z^Y_{\Phi,ac}(\bb{Q})^{\overline{\ }}})*_{Y_{\Phi}(\bb{Q})_+}Y_{\Phi}^{\ad}(\bb{Q})^+,$$
and 
$${}^{G_0}\agsb(Q):=(G_0\times \frac{Y_{\Phi}(\bb{Q})_+^{\overline{\ }}}{Z^Y_{\Phi,ac}(\bb{Q})^{\overline{\ }}})*_{Y_{\Phi}(\bb{Q})_+}Y_{\Phi}^\ad(\bb{Q})^+$$
where $Y_\Phi^\ad:=Y_{\Phi}/(Z_G\cap Y_\Phi)$ and $Z_{\Phi,ac}^Y:=(Z_{G}\cap Y_\Phi)_{ac}$. The middle factors $Y_\Phi(\bb{Q})_+$ multiply to the left factors on the right.\par
Note that for a multiplicative group $Z$ over $\bb{Q}$, $Z_{ac}(\bb{Q})^{\overline{\ }}Z(\bb{Q})=Z(\bb{Q})^{\overline{\ }}$. Then there is an extension 
$$1\to G_0\to {}^{G_0}\ag(Y_\Phi)\to \ag(Y_\Phi)\to 1$$
for ${}^{G_0}\ag(Y_\Phi)$, and an extension
$$1\to G_0\to {}^{G_0}\agsb(Y_\Phi)\to\agsb(Y_\Phi)\to 1$$
for ${}^{G_0}\agsb(Y_\Phi)$.\par
We only treat the case where $G_0=Y_\Phi^c$ because the other follows immediately by a contraction product $\times^{Y^c_\Phi} G^c$. 
Then ${}^{Y_\Phi^c}\ag(Y_\Phi)$ acts on $\sh_{Y_\Phi}(\bb{C}):=\varprojlim_{K_\Phi^Y\sbst Y_\Phi(\A)\text{\ neat open compact}}Y_\Phi(\bb{Q})\bss Y_\Phi(\bb{Q})D_\Phi\times Y_\Phi^c(\bb{C})\times Y_\Phi(\A)/K_\Phi^Y$ by a right action
$$[(\mbf{g},g,\gamma^{-1})]\cdot [(x,\mbf{h},y)]\mapsto [(\gamma x\gamma^{-1},\gamma\mbf{h}\mbf{g}\gamma^{-1},\gamma yg\gamma^{-1})],$$
where $\mbf{g},\mbf{h}\in Y_\Phi^c(\bb{C})$, $g,y \in Y_\Phi(\A)$, $\gamma^{-1}\in Y^\ad_\Phi(\bb{Q})^+$ and $x\in Y_\Phi(\bb{Q})D_\Phi$. Note that $Y^\ad_\Phi(\bb{Q})^+$ acts on $Y_\Phi^c$ by a left conjugation, but we write it as a right action by writing $\gamma^{-1}$ instead of $\gamma$.\par
This action algebraizes and descends to an action of ${}^{Y_\Phi^c}\ag(Y_\Phi)$ on $\ca{E}_{Y_\Phi}:=\varprojlim_{K_\Phi^Y}\ca{E}_{Y_\Phi,K_\Phi^Y}$ since, by Proposition \ref{prop-can-mod-HZM}, the formation of canonical models of principal bundles satisfies functoriality, and is $Y_\Phi(\A)$- and $Y_\Phi^c$-equivariant.\par
One can check again on complex points that the stabilizer of ${}^{Y_\Phi^c}\ag(Y_\Phi)$ on a connected component $\ca{E}^+_{Y_\Phi,\overline{\bb{Q}}}:=\ca{E}_{Y_\Phi,\overline{\bb{Q}}}\times_{\sh_{Y_\Phi,\overline{\bb{Q}}}}\sh_{Y_\Phi,\overline{\bb{Q}}}^+$ is ${}^{Y_\Phi^c}\agsb(Y_\Phi)$. By Lemma \ref{lem-deligne-ind-torsor}, we have an ${}^{Y^c_\Phi}\ag(Y_\Phi)$-equivariant isomorphism
\begin{equation}\label{eq-deligne-ind-torsor}\ca{E}_{Y_\Phi,\overline{\bb{Q}}}\iso \ca{E}_{Y_\Phi,\overline{\bb{Q}}}^+\times ({}^{Y_\Phi^c}\ag(Y_\Phi))/({}^{Y_\Phi^c}\agsb(Y_\Phi)).\end{equation}

We care mainly about the following situation. \par 
Let $\pi: (G_0,X_0)\to (G,X)$ be a morphism between Shimura data. Suppose that $\ker(\pi: G_0\to G)$ is contained in the center of $G_0$, so $\pi:G^\der_0\to G^\der$ is an isogeny. \par
Let $\Phi_0$ be a cusp label representative of $\ca{CLR}(G_0,X_0)$ which maps to $\Phi\in\ca{CLR}(G,X)$. There is a morphism between mixed Shimura data $\pi: (P_{\Phi_0},D_{\Phi_0})\to (Y_\Phi,Y_\Phi(\bb{\bb{Q}})D_\Phi)$. By Proposition \ref{prop-can-mod-HZM}, there is a $(P_{\Phi_0}(\A)\to Y_\Phi(\A))$- and $(P_{\Phi_0}^c\to Y_\Phi^c)$-equivariant morphism between principal bundles 
$$\ca{E}_{P_{\Phi_0}}\to \ca{E}_{Y_\Phi}.$$
By checking on $\bb{C}$-points, this morphism is in fact equivariant under ${}^{P_{\Phi_0}^c}\ag(P_{\Phi_0})\to {}^{Y_\Phi^c}\ag(Y_\Phi)$.\par
Fix a connected component $\ca{E}^+_{P_{\Phi_0},\overline{\bb{Q}}}\sbst \ca{E}_{P_{\Phi_0},\overline{\bb{Q}}}$. It can be checked again on complex points that $\ca{E}^+_{P_{\Phi_0},\overline{\bb{Q}}}$ maps surjectively to a connected component $\ca{E}^+_{Y_\Phi,\overline{\bb{Q}}}\sbst \ca{E}_{Y_\Phi,\overline{\bb{Q}}}$.
Define
$$\ca{E}'_{Y_\Phi,\overline{\bb{Q}}}:=\ca{E}_{P_{\Phi_0},\overline{\bb{Q}}}^+\times ({}^{Y^c_\Phi}\ag(Y_\Phi))/({}^{P^c_{\Phi_0}}\agsb(P_{\Phi_0})).$$
Then the map $\ca{E}_{P_{\Phi_0},\overline{\bb{Q}}}\to \ca{E}_{Y_\Phi,\overline{\bb{Q}}}$ factors through a ${}^{Y^c_\Phi}\ag(Y_\Phi)$-equivariant morphism between $Y^c_\Phi$-torsors over $\sh_{Y_\Phi,\overline{\bb{Q}}}$ between $\ca{E}'_{Y_\Phi,\overline{\bb{Q}}}$ and $\ca{E}_{Y_\Phi,\overline{\bb{Q}}}$. So we have a ${}^{Y^c_\Phi}\ag(Y_\Phi)$-equivariant isomorphism
$$\ca{E}'_{Y_\Phi,\overline{\bb{Q}}}\iso\ca{E}_{Y_\Phi,\overline{\bb{Q}}}.$$

Since the ${}^{Y^c_\Phi}\ag(Y_\Phi)$-action on $\ca{E}_{Y_{\Phi}}$, the ${}^{P_{\Phi_0}^c}\ag(P_{\Phi_0})$-action on $\ca{E}_{P_{\Phi_0}}$ and the morphism $\ca{E}_{P_{\Phi_0}}\to \ca{E}_{Y_\Phi}$ are defined over $E(P_{\Phi_0},D_{\Phi_0})$, we see that
\begin{lem}\label{lem-check-group-quotient}
The morphism $\ca{E}_{P_{\Phi_0}}\to \ca{E}_{Y_\Phi}$ over $E(P_{\Phi_0},D_{\Phi_0})$ is $({}^{P_{\Phi_0}^c}\ag(P_{\Phi_0})\to{}^{Y^c_\Phi}\ag(Y_\Phi))$-equivariant. It factors through a quotient $(\ca{E}_{P_{\Phi_0}}\times^{P_{\Phi_0}^c}Y^c_\Phi)/\ker(({}^{P_{\Phi_0}}\ag(P_{\Phi_0}))\times^{P_{\Phi_0}^c}Y^c_\Phi\to{}^{Y^c_\Phi}\ag(Y_\Phi))$, which embeds into $\ca{E}_{Y_\Phi}$.
\end{lem}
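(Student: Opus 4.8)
The plan is to reduce everything to a statement over $\bb{C}$, prove it there by a direct computation on the double-coset uniformizations, and then descend via the functoriality results already established. First, I would observe that all three objects — the ${}^{P_{\Phi_0}^c}\ag(P_{\Phi_0})$-action on $\ca{E}_{P_{\Phi_0}}$, the ${}^{Y^c_\Phi}\ag(Y_\Phi)$-action on $\ca{E}_{Y_\Phi}$, and the morphism $\ca{E}_{P_{\Phi_0}}\to\ca{E}_{Y_\Phi}$ — are defined over $E(P_{\Phi_0},D_{\Phi_0})$; the action maps by Proposition \ref{prop-can-mod-HZM} and the discussion in \S\ref{subsubsec-ind-mixed-sh} (the canonical models of principal bundles on mixed Shimura varieties are $Y_\Phi(\A)$- and $Y_\Phi^c$-equivariant and functorial), the comparison morphism by the functoriality part of Proposition \ref{prop-can-mod-HZM} applied to $\pi\colon(P_{\Phi_0},D_{\Phi_0})\to(Y_\Phi,Y_\Phi(\bb{Q})D_\Phi)$. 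Since the source and target are reduced and separated, it suffices to check the asserted equivariance on $\overline{\bb{Q}}$-points, and indeed on $\bb{C}$-points, where both sides admit explicit double-coset descriptions.

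Next I would carry out the $\bb{C}$-point computation. On a connected component, the $\overline{\bb{Q}}$-schemes $\ca{E}_{P_{\Phi_0},\overline{\bb{Q}}}$ and $\ca{E}_{Y_\Phi,\overline{\bb{Q}}}$ are presented through the Deligne-type induction isomorphisms: from \eqref{eq-deligne-ind-torsor} and the paragraph following it, $\ca{E}_{Y_\Phi,\overline{\bb{Q}}}\iso\ca{E}_{Y_\Phi,\overline{\bb{Q}}}^+\times({}^{Y_\Phi^c}\ag(Y_\Phi))/({}^{Y_\Phi^c}\agsb(Y_\Phi))$, and similarly for $P_{\Phi_0}$; moreover $\ca{E}_{P_{\Phi_0},\overline{\bb{Q}}}^+$ maps surjectively to $\ca{E}_{Y_\Phi,\overline{\bb{Q}}}^+$ (checked on the explicit orbit presentation $Y_\Phi(\bb{Q})\bss Y_\Phi(\bb{Q})D_\Phi\times Y_\Phi^c(\bb{C})\times Y_\Phi(\A)/K_\Phi^Y$ using that $\ker(\pi\colon G_0\to G)$ is central and $\pi\colon G_0^{\der}\to G^{\der}$ is an isogeny). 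Using the definition $\ca{E}'_{Y_\Phi,\overline{\bb{Q}}}:=\ca{E}_{P_{\Phi_0},\overline{\bb{Q}}}^+\times({}^{Y^c_\Phi}\ag(Y_\Phi))/({}^{P^c_{\Phi_0}}\agsb(P_{\Phi_0}))$ from the excerpt, together with the identification $\ca{E}'_{Y_\Phi,\overline{\bb{Q}}}\iso\ca{E}_{Y_\Phi,\overline{\bb{Q}}}$ noted there, one reads off directly that $\ca{E}_{P_{\Phi_0},\overline{\bb{Q}}}\to\ca{E}_{Y_\Phi,\overline{\bb{Q}}}$ is equivariant for the homomorphism ${}^{P_{\Phi_0}^c}\ag(P_{\Phi_0})\to{}^{Y^c_\Phi}\ag(Y_\Phi)$ — this homomorphism is the one induced on the $\ag$-groups by the data $(\pi\colon P_{\Phi_0}^c\to Y_\Phi^c,\ \pi\colon P_{\Phi_0}(\A)\to Y_\Phi(\A),\ \pi\colon P_{\Phi_0}^{\ad}(\bb{Q})^+\to Y_\Phi^{\ad}(\bb{Q})^+)$, which is well-defined because $\pi$ respects all the amalgamated-product data (in particular $\pi(Z_{\Phi_0,ac}^{P})\sbst Z_{\Phi,ac}^{Y}$ by Lemma \ref{lem-hc-func}). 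The factorization through the quotient by $\ker\big(({}^{P_{\Phi_0}}\ag(P_{\Phi_0}))\times^{P_{\Phi_0}^c}Y^c_\Phi\to{}^{Y^c_\Phi}\ag(Y_\Phi)\big)$ and the resulting embedding into $\ca{E}_{Y_\Phi}$ then follow formally: the morphism factors as $\ca{E}_{P_{\Phi_0}}\to\ca{E}_{P_{\Phi_0}}\times^{P_{\Phi_0}^c}Y_\Phi^c\to\ca{E}_{Y_\Phi}$, the second map is an isomorphism onto its image by the $\ca{E}'_{Y_\Phi}\iso\ca{E}_{Y_\Phi}$ identification applied over all of $\sh_{Y_\Phi}$ (not just one component), and dividing source by the indicated kernel is exactly what is needed for this to be monic.

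Finally I would assemble the descent: having the equivariant isomorphism over $\overline{\bb{Q}}$ (indeed already over $\bb{C}$ by the complex uniformization, then over $\overline{\bb{Q}}$ since everything in sight is of finite type), and knowing all three structures are defined over $E(P_{\Phi_0},D_{\Phi_0})$, Galois descent gives the equivariance and the factorization over $E(P_{\Phi_0},D_{\Phi_0})$ itself. I expect the main obstacle to be bookkeeping rather than conceptual: one must verify carefully that the surjection $\ca{E}^+_{P_{\Phi_0},\overline{\bb{Q}}}\to\ca{E}^+_{Y_\Phi,\overline{\bb{Q}}}$ is compatible with the stabilizer subgroups ${}^{P^c_{\Phi_0}}\agsb(P_{\Phi_0})$ and ${}^{Y^c_\Phi}\agsb(Y_\Phi)$ in the precise sense that makes the induced map on the induction quotients agree with $\ca{E}_{P_{\Phi_0}}\to\ca{E}_{Y_\Phi}$ — i.e., chasing the element $[(\mbf{g},g,\gamma^{-1})]\cdot[(x,\mbf{h},y)]$ through $\pi$ on all three factors and matching it against the target action formula. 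This is a finite (if somewhat tedious) check on the explicit formulas already written down in \S\ref{subsubsec-ind-mixed-sh}, and no new input beyond Lemma \ref{lem-deligne-ind-torsor}, Proposition \ref{prop-can-mod-HZM}, and Lemma \ref{lem-hc-func} is required.
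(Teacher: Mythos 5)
Your proposal is correct and follows essentially the same route as the paper: the equivariance is inherited from the discussion preceding the lemma, the factorization and the identification of the kernel are checked over $\overline{\bb{Q}}$ by rewriting both torsors through the Deligne-induction presentations over the common fibre $\ca{E}_{P_{\Phi_0},\overline{\bb{Q}}}^+$ (using the $\ca{E}'_{Y_\Phi,\overline{\bb{Q}}}\iso\ca{E}_{Y_\Phi,\overline{\bb{Q}}}$ identification), and the kernel then drops out from group theory. The only difference is cosmetic: the paper writes out the intermediate factorization $(\ca{E}_{P_{\Phi_0},\overline{\bb{Q}}}^+)\times({}^{P_{\Phi_0}^c}\ag(P_{\Phi_0})\times^{P^c_{\Phi_0}}Y^c_\Phi)/({}^{P_{\Phi_0}^c}\agsb(P_{\Phi_0}))\iso\ca{E}_{P_{\Phi_0}}\times^{P^c_{\Phi_0}}Y^c_\Phi$ explicitly and reads off the kernel, whereas you defer that last computation as a routine but finite chase, which is fair.
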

\begin{proof}
    The first sentence follows from the paragraphs above. It suffices to check the second sentence over $\overline{\bb{Q}}$.\par
Since $$\ca{E}_{P_{\Phi_0},\overline{\bb{Q}}}\iso \ca{E}_{P_{\Phi_0},\overline{\bb{Q}}}^+\times ({}^{P_{\Phi_0}^c}\ag(P_{\Phi_0}))/({}^{P_{\Phi_0}^c}\agsb(P_{\Phi_0}))$$
and $$\ca{E}_{Y_\Phi,\overline{\bb{Q}}}\iso \ca{E}_{P_{\Phi_0},\overline{\bb{Q}}}^+\times ({}^{Y^c_\Phi}\ag(Y_\Phi))/({}^{P^c_{\Phi_0}}\agsb(P_{\Phi_0})),$$
the morphism $\ca{E}_{P_{\Phi_0}}\to \ca{E}_{Y_\Phi}$ is written as 
$$\ca{E}_{P_{\Phi_0},\overline{\bb{Q}}}^+\times ({}^{P_{\Phi_0}^c}\ag(P_{\Phi_0}))/({}^{P_{\Phi_0}^c}\agsb(P_{\Phi_0}))\to \ca{E}_{P_{\Phi_0},\overline{\bb{Q}}}^+\times ({}^{Y^c_\Phi}\ag(Y_\Phi))/({}^{P^c_{\Phi_0}}\agsb(P_{\Phi_0})).$$
The last expression factors through 
$$(\ca{E}_{P_{\Phi_0},\overline{\bb{Q}}}^+)\times ({}^{P_{\Phi_0}^c}\ag(P_{\Phi_0})\times^{P^c_{\Phi_0}}Y^c_\Phi)/({}^{P_{\Phi_0}^c}\agsb(P_{\Phi_0}))\iso \ca{E}_{P_{\Phi_0}}\times^{P^c_{\Phi_0}} Y^c_\Phi.$$
Then the kernel of the natural morphism from the last expression above to 
$$\ca{E}_{P_{\Phi_0},\overline{\bb{Q}}}^+\times ({}^{Y^c_\Phi}\ag(Y_\Phi))/({}^{P^c_{\Phi_0}}\agsb(P_{\Phi_0}))$$
is $\ker(({}^{P_{\Phi_0}}\ag(P_{\Phi_0}))\times^{P_{\Phi_0}^c}Y^c_\Phi\to{}^{Y^c_\Phi}\ag(Y_\Phi))$.
\end{proof}
\section{Canonical extensions on toroidal compactifications of integral canonical models}\label{sec-can-ex-hyper}
\subsection{Integral models of toroidal compactifications}\label{subsec-tor-ab-review}
Let us review the conventions in \cite[Sec. 4.2]{Wu25} in the hyperspecial case.\par
Fix a prime number $p> 0$. Let $(G_2,X_2)$ be an abelian-type Shimura datum. 
From now on, we assume that $G_{2,\bb{Q}_p}$ is (quasi-split and) unramified. We fix a smooth reductive model $G_{2,\zbkp}$ over $\zbkp$ such that $G_{2,\zbkp}(\bb{Z}_p)=K_{2,p}$ is a hyperspecial subgroup of $G_2(\bb{Q}_p)$. 
We make the following choices:
\begin{itemize} 
\item By \cite[Lem. 3.4.13]{Kis10}, there is a Hodge-type Shimura datum $(G_0,X_0)$ such that there is a central isogeny $G_0^\der\to G_2^\der$ and an isomorphism between adjoint Shimura data $(G_0^\ad,X_0^\ad)\iso (G_2^\ad,X_2^\ad)$, and such that $G_{0,\bb{Q}_p}$ is unramified. 
\item We choose intermediate Shimura data $(G,X_a)$ and $(G,X_b)$. We first choose $G$ such that $G^\der=G_2^\der$ and there is an embedding $\pi^a:G_2\to G$ and a map $\pi^b:G_0\to G$ with a finite kernel contained in $Z_{G^\der_0}$. The center of $G$ can be chosen to be quasi-split and unramified at $p$. Hence, $G_{\bb{Q}_p}$ is also unramified. Let $(G,X_a)$ (resp. $(G,X_b)$) be the Shimura datum determined by $(G_2,X_2)$ and $\pi^a:G_2\to G$ (resp. $(G_0,X_0)$ and $\pi^b:G_0\to G$). The choice of $G$ makes $G(\bb{R})$-homogeneous spaces $X_a$ and $X_b$ homeomorphic to $X^\ad_2\iso X^\ad_0$. See \cite[\S 1.4.4]{Wu25}.
\item We choose reductive models according to the last two items (see \cite[\S 4.2 (HS)]{Wu25}). Let $I_{G/G_0,K}:=\stb_{G(\bb{Q})}(X_0)\pi^b(G_0(\A))\bss G(\A)/K$. For the case we are considering, we can choose a complete collection of representatives of $I_{G/G_0,K}$ in $G(\Ap)$. For $\alpha\in I_{G/G_0,K}$, denote by $g_\alpha\in G(\Ap)$ a representative of $\alpha$. 
We can choose a smooth reductive model $G_{\bb{Z}_p}$ of $G$ over $\bb{Z}_p$ such that there is an embedding $G_{2,\bb{Z}_p}\hookrightarrow G_{\bb{Z}_p}$. Let $K_p:=G_{\bb{Z}_p}(\bb{Z}_p)$. 
We choose a smooth reductive model $G_{0,\bb{Z}_p}:=G_{0,\bb{Z}_p}^\alpha$ of $G_0$, which is independent of $\alpha$, such that there is a finite map $G_{0,\bb{Z}_p}\to G_{\bb{Z}_p}$ with kernel in the center of $G_{0,\bb{Z}_p}^\der$ and $K_{0,p}=G_{0,\bb{Z}_p}(\bb{Z}_p)$.
\end{itemize}
Set $E_0:=E(G_0,X_0)$, $E_2:=E(G_2,X_2)$ and $E':=E_0\cdot E_2$. We can and will choose $(G_0,X_0)$ such that the resulting $E'$ is unramified over $E_2$ at places over $p$.\par
Fix a place $v_2|p$ of $E_2$ that lifts to a place $v'|v_2$ of $E'$. Let $v|p$ be a place of $E_0$ such that $v'|v$. Define $\ca{O}:=\ca{O}_{E',(v)}=\ca{O}_{E'}\otimes_{\ca{O}_{E_0}}\ca{O}_{E_0,(v)}$ and $\ca{O}_2:=\ca{O}_{E_2,(v_2)}$.
Most of the constructions are done over $\ca{O}$ or $E'$ unless otherwise noted. Let $\ca{O}^{ur}:= \ca{O}_{E^{\prime,p}}\otimes_{\ca{O}_{E_0}}\ca{O}_{E_0,(v)}$, where $E^{\prime,p}$ is the union of all finite field extensions of $E'$ that are unramified at places of $E'$ over $p$.\par
We outline some key steps in the construction of toroidal compactifications of integral canonical models of abelian-type Shimura varieties. See \cite[Sec. 1.4, Sec. 4.2-4.3]{Wu25} for details.
\begin{enumerate}
    \item Choosing cone decompositions and levels. Let $\Sigma_2$ be an admissible cone decomposition of $(G_2,X_2,K_2:=K_{2,p}K^p_2)$. There are neat open compact subgroups $K_2^p\sbst G_2(\Ap)$, $K^p\sbst G(\Ap)$, and $K^{\alpha,p}_0\sbst G_0(\Ap)$ such that $K_2^p\sbst K^p$ via $\pi^a$ and $K_0^\alpha$ maps to $\lcj{K}{\alpha}:=\lcj{K}{g_\alpha}$ via $\pi^b$. Note that we use $\lcj{K}{\alpha}$ to denote left conjugation $g_\alpha K g_\alpha^{-1}$ for simplicity. Write $K_?=K_{?,p}K_?^p$, where $?={}^\alpha_0$, $\emptyset$ or ${}_2$. There are also admissible cone decompositions $\Sigma_2'$, $\Sigma$, and $\Sigma_0^\alpha$ for $(G_2,X_2,K_2)$, both $(G,X_a,K)$ and $(G,X_b,K)$, and $(G_0,X_0,K_0^\alpha)$, respectively. We can and will require that they are all projective, that $\Sigma_2'$ and $\Sigma$ are smooth, that $\Sigma_0^\alpha$ is induced by $\Sigma$, and that $\Sigma_2'$ is induced by $\Sigma$. (In fact, the smoothness assumption can be dropped in the construction of this paper.)\par
    From this point to the end of this paper, we write $\Sigma_2'$ as $\Sigma_2$ for simplicity.\par
    One can choose a Hodge embedding between Shimura data $\iota:(G_0,X_0,K_{0,p})\hookrightarrow (G^\ddag,X^\ddag,K_p^\ddag)$, where $(G^\ddag,X^\ddag,K_p^\ddag)$ is a Siegel Shimura datum $(GSp(V,\psi),X^\ddag)$ with $K_p^{\ddag}$ (resp. $K_{0,p}$) the stabilizer in $G^\ddag(\bb{Q}_p)$ (resp. $G_0(\bb{Q}_p)$) of a self-dual $\bb{Z}_p$-lattice $V_{\bb{Z}_p}\sbst V_{\bb{Q}_p}$. By \cite[Lem. 2.3.1 and Lem. 1.3.2]{Kis10} and the remark in \cite[Lem. 4.7]{KM15} and by Zarhin's trick, we can choose a self-dual $\bb{Z}$-lattice $V_{\bb{Z}}\sbst V$ such that there is a reductive $\zbkp$-model $G_{0,\zbkp}$ with a closed embedding $G_{0,\zbkp}\hookrightarrow \mrm{GL}(V_\zbkp)$ and such that $G_{0,\zbkp}$ is the pointwise stabilizer of a collection of tensors $\{s_{\lambda}\}_{\lambda\in \Lambda}\sbst (V_{\bb{Z}}\otimes\zbkp)^{\otimes}$, where $\Lambda$ is a finite index set.\par
    From $G_{0,\zbkp}$, we extend $G_{\bb{Z}_p}$ and $G_{2,\bb{Z}_p}$ to reductive $\zbkp$-models $G_{\zbkp}$ and $G_{2,\zbkp}$ such that $G_{0,\bb{Z}_p}\to G_{\bb{Z}_p}$ extends to $G_{0,\zbkp}\to G_{\zbkp}$ with finite kernel in the center of $G^\der_{0,\zbkp}$, and such that the closed embedding $G_{2,\bb{Z}_p}\hookrightarrow G_{\bb{Z}_p}$ also extends to $G_{2,\zbkp}\hookrightarrow G_\zbkp$.\par
    Choose neat open compact subgroups $K^{\ddag,\alpha,p}\sbst G^\ddag(\Ap)$ compatible with $K_0^{\alpha,p}$, and denote $K^{\ddag,\alpha}:=K_p^\ddag K^{\ddag,\alpha,p}$.
    \item Constructing the quotients. We construct the integral model $\ca{S}^\Sigma_K$ of the toroidal compactification $\sh_K^\Sigma(G,X_b)$ first. The compactification $\ca{S}^\Sigma_K$ is constructed as $\disju_{\alpha\in I_{G/G_0,K}} \ca{S}_{K^\alpha_0}^{\Sigma_0^\alpha}/\Delta_{\lcj{K}{\alpha}}(G_0,G)$, where $\Delta_{\lcj{K}{\alpha}}(G_0,G)=\ker(\ag(G_0)\to \ag(G)/\lcj{K}{\alpha})$. One can also construct the stratification on $\ca{S}^\Sigma_K$. For any $ZP$-cusp with cone $[ZP^b(\Phi,\sigma)]$ as in \cite[Def. 1.28]{Wu25}, one constructs the corresponding integral model of mixed Shimura variety $\ca{S}_{\K_\Phi}$ as a disjoint union 
    $$\disju_{\alpha\in I_{G/G_0,K}}\disju_{\pi^b(g^\alpha_0)\alpha\sim g^b}\disju_{[\sigma_0^\alpha]\in [\sigma]_{ZP}}\ca{S}_{K_0^\alpha}/\Delta_{\lcj{K}{g_0^\alpha g_\alpha}}(P_{0},ZP_\Phi),$$
    where $\Delta_{\lcj{K}{g_0^\alpha g_\alpha}}(P_{0},ZP_\Phi)=\ker(\ag(P_0)\to \ag(ZP_\Phi)/ZP_\Phi(\A)\cap \lcj{K}{g_0^\alpha g_\alpha})$. See Proposition \ref{prop-recall-constr-wu25} for detailed meanings of symbols. 
    \item Descent and pullback. Fixing $K_2^p$, one can make finer choices in step (1) so that $\pi^a:\sh_{K_2}^{\Sigma_2}\hookrightarrow \sh_K(G,X_a)$ is an open and closed embedding over $\spec E'$. From step (2), one can construct $\ca{S}^\Sigma_K(G,X_a)$ over $\ca{O}$ by descending from $\ca{S}^\Sigma_{K,\ca{O}^{ur}}$ by a descent datum whose generic fiber is that of $\sh_K^\Sigma(G,X_a)_{E^{\prime,ur}}$ from $E^{\prime,ur}$ to $E'$. We construct $\ca{S}^{\Sigma_2}_{K_2,\ca{O}}$ as the normalization in $\sh_{K_2}^{\Sigma_2}$ of $\ca{S}_K^\Sigma(G,X_a)$. 
    In particular, $\ca{S}_K(G,X_a)_{\ca{O}^{ur}}\iso \ca{S}_K(G,X_b)_{\ca{O}^{ur}}$ and $\ca{S}_K^\Sigma(G,X_a)_{\ca{O}^{ur}}\iso \ca{S}_K^\Sigma(G,X_b)_{\ca{O}^{ur}}$.
    Finally, we could show that the towers of integral models of mixed Shimura varieties in the boundary components of $\ca{S}_{K_2,\ca{O}}^{\Sigma_2}$ satisfy the extension property. So the descent datum of $\sh^{\Sigma_2}_{K_2,E'}$ associated with the faithfully flat base change $E'/E_2$ extends to that of $\ca{S}^{\Sigma_2}_{K_2,\ca{O}_{E',(v')}}$ associated with the faithfully flat base change $\ca{O}_{E',(v')}/\ca{O}_{E_2,(v_2)}$.
\end{enumerate}
Write $I_{G/G_0}:=I_{G/G_0,K}$, $\Delta^\alpha:=\Delta_{\lcj{K}{\alpha}}(G_0,G)$ and $\Delta^{g_0^\alpha}:=\Delta_{\lcj{K}{g_0^\alpha g_\alpha}}(P_0,G)$. 
\subsection{Canonical extensions on Hodge-type compactifications}\label{subsec-can-ext-hodge}
In this subsection, we mainly review the works of Lovering \cite{Lov17} and Madapusi \cite{Mad19} on the definition and existence of integral models of standard principal bundles and their (sub-)canonical extensions. An ingredient added to the theory here is that we prove an integral (and algebraic) version of Theorem \ref{thm-can-ex-Harris-complex}.
\subsubsection{}\label{subsubsec-can-ext-hodge-review}
Let $\ca{V}^\alpha$ be the first de Rham homology of the pullback of the universal abelian scheme over $\ca{S}_{K^{\ddag,\alpha}}$ to $\ca{S}_{K_0^\alpha}$. The scheme $\ca{S}_{K^{\ddag,\alpha}}$ denotes the Siegel moduli scheme defined by $(V_\bb{Z},\psi|_{V_\bb{Z}},K_p^\ddag,K^{\ddag,\alpha,p})$ as in, e.g., \cite[\S 2.1]{Wu25}. The tensors $s_\lambda$ canonically induce Betti tensors $s_{\lambda,B}^\alpha$ on Betti local systems, which in turn canonically induce parallel tensors $s_{\lambda,\dr}^{\alpha,an}\in\ca{V}^\otimes|_{\sh_{K_0^\alpha}(\bb{C})}$ by classical Riemann-Hilbert, and algebraize to tensors $s_{\lambda,\dr,\bb{C}}^\alpha$ in $\ca{V}^\otimes|_{\sh_{K_0^\alpha,\bb{C}}}$ by \cite{Del70}. Moreover, $s_{\lambda,\dr,\bb{C}}^\alpha$ descend to tensors $s_{\lambda,\dr}^\alpha$ in $\ca{V}^\otimes|_{\sh_{K_0^\alpha}}$ by \cite[Cor. 2.2.2]{Kis10}. By \cite[Cor. 2.3.9]{Kis10}, $s_{\lambda,\dr}^\alpha$ extend to tensors $\wdtd{s}_{\lambda,\dr}^\alpha$ in $\ca{V}^{\alpha,\otimes}$.\par 
Define a sheaf on $(\ca{S}_{K_0^\alpha})_{fppf}$
\begin{equation}
    \mathscr{E}_{G_{0,\zbkp},K_0^\alpha}:=\underline{\mrm{Isom}}_{\{s_\lambda\otimes 1\mapsto \wdtd{s}_{\lambda,\dr}^\alpha\}}(V_\zbkp\otimes_\zbkp \ca{O}_{\ca{S}_{K_0^\alpha}},\ca{V}^\alpha).
\end{equation}
\begin{thm}[{\cite[Prop. 4.5.6]{Lov17} and \cite[4.3.1 and Prop. 4.3.7]{Mad19}}]\label{thm-lov-mad}
For each $\alpha$, the following statements are true:
\begin{enumerate}
\item The fppf sheaf $\mathscr{E}_{G_{0,\zbkp},K_0^\alpha}$ is a $G_{0,\zbkp}$-torsor (in the {\'e}tale topology). It determines and is determined by a tensor functor
$$\omega_{G_{0,\zbkp},K_0^\alpha}:\rep(G_{0,\zbkp})\to \mrm{Vec}_{\ca{S}_{K_0^\alpha}}$$
from the category of $G_{0,\zbkp}$-representations to the category of vector bundles on $\ca{S}_{K_0^\alpha}$ with integrable connections. The torsor $\mathscr{E}_{G_{0,\zbkp},K_0^\alpha}$ is the integral canonical model of the torsor $\ca{E}_{G_0,K_0^\alpha}(G_0,X_0)$ as in \S\ref{subsubsec-can-model-can-principal-bundle} in the sense of \cite[\S 4.4.5]{Lov17}. (We will recall the definition of integral canonical models of principal bundles later in \S\ref{subsubsec-lov-can-model})
\item $\ca{V}^\alpha$ canonically extends to a vector bundle $\ca{V}^{\alpha,\can}$ with log integrable connection on $\ca{S}^{\Sigma_0^\alpha}_{K_0^\alpha}$. For each cusp label representative with cone $(\Phi_0^\alpha,\sigma_0^\alpha)$, there is a vector bundle with integrable connection $\ca{V}_{\Phi_0^\alpha}$ on $\ca{S}_{K_{\Phi_0^\alpha}}$ associated with the $1$-motive $\Q_{\Phi_0^\alpha}$ on it constructed by its covariant de Rham realization. Moreover, $\ca{V}_{\Phi_0^\alpha}$ admits an extension $\ca{V}_{\Phi_0^\alpha}(\sigma_0^\alpha)$ to $\ca{S}_{K_{\Phi_0^\alpha}}(\sigma_0^\alpha)$, which is the vector bundle with integrable log connection associated with the log $1$-motive extending $\Q_{\Phi_0^\alpha}$. The vector bundle $V_{\Phi_0^\alpha}$ with weight filtration and integrable log connection extends the vector bundle $\autoshdr{V}_{K_{\Phi_0^\alpha},\bb{C}}(\sigma_0^\alpha)$ together with its weight filtration and integrable log connection.\par
Over $$\cpl{\ca{S}_{K_0^\alpha}^{\Sigma_0^\alpha}}{\ca{Z}_{[(\Phi_0^\alpha,\sigma_0^\alpha)],K_0^\alpha}}\iso \cpl{\ca{S}_{K_{\Phi_0^\alpha}}(\sigma_0^\alpha)}{\ca{S}_{K_{\Phi_0^\alpha},\sigma_0^\alpha}},$$
the two vector bundles with additional structures, $\ca{V}^{\alpha,\can}$ and $\ca{V}_{\Phi_0^\alpha}(\sigma_0^\alpha)$, coincide. This compatibility is compatible with the analytic one in Theorem \ref{thm-can-ex-Harris-complex}.
\item The tensors $\wdtd{s}_{\lambda,\dr}^\alpha$ extend to $\wdtd{s}_{\lambda,\dr}^{\alpha,\can}\in \ca{V}^{\alpha,\can,\otimes}$ on $\ca{S}^{\Sigma_0^\alpha}_{K_0^\alpha}$. The torsor $\mathscr{E}_{G_{0,\zbkp},K_0^\alpha}$ extends to a $G_{0,\zbkp}$-torsor $\ca{E}_{G_{0,\zbkp},K_0^\alpha}^\can$ over $\ca{S}_{K_0^\alpha}^{\Sigma_0^\alpha}$ defined by
$$\mathscr{E}_{G_{0,\zbkp},K_0^\alpha}^\can:=\underline{\mrm{Isom}}_{\{s_\lambda\otimes 1\mapsto \wdtd{s}_{\lambda,\dr}^{\alpha,\can}\}}(V_\zbkp\otimes_\zbkp \ca{O}_{\ca{S}_{K_0^\alpha}^{\Sigma_0^\alpha}},\ca{V}^{\alpha,\can}).$$
\item For any representation $W_\zbkp\in \rep(G_{0,\zbkp})$, $\autoshdr{W}_{K_0^\alpha,\zbkp}:=\mathscr{E}_{G_{0,\zbkp},K_0^\alpha}\times^{G_{0,\zbkp}}W_\zbkp$ is a locally free sheaf with an integrable connection $\nabla_\zbkp$. This locally free sheaf extends to a locally free sheaf $\autoshdr{W}_{K_0^\alpha,\zbkp}^{\Sigma_0^\alpha}:=\mathscr{E}_{G_{0,\zbkp},K_0^\alpha}^{\Sigma_0^\alpha}\times^{G_{0,\zbkp}}W_\zbkp$ over $\ca{S}^{\Sigma_0^\alpha}_{K_0^\alpha}$ with extended integrable log connection. Pulling back the sheaf to $\ca{S}_{K_0^\alpha}^{\Sigma_0^{\alpha,\prime}}$ for any smooth refinement $\Sigma_0^{\alpha,\prime}$ of $\Sigma_0^\alpha$, the log connection has nilpotent residues along the boundary. 
\end{enumerate}
\end{thm}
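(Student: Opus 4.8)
The plan is to deduce parts (1), (3) and (4) by reorganizing the cited works of Lovering and Madapusi, reserving the main effort for the coincidence assertion in part (2), which is the integral (and algebraic) counterpart of Theorem \ref{thm-can-ex-Harris-complex}. For part (1), I would recall that by \cite[Cor. 2.3.9]{Kis10} the de Rham tensors $\wdtd{s}_{\lambda,\dr}^\alpha$ lie, \'etale-locally on $\ca{S}_{K_0^\alpha}$, in the $G_{0,\zbkp}$-orbit of $\{s_\lambda\otimes 1\}$ inside $(\ca{V}^\alpha)^\otimes$; hence $\mathscr{E}_{G_{0,\zbkp},K_0^\alpha}$ is an \'etale-locally trivial $G_{0,\zbkp}$-torsor, and Tannakian duality packages it into the exact tensor functor $\omega_{G_{0,\zbkp},K_0^\alpha}$, the connection on its image being induced by the Gauss--Manin connection on $\ca{V}^\alpha$ (under which the $s^\alpha_{\lambda,\dr}$ are parallel). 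That this torsor is the integral canonical model of $\ca{E}_{G_0,K_0^\alpha}(G_0,X_0)$ from \S\ref{subsubsec-can-model-can-principal-bundle} is \cite[Prop. 4.5.6]{Lov17}: one verifies the relevant extension property against the generic fibre, using the extension property already enjoyed by the tower $\{\ca{S}_{K_0^\alpha}\}$.

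For part (2), the key point is to reconcile two models of the canonical extension near the boundary. On the one hand, by \cite[4.3.1]{Mad19}, $\ca{V}^{\alpha,\can}$ is the covariant de Rham realization of the degenerating object (a log $1$-motive) produced by Mumford's construction applied to the universal abelian scheme over $\ca{S}_{K^{\ddag,\alpha}}$ and transported to $\ca{S}_{K_0^\alpha}^{\Sigma_0^\alpha}$ in the manner of \cite{Lan12} and \cite{Mad19}; on the other hand, $\ca{V}_{\Phi_0^\alpha}(\sigma_0^\alpha)$ is the de Rham realization of the log $1$-motive $\Q_{\Phi_0^\alpha}$ attached directly to the boundary mixed Shimura datum. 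I would show that these two log $1$-motives become canonically isomorphic after restriction to $\cpl{\ca{S}_{K_0^\alpha}^{\Sigma_0^\alpha}}{\ca{Z}_{[(\Phi_0^\alpha,\sigma_0^\alpha)],K_0^\alpha}}\iso\cpl{\ca{S}_{K_{\Phi_0^\alpha}}(\sigma_0^\alpha)}{\ca{S}_{K_{\Phi_0^\alpha},\sigma_0^\alpha}}$ --- this is precisely the compatibility of Mumford's construction with the toroidal boundary charts built from $\Sigma_0^\alpha$, i.e. the integral incarnation of the identification underlying Theorem \ref{thm-can-ex-Harris-complex}. Passing to de Rham realizations, together with their Hodge/weight filtrations and (log) Gauss--Manin connections, yields the claimed coincidence; compatibility with the analytic comparison of Theorem \ref{thm-can-ex-Harris-complex} then follows because both sides compute, over $\bb{C}$, the Betti--de Rham comparison of one and the same admissible variation.

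For part (3), the $s^\alpha_{\lambda,\dr}$ are parallel sections of $(\ca{V}^\alpha,\nabla_\zbkp)^\otimes$; since $(\ca{V}^{\alpha,\can},\nabla^\can)$ has nilpotent residues along the boundary --- which follows from part (2) together with the nilpotence of the residue of the Gauss--Manin connection of a degenerating $1$-motive --- Deligne's canonical-extension formalism produces a unique extension $\wdtd{s}_{\lambda,\dr}^{\alpha,\can}$, algebraic and defined over $\ca{O}$ by the full faithfulness of the relevant $j_*$. To see that $\mathscr{E}_{G_{0,\zbkp},K_0^\alpha}^\can$ is again a $G_{0,\zbkp}$-torsor, I would check this on the formal completion along each boundary stratum, where part (2) identifies $(\ca{V}^{\alpha,\can},\wdtd{s}_{\lambda,\dr}^{\alpha,\can})$ with the de Rham realization of $\Q_{\Phi_0^\alpha}$ equipped with tensors that are automatically in the correct relative position, since $G_{0,\zbkp}$ already acts on the boundary datum; this is exactly \cite[Prop. 4.3.7]{Mad19}. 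Part (4) is then formal: $\mathscr{E}_{G_{0,\zbkp},K_0^\alpha}^\can\times^{G_{0,\zbkp}}W_\zbkp$ is locally free because $\mathscr{E}_{G_{0,\zbkp},K_0^\alpha}^\can$ is a torsor, it inherits an integrable log connection, and on a smooth refinement its residues stay nilpotent since they act through a fixed finite-dimensional representation of the (pro-)unipotent boundary monodromy, exactly as in the proof of Lemma \ref{lem-mono-uni}. The step I expect to be the genuine obstacle is the boundary identification of the two log $1$-motives in part (2), with all additional structures and with the correct match with the analytic picture, as this is where the explicit geometry of Mumford's degeneration must be reconciled with the toroidal charts cut out by $\Sigma_0^\alpha$.
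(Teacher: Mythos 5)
Your proposal covers the same ground as the paper: parts (1)--(3) are essentially a citation of Lovering and Madapusi, and you correctly unpack what those references do; the paper's own ``proof'' is little more than a pointer to those works plus the remark that the boundary extension in \cite[2.1.21]{Mad19} agrees with the one in (\ref{eq-ext-W-boundary}) of this paper. For part (4), you obtain local freeness of $\autoshdr{W}^{\Sigma_0^\alpha}_{K_0^\alpha,\zbkp}$ directly from the torsor structure, while the paper instead invokes the codimension-$\geq 2$ argument on $\sh_{K_0^\alpha}^{\Sigma_0^\alpha}\cup\ca{S}_{K_0^\alpha}$; both routes are fine.

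The one step in your part (4) that does not quite work as written is the nilpotence of residues. You argue that they ``act through a fixed finite-dimensional representation of the (pro-)unipotent boundary monodromy, exactly as in the proof of Lemma~\ref{lem-mono-uni}'' --- but Lemma~\ref{lem-mono-uni} is a statement about topological monodromy on the complex analytic fiber; there is no monodromy interpretation available in mixed or positive characteristic, so this reasoning does not directly apply to the residues on $\ca{S}_{K_0^\alpha}^{\Sigma_0^{\alpha,\prime}}$. The missing bridge is the observation (in the paper's proof) that the irreducible components of the boundary divisor of $\ca{S}_{K_0^\alpha}^{\Sigma_0^{\alpha,\prime}}$ are flat over $\ca{O}_{E_0,(v_0)}$: a residue is a section of $\underline{\mathrm{End}}(\autoshdr{W}_{K_0^\alpha,\zbkp}^{\Sigma_0^{\alpha,\prime}})$ restricted to a boundary component, and flatness means nilpotence can be tested after base change to the generic fiber $\sh^{\Sigma_0^{\alpha,\prime}}_{K_0^\alpha}$, where your Lemma~\ref{lem-mono-uni} argument, or \S\ref{subs-can-ex-summary}, applies. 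Without this flatness input, the specialization from characteristic zero to characteristic $p$ is unjustified.

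A smaller point on part (3): Deligne's canonical-extension theory (and the full faithfulness of $j_*$ that you invoke) operates over $\bb{C}$, so it yields the extension of the tensors only on the generic fiber. The extension over $\ca{O}$ in \cite[Prop.~4.3.7]{Mad19} is obtained by a Hartogs-type argument, not by an integral analogue of Deligne's theory; you cite the right reference, but the intermediate justification you offer doesn't literally produce the $\ca{O}$-extension.
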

Again, we will omit the cone decompositions and just write ``$\can$/$\mrm{tor}$'' for the superscripts in the symbols of those extensions, if the underlying toroidal compactifications are clear.
\begin{proof}All statements, except for the last item, can be directly found in the references mentioned above. We note that the extension of vector bundle described in \cite[2.1.21]{Mad19} coincides with that in (\ref{eq-ext-W-boundary}). For the last item, note that the assertions hold over $\sh_{K_0^\alpha}^{\Sigma_0^\alpha}\cup \ca{S}_{K_0^\alpha}$ of codimension $\geq 2$; see the summary in \S\ref{subs-can-ex-summary}. Now, all but the nilpotence of residues follow from \S\ref{subs-can-ex-summary} and the first three items. For the nilpotence of residues, since the irreducible components of the boundary divisor are all flat over $\ca{O}_{E_0,(v_0)}$, it suffices to check over $\sh_{K_0^\alpha}^{\Sigma_0^\alpha}$, and this follows from \S\ref{subs-can-ex-summary} again.
\end{proof}
On each $\sh_{K_{\Phi_0^\alpha}}:=\sh_{K_{\Phi_0^\alpha}}(P_{\Phi_0^\alpha},D_{\Phi_0^\alpha})$, denote by $\mbf{Q}_{\Phi_0^\alpha}:=\Q_{\Phi_0^\alpha}|_{\sh_{K_0^\alpha}(P_{\Phi_0^\alpha},D_{\Phi_0^\alpha})}$ the pullback of $\Q_{\Phi_0^\alpha}$ in the theorem above to $\sh_{K_{\Phi_0^\alpha}}$. To be compatible with the notation in the last section, the pullback of $\ca{V}_{\Phi_0^\alpha}$ (resp. $\ca{V}_{\Phi_0^\alpha}(\sigma_0^\alpha)$) to $\sh_{K_{\Phi_0^\alpha}}$ (resp. $\sh_{K_{\Phi_0^\alpha}}(\sigma_0^\alpha)$) is denoted by $\autoshdr{V}_{K_{\Phi_0^\alpha}}$ (resp. $\autoshdr{V}_{K_{\Phi_0^\alpha}}(\sigma_0^\alpha)$).\par 
The same argument as at the beginning of \S\ref{subsubsec-can-ext-hodge-review} works for $\sh_{K_{\Phi_0^\alpha}}$. More precisely, by \cite[Sec. 3.1]{Mad19}, the tensors $s_\lambda$ induce tensors $s_{\lambda,\Phi_0^\alpha,B}$ in $\autoshbetti{V}_{K_{\Phi_0^\alpha}}^{an,\otimes}$, tensors $s_{\lambda,\Phi_0^\alpha,\dr}^{an}$ in $\autoshdr{V}^{an,\otimes}_{K_{\Phi_0^\alpha},\bb{C}}$, and tensors $s_{\lambda,\Phi_0^\alpha,\dr,\bb{C}}$ in $\autoshdr{V}^{\otimes}_{K_{\Phi_0^\alpha},\bb{C}}$. Tensors $s_{\lambda,\Phi_0^\alpha,\dr,\bb{C}}$ also descend to tensors $s_{\lambda,\Phi_0^\alpha,\dr}$ in $\autoshdr{V}^\otimes_{K_{\Phi_0^\alpha}}$ by \cite[Prop. 3.1.2]{Mad19}. \par
By \cite[Prop. 3.1.6]{Mad19}, the two tensors $s_{\lambda,\dr}^\alpha$ and $s_{\lambda,\Phi_0^\alpha,\dr}$ coincide on the open dense strata $U$ of $\cpl{\sh_{K_0^\alpha}^{\Sigma_0^\alpha}}{\mbf{Z}_{[(\Phi_0^\alpha,\sigma_0^\alpha)],K_0^\alpha}}\iso \cpl{\sh_{K_{\Phi_0^\alpha}}(\sigma_0^\alpha)}{\sh_{K_{\Phi_0^\alpha},\sigma_0^\alpha}}$ under the canonical isomorphism between $\autoshdr{V}_{K}|_U$ and $\autoshdr{V}_{K_{\Phi_0^\alpha}}|_U$.
\subsubsection{Tensors and principal bundles at the boundary}\label{subsubsec-tensors-bd}
\begin{lem}\label{lem-E-action-bd-v}
There is a natural $\mbf{E}_{K_{\Phi_0^\alpha}}$-action on $\ca{V}_{\Phi_0^\alpha}$ extending the one in Corollary \ref{cor-e-action} taking the value at $V$.
\end{lem}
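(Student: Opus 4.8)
The plan is to construct the action from the moduli interpretation of $\ca{V}_{\Phi_0^\alpha}$ as the covariant de Rham realization of the $1$-motive $\Q_{\Phi_0^\alpha}=[\ul{X}_{\Phi_0^\alpha}\to \Q^{\natural}_{\Phi_0^\alpha}]$ on $\ca{S}_{K_{\Phi_0^\alpha}}$, and then to match the resulting action over the generic fibre with the one of Corollary \ref{cor-e-action} by passing to the complex uniformization. First I would recall that $\mbf{p}_1\colon \ca{S}_{K_{\Phi_0^\alpha}}\to \ca{S}_{\overline{K}_{\Phi_0^\alpha}}$ is a torsor under the split torus $\mbf{E}_{K_{\Phi_0^\alpha}}$, that the semi-abelian part $\Q^{\natural}_{\Phi_0^\alpha}$ and the lattice $\ul{X}_{\Phi_0^\alpha}$ are pulled back along $\mbf{p}_1$, and that $\mbf{E}_{K_{\Phi_0^\alpha}}\hookrightarrow \uhom(\ul{X}_{\Phi_0^\alpha},\mbf{T}_{\Phi_0^\alpha})$ for $\mbf{T}_{\Phi_0^\alpha}$ the toric part of $\Q^{\natural}_{\Phi_0^\alpha}$; thus, for an $\ca{O}$-algebra $R$ and $e\in \mbf{E}_{K_{\Phi_0^\alpha}}(R)$, the translation automorphism $t_e$ of $\ca{S}_{K_{\Phi_0^\alpha},R}$ over $\ca{S}_{\overline{K}_{\Phi_0^\alpha},R}$ satisfies $t_e^*\Q_{\Phi_0^\alpha}=[\ul{X}_{\Phi_0^\alpha}\xrightarrow{c+\delta_e}\Q^{\natural}_{\Phi_0^\alpha}]$, where $c$ is the tautological period homomorphism and $\delta_e$ is the composite of $e$ with $\mbf{T}_{\Phi_0^\alpha}\hookrightarrow \Q^{\natural}_{\Phi_0^\alpha}$.

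Next I would produce, for each $e$, a canonical isomorphism $\varphi_e\colon t_e^*\ca{V}_{\Phi_0^\alpha}\xrightarrow{\sim}\ca{V}_{\Phi_0^\alpha}$. The key point is that the de Rham realizations of $[\ul{X}\xrightarrow{c+\delta_e}\Q^{\natural}]$ and of $[\ul{X}\xrightarrow{c}\Q^{\natural}]$ are canonically identified as $\ca{O}$-modules with their weight and Hodge filtrations: the two $1$-motives share the same $\Q^{\natural}$ and $\ul{X}$ and differ only by the torus-type $1$-motive $[\ul{X}\xrightarrow{\delta_e}\mbf{T}_{\Phi_0^\alpha}]$, whose de Rham realization affects only the Gauss--Manin connection (through a term linear in $\delta_e$) and not the underlying filtered bundle; this is the integral incarnation, at the level of the universal vectorial extension, of the unipotence computation in the proof of Lemma \ref{lem-mono-uni}. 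Functoriality of the de Rham realization together with additivity of periods ($\delta_{e_1e_2}=\delta_{e_1}+\delta_{e_2}$) yields the cocycle identity $\varphi_{e_1e_2}=\varphi_{e_1}\circ t_{e_1}^*\varphi_{e_2}$, so the $\varphi_e$ assemble into an $\mbf{E}_{K_{\Phi_0^\alpha}}$-action on $\ca{V}_{\Phi_0^\alpha}$ covering the torsor action on $\ca{S}_{K_{\Phi_0^\alpha}}$.

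Finally I would verify that this action extends the one of Corollary \ref{cor-e-action} evaluated at $V$. It suffices to compare over $E_0=E(P_{\Phi_0^\alpha},D_{\Phi_0^\alpha})$, and by reducedness of $\ca{S}_{K_{\Phi_0^\alpha},E_0}$ and separatedness, as in Proposition \ref{prop-can-mod-HZM}, it suffices to compare the analytifications, where $\ca{V}_{\Phi_0^\alpha}|_{\bb{C}}\cong \autoshdr{V}_{K_{\Phi_0^\alpha},\bb{C}}$ compatibly with the $1$-motive and Betti comparisons by Theorem \ref{thm-lov-mad}(2). Under the uniformization $\sh_{K_{\Phi_0^\alpha}}(\bb{C})=P_{\Phi_0^\alpha}(\bb{Q})\bss D_{\Phi_0^\alpha}\times P_{\Phi_0^\alpha}(\A)/K_{\Phi_0^\alpha}$ the translation-of-period isomorphism $\varphi_e$ becomes, on the Betti realization $\autoshbetti{V}_{K_{\Phi_0^\alpha}}^{an}$, exactly the variation of the local system along the fibres of $\mbf{p}_1$, which by the computation in \S\ref{subsubsec-E-action-mix-sh-complex} and the proof of Lemma \ref{lem-mono-uni} is precisely the conjugation action of $U_{\Phi_0^\alpha}(\bb{C})$ underlying Lemma \ref{lem-u-action-complex}; hence $\varphi_e$ agrees with the value at $V$ of the action of Lemma \ref{lem-u-action-alg} and Corollary \ref{cor-e-action}. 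The main obstacle is the second step: making the canonical identification of the de Rham realizations under a torus-valued change of period (and the attendant cocycle bookkeeping) precise and functorial over the integral base $\ca{O}$, rather than only over $\bb{C}$ where it is the classical behaviour of a degenerating variation of Hodge structure; granting that naturality, the comparison in the last step is routine.
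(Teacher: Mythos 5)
Your proposal takes essentially the same route as the paper: both reduce to the moduli interpretation of $\ca{V}_{\Phi_0^\alpha}$ as the covariant de Rham realization of the $1$-motive $\Q_{\Phi_0^\alpha}$, observe that an $\mbf{E}$-translation changes only the period map $c$ of the $1$-motive and not the remaining data, and then check compatibility with Corollary \ref{cor-e-action} by passing to $\bb{C}$-points and invoking the Betti--de~Rham comparison. The step you flag as the ``main obstacle''---making the canonical identification of de Rham realizations under a torus-valued change of period precise over $\ca{O}$---is exactly the point the paper settles, and it does so more directly than a ``torus-type $1$-motive difference'' argument would: it goes up to the Siegel cusp $\Phi^\ddag$ (where $\Q_{\Phi^\ddag}$ is the genuine universal family) and then observes that $\mbf{H}_\dr(\Q)$ is, by construction, the Lie algebra of a pushout of the universal vector extension $E(\G^\natural_\Q)$ along $\lie^\vee_{A^\vee_\Q}\to\lie^\vee_{\G^{\natural,\vee}_\Q}$, so that it depends \emph{only} on $\G^\natural_\Q$ and $\G^{\natural,\vee}_\Q$---both of which are pulled back from $\overline{\ca{S}}_{K_{\Phi^\ddag}}$ and hence literally equal for two points with the same image under $\mbf{p}_1$. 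This also makes the cocycle bookkeeping automatic, since the identification is the identity, not an isomorphism to be composed. If you replace the somewhat heuristic ``the torus-type piece only perturbs the connection'' discussion with this pushout description (citing \cite[\S 2.2]{Ber05} or \cite[\S 1.4]{BVS98}), your argument closes the gap you flagged and becomes the paper's proof.
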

\begin{proof}
Suppose that $\Phi_0^\alpha$ maps to a cusp label representative $\Phi^\ddag\in \ca{CLR}(G^\ddag,X^\ddag)$. The scheme $\ca{S}_{K_{\Phi^\ddag}}\to \overline{\ca{S}}_{K_{\Phi^\ddag}}$ is an $\mbf{E}_{K_{\Phi^\ddag}}$-torsor and there is an $(\mbf{E}_{K_{\Phi_0^\alpha}}\to \mbf{E}_{K_{\Phi^\ddag}})$-equivariant map $\ca{S}_{K_{\Phi_0^\alpha}}\to \ca{S}_{K_{\Phi^\ddag}}$. The $1$-motive $\Q_{\Phi_0^\alpha}$ is pulled back from the universal family $\Q_{\Phi^\ddag}$ on $\ca{S}_{K_{\Phi^\ddag}}$.\par 
There is an action of $\mbf{E}_{K_{\Phi^\ddag}}$ on the (covariant) de Rham realization $\mbf{H}_{\dr}(\Q_{\Phi^\ddag})$ of $\Q_{\Phi^\ddag}$. Let us explain this. Fix any $\zbkp$-algebra $R$ and write $S:=\spec R$. Let $x_1, x_2\in \ca{S}_{K_{\Phi^\ddag}}(S)$ mapping to the same point $\overline{x}\in \overline{\ca{S}}_{K_{\Phi^\ddag}}(S)$. Then $e\cdot x_1=x_2$ for some $e\in\mbf{E}_{K_{\Phi^\ddag}}(S)$. 
For $i=1,2$, let $\Q_{i}$ be the pullback of $\Q_{\Phi^\ddag}$ via $x_i$. Since $x_1$ and $x_2$ map to the same point $\overline{x}$, by the moduli interpretation of $\overline{S}_{K_{\Phi^\ddag}}$, we have canonical identifications $\G^\natural_{\Q_1}=\G^\natural_{\Q_2}$ and $\G^{\natural,\vee}_{\Q_1}=\G^{\natural,\vee}_{\Q_2}$ (see \cite[\S 2.2.1]{Wu25} for conventions).
Then, from the very construction of $\mbf{H}_{\dr}(\Q_{\Phi^\ddag})$, we obtain a canonical identification $\mbf{H}_\dr(\Q_1)=\mbf{H}_{\dr}(\Q_2)$. Indeed, $\mbf{H}_{\dr}(\Q_i)$ is the Lie algebra of the pushout
\begin{equation}
    \begin{tikzcd}
    0\arrow[r]&\lie^\vee_{A^\vee_{\Q_i}}\arrow[r]\arrow[d]&E(\G^\natural_{\Q_i})\arrow[r]& \G^\natural_{\Q_i}\arrow[r]&0,\\
    &\lie^\vee_{\G^{\natural,\vee}_{\Q_i}}
    \end{tikzcd}
\end{equation}
which depends only on $\G^\natural_{\Q_i}$ and $\G^{\natural,\vee}_{\Q_i}.$ See \cite[\S 2.2]{Ber05} or \cite[\S 1.4]{BVS98}. Here, $E(-)$ denotes universal vector extensions. Therefore, there is an $\mbf{E}_{K_{\Phi_0^\alpha}}$-action on the pullback.\par
Let us now check its compatibility with Corollary \ref{cor-e-action}. It suffices to check when $S=\spec \bb{C}$. Suppose that $e$ lifts to $u\in U_{\Phi^\ddag}(\bb{C})$. 
The action of $u$ on $\autoshbetti{V_\bb{C}}^{an}_{K_{\Phi^\ddag}}$, as explained above Lemma \ref{lem-u-action-complex}, is given by the following diagram
\begin{equation}\label{eq-betti-action-u}
    \begin{tikzcd}
    V_\bb{C}\arrow[r,"\alpha_{x_1}"]\arrow[d,"u"]& \autoshbetti{V_\bb{C}}^{an}_{K_{\Phi^\ddag}}|_{x_1}\arrow[d,"\iso"]\\
    V_\bb{C}\arrow[r,"\alpha_{x_2}"]& \autoshbetti{V_\bb{C}}_{K_{\Phi^\ddag}}|_{x_2}.
    \end{tikzcd}
\end{equation}
Here, we fix an identification of vector spaces $\alpha_{x_1}:V_\bb{C}\to \autoshbetti{V_\bb{C}}^{an}_{K_{\Phi^\ddag}}|_{x_1}$, which gives rise to the mixed Hodge structure at $x_1$. The right vertical arrow is a canonical identification following from the same reason as the last paragraph that the Betti realization depends only on $\overline{x}$. See \cite[Lem. 10.1.3.2 (c)]{Del74}. 
The compatibility follows from the Betti-de Rham comparison \cite[10.1.8]{Del74}.  
\end{proof}
\begin{lem}\label{lem-hz-int-v}
 We have isomorphisms
    \begin{equation}\label{eq-hz-int-v}
    \ca{V}_{\Phi_0^\alpha}\iso \mbf{p}_1^*(\mbf{p}_{1,*}\ca{V}_{\Phi_0^\alpha})^{\mbf{E}_{K_{\Phi_0^\alpha}}}
    \end{equation}
    and 
    \begin{equation}\label{eq-hz-int-ext-v}
\ca{V}_{\Phi_0^\alpha}(\sigma_0^\alpha)\iso \mbf{p}_1(\sigma_0^\alpha)^*(\mbf{p}_{1,*}\ca{V}_{\Phi_0^\alpha})^{\mbf{E}_{K_{\Phi_0^\alpha}}}.
    \end{equation}
\end{lem}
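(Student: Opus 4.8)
The plan is to derive both isomorphisms from one structural observation: the vector bundle underlying $\ca{V}_{\Phi_0^\alpha}$ (the de Rham realization of the $1$-motive $\Q_{\Phi_0^\alpha}$), with its connection forgotten, is pulled back along the torus torsor $\mbf{p}_1\colon\ca{S}_{K_{\Phi_0^\alpha}}\to\overline{\ca{S}}_{K_{\Phi_0^\alpha}}$. Indeed, as recalled in the proof of Lemma \ref{lem-E-action-bd-v}, $\mbf{H}_{\dr}(\Q_{\Phi_0^\alpha})$ is the Lie algebra of a pushout of the universal vector extension of $\G^\natural_{\Q_{\Phi_0^\alpha}}$, a construction depending only on the pair $(\G^\natural_{\Q_{\Phi_0^\alpha}},\G^{\natural,\vee}_{\Q_{\Phi_0^\alpha}})$; and by the moduli interpretation of $\overline{\ca{S}}_{K_{\Phi^\ddag}}$ used in that proof this pair descends to $\overline{\ca{S}}_{K_{\Phi^\ddag}}$, hence — since the projection $\mbf{p}_1$ for $\Phi_0^\alpha$ is compatible with the one for $\Phi^\ddag$ — is pulled back from $\overline{\ca{S}}_{K_{\Phi_0^\alpha}}$. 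So there is a vector bundle $\ca{W}_{\Phi_0^\alpha}$ on $\overline{\ca{S}}_{K_{\Phi_0^\alpha}}$ with $\mbf{p}_1^*\ca{W}_{\Phi_0^\alpha}\iso\ca{V}_{\Phi_0^\alpha}$, and, unwinding its definition, the $\mbf{E}_{K_{\Phi_0^\alpha}}$-action of Lemma \ref{lem-E-action-bd-v} is precisely the canonical linearization of $\mbf{p}_1^*\ca{W}_{\Phi_0^\alpha}$.

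For (\ref{eq-hz-int-v}): since $\mbf{E}_{K_{\Phi_0^\alpha}}$ is a split torus, $\mbf{p}_1$ is Zariski-locally trivial, so $\mbf{p}_{1,*}\ca{O}_{\ca{S}_{K_{\Phi_0^\alpha}}}$ is the direct sum of its weight eigensheaves, each a line bundle, with the weight-zero eigensheaf equal to $\ca{O}_{\overline{\ca{S}}_{K_{\Phi_0^\alpha}}}$. By the projection formula, $\mbf{p}_{1,*}\ca{V}_{\Phi_0^\alpha}\iso\ca{W}_{\Phi_0^\alpha}\otimes_{\ca{O}_{\overline{\ca{S}}_{K_{\Phi_0^\alpha}}}}\mbf{p}_{1,*}\ca{O}_{\ca{S}_{K_{\Phi_0^\alpha}}}$ $\mbf{E}_{K_{\Phi_0^\alpha}}$-equivariantly, whence $(\mbf{p}_{1,*}\ca{V}_{\Phi_0^\alpha})^{\mbf{E}_{K_{\Phi_0^\alpha}}}\iso\ca{W}_{\Phi_0^\alpha}$, and pulling back yields $\mbf{p}_1^*(\mbf{p}_{1,*}\ca{V}_{\Phi_0^\alpha})^{\mbf{E}_{K_{\Phi_0^\alpha}}}\iso\ca{V}_{\Phi_0^\alpha}$. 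This is the unique isomorphism respecting the linearizations; in particular it is horizontal for the induced connections and base changes, over $\bb{C}$, to the identity implicit in formula (\ref{eq-ext-W-boundary}).

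For (\ref{eq-hz-int-ext-v}): by Theorem \ref{thm-lov-mad}(2), $\ca{V}_{\Phi_0^\alpha}(\sigma_0^\alpha)$ is the vector bundle with integrable log connection attached to the log $1$-motive over $\ca{S}_{K_{\Phi_0^\alpha}}(\sigma_0^\alpha)$ extending $\Q_{\Phi_0^\alpha}$; its underlying semi-abelian scheme and dual still extend and are pulled back along $\mbf{p}_1(\sigma_0^\alpha)\colon\ca{S}_{K_{\Phi_0^\alpha}}(\sigma_0^\alpha)\to\overline{\ca{S}}_{K_{\Phi_0^\alpha}}$, so the argument of the first paragraph gives $\ca{V}_{\Phi_0^\alpha}(\sigma_0^\alpha)\iso\mbf{p}_1(\sigma_0^\alpha)^*\ca{W}_{\Phi_0^\alpha}\iso\mbf{p}_1(\sigma_0^\alpha)^*(\mbf{p}_{1,*}\ca{V}_{\Phi_0^\alpha})^{\mbf{E}_{K_{\Phi_0^\alpha}}}$, using (\ref{eq-hz-int-v}) for the second identification. (Equivalently, the right-hand side is the extension of \cite[2.1.21]{Mad19}, which, as noted in the proof of Theorem \ref{thm-lov-mad}, coincides with the recipe (\ref{eq-ext-W-boundary}).) To match the log connections, restrict to the schematically dense open $\ca{S}_{K_{\Phi_0^\alpha}}\sbst\ca{S}_{K_{\Phi_0^\alpha}}(\sigma_0^\alpha)$, where the isomorphism is the horizontal one of (\ref{eq-hz-int-v}), and observe that both sides carry log connections with the same nilpotent residue along each boundary divisor — the one prescribed by the weight filtration and the polarization pairing; hence the isomorphism is horizontal. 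One may alternatively verify this by flat base change to $\bb{C}$ and Theorem \ref{thm-can-ex-Harris-complex}.

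The step I expect to be the main obstacle is this last matching of log connections in (\ref{eq-hz-int-ext-v}): the log $1$-motive construction and the ``push-forward-then-take-invariants'' recipe produce log connections from superficially different data, so one has to verify that their residues along the boundary agree — this is exactly where \cite[2.1.21]{Mad19} (or a Gauss--Manin computation in the spirit of \cite{Lan12}) and flatness over $\zbkp$ are needed. The bundle-level assertions, by contrast, are formal consequences of the $1$-motive description of $\ca{V}_{\Phi_0^\alpha}$ together with descent along the torus torsor $\mbf{p}_1$.
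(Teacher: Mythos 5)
Your argument for (\ref{eq-hz-int-v}) matches the paper's in substance: both show that $\ca{V}_{\Phi_0^\alpha}$ descends along the torus torsor $\mbf{p}_1$ because its construction from the $1$-motive $\Q_{\Phi_0^\alpha}$ depends only on the semi-abelian pair $(\G^\natural,\G^{\natural,\vee})$, which lives over $\overline{\ca{S}}_{K_{\Phi_0^\alpha}}$; you exhibit the descended bundle directly from the moduli interpretation, while the paper packages the same observation as an fpqc descent datum via Lemma \ref{lem-E-action-bd-v}, and then both apply the projection formula. For (\ref{eq-hz-int-ext-v}) your route genuinely differs. You argue that the \emph{log} $1$-motive on $\ca{S}_{K_{\Phi_0^\alpha}}(\sigma_0^\alpha)$ is still pulled back along $\mbf{p}_1(\sigma_0^\alpha)$ and rerun the descent; this is plausible but rests on the claim, left implicit, that the de Rham realization of the log $1$-motive is still computed from the extended semi-abelian pair alone. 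The paper sidesteps that issue entirely: both sides of (\ref{eq-hz-int-ext-v}) are already known to be vector bundles (the left by Theorem \ref{thm-lov-mad}(2), the right as the pullback of $\overline{\ca{V}}_{\Phi_0^\alpha}$), they agree on $\ca{S}_{K_{\Phi_0^\alpha}}\cup\sh_{K_{\Phi_0^\alpha}}(\sigma_0^\alpha)$ by (\ref{eq-hz-int-v}) together with the generic-fiber compatibility of Theorem \ref{thm-can-ex-Harris-complex}, and the complement of that union in $\ca{S}_{K_{\Phi_0^\alpha}}(\sigma_0^\alpha)$ has codimension $\geq 2$, so the isomorphism extends by reflexivity of vector bundles on a normal scheme. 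That is shorter and more robust. Finally, the connection-matching you flag as ``the main obstacle'' is not actually required: the lemma asserts only an isomorphism of $\ca{O}$-modules, and the right-hand sides carry no a priori log connection; the horizontality questions you worry about belong to the subsequent discussion (Lemma \ref{lem-tensor-mixsh-gen} and Proposition \ref{prop-torsor-hz}), not here.
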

\begin{proof}
In fact, $\ca{V}_{\Phi_0^\alpha}$ descends to a vector bundle $\overline{\ca{V}}_{\Phi_0^\alpha}$ on $\overline{\ca{S}}_{K_{\Phi_0^\alpha}}$: Indeed, 
this statement follows from the fpqc descent of quasi-coherent modules and \cite[\href{https://stacks.math.columbia.edu/tag/05B2}{Lem. 05B2}]{stacks-project}. The descent datum is given by Lemma \ref{lem-E-action-bd-v}.\par
Now $\mbf{p}^*_1\overline{\ca{V}}_{\Phi_0^\alpha}=\ca{O}_{\ca{S}_{K_{\Phi_0^\alpha}}}\otimes_{\ca{O}_{\overline{\ca{S}}_{K_{\Phi_0^\alpha}}}}\overline{\ca{V}}_{\Phi_0^\alpha}=\ca{V}_{\Phi_0^\alpha}$. 
By projection formula, one has $(\mbf{p}_{1,*}\ca{V}_{\Phi_0^\alpha})^{\mbf{E}_{K_{\Phi_0^\alpha}}}=(\mbf{p}_*\ca{O}_{\ca{S}_{K_{\Phi_0^\alpha}}})^{\mbf{E}_{K_{\Phi_0^\alpha}}}\otimes_{\ca{O}_{\overline{\ca{S}}_{K_{\Phi_0^\alpha}}}}\overline{\ca{V}}_{\Phi_0^\alpha}=\overline{\ca{V}}_{\Phi_0^\alpha}$. We then deduce (\ref{eq-hz-int-v}).
If we know this, the second assertion is true because the two sides of (\ref{eq-hz-int-ext-v}) are isomorphic over a locus of codimension $\geq 2$.\par

\end{proof}
\begin{lem}\label{lem-tensor-mixsh-gen}
\begin{enumerate}
\item The tensors $s_{\lambda,\Phi_0^\alpha,\dr}$ extend uniquely to tensors $\wdtd{s}_{\lambda,\Phi_0^\alpha,\dr}$ in $\ca{V}_{\Phi_0^\alpha}^\otimes$ and tensors $\wdtd{s}_{\lambda,\Phi_0^\alpha,\dr}(\sigma_0^\alpha)$ in $\ca{V}_{\Phi_0^\alpha}(\sigma_0^\alpha)^\otimes$.
\item The tensors $\wdtd{s}_{\lambda,\Phi_0^\alpha,\dr}$ and $\wdtd{s}_{\lambda,\Phi_0^\alpha,\dr}(\sigma_0^\alpha)$ are $\mbf{E}_{K_{\Phi_0^\alpha}}$-invariant. 
\end{enumerate}
\end{lem}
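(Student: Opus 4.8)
The plan is to construct the extensions not directly on $\ca{V}_{\Phi_0^\alpha}$ and $\ca{V}_{\Phi_0^\alpha}(\sigma_0^\alpha)$, but after descending along $\mbf{p}_1$, exploiting the structural input of Lemma~\ref{lem-hz-int-v}. Recall from its proof that $\ca{V}_{\Phi_0^\alpha}$ descends to a locally free sheaf $\overline{\ca{V}}_{\Phi_0^\alpha}$ on $\overline{\ca{S}}_{K_{\Phi_0^\alpha}}$, with $\ca{V}_{\Phi_0^\alpha}\iso\mbf{p}_1^*\overline{\ca{V}}_{\Phi_0^\alpha}$ and $\ca{V}_{\Phi_0^\alpha}(\sigma_0^\alpha)\iso\mbf{p}_1(\sigma_0^\alpha)^*\overline{\ca{V}}_{\Phi_0^\alpha}$, where $\mbf{p}_1$ and $\mbf{p}_1(\sigma_0^\alpha)$ are the structure maps of, respectively, the $\mbf{E}_{K_{\Phi_0^\alpha}}$-torsor $\ca{S}_{K_{\Phi_0^\alpha}}\to\overline{\ca{S}}_{K_{\Phi_0^\alpha}}$ and the associated twisted affine torus embedding over $\overline{\ca{S}}_{K_{\Phi_0^\alpha}}$. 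Hence it suffices to produce a single tensor $\overline{s}_{\lambda,\Phi_0^\alpha,\dr}\in\overline{\ca{V}}_{\Phi_0^\alpha}^{\,\otimes}$ whose pullback to $\sh_{K_{\Phi_0^\alpha}}$ is $s_{\lambda,\Phi_0^\alpha,\dr}$; one then sets $\wdtd{s}_{\lambda,\Phi_0^\alpha,\dr}:=\mbf{p}_1^*\overline{s}_{\lambda,\Phi_0^\alpha,\dr}$ and $\wdtd{s}_{\lambda,\Phi_0^\alpha,\dr}(\sigma_0^\alpha):=\mbf{p}_1(\sigma_0^\alpha)^*\overline{s}_{\lambda,\Phi_0^\alpha,\dr}$. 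Any pullback of a section from the base is tautologically $\mbf{E}_{K_{\Phi_0^\alpha}}$-invariant, so part (2) follows at once; and since $\overline{\sh}_{K_{\Phi_0^\alpha}}$ is dense in the reduced, $\ca{O}$-flat scheme $\overline{\ca{S}}_{K_{\Phi_0^\alpha}}$ and $\overline{\ca{V}}_{\Phi_0^\alpha}$ is locally free, $\overline{s}_{\lambda,\Phi_0^\alpha,\dr}$ is unique if it exists, giving the uniqueness in part (1) after pullback.

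First I would check that $s_{\lambda,\Phi_0^\alpha,\dr}$ is already $\mbf{E}_{K_{\Phi_0^\alpha}}$-invariant on the generic fibre, hence descends to a tensor $\overline{s}^{\,\mrm{gen}}_{\lambda,\Phi_0^\alpha,\dr}$ over $\overline{\sh}_{K_{\Phi_0^\alpha}}$. This holds because $s_\lambda\in(V_\zbkp)^{\otimes}$ is fixed by $G_{0,\zbkp}$, hence by $P_{\Phi_0^\alpha}$ once lifted along $P_{\Phi_0^\alpha}^c\to G_0^c$ as in Lemma~\ref{lem-hc-func}, and in particular by $U_{\Phi_0^\alpha}(\bb{C})$; tracing through the construction of the $\mbf{E}_{K_{\Phi_0^\alpha}}$-action recalled in \S\ref{subsubsec-E-action-mix-sh-complex} (induced by conjugation by $U_{\Phi_0^\alpha}(\bb{C})$ together with its translation action on $D_{\Phi_0^\alpha}$, and compatible with Corollary~\ref{cor-e-action}), the isomorphisms $[u]$ fix $s_{\lambda,\Phi_0^\alpha,\dr}$ --- an identity checked after base change to $\bb{C}$ and then valid over the reflex field $E(P_{\Phi_0^\alpha},D_{\Phi_0^\alpha})$.

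The remaining, and principal, task is to extend $\overline{s}^{\,\mrm{gen}}_{\lambda,\Phi_0^\alpha,\dr}$ across the special fibre of $\overline{\ca{S}}_{K_{\Phi_0^\alpha}}$ --- a $p$-integrality assertion which is not formal. I would argue as in Madapusi \cite[Sec.~3]{Mad19}: $\overline{\ca{S}}_{K_{\Phi_0^\alpha}}$ is a torsor under an abelian scheme over the integral canonical model $\ca{S}_{K_{\Phi_0^\alpha,h}}$ of the Hodge-type datum $(Q_{\Phi_0^\alpha,h},X_{\Phi_0^\alpha,h})$, and $\overline{\ca{V}}_{\Phi_0^\alpha}$, with its weight filtration, is built from the de Rham realisation of the $1$-motive $\overline{\Q}_{\Phi_0^\alpha}$ via the universal vector extension recalled in the proof of Lemma~\ref{lem-E-action-bd-v}, whose graded pieces are a ``constant'' character-lattice part and the first relative de Rham homology $\ca{H}$ of the abelian part. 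The tensor $\overline{s}^{\,\mrm{gen}}_{\lambda,\Phi_0^\alpha,\dr}$ has weight $0$, hence lies in $W_0$ of the induced filtration on $\overline{\ca{V}}_{\Phi_0^\alpha}^{\,\otimes}$ (this being a subbundle, it suffices to see it on the generic fibre); it induces the trivial tensor on the constant graded pieces and, on $\ca{H}$, the tensor $s_{\lambda,h,\dr}$, which extends over $\ca{S}_{K_{\Phi_0^\alpha,h}}$ by Kisin's crystalline argument \cite[Cor.~2.3.9]{Kis10}. Lifting this back along the weight filtration --- exactly as Madapusi does in producing the tensors $\wdtd{s}^{\alpha,\can}_{\lambda,\dr}$ of Theorem~\ref{thm-lov-mad}(3) --- yields $\overline{s}_{\lambda,\Phi_0^\alpha,\dr}$, and then $\wdtd{s}_{\lambda,\Phi_0^\alpha,\dr}$ and $\wdtd{s}_{\lambda,\Phi_0^\alpha,\dr}(\sigma_0^\alpha)$ by pullback; by construction they restrict to $s_{\lambda,\Phi_0^\alpha,\dr}$ on $\sh_{K_{\Phi_0^\alpha}}$ and to one another on $\ca{V}_{\Phi_0^\alpha}\sbst\ca{V}_{\Phi_0^\alpha}(\sigma_0^\alpha)$. \emph{The hard part is precisely this spreading-out across the special fibre}: the formal boundary charts coming from Theorem~\ref{thm-lov-mad}(2), together with the generic fibre, do not reach the generic points of the special fibre of $\overline{\ca{S}}_{K_{\Phi_0^\alpha}}$, so one cannot merely transport $\wdtd{s}^{\alpha,\can}_{\lambda,\dr}$ through the chart isomorphism and glue; genuine crystalline input on the abelian part is needed, and it is the reduction to \cite[Cor.~2.3.9]{Kis10} that must be set up with care.
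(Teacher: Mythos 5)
Your overall frame --- descend to $\overline{\ca{S}}_{K_{\Phi_0^\alpha}}$ via Lemma \ref{lem-hz-int-v}, get uniqueness from density and local freeness, and check $\mbf{E}_{K_{\Phi_0^\alpha}}$-invariance on $\bb{C}$-points from the $U_{\Phi_0^\alpha}(\bb{C})$-invariance of $s_\lambda$ --- is fine and consistent with the paper, which verifies invariance the same way and uses precisely this descent to handle non-smooth cones. The gap is in the step you yourself single out as the hard one. ``Lifting back along the weight filtration'' is not an argument: the map $W_0(\overline{\ca{V}}_{\Phi_0^\alpha}^{\otimes})\to \mrm{gr}^W(\overline{\ca{V}}_{\Phi_0^\alpha}^{\otimes})$ has a large kernel, the weight filtration of the de Rham realization of a $1$-motive admits no canonical (much less integral) splitting in families, and the description ``trivial on the constant pieces, $s_{\lambda,h,\dr}$ on $\ca{H}$'' of the induced graded tensor is itself only a loose approximation; so integrality of the graded tensor gives no control on integrality of the tensor. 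Nor does the appeal to Madapusi transfer: the tensors $\wdtd{s}^{\alpha,\can}_{\lambda,\dr}$ of Theorem \ref{thm-lov-mad}(3) exist because they are already defined on $\ca{S}_{K_0^\alpha}\cup\sh^{\Sigma_0^\alpha}_{K_0^\alpha}$, whose complement has codimension $\geq 2$, and then spread out by normality; here the unknown locus is the whole special fibre of $\overline{\ca{S}}_{K_{\Phi_0^\alpha}}$, of codimension $1$, so no such principle is available and your proposal supplies no substitute mechanism.

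Moreover, the route you explicitly dismiss is the one that works, and it is the paper's proof. For smooth $\sigma_0^\alpha$ one pulls the integral tensor $\wdtd{s}^{\alpha,\can}_{\lambda,\dr}$ back through the chart isomorphism to the formal completion of $\ca{S}_{K_{\Phi_0^\alpha}}(\sigma_0^\alpha)$ along each boundary divisor $D_i$, where it agrees with $s_{\lambda,\Phi_0^\alpha,\dr}$ on the overlap by \cite[Prop. 3.1.6]{Mad19} and Theorem \ref{thm-lov-mad}(3); working in a Zariski chart $V_\omega=\spec R_\omega$ trivializing $\overline{\ca{V}}_{\Phi_0^\alpha}$, the identity $\cpl{R_\omega[s_j^{-1}]_{j\neq i}}{I_i}\cap R_\omega[p^{-1},s_j^{-1}]_{j=1}^r=R_\omega[s_j^{-1}]_{j\neq i}$ then produces the integral extension, first away from each $D_j$ with $j\neq i$ and then everywhere by intersecting over $i$. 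The point your objection misses is that the $D_i$ are horizontal (flat over the base), so although the formal completions do not topologically reach the generic points of the special fibre, they do detect $p$-integrality of a section that is already regular on the generic fibre --- this is the Fourier--Jacobi/$q$-expansion principle; in the model case $R=\bb{Z}_p[s]$ one has $\bb{Z}_p[[s]]\cap R[1/p,1/s]=R$. No fresh crystalline input on the abelian part is needed beyond what is already packaged in $\wdtd{s}^{\alpha,\can}_{\lambda,\dr}$; afterwards invariance and the general-cone case follow by descending to $\overline{\ca{V}}_{\Phi_0^\alpha}$ and pulling back, which is where your framing re-enters.
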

\begin{proof}
The plan of the proof is the following: We assume that $\sigma_0^\alpha$ is a smooth cone first. We then show both of the assertions under this assumption. Finally, we drop this assumption.\par 
The uniqueness is clear, and it is enough to construct tensors $\wdtd{s}_{\lambda,\Phi_0^\alpha,\dr}(\sigma_0^\alpha)$.\par
By Lemma \ref{lem-hz-int-v}, $\ca{V}_{\Phi_0^\alpha}(\sigma_0^\alpha)$ is the pullback to $\ca{S}_{K_{\Phi_0^\alpha}}(\sigma_0^\alpha)$ of a vector bundle $\overline{\ca{V}}_{\Phi_0^\alpha}$ on $\overline{\ca{S}}_{K_{\Phi_0^\alpha}}$. Let $\{U_\omega\}_\omega$ be a Zariski cover of $\overline{\ca{S}}_{K_{\Phi_0^\alpha}}$ such that $\overline{\ca{V}}_{\Phi_0^\alpha}$ is trivial over $U_\omega$. Denote by $\{V_\omega:=U_\omega\times_{\overline{\ca{S}}_{K_{\Phi_0^\alpha}}}\ca{S}_{K_{\Phi_0^\alpha}}(\sigma_0^\alpha)\}$ a Zariski cover of $\ca{S}_{K_{\Phi_0^\alpha}}(\sigma_0^\alpha)$.
Let $\{D_i\}_{i=1}^r$ be the irreducible components of the boundary of $\ca{S}_{K_{\Phi_0^\alpha}}(\sigma_0^\alpha)$. Write $V_\omega:=\spec R_\omega$. Write the ideal sheaf of the pullback of $D_i$ to $V_\omega$ by $I_i=(s_i)$. Let $R_i:=\cpl{R_\omega[s_j^{-1}]_{j\neq i}}{I_i}$. 
The following diagram commutes:
\begin{equation*}
    \begin{tikzcd}
    \sh_{K_0^\alpha}\arrow[d]&{W_{i,\omega}^\circ:=\spec\cpl{R_\omega[p^{-1},s_j^{-1}]_{j\neq i}}{I_i}[s_i^{-1}]}\arrow[l]\arrow[dd]\arrow[r]&W^\circ_\omega:=\spec R_\omega[p^{-1},s_j^{-1}]_{j=1}^r\arrow[r]&\sh_{K_{\Phi_0^\alpha}}\arrow[d]\\
    \sh_{K_0^\alpha}^{\Sigma_0^\alpha}\arrow[d]&&&\sh_{K_{\Phi_0^\alpha}}(\sigma_0^\alpha)\arrow[d]\\
    \ca{S}_{K_0^\alpha}^{\Sigma_0^\alpha}&{W_{i,\omega}:=\spec\cpl{R_\omega[s_j^{-1}]_{j\neq i}}{I_i}}\arrow[l]\arrow[rr]&&\ca{S}_{K_{\Phi_0^\alpha}}(\sigma_0^\alpha).
    \end{tikzcd}
\end{equation*}
By \cite[Prop. 3.1.6]{Mad19}, the pullback of the tensors $s_{\lambda,\Phi_0^\alpha,\dr}$ to $W^\circ_\omega$ and to $W_{i,\omega}^\circ$ is identical to the pullback of $s^\alpha_{\lambda,\dr}$ to $W_{i,\omega}^\circ$. By the left-half of the diagram above and by Theorem \ref{thm-lov-mad}(3), the pullback of $s^\alpha_{\lambda,\dr}$ to $W^\circ_{i,\omega}$ is identical to the pullback of $\wdtd{s}_{\lambda,\dr}^{\alpha,\can}$ to $W^\circ_{i,\omega}$.\par
We now have a tensor $\wdtd{s}_{\lambda,\dr}^{\alpha,\can}$ in $V^{\otimes}|_{W_{i,\omega}}$ and a tensor $s_{\lambda,\Phi_0^\alpha,\dr}$ in $V^\otimes|_{W_\omega^\circ}$ for each $\lambda$ such that they coincide on $W^\circ_{i,\omega}$. Since $\cpl{R_\omega[s_j^{-1}]}{I_i}\cap R_\omega[p^{-1},s^{-1}_j]^r_{j=1}=R_\omega[s_j^{-1}]_{j\neq i}$, we have a tensor $\wdtd{s}_{\lambda,\omega,i}$ in $V^\otimes(R[s_j^{-1}]_{j\neq i})$ for each $i$. Moreover, since the tensor $\wdtd{s}_{\lambda,\omega,i}$ extends $s_{\lambda,\Phi_0^\alpha,\dr}$ on $W_\omega^\circ$, there is a tensor $\wdtd{s}_{\lambda,\omega}$ in $V^\otimes(\cap_{i=1}^r R_i)=V^\otimes(R_\omega)$ extending $\wdtd{s}_{\lambda,\omega,i}$. Finally, the tensors $\wdtd{s}_{\lambda,\omega}$ on varying $V_\omega$ glue to a tensor $\wdtd{s}_{\lambda,\Phi_0^\alpha,\dr}(\sigma_0^\alpha)$ in $\ca{V}_{\Phi_0^\alpha}(\sigma_0^\alpha)^\otimes$ since they all extend the tensor $s_{\lambda,\Phi_0^\alpha,\dr}$. Now we have (1) under the assumption.\par
To check (2), it suffices to check the generic fiber $\sh_{K_{\Phi_0^\alpha}}$, and pass to $\bb{C}$-points. Under the Betti-de Rham comparison, it suffices to check that $s_{\lambda,\Phi_0^\alpha,B}$ is $\mbf{E}_{K_{\Phi_0^\alpha}}(\bb{C})$-invariant, and this is the consequence of the construction in \S\ref{subsubsec-E-action-mix-sh-complex}, the fact that $s_\lambda$ is $U_\Phi(\bb{C})$-invariant and the action of $u\in U_\Phi(\bb{C})$ explained in (\ref{eq-betti-action-u}). The part (2) follows.\par
We now consider the case in general. We choose a smooth projective refinement $\Sigma_0^{\alpha,\prime}$ of $\Sigma_0^\alpha$ and choose any $\tau_0^\alpha$ in the refined cone decomposition that is contained in $|\sigma_0^\alpha|$. The argument above gives us $\mbf{E}_{K_{\Phi_0^\alpha}}$-invariant tensors $\wdtd{s}_{\lambda,\Phi_0^\alpha,\dr}$ in $\ca{V}_{\Phi_0^\alpha}^\otimes$. Then these tensors descend to $\overline{\ca{V}}_{\Phi_0^\alpha}^{\otimes}$. We then pull them back to $\ca{S}_{K_{\Phi_0^\alpha}}(\sigma_0^\alpha)$ to get desired tensors with desired properties.
\end{proof}
\begin{definition}\label{def-torsors-bd}
Define fppf sheaves 
\begin{equation}
    \mathscr{E}_{G_{0,\zbkp},K_{\Phi_0^\alpha}}:=\ul{\mrm{Isom}}_{\{s_\lambda\otimes 1\mapsto \wdtd{s}_{\lambda,\Phi_0^\alpha,\dr}\}}(V_\zbkp\otimes_\zbkp \ca{O}_{\ca{S}_{K_{\Phi_0^\alpha}}},\ca{V}_{\Phi_0^\alpha});
\end{equation}
and
\begin{equation}
 \mathscr{E}_{G_{0,\zbkp},K_{\Phi_0^\alpha}}(\sigma_0^\alpha):=\ul{\mrm{Isom}}_{\{s_\lambda\otimes 1\mapsto \wdtd{s}_{\lambda,\Phi_0^\alpha,\dr}(\sigma_0^\alpha)\}}(V_\zbkp\otimes_\zbkp \ca{O}_{\ca{S}_{K_{\Phi_0^\alpha}}(\sigma_0^\alpha)},\ca{V}_{\Phi_0^\alpha}(\sigma_0^\alpha)).
\end{equation}
\end{definition}
Let us summarize our main results in the Hodge-type case.
\begin{prop}\label{prop-torsor-hz}
The following statements are true: 
\begin{enumerate}
    \item The sheaves $\mathscr{E}_{G_{0,\zbkp},K_{\Phi_0^\alpha}}$ and $\mathscr{E}_{G_{0,\zbkp},K_{\Phi_0^\alpha}}(\sigma_0^\alpha)$ are ({\'e}tale) $G_{0,\zbkp}$-torsors.
    \item There are natural $\mbf{E}_{K_{\Phi_0^\alpha}}$-actions on both $\mathscr{E}_{G_{0,\zbkp},K_{\Phi_0^\alpha}}$ and $\mathscr{E}_{G_{0,\zbkp},K_{\Phi_0^\alpha}}(\sigma_0^\alpha)$ extending the one in Corollary \ref{cor-e-action}.
    \item We have isomorphisms
    \begin{equation}\label{eq-hz-int}
    \mathscr{E}_{G_{0,\zbkp},K_{\Phi_0^\alpha}}\iso \mbf{p}_1^*(\mbf{p}_{1,*}\mathscr{E}_{G_{0,\zbkp},K_{\Phi_0^\alpha}})^{\mbf{E}_{K_{\Phi_0^\alpha}}}
    \end{equation}
    and 
    \begin{equation}\label{eq-hz-int-ext}
    \mathscr{E}_{G_{0,\zbkp},K_{\Phi_0^\alpha}}(\sigma_0^\alpha)\iso \mbf{p}_1(\sigma_0^\alpha)^*(\mbf{p}_{1,*}\mathscr{E}_{G_{0,\zbkp},K_{\Phi_0^\alpha}})^{\mbf{E}_{K_{\Phi_0^\alpha}}}
    \end{equation}
    \item Over $$\cpl{\ca{S}_{K_0^\alpha}^{\Sigma_0^\alpha}}{\ca{Z}_{[(\Phi_0^\alpha,\sigma_0^\alpha)],K_0^\alpha}}\iso \cpl{\ca{S}_{K_{\Phi_0^\alpha}}(\sigma_0^\alpha)}{\ca{S}_{K_{\Phi_0^\alpha},\sigma_0^\alpha}},$$
    the torsors $\mathscr{E}_{G_{0,Z_{(p)}},K_0^\alpha}^\can$ and $\mathscr{E}_{G_{0,\zbkp},K_{\Phi_0^\alpha}}(\sigma_0^\alpha)$ coincide.
\end{enumerate}
\end{prop}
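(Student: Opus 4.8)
The plan is to deduce parts (1)--(3) formally from the torus-torsor description and to concentrate the real work on part (4).

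\emph{Parts (1)--(3).} First I would descend everything along $\mbf{p}_1\colon\ca{S}_{K_{\Phi_0^\alpha}}\to\overline{\ca{S}}_{K_{\Phi_0^\alpha}}$: by Lemma~\ref{lem-hz-int-v}, $\ca{V}_{\Phi_0^\alpha}$ is the pullback of a bundle $\overline{\ca{V}}_{\Phi_0^\alpha}$ on $\overline{\ca{S}}_{K_{\Phi_0^\alpha}}$, and by Lemma~\ref{lem-tensor-mixsh-gen}(2) the tensors $\wdtd{s}_{\lambda,\Phi_0^\alpha,\dr}$ are $\mbf{E}_{K_{\Phi_0^\alpha}}$-invariant, hence descend to tensors $\overline{s}_\lambda$; so both $\mathscr{E}_{G_{0,\zbkp},K_{\Phi_0^\alpha}}$ and $\mathscr{E}_{G_{0,\zbkp},K_{\Phi_0^\alpha}}(\sigma_0^\alpha)$ are pullbacks along $\mbf{p}_1$, resp.\ $\mbf{p}_1(\sigma_0^\alpha)$, of the single sheaf $\overline{\mathscr{E}}:=\isom_{\{s_\lambda\mapsto\overline{s}_\lambda\}}(V_\zbkp\otimes_\zbkp\ca{O}_{\overline{\ca{S}}_{K_{\Phi_0^\alpha}}},\overline{\ca{V}}_{\Phi_0^\alpha})$. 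Granting that $\overline{\mathscr{E}}$ is a $G_{0,\zbkp}$-torsor, (1) follows because $\mbf{p}_1,\mbf{p}_1(\sigma_0^\alpha)$ are faithfully flat; (3) is the verbatim analogue of the proof of Lemma~\ref{lem-hz-int-v} (using $\mbf{p}_1^*\overline{\mathscr{E}}=\mathscr{E}_{G_{0,\zbkp},K_{\Phi_0^\alpha}}$ and $(\mbf{p}_{1,*}\ca{O})^{\mbf{E}_{K_{\Phi_0^\alpha}}}=\ca{O}_{\overline{\ca{S}}_{K_{\Phi_0^\alpha}}}$); and for (2) one declares the $\mbf{E}_{K_{\Phi_0^\alpha}}$-action on the $\isom$-sheaves to be that induced, by functoriality of $\isom$ (leaving $V_\zbkp$ fixed), from the action on $\ca{V}_{\Phi_0^\alpha}$ of Lemma~\ref{lem-E-action-bd-v} --- legitimate since that action preserves the tensor conditions by Lemma~\ref{lem-tensor-mixsh-gen}(2) --- its compatibility with Corollary~\ref{cor-e-action} being inherited from Lemma~\ref{lem-E-action-bd-v}. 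It then remains to see $\overline{\mathscr{E}}$ is a torsor. It is a $G_{0,\zbkp}$-pseudo-torsor because the $s_\lambda$ cut out $G_{0,\zbkp}$ in $\mrm{GL}(V_\zbkp)$, so I only need \'etale-local trivialisations. Over the open stratum $\ca{S}_{K_{\Phi_0^\alpha}}$ this is the $1$-motive analogue of \cite[Cor.~2.2.2]{Kis10} --- that the de Rham realisation of $\mbf{Q}_{\Phi_0^\alpha}$ with its crystalline tensors is \'etale-locally standard --- which is built into the construction of the boundary integral models in \cite[Sec.~3.1]{Mad19}; over a point of a boundary stratum it follows from the argument of part (4) below applied to the relevant face of $\sigma_0^\alpha$, together with the fact that $\mathscr{E}_{G_{0,\zbkp},K_0^\alpha}^\can$ is a torsor by Theorem~\ref{thm-lov-mad}(3), since a pseudo-torsor which becomes a torsor over the completed local ring is a torsor over a neighbourhood.

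\emph{Part (4).} By Definition~\ref{def-torsors-bd} and Theorem~\ref{thm-lov-mad}(3), the two torsors are the $\isom$-subsheaves of $\isom(V_\zbkp\otimes_\zbkp\ca{O},\ca{V}^{\alpha,\can})$ and $\isom(V_\zbkp\otimes_\zbkp\ca{O},\ca{V}_{\Phi_0^\alpha}(\sigma_0^\alpha))$ cut out by $s_\lambda\mapsto\wdtd{s}^{\alpha,\can}_{\lambda,\dr}$ and $s_\lambda\mapsto\wdtd{s}_{\lambda,\Phi_0^\alpha,\dr}(\sigma_0^\alpha)$. Theorem~\ref{thm-lov-mad}(2) already provides, for each open affine formal chart $\mrm{Spf}(A,I)$ of $\cpl{\ca{S}_{K_0^\alpha}^{\Sigma_0^\alpha}}{\ca{Z}_{[(\Phi_0^\alpha,\sigma_0^\alpha)],K_0^\alpha}}\iso\cpl{\ca{S}_{K_{\Phi_0^\alpha}}(\sigma_0^\alpha)}{\ca{S}_{K_{\Phi_0^\alpha},\sigma_0^\alpha}}$ with induced $c_1,c_2$, a compatible isomorphism $g_{(A,I)}\colon c_1^*\ca{V}^{\alpha,\can}\iso c_2^*\ca{V}_{\Phi_0^\alpha}(\sigma_0^\alpha)$; so I must check that $g_{(A,I)}$ sends $c_1^*\wdtd{s}^{\alpha,\can}_{\lambda,\dr}$ to $c_2^*\wdtd{s}_{\lambda,\Phi_0^\alpha,\dr}(\sigma_0^\alpha)$, after which $\isom(\id,g_{(A,I)})$ identifies the two $\isom$-subsheaves and (4) follows. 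Now $c_1^*\wdtd{s}^{\alpha,\can}_{\lambda,\dr}$ and $g_{(A,I)}^{-1}\bigl(c_2^*\wdtd{s}_{\lambda,\Phi_0^\alpha,\dr}(\sigma_0^\alpha)\bigr)$ are two sections of the locally free $A$-module $(c_1^*\ca{V}^{\alpha,\can})^{\otimes}$; as the integral models are normal, $A$ is reduced, and as $A$ is $\zbkp$-flat with each boundary component an effective Cartier divisor containing no component of $\mrm{Spec}\,A$, inverting $p$ and local equations of all boundary components embeds $A$ into the coordinate ring of an open dense subscheme landing in the char-$0$ open dense stratum $U$ of the formal completion considered in \S\ref{subsubsec-can-ext-hodge-review} (i.e.\ in \cite[Prop.~3.1.6]{Mad19}). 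Over $U$ the first section restricts to $s^\alpha_{\lambda,\dr}$ (Theorem~\ref{thm-lov-mad}(3)) and the second to $s_{\lambda,\Phi_0^\alpha,\dr}$ (Lemma~\ref{lem-tensor-mixsh-gen}(1)), these coincide by \cite[Prop.~3.1.6]{Mad19}, and $g_{(A,I)}$ restricts there to the canonical de Rham comparison used in \emph{loc.\ cit.} by the compatibility clause of Theorem~\ref{thm-lov-mad}(2); hence the two sections agree after inverting $p$ and the boundary, so they agree in $(c_1^*\ca{V}^{\alpha,\can})^{\otimes}$.

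\emph{Main obstacle.} The hardest point will be the very last step: one must know that the isomorphism $g_{(A,I)}$ identifying the two canonical extensions on the formal completion --- coming from the log $1$-motive on $\ca{S}_{K_{\Phi_0^\alpha}}(\sigma_0^\alpha)$ on one side and from $(\mbf{p}_{1,*}(-))^{\mbf{E}_{K_{\Phi_0^\alpha}}}$ on the other --- really restricts, on the char-$0$ boundary-punctured locus, to the de Rham comparison with respect to which \cite[Prop.~3.1.6]{Mad19} phrases its tensor identity; otherwise the generic coincidence of the tensors $s_{\lambda,\dr}$ does not transfer to their canonical extensions. This is precisely what the compatibility clauses threaded through Theorem~\ref{thm-can-ex-Harris-complex}, Theorem~\ref{thm-lov-mad}(2) and \S\ref{subsubsec-can-ext-hodge-review} are designed to supply, but cashing them out requires following both constructions of the boundary extension to the end; the accompanying flatness and reducedness input (a section of a locally free sheaf on the formal completion is determined by its restriction to the punctured char-$0$ locus) is routine once normality of the models is invoked. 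The analogous difficulty for part (1) --- the $1$-motive version of Kisin's local structure theorem over the open boundary stratum --- is the other place where the argument rests essentially on \cite{Mad19}.
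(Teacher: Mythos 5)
Your treatment of parts (2), (3) and (4) is essentially the same as the paper's: the paper derives (2) and (3) from Lemma~\ref{lem-hz-int-v} and Lemma~\ref{lem-tensor-mixsh-gen}, and obtains (4) from Theorem~\ref{thm-lov-mad}(2),(3) together with the mechanism exhibited in the proof of Lemma~\ref{lem-tensor-mixsh-gen} (your "reduced and $\zbkp$-flat, so a section of a locally free sheaf is determined on the char-$0$ boundary-punctured locus" step is exactly the one used there); your correct identification of the compatibility of $g_{(A,I)}$ with the de Rham comparison as the crux is the same issue the paper addresses via the compatibility clauses of Theorem~\ref{thm-lov-mad}(2).

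There is, however, a genuine gap in your argument for part (1). Having descended $\mathscr{E}_{G_{0,\zbkp},K_{\Phi_0^\alpha}}$ and $\mathscr{E}_{G_{0,\zbkp},K_{\Phi_0^\alpha}}(\sigma_0^\alpha)$ to a single pseudo-torsor $\overline{\mathscr{E}}$ on $\overline{\ca{S}}_{K_{\Phi_0^\alpha}}$, you try to produce local sections of $\mathscr{E}_{G_{0,\zbkp},K_{\Phi_0^\alpha}}(\sigma_0^\alpha)$ by covering $\ca{S}_{K_{\Phi_0^\alpha}}(\sigma_0^\alpha)$ with the open stratum and formal completions at boundary strata. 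For the open stratum you invoke a ``$1$-motive analogue of \cite[Cor.~2.2.2]{Kis10} built into \cite[\S 3.1]{Mad19}'' --- but that section of \cite{Mad19} concerns the generic fiber $\sh_{K_{\Phi_0^\alpha}}$, not the integral model $\ca{S}_{K_{\Phi_0^\alpha}}$, and no such torsor statement over the integral mixed Shimura variety is established in the references you cite; in fact it is, in substance, part (1) of the very proposition you are proving, so the appeal is circular. The fix is to observe that the open-stratum step is unnecessary once you have $\overline{\mathscr{E}}$: the closed $\sigma_0^\alpha$-stratum $\ca{S}_{K_{\Phi_0^\alpha},\sigma_0^\alpha}$ is faithfully flat over $\overline{\ca{S}}_{K_{\Phi_0^\alpha}}$ (it is a torus torsor over it, indeed isomorphic to it when $\sigma_0^\alpha$ is top-dimensional), and part (4) together with the restriction of the formal completion to its underlying reduced subscheme already shows that $\overline{\mathscr{E}}$ pulled back to $\ca{S}_{K_{\Phi_0^\alpha},\sigma_0^\alpha}$ is a torsor. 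By fppf descent $\overline{\mathscr{E}}$ is then a torsor on all of $\overline{\ca{S}}_{K_{\Phi_0^\alpha}}$, and (1) follows by pulling back along $\mbf{p}_1$ and $\mbf{p}_1(\sigma_0^\alpha)$. This is what the paper's (terse) phrase ``fpqc locally admits a section and is a torsor by Part (4)'' is doing; it never needs a separate argument over the open stratum, and neither do you.
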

\begin{proof}By Theorem \ref{thm-lov-mad} (2) and (3), and by the proof of Lemma \ref{lem-tensor-mixsh-gen}, we obtain Part (4). Since $G_{0,\zbkp}$ is smooth and $\mathscr{E}_{G_{0,\zbkp},K_{\Phi_0^\alpha}}(\sigma_0^\alpha)$ fpqc locally admits a section and is a torsor by Part (4), we know that $\mathscr{E}_{G_{0,\zbkp},K_{\Phi_0^\alpha}}(\sigma_0^\alpha)$ and its restriction $\mathscr{E}_{G_{0,\zbkp},K_{\Phi_0^\alpha}}$ are representable and are {\'e}tale torsors.
Parts (2) and (3) are derived directly from Lemma \ref{lem-hz-int-v} and Lemma \ref{lem-tensor-mixsh-gen}.
\end{proof}
\begin{rk}
In \cite[Prop. 5.6]{Lan16c}, Lan showed that the principal bundles under $M_\mu$ (whose definition will be given in \S\ref{subsec-auto-vb}) satisfy a property similar to Proposition \ref{prop-torsor-hz} in the PEL-type case.
\end{rk}
\subsubsection{Integral canonical models of principal bundles in the sense of Lovering}\label{subsubsec-lov-can-model}
This appendix supplements \S\ref{subsec-can-ext-hodge}. In what follows, we recall Lovering’s main theorem of integral canonical models of principal bundles. We will only briefly recall the definition since it is not heavily used here.\par
\begin{definition}[{Lovering; see \cite[\S 4.4]{Lov17}}]\label{def-lov-can-model}
Let $(G,X)$ be a Shimura datum of abelian type. Let $R:=\ca{O}_{E,(v)}$ be the localization at a place $v|p$ of $E:=E(G,X)$.
Let $G_{\zbkp}$ be a connected reductive model of $G$ such that $G_{\zbkp}(\bb{Z}_p)=K_p$.
Let $\{\ca{S}_{K}\}_{K^p}$ with $K=K_pK^p$ be the inverse system of smooth integral models over $R$ defined in \cite{Kis10} and \cite{KM15} when $p=2$.\par 
Let $G^c_\zbkp$ be the cuspidal quotient of $G_\zbkp$. Let $\{\mathscr{E}_{K}\}_{K^p}$ be an inverse limit of $G^c_\zbkp$-bundles on $\{\ca{S}_{K}\}_{K^p}$ compatible with $G(\Ap)$-action. We say that $\{\mathscr{E}_K\}_{K^p}$ is an integral canonical model of $\ca{E}_{K_p}=\{\ca{E}_{G^c,K}(G,x)\}_{K^p}$ if the generic fiber is $\ca{E}_{K_p}$ and if, for any closed point $x\in\ca{S}_{K}(\kappa)$ in the special fiber and a lifting $\wdtd{x}\in \sh_K=\ca{S}_K[1/p](L)$ of $x$ for some $L$ finite extension of $E_v$ and $\mrm{Frac}W(\kappa)$, the following two tensor functors coincide:
\begin{itemize}
    \item For any $G^c_{\zbkp}$-representation $V$, one assigns an $\ca{O}_L$-lattice by specializing $\mathscr{E}_{K}\times^{G^c_\zbkp}V$ to the $\ca{O}_L$-point of $\ca{S}_K$ determined by $(\wdtd{x},x)$.
    \item For any $G^c_\zbkp$-representation $V$, one assigns an $\ca{O}_L$-lattice by specializing to $\ca{O}_L$ the $\mathfrak{S}$-module associated with the (crystalline) Galois representation at $\wdtd{x}$ obtained from the $\mathfrak{M}$-functor in \cite[\S 1.2]{Kis10}.
\end{itemize}
\end{definition}
Note that the two lattices above are compared under a canonical de Rham structure of $\ca{E}_K$ (cf. \cite[4.4.4]{Lov17}). One can use \cite[Thm. 5.3.1]{DLLZ} to get such canonical de Rham structures for general Shimura varieties.
\begin{thm}[{\cite{Lov17}}]\label{thm-lovering-ab}
Let $(G_2,X_2)$ be an abelian-type Shimura datum. Assume that $G_2$ is quasi-split and unramified at $p$. With the conventions above, set $K_{2,p}=G_{2,\zbkp}(\bb{Z}_p)$ and $K_2=K_{2,p}K^p_2$. Then: 
\begin{enumerate}
\item The principal bundle $\ca{E}_{G_2^c,K_2}$ over $\sh_{K_2}$ admits an integral canonical model $\mathscr{E}_{G_{2,\zbkp}^c,K_2}$ over the integral canonical model $\ca{S}_{K_2}$ of $\sh_{K_2}$. Moreover, the integrable connection on $\ca{E}_{G^c_2,K_2}$ extends to $\mathscr{E}_{G^c_{2,\zbkp},K_2}$. The construction is compatible with prime-to-$p$ Hecke action of $G_2(\Ap)$.
\item Let $f:(G_1,X_1)\to (G_2,X_2)$ be a morphism from another abelian-type Shimura datum $(G_1,X_1)$ to $(G_2,X_2)$, such that $G_1$ is also quasi-split and unramified at $p$. Choose a smooth reductive model $G_{1,\zbkp}$ of $G_1$ such that the image of $K_{1,p}:=G_{1,\zbkp}(\bb{Z}_p)$ under $f$ is contained in $K_{2,p}$ and $K_1^p\sbst K_2^p$. Then $f^*\mathscr{E}_{G^c_{2,\zbkp},K_2}\iso \mathscr{E}_{G^c_{1,\zbkp},K_1}\times^{G_{1,\zbkp}^c}G_{2,\zbkp}^c.$ The pullback of the connection for $\mathscr{E}_{G_{2,\zbkp}^c,K_2}$ under $f$ also coincides with the connection for $\mathscr{E}_{G^c_{1,\zbkp},K_1}\times^{G_{1,\zbkp}^c}G_{2,\zbkp}^c.$ 
\end{enumerate}
\end{thm}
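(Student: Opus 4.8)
The plan is to bootstrap from the Hodge-type case (Theorem~\ref{thm-lov-mad}) by means of the twisting construction for torsors of \S\ref{subsec-ind-torsor}, mirroring the way the integral canonical model $\ca{S}_{K_2}$ is itself assembled from Hodge-type models in \cite{Kis10} and \S\ref{subsec-tor-ab-review}. Fix, as in \S\ref{subsec-tor-ab-review}, a Hodge-type datum $(G_0,X_0)$ with a central isogeny $G_0^\der\to G_2^\der$ and $(G_0^\ad,X_0^\ad)\iso(G_2^\ad,X_2^\ad)$, the intermediate data $(G,X_a),(G,X_b)$, and the reductive $\zbkp$-models. For each $\alpha$, Theorem~\ref{thm-lov-mad}(1) supplies the $G_{0,\zbkp}$-torsor $\mathscr{E}_{G_{0,\zbkp},K_0^\alpha}$ over $\ca{S}_{K_0^\alpha}$ with its integrable connection, and pushing out along $G_{0,\zbkp}\to G_{0,\zbkp}^c$ yields a $G_{0,\zbkp}^c$-torsor whose generic fibre is the canonical model of $\ca{E}_{G_0^c,K_0^\alpha}$ in the sense of \S\ref{subsubsec-can-model-can-principal-bundle}. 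I would feed this into the torsor induction functor of Lemma~\ref{lem-deligne-ind-torsor}, applied to the extension of $\ag$-type groups relating torsors over $\ca{S}_{K_0^\alpha}$ to torsors over $\ca{S}_K(G,X_b)$ as in \S\ref{subsubsec-ind-mixed-sh}. Its freeness hypothesis is the freeness of the finite group acting on (level covers of) $\ca{S}_{K_0^\alpha}$ that underlies the quotient presentation $\ca{S}_K(G,X_b)\iso\disju_\alpha\ca{S}_{K_0^\alpha}/\Delta^\alpha$; hence the quotient exists as an \'etale $G_\zbkp^c$-torsor, and since the quotient maps on bases are finite \'etale the integrable connection descends along them, disjoint unions causing no trouble.

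Next I would transport the resulting torsor along $\ca{S}_K(G,X_b)_{\ca{O}^{ur}}\iso\ca{S}_K(G,X_a)_{\ca{O}^{ur}}$, descend the Galois-type descent datum from $\ca{O}^{ur}$ to $\ca{O}$ using the extension property of the boundary towers recalled in \S\ref{subsec-tor-ab-review}, restrict to the open and closed subscheme $\sh_{K_2}\hookrightarrow\sh_K(G,X_a)$, pass to the normalization $\ca{S}_{K_2}$, and reduce the structure group along $G_{2,\zbkp}^c\to G_\zbkp^c$ (the reduction exists over the dense generic fibre by Proposition~\ref{prop-can-mod-HZM} applied to $\pi^a:(G_2,X_2)\to(G,X_a)$, and I would propagate it over the normal scheme $\ca{S}_{K_2}$ exactly as in the construction of $\ca{S}_{K_2}$ in \S\ref{subsec-tor-ab-review}). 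The outcome is the candidate $G_{2,\zbkp}^c$-torsor $\mathscr{E}_{G_{2,\zbkp}^c,K_2}$ over $\ca{S}_{K_2}$ with its integrable connection; compatibility with the prime-to-$p$ Hecke action of $G_2(\Ap)$ is automatic since every step is functorial in $K_2^p$. To match the generic fibre with $\ca{E}_{G_2^c,K_2}$, I would note that, over $\bb{C}$ and over the reflex field, the twisting construction of \S\ref{subsec-ind-torsor} is compatible with the canonical models of principal bundles of \S\ref{subsubsec-can-model-can-principal-bundle} (cf. \cite{Mil88}), so the uniqueness in Proposition~\ref{prop-can-mod-HZM} gives $\mathscr{E}_{G_{2,\zbkp}^c,K_2}[1/p]\iso\ca{E}_{G_2^c,K_2}$, compatibly with connections.

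The decisive — and, I expect, hardest — step is to verify the defining crystalline-comparison property of Definition~\ref{def-lov-can-model}. Given a closed point $x\in\ca{S}_{K_2}(\kappa)$ and a lift $\wdtd{x}$, one lifts $(x,\wdtd{x})$ along the finite \'etale cover $\ca{S}_{K_0^\alpha}\to\ca{S}_K(G,X_b)$ to a compatible pair $(x_0,\wdtd{x}_0)$ on a Hodge-type model; by construction the $\ca{O}_L$-lattice obtained from a $G_{2,\zbkp}^c$-representation $V$ by specializing $\mathscr{E}_{G_{2,\zbkp}^c,K_2}\times^{G_{2,\zbkp}^c}V$ is the image of the lattice obtained at $x_0$ from $\mathscr{E}_{G_{0,\zbkp},K_0^\alpha}$, and one must identify this with the $\mathfrak{S}$-module of the crystalline Galois representation at $\wdtd{x}$. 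This does \emph{not} formally follow from Theorem~\ref{thm-lov-mad}(1) together with the characteristic-$0$ theory: it requires knowing that passing to the cuspidal quotient, inducing along the $\ag$-groups, and taking the finite quotient are compatible with the formation of Kisin's $\mathfrak{M}$-functor, and hence of the attached $\mathfrak{S}$-modules, under the central isogeny $G_0^\der\to G_2^\der$ — this compatibility is the heart of Lovering's theorem and is proved there by a direct analysis. Finally, for part~(2), given $f:(G_1,X_1)\to(G_2,X_2)$, I would choose compatible Hodge-type covers and reductive models for $(G_1,X_1)$ and $(G_2,X_2)$ through which $f$ lifts, and combine the Hodge-type functoriality of Theorem~\ref{thm-lov-mad} with the functoriality of the torsor induction functor (Lemma~\ref{lem-deligne-ind-torsor}) and of the descent and normalization steps; the compatibility of connections is inherited from the Hodge-type case.
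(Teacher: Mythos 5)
The paper does not reprove Lovering's theorem: its proof is a citation to \cite[Thm.~4.7.1, Prop.~4.4.8, \S4.4.4, Prop.~4.7.4]{Lov17}, with one genuine addition, namely the observation that Lovering's standing hypothesis that $Z_G^\circ$ splits over a CM field can be dropped, because one can \emph{define} the integral canonical model of the principal bundle on $\ca{S}_K(G,X)$ as the pullback of the one on $\ca{S}_{K^c}(G^c,X^c)$, and $(G^c,X^c)$ satisfies the CM hypothesis by Lemma~\ref{lem-cusp-cm}. Your proposal is doing something quite different: it attempts to \emph{re-derive} Lovering's theorem by running the Hodge-type result (Theorem~\ref{thm-lov-mad}) through the torsor induction machinery of \S\ref{subsec-ind-torsor} and the transport/descent/normalization steps of \S\ref{subsec-tor-ab-review}. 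That is indeed, in outline, the shape of Lovering's argument, and there is nothing wrong with wanting to sketch it; but as a proof it does not close.

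Two gaps are concrete. First, as you yourself flag, the crystalline-comparison property of Definition~\ref{def-lov-can-model} for the assembled torsor does not follow from Theorem~\ref{thm-lov-mad}(1) plus the characteristic-zero theory: one must show that pushing out to $G_{0,\zbkp}^c$, inducing along the $\ag$-groups, descending from $\ca{O}^{ur}$, restricting, and normalizing are all compatible with Kisin's $\mathfrak{M}$-functor and the resulting $\mathfrak{S}$-modules at every crystalline point. You correctly say this "is the heart of Lovering's theorem and is proved there by a direct analysis"; once you grant that, your argument has become a citation of \cite{Lov17}, not an independent derivation. Second, the step "reduce the structure group along $G_{2,\zbkp}^c\to G_\zbkp^c$... and I would propagate it over the normal scheme $\ca{S}_{K_2}$ exactly as in the construction of $\ca{S}_{K_2}$" is not justified: the existence of a reduction over the dense generic fibre (Proposition~\ref{prop-can-mod-HZM}) by itself gives no mechanism for spreading the reduction over the integral model, and this is precisely one of the things Lovering has to establish. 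Finally, you do not reproduce the one piece of content that actually belongs to the paper's proof, namely the removal of the CM-split-center hypothesis via pullback from $(G^c,X^c)$ and Lemma~\ref{lem-cusp-cm}; without that, you are implicitly taking over Lovering's standing assumption, so the statement you would end up proving is weaker than the one asserted.
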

\begin{proof}
    The first statement is the main theorem \cite[Thm. 4.7.1]{Lov17}, and the extension of integrable connections is checked in Section 4.7 of \emph{loc. cit.} Note that the assumption in \cite{Lov17} that $Z_G^\circ$ splits over a CM field extension is not necessary, because one can define the integral canonical model of the principal bundle on an integral model $\ca{S}_K(G,X)$ as the pullback of that on $\ca{S}_{K^c}(G^c,X^c)$ and $G^c$ satisfies the assumption by Lemma \ref{lem-cusp-cm}.\par 
    The isomorphism in the second statement follows from \cite[Prop. 4.4.8]{Lov17}, and it is checked in \cite[4.4.4 and Prop. 4.7.4]{Lov17} that the source and the target of $f:(G_1,X_1)\to (G_2,X_2)$ both have all the crystalline points and compatible de Rham structures. The compatibility of connections can be checked over generic fiber over complex points.
\end{proof}
\subsection{Canonical extensions on abelian-type compactifications}\label{subsec-can-ext-ab}
\subsubsection{Statement of main theorem} 
Our aim is to show:
\begin{thm}\label{thm-extend-main-theorem}
There is a principal $G_{2,\zbkp}^c$-bundle $\mathscr{E}_{G_{2,\zbkp}^c,K_2}^\can$ on $\ca{S}^{\Sigma_2}_{K_2}$ which extends the canonical extension $\ca{E}_{G_{2,\zbkp}^c,K_2}^\can$ on $\sh_{K_2}^{\Sigma_2}$ (cf. Theorem \ref{thm-harris-can-ex-can-model}), and the integral canonical model of standard principal bundle $\mathscr{E}_{G_{2,\zbkp}^c,K_2}$ on $\ca{S}_{K_2}$.\par
The exact tensor functor corresponding to $\mathscr{E}_{G_{2,\zbkp}^c,K_2}^\can$,
$$\omega_{G_{2,\zbkp}^c,K_2}^\can: \rep(G_{2,\zbkp}^c)\lra \mrm{Vec}_{\ca{S}_{K_2}^{\Sigma_2}},$$
is from the category of $G_{2,\zbkp}^c$-representations over $\zbkp$ to the category of vector bundles on $\ca{S}_{K_2}^{\Sigma_2}$ with integrable log connections.
\end{thm}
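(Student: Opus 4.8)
The plan is to descend the Hodge-type canonical extensions of \S\ref{subsec-can-ext-hodge} through the finite Kummer \'etale quotients of \cite{Wu25}, first producing a torsor over $\ca{S}_K^\Sigma(G,X_b)$ and then transporting it to $\ca{S}_{K_2}^{\Sigma_2}$. \emph{Step 1 (push-out and equivariant structures).} For each $\alpha\in I_{G/G_0}$ one pushes out Madapusi's torsor $\mathscr{E}_{G_{0,\zbkp},K_0^\alpha}^\can$ of Theorem \ref{thm-lov-mad}(3) along $G_{0,\zbkp}\to G_{0,\zbkp}^c\to G_\zbkp^c$ to a $G_\zbkp^c$-torsor $\mathscr{F}^\alpha$ on $\ca{S}_{K_0^\alpha}^{\Sigma_0^\alpha}$ carrying the induced integrable log connection (Theorem \ref{thm-lov-mad}(4)), and equips $\mathscr{F}^\alpha$ with a $\Delta^\alpha$-equivariant structure compatible with the transition maps. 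Over the generic fibre this is the $\ag(G_0)\to\ag(G)/\lcj{K}{\alpha}$-equivariance of Harris's canonical extension, produced from Propositions \ref{prop-can-mod-HZM}, \ref{prop-functoriality-generic} and \ref{prop-functoriality-generic-conti} and the torsor formalism of \S\ref{subsec-ind-torsor} (Lemmas \ref{lem-deligne-ind-torsor} and \ref{lem-check-group-quotient}); over $\ca{S}_{K_0^\alpha}$ it is Lovering's functoriality, Theorem \ref{thm-lovering-ab}(2); and on the formal completions along the boundary it is read off from the description of $\mathscr{E}_{G_{0,\zbkp},K_{\Phi_0^\alpha}}(\sigma_0^\alpha)$ in Proposition \ref{prop-torsor-hz}, using the $\mbf{E}_{K_{\Phi_0^\alpha}}$-action recorded there together with Proposition \ref{prop-can-mod-HZM}. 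These descriptions agree on overlaps --- and in any case the extension over the boundary is forced by uniqueness of the Hodge-type canonical extension --- hence glue to a $\Delta^\alpha$-equivariant $G_\zbkp^c$-torsor with log connection, compatibly in $\alpha$.

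\emph{Step 2 (quotient and local freeness --- the crux).} Let $\pi^\alpha:\ca{S}_{K_0^\alpha}^{\Sigma_0^\alpha}\to\ca{S}_{K_0^\alpha}^{\Sigma_0^\alpha}/\Delta^\alpha$ be the quotient map in $\ca{S}_K^\Sigma(G,X_b)=\disju_\alpha\ca{S}_{K_0^\alpha}^{\Sigma_0^\alpha}/\Delta^\alpha$, and consider the torsion-free coherent sheaf $\ca{W}^\alpha:=(\pi^\alpha_*\ca{O}_{\mathscr{F}^\alpha})^{\Delta^\alpha}$ on $\ca{S}_{K_0^\alpha}^{\Sigma_0^\alpha}/\Delta^\alpha$ with its induced integrable log connection, equivalently the tensor functor $W\mapsto(\pi^\alpha_*(\mathscr{F}^\alpha\times^{G_\zbkp^c}W))^{\Delta^\alpha}$. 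On $\sh_K^\Sigma(G,X_b)\cup\ca{S}_K(G,X_b)$, whose complement in $\ca{S}_K^\Sigma(G,X_b)$ has codimension $\geq 2$, the sheaf $\ca{W}^\alpha$ already defines a $G_\zbkp^c$-torsor: over the generic fibre it is Harris's canonical extension $\ca{E}_{G^c,K}^\can(G,X_b)$ of Theorem \ref{thm-harris-can-ex-can-model} (by Deligne's existence theorem and Proposition \ref{prop-functoriality-generic}), and over $\ca{S}_K(G,X_b)$ it is the integral canonical model $\mathscr{E}_{G_\zbkp^c,K}(G,X_b)$ (by Theorem \ref{thm-lovering-ab}(2)). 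By Theorem \ref{thm-loc-free-quo} the sheaf $\ca{W}^\alpha$ is locally free everywhere; since the action and comparison morphisms that would express the torsor structure are isomorphisms over the codimension-$\geq 2$ locus and $\ca{S}_K^\Sigma(G,X_b)$ is normal, Hartogs' principle promotes $\spec\ca{W}^\alpha$ to a $G_\zbkp^c$-torsor. Taking the disjoint union over $\alpha$ gives a $G_\zbkp^c$-torsor $\mathscr{E}_{G_\zbkp^c,K}^\can(G,X_b)$ with integrable log connection on $\ca{S}_K^\Sigma(G,X_b)$, extending Harris's canonical extension on the generic fibre and the integral canonical model on $\ca{S}_K(G,X_b)$.

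\emph{Step 3 and conclusion.} Using $\ca{S}_K^\Sigma(G,X_a)_{\ca{O}^{ur}}\iso\ca{S}_K^\Sigma(G,X_b)_{\ca{O}^{ur}}$ and the descent of $\ca{S}_K^\Sigma(G,X_a)$ along $\ca{O}^{ur}/\ca{O}$ recalled in \S\ref{subsec-tor-ab-review}, the torsor $\mathscr{E}_{G_\zbkp^c,K}^\can(G,X_b)$ transports to a $G_\zbkp^c$-torsor on $\ca{S}_K^\Sigma(G,X_a)$: its generic fibre is Harris's Galois-equivariant canonical extension, and being the unique locally free extension it inherits the descent datum. Pulling back along the finite normalization morphism $\ca{S}_{K_2,\ca{O}}^{\Sigma_2}\to\ca{S}_K^\Sigma(G,X_a)$ and descending along $\ca{O}_{E',(v')}/\ca{O}_2$ produces a $G_\zbkp^c$-torsor on $\ca{S}_{K_2}^{\Sigma_2}$, which on $\sh_{K_2}^{\Sigma_2}\cup\ca{S}_{K_2}$ is the push-out along $\pi^a$ of $\ca{E}_{G_2^c,K_2}^\can$ (Proposition \ref{prop-functoriality-generic}), resp.\ of $\mathscr{E}_{G_{2,\zbkp}^c,K_2}$ (Theorem \ref{thm-lovering-ab}(2)); since the corresponding reduction of structure group to $G_{2,\zbkp}^c$ is a section of an affine $\ca{S}_{K_2}^{\Sigma_2}$-scheme, it extends over the codimension-$\geq 2$ boundary by normality, giving the desired $G_{2,\zbkp}^c$-torsor $\mathscr{E}_{G_{2,\zbkp}^c,K_2}^\can$. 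By construction its generic fibre is $\ca{E}_{G_2^c,K_2}^\can$ and its restriction to $\ca{S}_{K_2}$ is $\mathscr{E}_{G_{2,\zbkp}^c,K_2}$. Finally, by the Tannakian formalism a $G_{2,\zbkp}^c$-torsor on $\ca{S}_{K_2}^{\Sigma_2}$ is the same datum as an exact tensor functor $\rep(G_{2,\zbkp}^c)\to\vbd_{\ca{S}_{K_2}^{\Sigma_2}}$, and the integrable log connection carried along through push-out, quotient, descent and reduction of structure group makes this functor land in vector bundles with integrable log connections; this is $\omega_{G_{2,\zbkp}^c,K_2}^\can$.

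\emph{Main obstacle.} The heart of the argument is Step 2, namely Theorem \ref{thm-loc-free-quo}: that the invariant pushforward $\ca{W}^\alpha$ is locally free across the boundary of the special fibre. There $\pi^\alpha$ is finite Kummer \'etale but not flat, so local freeness cannot be obtained by descent; it must be extracted from an explicit identification of $\ca{W}^\alpha$ on the formal completions along the boundary strata, combining the toric charts, the boundary description of the Hodge-type torsor in Proposition \ref{prop-torsor-hz}, and the torsor version of Deligne's induction construction of \S\ref{subsec-ind-torsor} (Lemma \ref{lem-quotient-tors-general}).
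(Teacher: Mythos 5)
Your proposal takes essentially the same route as the paper: push out Madapusi's Hodge-type canonical extension to a $G_\zbkp^c$-torsor, equip it with the $\Delta^\alpha_{G^c}$-equivariant structure via the formalism of \S\ref{subsec-ind-torsor} and Lovering's functoriality, invoke Theorem~\ref{thm-loc-free-quo} to control the quotient across the boundary of the special fibre, and then transport via $s$, descend, pull back along $\pi^a$, and reduce structure group by a codimension-$\geq 2$ normality argument. The only organizational difference is that you read Theorem~\ref{thm-loc-free-quo} as supplying just local freeness and then upgrade to torsor-ness by Hartogs on the codimension-$\geq 2$ locus, whereas the paper's version of that theorem already asserts the torsor structure directly (proved on an fpqc cover by formal completions); both are fine, and you correctly identify the same decomposition of work and the same crux.
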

The theorem will be proved in this subsection.

\subsubsection{}
Recall that we have $\ca{S}_{K}(G,X_a)_{\ca{O}^{ur}}\iso \ca{S}_K(G,X_b)_{\ca{O}^{ur}}$ and $\ca{S}^{\Sigma}_{K}(G,X_a)_{\ca{O}^{ur}}\iso \ca{S}^{\Sigma}_K(G,X_b)_{\ca{O}^{ur}}$. Set $\ca{S}_{K,\ca{O}^{ur}}:=\ca{S}_K(G,X_b)_{\ca{O}^{ur}}$ and $\ca{S}_{K,\ca{O}^{ur}}^\Sigma:=\ca{S}^{\Sigma}_K(G,X_b)_{\ca{O}^{ur}}.$\par
Recall that $\ca{S}_K^\Sigma=\disju_{\alpha\in I_{G/G_0}} \ca{S}_{K_0^\alpha}^{\Sigma_0^\alpha}/\Delta^\alpha$ is a scheme over $\ca{O}$. We denote the map from $\ca{S}^{\Sigma_0^\alpha}_{K_0^\alpha}$ to $\ca{S}_K^\Sigma$ by $\pi_\alpha: \ca{S}_{K_0^\alpha}^{\Sigma_0^\alpha}\to \ca{S}_K^\Sigma$.\par
Define $\Delta^\alpha_{G^c}:=\ker ({}^{G_0}\ag(G_0)\times^{G_0}G^c\to {}^{G^c}\ag(G)/\lcj{K}{g_\alpha}).$
\begin{lem}\label{lem-delta-g}
The group $\Delta^\alpha_{G^c}$ is profinite, and its projection to $\Delta^\alpha$ is an isomorphism.
\end{lem}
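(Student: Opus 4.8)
The plan is to read the statement off the morphism of group extensions that underlies the two definitions, and then to run a short diagram chase, very much in the spirit of the proof of Lemma~\ref{lem-check-group-quotient}. Write $\Gamma_0:={}^{G_0}\ag(G_0)\times^{G_0}G^c$ and $\Gamma:={}^{G^c}\ag(G)$. By the push-out construction of $\times^{G_0}G^c$ (\S\ref{subsec-ind-torsor}) and the definition of the $\ag$-type groups (\S\ref{subsubsec-ind-mixed-sh}), these are extensions
\[
1\to G^c\to \Gamma_0\xrightarrow{\ q_0\ }\ag(G_0)\to 1 ,\qquad 1\to G^c\to \Gamma\xrightarrow{\ q\ }\ag(G)\to 1 ,
\]
and the morphism $\phi\colon\Gamma_0\to\Gamma$ furnished by \S\ref{subsubsec-ind-mixed-sh} / Lemma~\ref{lem-check-group-quotient} (applied with $Y_\Phi=G$ and $P_{\Phi_0}=G_0$) is the identity on the kernels $G^c$ and covers the natural map $\bar\phi\colon\ag(G_0)\to\ag(G)$. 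Finally, $\lcj{K}{g_\alpha}$ is a neat open compact subgroup sitting inside the $G(\A)$-factor of $\Gamma$; because $\Gamma$ splits canonically over (the image of) that factor, $q$ restricts to an isomorphism $\lcj{K}{g_\alpha}\xrightarrow{\ \sim\ }\lcj{K}{\alpha}$ onto the corresponding subgroup of $\ag(G)$, and in particular $G^c\cap\lcj{K}{g_\alpha}=\{e\}$ in $\Gamma$. Unwinding the definitions, $\Delta^\alpha_{G^c}=\phi^{-1}(\lcj{K}{g_\alpha})$ and $\Delta^\alpha=\bar\phi^{-1}(\lcj{K}{\alpha})$, and the projection in the statement is $q_0$.

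First I would check that $q_0$ carries $\Delta^\alpha_{G^c}$ into $\Delta^\alpha$: for $x\in\Delta^\alpha_{G^c}$ one has $\bar\phi(q_0(x))=q(\phi(x))\in q(\lcj{K}{g_\alpha})=\lcj{K}{\alpha}$; call the resulting homomorphism $\pi$. For injectivity, $\ker\pi=\Delta^\alpha_{G^c}\cap\ker q_0=\Delta^\alpha_{G^c}\cap G^c=\{g\in G^c:\phi(g)\in\lcj{K}{g_\alpha}\}=G^c\cap\lcj{K}{g_\alpha}=\{e\}$, using $\phi|_{G^c}=\mathrm{id}$ and the last displayed fact. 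For surjectivity, given $y\in\Delta^\alpha$, pick $x_0\in\Gamma_0$ with $q_0(x_0)=y$ (possible as $q_0$ is surjective); then $q(\phi(x_0))=\bar\phi(y)\in\lcj{K}{\alpha}$, so $\phi(x_0)\in q^{-1}(\lcj{K}{\alpha})=G^c\cdot\lcj{K}{g_\alpha}$, say $\phi(x_0)=g\,k$ with $g\in G^c$ and $k\in\lcj{K}{g_\alpha}$; viewing $g$ inside $G^c\subset\Gamma_0$, the element $x:=g^{-1}x_0$ satisfies $q_0(x)=y$ and $\phi(x)=g^{-1}\phi(x_0)=k\in\lcj{K}{g_\alpha}$, so $x\in\Delta^\alpha_{G^c}$ and $\pi(x)=y$. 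Thus $\pi$ is a bijective homomorphism, hence an isomorphism of groups; since $\Delta^\alpha$ is profinite (by the construction recalled in \S\ref{subsec-tor-ab-review}, cf.\ \cite{Wu25}), so is $\Delta^\alpha_{G^c}$. As in the proof of Lemma~\ref{lem-check-group-quotient}, one may run this chase on $\overline{\bb{Q}}$-points in order to accommodate the group-functor formalism cleanly.

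The only step above that is not purely formal is the assertion that $q$ is injective on $\lcj{K}{g_\alpha}$, equivalently $G^c\cap\lcj{K}{g_\alpha}=\{e\}$ in $\Gamma$; I expect this to be the (mild) crux. It rests on the canonical splitting of the extension ${}^{G^c}\ag(G)\to\ag(G)$ over the image of $G(\A)/\overline{Z_{ac}(\bb{Q})}$, into which the neat compact open $g_\alpha K g_\alpha^{-1}$ embeds injectively --- a feature already built into the construction of the $\ag$-type groups (and visible, over $\bb{C}$, in \S\ref{subsubsec-E-action-mix-sh-complex}). Everything else is the diagram chase above.
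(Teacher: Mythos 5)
Your proof is correct and proceeds by essentially the same diagram chase as the paper's: the paper writes down the commutative square of group extensions with kernel $G^c$, taking the bottom row already quotiented by $\lcj{K}{g_\alpha}$, and concludes by a diagram chase together with \cite[Lem.\ 4.15]{Wu25}. The one non-formal ingredient you spell out --- that $G^c\cap\lcj{K}{g_\alpha}=\{e\}$ inside ${}^{G^c}\ag(G)$, which you derive from the canonical splitting over the image of the $G(\A)$-factor together with neatness of $K$ --- is precisely what makes the paper's bottom row exact, so you are unfolding the same argument rather than offering a different one.
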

\begin{proof}
There is a commutative diagram
\begin{equation*}
    \begin{tikzcd}
    1\arrow[r]& G^c\arrow[r]\arrow[d,equal]&{{}^{G_0}\ag(G_0)\times^{G_0}G^c}\arrow[r]\arrow[d]&\ag(G_0)\arrow[r]\arrow[d]&1\\
    1\arrow[r]& G^c\arrow[r]&{}^{G^c}\ag(G)/\lcj{K}{g_\alpha}\arrow[r]&\ag(G)/\lcj{K}{g_\alpha}\arrow[r]&1
    \end{tikzcd}
\end{equation*}
where the horizontal arrows are exact. Then this follows from diagram chasing and \cite[Lem. 4.15]{Wu25}.
\end{proof}
\begin{construction}\label{const-delta-G-act-generic}\upshape
We construct a natural action of $\Delta^\alpha_{G^c}$ on $\ca{E}_{G^c,K_0^\alpha}^\can:=\ca{E}^\can_{G_0,K_0^\alpha}\times^{G_0}G^c$. 
Let $(\mbf{g},g,\gamma^{-1})\in \Delta^\alpha_{G^c}$. Then, by construction, there is an element $t\in G(\bb{Q})_+$ such that $(t,t,1)\sim(1,1,\gamma^{-1})$ in ${}^{G^c}\ag(G)$ and such that $(\mbf{g},g,\gamma^{-1})\sim (\mbf{g}t,gt,1)\in(1,\lcj{K}{g_\alpha}Z(\bb{Q})^{\overline{\ }}/Z(\bb{Q})^{\overline{\ }},1)$ in ${}^{G^c}\ag(G)$. This implies that $\mbf{g}t=1$ and $gt\in \lcj{K}{g_\alpha}Z(\bb{Q})^{\overline{\ }}/Z(\bb{Q})^{\overline{\ }}$.\par
By Proposition \ref{prop-functoriality-generic} and Proposition \ref{prop-functoriality-generic-conti}, we have an action
\begin{equation*}
\begin{split}
&\ca{E}^\can_{G^c,K_0^\alpha}\xrightarrow{\cdot \mbf{g}}\ca{E}^\can_{G^c,K_0^\alpha}\times^{G^c}\mbf{g}^{-1}G^c\mbf{g}\xrightarrow{\cdot g} \ca{E}^\can_{G^c,g^{-1}K_0^\alpha g}\times^{G^c}(\mbf{g}^{-1}G^c\mbf{g})\xrightarrow{\cdot \gamma^{-1}}\\ 
&\ca{E}^\can_{G^c,\gamma g^{-1}K_0^\alpha g \gamma^{-1}}\times^{G^c}\gamma G^c \gamma^{-1}\times^{\gamma G^c\gamma^{-1}} (\gamma \mbf{g}^{-1} G^c \mbf{g}\gamma^{-1})=\ca{E}^\can_{G^c,K_0^\alpha}\times^{G^c}G^c=\ca{E}^\can_{G^c,K_0^\alpha}.
\end{split}
\end{equation*}
The last line is computed using the first paragraph.\par
For an element $(\mbf{g},g,\gamma^{-1})\sim (1,1,1)$ in ${}^{G^c}\ag(G_0)$, we check over elements $(x,\mbf{g}_0,g_0)\in G_0(\bb{Q})\bss X\times G^c(\bb{C})\times G_0(\A)/K_0^\alpha= \ca{E}_{G^c,K_0^\alpha}(\bb{C})$: The action
\begin{equation*}
\begin{split}
&\ca{E}_{G^c,K_0^\alpha}(\bb{C})\xrightarrow{\cdot \mbf{g}}\ca{E}_{G^c,K_0^\alpha}(\bb{C})\times^{G^c}\mbf{g}^{-1}G^c\mbf{g}\xrightarrow{\cdot g} \ca{E}_{G^c,g^{-1}K_0^\alpha g}\times^{G^c}(\mbf{g}^{-1}G^c\mbf{g})(\bb{C})\xrightarrow{\cdot \gamma^{-1}}\\ 
&\ca{E}_{G^c,\gamma g^{-1}K_0^\alpha g \gamma^{-1}}\times^{G^c}\gamma G^c \gamma^{-1}\times^{\gamma G^c\gamma^{-1}} (\gamma \mbf{g}^{-1} G^c \mbf{g}\gamma^{-1})(\bb{C})=\ca{E}_{G^c,K_0^\alpha}\times^{G^c}G^c(\bb{C})=\ca{E}_{G^c,K_0^\alpha}(\bb{C})
\end{split}
\end{equation*}
 is computed as
\begin{equation*}
\begin{split}
& (x,\mbf{g}_0,g_0) \xrightarrow{\cdot \mbf{g}} (x,\mbf{g}_0\mbf{g},g_0)\xrightarrow{\cdot g} (x,\mbf{g}_0\mbf{g},g_0 g)\xrightarrow{\cdot \gamma^{-1}}\\ 
& (\gamma x\gamma^{-1},\gamma \mbf{g}_0\mbf{g}\gamma^{-1},\gamma g_0g\gamma^{-1})= (\gamma x,\gamma\mbf{g}_0,\gamma g_0)=(x,\mbf{g}_0,g_0).
\end{split}
\end{equation*}
The last line is due to the facts that $\gamma$ is an element in $G_0(\bb{Q})_+$ and is equal to $g$, and that the image of $\gamma$ in $G^c(\bb{Q})$ is equal to $\mbf{g}$. These facts are obtained from the equivalence $(\mbf{g},g,\gamma^{-1})\sim (1,1,1)$.
So the map is an identity map. It is also an identity map on $\ca{E}^\can_{G^c,K_0^\alpha}$ by density. We then have obtained a well-defined action of $\Delta^\alpha_{G^c}$ on $\ca{E}^\can_{G^c,K_0^\alpha}$.
\hfill$\square$
\end{construction}
\begin{prop}\label{prop-action-delta}
There is an action of $\Delta^\alpha_{G^c}$ on $\mathscr{E}^\can_{G_\zbkp^c,K_0^\alpha}:=\mathscr{E}_{G_{0,\zbkp},K_0^\alpha}^\can\times^{G_{0,\zbkp}}G_\zbkp^c$ extending the action on $\ca{E}_{G^c,K_0^\alpha}:=\ca{E}_{G_0,K_0^\alpha}\times^{G_0}G^c$ described in Construction \ref{const-delta-G-act-generic}. In particular, the action of $\Delta^\alpha_{G^c}$ factors through a finite group $\Delta^\alpha_{G^c}/K_0^\alpha$ since it is so on $\ca{E}_{G^c_\zbkp,K_0^\alpha}$.
\end{prop}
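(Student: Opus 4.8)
The strategy is to lift, factor by factor, the three-step recipe of Construction \ref{const-delta-G-act-generic} to the integral canonical model $\mathscr{E}^\can_{G^c_\zbkp,K_0^\alpha}=\mathscr{E}^\can_{G_{0,\zbkp},K_0^\alpha}\times^{G_{0,\zbkp}}G^c_\zbkp$ over $\ca{S}^{\Sigma_0^\alpha}_{K_0^\alpha}$, and then to glue with the generic action by a density argument. Write an element of $\Delta^\alpha_{G^c}$ as $(\mbf{g},g,\gamma^{-1})$, with $\mbf{g}$ a point of $G^c_\zbkp$, $g$ a prime-to-$p$ Hecke parameter (the $p$-component being absorbed into the hyperspecial level, cf. Lemma \ref{lem-delta-g}, which also records that $\Delta^\alpha_{G^c}$ is profinite and maps isomorphically onto $\Delta^\alpha$), and $\gamma^{-1}\in G_0^\ad(\bb{Q})^+$. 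The factor $\cdot\,\mbf{g}$ is purely group-theoretic: it is the change of structure group $\mathscr{E}^\can_{G^c_\zbkp,K_0^\alpha}\to\mathscr{E}^\can_{G^c_\zbkp,K_0^\alpha}\times^{G^c_\zbkp}\mbf{g}^{-1}G^c_\zbkp\mbf{g}$, defined over any base. The factor $\cdot\,g$ is the prime-to-$p$ Hecke operator on the tower $\{\mathscr{E}^\can_{G^c_\zbkp,K_0^\alpha}\}_{K_0^{\alpha,p}}$: on the interior it is provided by Theorem \ref{thm-lovering-ab}(1), and it extends to the toroidal compactification because the construction of $\ca{S}^{\Sigma_0^\alpha}_{K_0^\alpha}$, of the boundary towers $\ca{S}_{K_{\Phi_0^\alpha}}(\sigma_0^\alpha)$, and of the formal-boundary identification of Proposition \ref{prop-torsor-hz}(4) in \cite{Wu25} are $G_0(\Ap)$-equivariant, so the boundary torsors of Definition \ref{def-torsors-bd} are carried along; this is the integral incarnation of Proposition \ref{prop-functoriality-generic-conti}(1). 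The factor $\cdot\,\gamma^{-1}$ is the functoriality under $\ad(\gamma^{-1})$ acting on $(G_{0,\zbkp},X_0)$, which by Theorem \ref{thm-lovering-ab}(2) (and its compatibility with toroidal compactifications via \cite{Wu25}) acts on $\{\mathscr{E}^\can_{G_{0,\zbkp},K_0^\alpha}\}$, followed by the conjugation $G^c_\zbkp\to\gamma G^c_\zbkp\gamma^{-1}$ on the structure group; this is the integral incarnation of Proposition \ref{prop-functoriality-generic} and Proposition \ref{prop-functoriality-generic-conti}(2). All three operations fit into the $\mathscr{A}$-type formalism of \S\ref{subsec-ind-torsor}.

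Composing these for each $(\mbf{g},g,\gamma^{-1})$ yields an automorphism of $\mathscr{E}^\can_{G^c_\zbkp,K_0^\alpha}$ covering the automorphism of $\ca{S}^{\Sigma_0^\alpha}_{K_0^\alpha}$ induced by the image of the element in $\Delta^\alpha$ (the $\Delta^\alpha$-action being the one from \S\ref{subsec-tor-ab-review}). To see that this assignment is well defined, multiplicative, extends the generic action, and factors through $\Delta^\alpha_{G^c}/K_0^\alpha$, I would use that $\ca{S}^{\Sigma_0^\alpha}_{K_0^\alpha}$ is flat over $\ca{O}$ with schematically dense generic fibre $\sh^{\Sigma_0^\alpha}_{K_0^\alpha}$ and that $\mathscr{E}^\can_{G^c_\zbkp,K_0^\alpha}$ is faithfully flat over it; hence the restriction map from automorphisms of $\mathscr{E}^\can_{G^c_\zbkp,K_0^\alpha}$ lying over a fixed automorphism of $\ca{S}^{\Sigma_0^\alpha}_{K_0^\alpha}$ to automorphisms of the generic fibre $\ca{E}^\can_{G^c,K_0^\alpha}$ is injective. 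Every identity then follows from the corresponding identity over $\bb{Q}$, which is exactly Construction \ref{const-delta-G-act-generic}; in particular the composite automorphism restricts to the one constructed there, so the integral action extends the generic one, and since the latter factors through $\Delta^\alpha_{G^c}/K_0^\alpha$, finite by Lemma \ref{lem-delta-g}, the same injectivity forces the integral action to do so as well.

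\textbf{Main obstacle.} The crux is the middle step: producing \emph{integral} functoriality of the canonical extensions of the torsors under the Hecke operators $g$ and the inner automorphisms $\gamma$, i.e.\ genuine integral analogues of Proposition \ref{prop-functoriality-generic} and Proposition \ref{prop-functoriality-generic-conti}. Over $\bb{C}$ these rest on Deligne's uniqueness theorem, which is not available over $\ca{O}$; instead one must show that $\ca{V}^{\alpha,\can}$ together with the tensors $\wdtd{s}^{\alpha,\can}_\lambda$ is preserved, up to the evident identifications, by each such morphism. For this the boundary characterization of Proposition \ref{prop-torsor-hz}(4) is essential: the morphism acts on the interior by the known functoriality of the constructions of \cite{Kis10} and \cite{Lov17}, and it acts on each formal chart $\cpl{\ca{S}_{K_{\Phi_0^\alpha}}(\sigma_0^\alpha)}{\ca{S}_{K_{\Phi_0^\alpha},\sigma_0^\alpha}}$ through the $\mbf{E}_{K_{\Phi_0^\alpha}}$-equivariant boundary torsor $\mathscr{E}_{G_{0,\zbkp},K_{\Phi_0^\alpha}}(\sigma_0^\alpha)$, whose formation from the de Rham realization of the log $1$-motive and the extended Hodge tensors is itself functorial; as $\ca{S}^{\Sigma_0^\alpha}_{K_0^\alpha}$ minus a locus of codimension $\geq 2$ carries no further ambiguity, these two compatibilities pin down the global morphism on $\mathscr{E}^\can$. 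Concretely one would reprove Proposition \ref{prop-functoriality-generic} and Proposition \ref{prop-functoriality-generic-conti} verbatim over $\ca{O}$, substituting Proposition \ref{prop-torsor-hz} for Theorem \ref{thm-can-ex-Harris-complex}.
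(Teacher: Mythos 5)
Your proposal takes a genuinely different route from the paper, and the route you chose forces you into the hardest possible version of the problem. You attempt to produce, for each $(\mbf{g},g,\gamma^{-1})$, an automorphism of $\mathscr{E}^\can_{G^c_\zbkp,K_0^\alpha}$ directly over the whole integral compactification $\ca{S}^{\Sigma_0^\alpha}_{K_0^\alpha}$, which as you correctly observe requires integral analogues of Propositions \ref{prop-functoriality-generic} and \ref{prop-functoriality-generic-conti}, i.e.\ functoriality of canonical extensions of torsors through the integral boundary under Hecke and adjoint operators. Your suggestion to reprove these ``verbatim over $\ca{O}$, substituting Proposition \ref{prop-torsor-hz} for Theorem \ref{thm-can-ex-Harris-complex}'' does not work as stated: the proofs of Propositions \ref{prop-functoriality-generic} and \ref{prop-functoriality-generic-conti} lean on GAGA, the analytic Riemann--Hilbert correspondence and Deligne's uniqueness, none of which transfer over $\ca{O}$, so the substitution requires a substantially new argument (comparison of $\ca{V}^{\alpha,\can}$ and the extended tensors across the formal boundary charts) that you flag as ``the crux'' but do not carry out. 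In short, your plan is not wrong in spirit but leaves its main step unresolved.

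The paper avoids this entirely. Its proof constructs the action only on two overlapping open subschemes of $\ca{S}^{\Sigma_0^\alpha}_{K_0^\alpha}$ whose union has complement of codimension $\geq 2$ (the intersection of the boundary divisor with the special fibre): the interior $\ca{S}_{K_0^\alpha}$, where the action is supplied by Lovering's integral Hecke and pullback functoriality (Theorem \ref{thm-lovering-ab}, after using \cite[Lem.~2.2.6]{Kis10} to arrange $g_p\in K_{0,p}^\alpha$ so that the $p$-component acts trivially), and the generic toroidal compactification $\sh^{\Sigma_0^\alpha}_{K_0^\alpha}$, where the action is the one from Construction \ref{const-delta-G-act-generic}. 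The two actions coincide on the common open $\ca{E}_{G^c,K_0^\alpha}$, and since $\mathscr{E}^\can_{G^c_\zbkp,K_0^\alpha}$ is an affine scheme over the normal scheme $\ca{S}^{\Sigma_0^\alpha}_{K_0^\alpha}$, the glued automorphism extends uniquely across the codimension-$\geq 2$ locus. This bypasses any integral boundary functoriality for canonical extensions. Your density observation in the second paragraph (that the restriction to the generic fibre is injective) is correct and gives well-definedness and finiteness for free in both approaches, but without the construction over the interior integral model and the codimension-$\geq 2$ extension step it does not produce the automorphism. You should adopt the paper's glue-and-extend decomposition rather than trying to globalize the three-step recipe over the full integral toroidal compactification.
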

\begin{proof}
It suffices to construct the actions on $\mathscr{E}_{G^c,K_0^\alpha}$ over $\ca{S}_{K_0^\alpha}$, and on $\ca{E}^\can_{G^c,K_0^\alpha}$ on $\sh_{K_0^\alpha}^{\Sigma_0^\alpha}$ that coincide on $\ca{E}_{G^c,K_0^\alpha}$. 
By \cite[Lem. 2.2.6]{Kis10}, we can adjust an element in $G_0(\bb{Q})_+$ to $(\mbf{g},g,\gamma^{-1})$ so that the $p$-component $g_p$ of $g$ is in $K_{0,p}^\alpha=G_{0,\zbkp}(\bb{Z}_p)$. Write $g=g_pg^p$, where $g^p\in G_0(\Ap)$. Then there is an action of $g^p$ on $\varprojlim_{K_0^{\alpha,p}}\mathscr{E}_{G^c,K_0^\alpha}$ by Theorem \ref{thm-lovering-ab} (1), and the right action of $g_p\in K_{0,p}^\alpha$ is trivial. Then the action of $(\mbf{g},g,\gamma^{-1})$ on $\mathscr{E}_{G^c,K_0^\alpha}$ follows from Theorem \ref{thm-lovering-ab} (1) and (2). Now we can conclude by combining this with Construction \ref{const-delta-G-act-generic}. By Lemma \ref{lem-delta-g} and Construction \ref{const-delta-G-act-generic}, the action of $\Delta^\alpha_{G^c}$ factors through $\Delta^\alpha_{G^c}/K_0^\alpha$ and the latter group is finite.
\end{proof}
\begin{rk}
We must consider the contracted product 
$$\mathscr{E}_{G_{0,\zbkp}^c,K_0^\alpha}\times^{G_{0,\zbkp}^c} G^c_\zbkp$$ first to let the action of $\Delta^\alpha_G$ extend on the level of torsors. In other words, there is no $\Delta^\alpha$ or $\Delta^\alpha_{G^c}$-action on $\mathscr{E}_{G_{0,\zbkp}^c,K_0^\alpha}$. One way to see it is that one cannot make sense of ``$\mbf{g}t$'' in $G_0(\A)$ for $\mbf{g}\in G_0(\A)$ but $t\in G(\bb{Q})_+$. Another way to see this is that if there were such an action, there would be a principal $G_{0,\zbkp}$-bundle on $\ca{S}_{K}(G,X_b)$, and we have a contradiction with the theory in characteristic zero. 
\end{rk}
\begin{construction}\upshape
By Proposition \ref{prop-action-delta}, the action of $\Delta^\alpha$ extends to a $\Delta^\alpha_{G^c}$-action on $\mathscr{E}^\can_{G_{\zbkp}^c,K_0^\alpha}$ covering the $\Delta^\alpha$-action on $\ca{S}_{K_0^\alpha}^{\Sigma_0^\alpha}$. Since $\pi_\alpha$ is finite and $\Delta^\alpha_{G^c}$ acts on $\mathscr{E}^\can_{G_{\zbkp}^c,K_0^\alpha}$ through a finite group $\Delta^\alpha_{G^c}/K_0^\alpha$ (the finiteness follows from Lemma \ref{lem-delta-g}),
let $\mathscr{E}^\can_{G_\zbkp^c,g_\alpha,K}:=\mathscr{E}^\can_{G^c_\zbkp,K_0^\alpha}/\Delta^\alpha_{G^c}$. This is a scheme over $\ca{S}_{g_\alpha}:=\ca{S}_{K_0^\alpha}^{\Sigma_0^\alpha}/\Delta^\alpha$. Taking the disjoint union over $I_{G/G_0}$, we define $\mathscr{E}^\can_{G^c_\zbkp,K}:=\disju_{\alpha\in I_{G/G_0}} \mathscr{E}^\can_{G^c_\zbkp,g_\alpha,K}$.
\hfill$\square$\par
\begin{lem}\label{lem-generic-compatible}
Over $\sh_K^\Sigma(G,X_b)_{E'}$, the restriction $\mathscr{E}_{G^c_\zbkp,K}^\can|_{\sh^{\Sigma}_{K,E'}}$ is the canonical extension $\ca{E}^\can_{G^c,K}$ defined in Theorem \ref{thm-harris-can-ex-can-model}.
\end{lem}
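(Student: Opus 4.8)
The plan is to argue component by component along the presentation $\ca{S}^\Sigma_K=\disju_{\alpha\in I_{G/G_0}}\ca{S}^{\Sigma_0^\alpha}_{K_0^\alpha}/\Delta^\alpha$ and then to conclude by the uniqueness of the canonical extension. Over $E'$ this presentation restricts to $\sh^\Sigma_K(G,X_b)_{E'}=\disju_\alpha\sh^{\Sigma_0^\alpha}_{K_0^\alpha}(G_0,X_0)_{E'}/\Delta^\alpha$, and on each $\alpha$-component the quotient map $\pi_\alpha\colon\sh^{\Sigma_0^\alpha}_{K_0^\alpha}(G_0,X_0)_{E'}\to\sh^\Sigma_K(G,X_b)_{E'}$ is the quotient by the free action of a finite group (recall $K_0^\alpha$ is neat), hence finite and faithfully flat, and it is Kummer \'etale with respect to the toroidal boundary divisors; under such a quotient vector bundles and torsors descend, the log structures are compatible, and nilpotence of log-connection residues is preserved in both directions (in characteristic zero the residues transform by an invertible scalar). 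Moreover $\Delta^\alpha_{G^c}\to\Delta^\alpha$ is an isomorphism by Lemma~\ref{lem-delta-g}, so on the $\alpha$-component $\mathscr{E}^\can_{G^c_\zbkp,K}|_{\sh^\Sigma_{K,E'}}$ is the descent along $\pi_\alpha$ of the generic fibre of $\mathscr{E}^\can_{G^c_\zbkp,K_0^\alpha}=\mathscr{E}^\can_{G_{0,\zbkp},K_0^\alpha}\times^{G_{0,\zbkp}}G^c_\zbkp$ carrying the $\Delta^\alpha_{G^c}$-action of Construction~\ref{const-delta-G-act-generic}. Since $\Sigma$ is smooth, $\ca{E}^\can_{G^c,K}(G,X_b)$ is characterized (Theorem~\ref{thm-harris-can-ex-can-model}, Theorem~\ref{thm-can-ex-Harris-complex}) among $G^c$-torsors on $\sh^\Sigma_K(G,X_b)_{E'}$ by restricting to $\ca{E}_{G^c,K}(G,X_b)$ over $\sh_K(G,X_b)_{E'}$ and by the property that for every $W\in\rep(G^c)$ the associated vector bundle carries a log connection with nilpotent residues along the normal crossings boundary; it therefore suffices to check these two properties for $\mathscr{E}^\can_{G^c_\zbkp,K}|_{\sh^\Sigma_{K,E'}}$.

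For the restriction over the open part: the generic fibre of $\mathscr{E}_{G_{0,\zbkp},K_0^\alpha}$ is the canonical model $\ca{E}_{G_0,K_0^\alpha}(G_0,X_0)$ by Theorem~\ref{thm-lov-mad}(1), hence the restriction of $\mathscr{E}^\can_{G^c_\zbkp,K_0^\alpha}$ to $\sh_{K_0^\alpha}(G_0,X_0)_{E'}$ is $\ca{E}_{G_0,K_0^\alpha}(G_0,X_0)\times^{G_0}G^c$. By the functoriality of canonical models (Proposition~\ref{prop-can-mod-HZM}) applied to $\pi^b\colon(G_0,X_0)\to(G,X_b)$ and to the Hecke morphisms of Proposition~\ref{prop-functoriality-generic-conti}, this is the pullback of $\ca{E}_{G^c,K}(G,X_b)$ along $\pi_\alpha$; and the $\Delta^\alpha_{G^c}$-action of Construction~\ref{const-delta-G-act-generic} restricts over the open part to the descent datum attached to this pullback, since it is by construction assembled from exactly those functoriality and Hecke morphisms. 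Descending along $\pi_\alpha$ and taking $\disju_\alpha$ gives $\mathscr{E}^\can_{G^c_\zbkp,K}|_{\sh_{K,E'}}\iso\ca{E}_{G^c,K}(G,X_b)$.

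For the nilpotence of residues: let $W\in\rep(G^c)$ and view it in $\rep(G_0)$ via $G_0\to G^c$. By Theorem~\ref{thm-lov-mad}(4), after passing to a smooth refinement of $\Sigma_0^\alpha$ the vector bundle $\mathscr{E}^\can_{G_{0,\zbkp},K_0^\alpha}\times^{G_{0,\zbkp}}W_\zbkp$ has nilpotent residues along the boundary, hence so does the generic fibre of $\mathscr{E}^\can_{G^c_\zbkp,K_0^\alpha}\times^{G^c_\zbkp}W$; since on the $\alpha$-component this is the pullback of $\mathscr{E}^\can_{G^c_\zbkp,K}|_{\sh^\Sigma_{K,E'}}\times^{G^c}W$ along a refinement map followed by the Kummer \'etale quotient $\pi_\alpha$, and both of these reflect nilpotence of residues in characteristic zero, the residues of $\mathscr{E}^\can_{G^c_\zbkp,K}|_{\sh^\Sigma_{K,E'}}\times^{G^c}W$ along the (smooth) boundary of $\Sigma$ are nilpotent. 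Together with the previous paragraph and the uniqueness in Theorem~\ref{thm-harris-can-ex-can-model} (or its Tannakian reformulation) this gives $\mathscr{E}^\can_{G^c_\zbkp,K}|_{\sh^\Sigma_{K,E'}}\iso\ca{E}^\can_{G^c,K}(G,X_b)$.

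The main obstacle is the identification, in the second paragraph, of the $\Delta^\alpha_{G^c}$-action of Construction~\ref{const-delta-G-act-generic} with the canonical descent datum on the pullback of $\ca{E}_{G^c,K}(G,X_b)$ along $\pi_\alpha$. There is no conceptual difficulty, but it requires checking that the Hecke normalizations implicit in the characteristic-zero quotient presentation of $\sh^\Sigma_K(G,X_b)$ from \cite{Wu25} match those used to build the action in Construction~\ref{const-delta-G-act-generic}; this is a somewhat tedious bookkeeping verification, best carried out on $\bb{C}$-points of $\sh_{K_0^\alpha}(G_0,X_0)$ using the explicit double-coset descriptions, density of the open part, and the uniqueness clauses in Propositions~\ref{prop-functoriality-generic} and \ref{prop-functoriality-generic-conti}.
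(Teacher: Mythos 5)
Your proposal takes a genuinely different route from the paper, and it has a gap that your own flag at the end does not quite identify. The paper's proof is two lines: by Proposition \ref{prop-functoriality-generic}, $\ca{E}^\can_{G_0,K_0^\alpha}\times^{G_0}G^c\iso\pi^{b,*}\ca{E}^\can_{G^c,K}$ as an isomorphism compatible with the $\Delta^\alpha_{G^c}$-action, and by Lemma \ref{lem-check-group-quotient} this $\Delta^\alpha_{G^c}$-action on the right is the tautological descent datum (i.e.\ $\Delta^\alpha_{G^c}$ acts trivially on the torsor factor $\ca{E}^\can_{G^c,K}$). Once a sheaf is known to be a pullback $\pi_\alpha^*\ca{F}$ with the tautological descent datum, the quotient recovers $\ca{F}$ by the projection formula $(\pi_{\alpha,*}\pi_\alpha^*\ca{F})^{\Delta^\alpha}\iso\ca{F}$, which holds for \emph{any} finite quotient map $\pi_\alpha$ and locally free $\ca{F}$, without needing $\pi_\alpha$ to be étale.

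The gap in your argument is the assertion that ``$\pi_\alpha$ is the quotient by the free action of a finite group, hence finite and faithfully flat, \ldots under such a quotient vector bundles and torsors descend.'' The action of $\Delta^\alpha$ on $\sh^{\Sigma_0^\alpha}_{K_0^\alpha,E'}$ is free only on the open dense part. Along the boundary, $\Sigma_0^\alpha$ is the pullback of the smooth $\Sigma$ along an isogeny of tori $\mbf{E}_{K_{\Phi_0^\alpha}}\to\mbf{E}_{\wdtd{K}_\Phi}$ that need not be an isomorphism, so the group has stabilizers on boundary strata (already the model case $\mu_n\curvearrowright\bb{A}^1$, $\bb{A}^1\to\bb{A}^1$, $z\mapsto z^n$, shows this); equivalently $\pi_\alpha$ is Kummer étale but not étale, and it need not even be flat. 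For such a map, a $\Delta^\alpha_{G^c}$-equivariant torsor upstairs does \emph{not} automatically descend to a torsor (or even a locally free sheaf) downstairs. Establishing precisely this descent, by a formal-local computation at the boundary, is the content of Theorem \ref{thm-loc-free-quo} (Theorem C of the paper) and is the main technical work of the whole section; it cannot be assumed at this stage, even over the characteristic-zero fibre. Your residue argument is built on this unproven descent (you need the quotient to carry a log connection before you can talk about its residues), so the Deligne-characterization strategy does not close here. Finally, the bookkeeping you flag in your last paragraph — matching the action of Construction \ref{const-delta-G-act-generic} to the tautological descent datum on $\pi^{b,*}\ca{E}^\can_{G^c,K}$ — is not incidental; it is exactly what Lemma \ref{lem-check-group-quotient} supplies via the Deligne induction formalism of \S\ref{subsec-ind-torsor}, and in the paper it is what makes the projection-formula argument applicable.
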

\begin{proof}
By Proposition \ref{prop-functoriality-generic}, we have $\ca{E}^\can_{G_0,K_0^\alpha}\times^{G_0}G^c\iso \pi^{b,*}\ca{E}_{G^c,K}^\can$. Then, since $\Delta^\alpha_{G^c}$ acts on $\ca{E}^\can_{G^c,K}$ trivially by Lemma \ref{lem-check-group-quotient}, $\ca{E}^\can_{G_0,K_0^\alpha}\times^{G_0}G^c/\Delta^\alpha_{G^c}\iso \pi^{b,*}\ca{E}^\can_{G^c,K}/\Delta^\alpha_{G^c}\iso \ca{E}^\can_{G^c,K}|_{\sh_{g_\alpha}}$, where $\sh_{g_\alpha}:=\sh^{\Sigma_0^\alpha}_{K_0^\alpha,E'}/\Delta^\alpha$.
\end{proof}
Now the key point is to see that
\end{construction}
\begin{thm}\label{thm-loc-free-quo}
The scheme $\mathscr{E}^\can_{G_\zbkp^c,g_\alpha,K}$ (resp. $\mathscr{E}^\can_{G^c_\zbkp,K}$) is a $G_\zbkp^c$-torsor over $\ca{S}_{g_\alpha}$ (resp. $\ca{S}^\Sigma_K$). 
\end{thm}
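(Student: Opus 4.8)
The plan is to reduce the statement to the local freeness of a single torsion‑free coherent sheaf, and to establish that local freeness by an explicit toric computation at the boundary of the special fibre. Since $\mathscr{E}^\can_{G^c_\zbkp,K}=\disju_{\alpha\in I_{G/G_0}}\mathscr{E}^\can_{G^c_\zbkp,g_\alpha,K}$ and $\ca{S}^\Sigma_K=\disju_{\alpha}\ca{S}_{g_\alpha}$, it suffices to treat a fixed $\alpha$. Choose a faithful $W_\zbkp\in\rep(G^c_\zbkp)$ realizing $G^c_\zbkp$ as the schematic pointwise stabilizer of a finite family of tensors $t_\nu\in W_\zbkp^\otimes$, which exists because $G^c_\zbkp$ is reductive over $\zbkp$. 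Put $\ca{W}^\alpha:=\mathscr{E}^\can_{G^c_\zbkp,K_0^\alpha}\times^{G^c_\zbkp}W_\zbkp$; this is a locally free sheaf on $\ca{S}^{\Sigma_0^\alpha}_{K_0^\alpha}$ carrying, by Proposition \ref{prop-action-delta} and Lemma \ref{lem-delta-g}, a $\Delta^\alpha_{G^c}$-equivariant structure covering the $\Delta^\alpha$-action on the base, together with $\Delta^\alpha_{G^c}$-invariant parallel tensors $\widetilde{t}_{\nu}^{\alpha,\can}\in(\ca{W}^\alpha)^\otimes$ (invariance is built into Construction \ref{const-delta-G-act-generic} and the functoriality of the de Rham tensors). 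Set $\ca{W}_{g_\alpha}:=(\pi_{\alpha,*}\ca{W}^\alpha)^{\Delta^\alpha_{G^c}}$, a coherent sheaf on the normal scheme $\ca{S}_{g_\alpha}$ with induced tensors $\widetilde{t}_{\nu,g_\alpha}$. By construction $\mathscr{E}^\can_{G^c_\zbkp,g_\alpha,K}=\ul{\mrm{Isom}}_{\{t_\nu\mapsto\widetilde{t}_{\nu,g_\alpha}\}}(W_\zbkp\otimes_\zbkp\ca{O}_{\ca{S}_{g_\alpha}},\ca{W}_{g_\alpha})$, and $\ca{W}_{g_\alpha}$ is torsion‑free since $\ca{S}_{g_\alpha}$ is normal and $\ca{W}^\alpha$ is locally free. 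So it is enough to prove that at every point $z$ of $\ca{S}_{g_\alpha}$ the pair $(\ca{W}_{g_\alpha},\widetilde{t}_{\nu,g_\alpha})$ is formal‑locally isomorphic to $(W_\zbkp\otimes\ca{O},t_\nu\otimes 1)$: local freeness of $\ca{W}_{g_\alpha}$ then follows (a coherent sheaf free after completion at each point is locally free), and $\mathscr{E}^\can_{G^c_\zbkp,g_\alpha,K}$, being a $G^c_\zbkp$-pseudo‑torsor that is trivial after completion at each point, is a $G^c_\zbkp$-torsor by flat descent.

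I would first dispose of the locus where this is already known. Over the generic fibre, Lemma \ref{lem-generic-compatible} identifies $\mathscr{E}^\can_{G^c_\zbkp,g_\alpha,K}|_{\sh_{g_\alpha}}$ with $\ca{E}^\can_{G^c,K}|_{\sh_{g_\alpha}}$, a $G^c$-torsor by Theorem \ref{thm-harris-can-ex-can-model}; over the interior $\ca{S}_{K_0^\alpha}/\Delta^\alpha$ the group $\Delta^\alpha$ acts freely, so $\ca{W}_{g_\alpha}$ descends a locally free sheaf along an \'etale quotient. These two loci cover $\sh^{\Sigma_0^\alpha}_{K_0^\alpha}/\Delta^\alpha\cup\ca{S}_{K_0^\alpha}/\Delta^\alpha$, whose complement in $\ca{S}_{g_\alpha}$ — the intersection of the (flat) boundary divisor with the special fibre — has codimension $\geq 2$. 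It remains to treat a closed point $z$ in a boundary stratum $\ca{Z}_{\Upsilon,K}$ of the special fibre, which I would handle after passing to the formal completion $\cpl{\ca{S}_{g_\alpha}}{\ca{Z}_{\Upsilon,K}}$.

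The heart of the argument is the computation on this formal completion. By the construction recalled in \S\ref{subsec-tor-ab-review}(2), $\cpl{\ca{S}_{g_\alpha}}{\ca{Z}_{\Upsilon,K}}$ is the quotient, by the finite group $\Delta^{g_0^\alpha}$ acting freely on the central stratum, of the formal completion $\cpl{\ca{S}_{K_{\Phi_0^\alpha}}(\sigma_0^\alpha)}{\ca{S}_{K_{\Phi_0^\alpha},\sigma_0^\alpha}}$ of the boundary toric scheme. Pulled back there, Proposition \ref{prop-torsor-hz}(4) identifies $\ca{W}^\alpha$ with $\mathscr{E}_{G_{0,\zbkp},K_{\Phi_0^\alpha}}(\sigma_0^\alpha)\times^{G_{0,\zbkp}}W_\zbkp$, which by Proposition \ref{prop-torsor-hz}(3) is the $\mbf{p}_1(\sigma_0^\alpha)$-pullback of a locally free sheaf $\overline{\ca{W}}$ on the smooth scheme $\overline{\ca{S}}_{K_{\Phi_0^\alpha}}$ carrying the tensors $\widetilde{t}_\nu$; in particular $\ca{W}^\alpha$ is constant along the affine toric fibres. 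Using the torsor version of Deligne's induction developed in \S\ref{subsec-ind-torsor} — the $\ag$-type groups for torsors and Lemma \ref{lem-check-group-quotient} — one describes the $\Delta^{g_0^\alpha}$-action on this chart compatibly with the fibration $\mbf{p}_1(\sigma_0^\alpha)$: it acts on $\overline{\ca{S}}_{K_{\Phi_0^\alpha}}$ (freely, through an \'etale group) and on the fibres of $\overline{\ca{W}}$ by the corresponding Hecke and conjugation automorphisms, while the ``$\mbf{E}_{K_{\Phi_0^\alpha}}$-lattice'' part of $\Delta^{g_0^\alpha}$ acts by torus translations on the toric fibres together with the $\mbf{E}_{K_{\Phi_0^\alpha}}$-action of Proposition \ref{prop-torsor-hz}(2) on $\ca{W}^\alpha$. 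Trivializing $\overline{\ca{W}}$ compatibly with the tensors on a $\Delta^{g_0^\alpha}$-stable Zariski cover of $\overline{\ca{S}}_{K_{\Phi_0^\alpha}}$, the passage to $\Delta^{g_0^\alpha}$-invariants becomes, fibrewise over $\overline{\ca{S}}_{K_{\Phi_0^\alpha}}$, the computation of invariants of the structure sheaf of an affine toric scheme under a finite group acting by monomial substitutions twisted by finite‑order characters — the base‑parametrized, higher‑rank analogue of the fact that $\zbkp[x]$ is free of rank $1$ over $\zbkp[x^n]$, generated by a monomial $x^b$. This shows that $\ca{W}_{g_\alpha}$ is formal‑locally free of rank $\dim W_\zbkp$ and that the chosen trivialization carries $\widetilde{t}_{\nu,g_\alpha}$ to $t_\nu$, which is what we need.

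The main obstacle is precisely this last computation: making the $\Delta^{g_0^\alpha}$-action on the canonical extension of the torsor near the boundary explicit enough to be carried out in toric coordinates. Everything rests on Proposition \ref{prop-torsor-hz}, which pins the boundary torsor down as a pullback from the ``abelian part'' $\overline{\ca{S}}_{K_{\Phi_0^\alpha}}$ along the toric fibration, and on the torsor‑theoretic $\ag$-formalism of \S\ref{subsec-ind-torsor}; the delicate bookkeeping is the interaction between the torus‑translation part of the group action and the log structure, i.e.\ the unipotence of the monodromy (Lemma \ref{lem-mono-uni}), which is what guarantees that the character twists occurring in the monomial computation are of finite order, so that the invariants remain locally free. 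Granting all this, the theorem for $\ca{S}_{g_\alpha}$ follows, and taking the disjoint union over $\alpha\in I_{G/G_0}$ gives the statement for $\ca{S}^\Sigma_K$.
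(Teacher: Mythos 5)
Your general plan matches the paper's: reduce to a single $\alpha$, treat the interior and generic fibre by known facts, pass to the formal completion at a boundary stratum, invoke Proposition \ref{prop-torsor-hz}(3) to realize the torsor as a pullback from $\overline{\ca{S}}_{K_{\Phi_0^\alpha}}$ along the toric fibration, and exploit the fact that $\Delta^{g_0^\alpha}$ acts freely on $\overline{\ca{S}}_{K_{\Phi_0^\alpha}}$. This is exactly the skeleton of the paper's proof via Propositions \ref{prop-delta-g0a-action}, \ref{prop-boundary-torsor-quo} and Lemma \ref{lem-quotient-tors-general}.

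However, the heart of your argument — the sentence claiming that the passage to $\Delta^{g_0^\alpha}$-invariants becomes ``fibrewise over $\overline{\ca{S}}_{K_{\Phi_0^\alpha}}$, the computation of invariants of the structure sheaf of an affine toric scheme under a finite group acting by monomial substitutions twisted by finite-order characters'' — is where the proof falls apart. First, this is the wrong picture: since $\Delta^{g_0^\alpha}$ acts \emph{freely} on $\overline{\ca{S}}_{K_{\Phi_0^\alpha}}$, no nontrivial element fixes a fibre, so there is no ``fibrewise group action'' to take invariants of. The correct mechanism, encoded in Lemma \ref{lem-quotient-tors-general}, is plain flat descent: $\overline{\mathscr{E}}_0/\Delta$ is a torsor on $\overline{\ca{S}}_{K_{\Phi_0^\alpha}}/\Delta$ because the base action is free and \'etale, and the chain of isomorphisms $(\ca{O}_{X_0(\sigma_0)}\otimes_{\ca{O}_{Y_0}}\ca{O}_{\overline{\mathscr{E}}_0})^\Delta \iso (\ca{O}_{X_0(\sigma_0)}\otimes_{\ca{O}_Y}\ca{O}_{\overline{\mathscr{E}}_0/\Delta})^\Delta \iso \ca{O}_{X(\sigma)}\otimes_{\ca{O}_Y}\ca{O}_{\overline{\mathscr{E}}_0/\Delta}$ involves no character twists at all. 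Your invocation of Lemma \ref{lem-mono-uni} (unipotent monodromy) to conclude that the purported ``twists'' are of finite order is a red herring: unipotence of the connection's residues has nothing to do with the freeness of a finite-group module.

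Second, and more seriously, you never verify the one delicate point that makes the descent picture legitimate, namely that the subgroup of $\Delta^{g_0^\alpha}_{G^c}$ acting trivially on $\overline{\ca{S}}_{K_{\Phi_0^\alpha}}$ has trivial image in $G^c$ (equivalently, that the $\Delta^{g_0^\alpha}_{G^c}$-action on the descended torsor $\overline{\mathscr{E}}_0$ genuinely factors through the group that acts freely on $\overline{\ca{S}}_{K_{\Phi_0^\alpha}}$). This is precisely the group-theoretic computation at the end of the proof of Proposition \ref{prop-boundary-torsor-quo}, using $Z_{G,ac}(\bb{Q})$ and the neatness of the level. Without it, you cannot rule out that an element of $\Delta^{g_0^\alpha}_{G^c}$ fixes the base $\overline{\ca{S}}_{K_{\Phi_0^\alpha}}$ while acting nontrivially on the $G^c$-fibre, in which case the invariants need not be a pullback and local freeness could fail. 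You should replace the heuristic toric-monomial picture with the formal descent argument of Lemma \ref{lem-quotient-tors-general} and supply the explicit verification of this triviality.
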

\subsubsection{Proof of Theorem \ref{thm-loc-free-quo}}\label{subsubsec-proof-key-thm}
We show Theorem \ref{thm-loc-free-quo} in the whole \S\ref{subsubsec-proof-key-thm}. \par
Choose any $\Phi\in \ca{CLR}(G,X_b)$. Let $[ZP^b(\Phi,\sigma)]$ be a $ZP$-cusp with a cone $\sigma$ defined in \cite[Prop. 1.33]{Wu25}. In \cite[Const. 4.20]{Wu25}, we constructed a normal flat scheme $\ca{Z}_{[ZP^b(\Phi,\sigma)],K}$ whose generic fiber is $\disju_{[(\Phi',\sigma')]\sim_{ZP}[(\Phi,\sigma)]} \mrm{Z}_{[(\Phi',\sigma')],K}$, a disjoint union of strata defined by cusp labels with cones that are equivalent to $[(\Phi,\sigma)]$ under a weaker equivalence relation (see \cite[Def. 1.28]{Wu25}). 
Let $Q_0:=\pi^{b,-1}(Q_\Phi)$ be an admissible $\bb{Q}$-parabolic subgroup. Let $P_0$ be the canonical subgroup associated with $Q_0$ (see \S\ref{subsec-f-u-f} (8)).
\begin{prop}[{\cite{Wu25}}]\label{prop-recall-constr-wu25}
Let $ZP_\Phi$ be the identity component of the group generated by $Z_G$ and $P_\Phi$. Then
\begin{enumerate}
\item There is an integral model $\ca{S}_{\wdtd{K}_\Phi}$ of the mixed Shimura variety $\sh_{\wdtd{K}_\Phi}(ZP_\Phi,ZP_\Phi(\bb{Q})D_\Phi)$ associated with the mixed Shimura datum $(ZP_\Phi,ZP_\Phi(\bb{Q})D_\Phi)$ and the level $\wdtd{K}_\Phi:=ZP_\Phi(\A)\cap g_\Phi K g_\Phi^{-1}$.
\item The tower $$\sh_{\wdtd{K}_\Phi}(ZP_\Phi,ZP_\Phi(\bb{Q})D_\Phi)\to \sh_{\overline{\wdtd{K}}_\Phi}(\overline{ZP}_\Phi,ZP_\Phi(\bb{Q})\overline{D}_\Phi)\to \sh_{\wdtd{K}_{\Phi,h}}(ZP_{\Phi,h},ZP_\Phi(\bb{Q})D_{\Phi,h})$$ as in \S\ref{subsec-f-u-f} (7) has an integral model
$$\ca{S}_{\wdtd{K}_\Phi}\to \overline{\ca{S}}_{\wdtd{K}_\Phi}\to \ca{S}_{\wdtd{K}_{\Phi,h}}, $$
where the first map is a torsor under a split torus $\mbf{E}_{\wdtd{K}_\Phi}$, and the second map is a torsor under an abelian scheme over $\ca{S}_{\wdtd{K}_{\Phi,h}}$. It makes sense to define the affine torus embedding $\ca{S}_{\wdtd{K}_\Phi}(\sigma)$ and its $\sigma$-stratum $\ca{S}_{\wdtd{K}_\Phi,\sigma}$ associated with $\sigma\in \Sigma^+(\Phi)$. 
\item There is an isomorphism
    \begin{equation}\label{eq-zp-strata-iso-at-completion}
   \cpl{\ca{S}^\Sigma_K}{ \ca{Z}_{[ZP^b({\Phi},\sigma)],K}} \iso \cpl{\ca{S}_{\wdtd{K}_\Phi}(\sigma)}{\ca{S}_{\wdtd{K}_\Phi,\sigma}}.
\end{equation}
It is computed as follows:\footnote{We apologize to the readers that there is an error in the current version of \cite[Prop. 4.32]{Wu25} on arXiv. A corrected version has been updated at \href{https://peihang-wu.github.io/}{https://peihang-wu.github.io/}.}
\begin{equation}\label{eq-exp-computation}
\begin{split}
& \cpl{\ca{S}^\Sigma_K}{ \ca{Z}_{[ZP^b({\Phi},\sigma)],K}}\\
&\iso \disju_{\alpha\in I_{G/G_0}}\disju_{\pi^b(g_0^\alpha)\alpha\sim g^b}\disju_{[\sigma_0^\alpha]\in [\sigma]_{ZP}}(\Delta_{\lcj{K}{\alpha}}(G_0,G) \cpl{\ca{S}_{K_0^\alpha}^{\Sigma_0^\alpha}}{\ca{Z}_{[(\Phi^\alpha_0,\sigma^\alpha_0)],K_0^\alpha}})/\Delta_{\lcj{K}{\alpha}}(G_0,G)\\
&\iso\disju_{\alpha\in I_{G/G_0}}\disju_{\pi^b(g^\alpha_0)\alpha\sim g^b}\disju_{[\sigma_0^\alpha]\in [\sigma]_{ZP}} (\Delta_{\lcj{K}{\alpha}}(G_0,G)\cpl{\ca{S}_{K_{\Phi_0^\alpha}}(\sigma_0^\alpha)}{\ca{S}_{K_{\Phi_0^\alpha},\sigma_0^\alpha}})/\Delta_{\lcj{K}{\alpha}}(G_0,G)\\
&\iso\disju_{\alpha\in I_{G/G_0}}\disju_{\pi^b(g^\alpha_0)\alpha\sim g^b}\disju_{[\sigma_0^\alpha]\in [\sigma]_{ZP}}\cpl{\ca{S}_{K_0^\alpha}(\sigma_0^\alpha)}{\ca{S}_{K_{\Phi_0^\alpha},\sigma_0^\alpha}}/\Delta_{\lcj{K}{g_0^\alpha g_\alpha}}(P_{0},ZP_\Phi)\\
&\iso \cpl{\ca{S}_{\wdtd{K}_\Phi}(\sigma)}{\ca{S}_{\wdtd{K}_\Phi,\sigma}}.
\end{split}
\end{equation}
The cusp label representatives $\Phi_0^\alpha$ are the tuples $(Q_{0},X^+_0,g_0^\alpha)$ for fixed $Q_0$ and $X^+_0$. The images of $[(\Phi_0^\alpha,\sigma_0^\alpha)]$ in $\cusp_K(G,X_b,\Sigma)$ are $\sim_{ZP}$ to $[(\Phi,\sigma)]$. The above disjoint union runs over (a subset of all) such $[(\Phi_0^\alpha,\sigma_0^\alpha)]$.
\end{enumerate}
\end{prop}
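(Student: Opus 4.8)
\smallskip
\noindent\emph{Proof proposal.}
This proposition collects, for the case at hand, the construction of the boundary of the integral toroidal compactification carried out in \cite{Wu25}; accordingly the plan is to recall that construction and to verify the three assertions along the way. Recall that $ZP_\Phi=(Z_G\cdot P_\Phi)^\circ$, so $(ZP_\Phi,ZP_\Phi(\bb{Q})D_\Phi)$ is a mixed Shimura datum in the sense of Pink with canonical subgroup $P_\Phi$, and that $Q_0=\pi^{b,-1}(Q_\Phi)$ has canonical subgroup $P_0$; the map $\pi^b:G_0\to G$ induces a morphism of mixed Shimura data, from the boundary datum of $(G_0,X_0)$ attached to $Q_0$ with canonical subgroup $P_0$ to $(ZP_\Phi,ZP_\Phi(\bb{Q})D_\Phi)$, with finite central kernel; this is the boundary avatar of $\pi^b$. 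On the Hodge-type side, each boundary mixed Shimura variety $\sh_{K_{\Phi_0^\alpha}}(P_{\Phi_0^\alpha},D_{\Phi_0^\alpha})$ admits an integral model $\ca{S}_{K_{\Phi_0^\alpha}}$ --- built from $1$-motives and degenerating semi-abelian families as in \cite{Lan12} and \cite{Mad19} --- together with the tower $\ca{S}_{K_{\Phi_0^\alpha}}\to\overline{\ca{S}}_{K_{\Phi_0^\alpha}}\to\ca{S}_{K_{\Phi_0^\alpha,h}}$, whose first map is a split-torus torsor and whose second is an abelian-scheme torsor, and with the boundary identification recalled in Theorem~\ref{thm-lov-mad}(2).

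For (1) and (2), I would define $\ca{S}_{\wdtd{K}_\Phi}$, in parallel with the construction $\ca{S}^\Sigma_K=\disju_{\alpha}\ca{S}^{\Sigma_0^\alpha}_{K_0^\alpha}/\Delta^\alpha$ recalled in \S\ref{subsec-tor-ab-review}, as the disjoint union over $\alpha\in I_{G/G_0}$ and over the auxiliary indices appearing in (\ref{eq-exp-computation}) of the quotients $\ca{S}_{K_{\Phi_0^\alpha}}/\Delta_{\lcj{K}{g_0^\alpha g_\alpha}}(P_0,ZP_\Phi)$, where $\Delta_{\lcj{K}{g_0^\alpha g_\alpha}}(P_0,ZP_\Phi)=\ker(\ag(P_0)\to\ag(ZP_\Phi)/ZP_\Phi(\A)\cap\lcj{K}{g_0^\alpha g_\alpha})$ acts through a finite group by the $\ag$-type formalism of \S\ref{subsubsec-ind-mixed-sh}. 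One then checks that this is well-defined, that it is normal and flat over $\ca{O}$ with the prescribed generic fiber, and that the tower descends from the Hodge-type side: the split torus $\mbf{E}_{K_{\Phi_0^\alpha}}$ and the abelian scheme in the Hodge-type tower are equivariant for the finite quotient, so $\ca{S}_{\wdtd{K}_\Phi}\to\overline{\ca{S}}_{\wdtd{K}_\Phi}\to\ca{S}_{\wdtd{K}_{\Phi,h}}$ inherit the torus-torsor and abelian-scheme-torsor structures, and the affine torus embedding $\ca{S}_{\wdtd{K}_\Phi}(\sigma)$ with its $\sigma$-stratum $\ca{S}_{\wdtd{K}_\Phi,\sigma}$ then make sense for $\sigma\in\Sigma^+(\Phi)$.

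For (3), the heart of the matter, I would compare formal completions along boundary strata. Since $\ca{S}^\Sigma_K=\disju_{\alpha}\ca{S}^{\Sigma_0^\alpha}_{K_0^\alpha}/\Delta^\alpha$, the completion $\cpl{\ca{S}^\Sigma_K}{\ca{Z}_{[ZP^b(\Phi,\sigma)],K}}$ decomposes, stratum by stratum, into the $\Delta^\alpha$-orbit of the completions $\cpl{\ca{S}^{\Sigma_0^\alpha}_{K_0^\alpha}}{\ca{Z}_{[(\Phi_0^\alpha,\sigma_0^\alpha)],K_0^\alpha}}$ of the preimage strata, modulo $\Delta^\alpha$; this is the first isomorphism in (\ref{eq-exp-computation}). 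Substituting the Hodge-type boundary identification $\cpl{\ca{S}^{\Sigma_0^\alpha}_{K_0^\alpha}}{\ca{Z}_{[(\Phi_0^\alpha,\sigma_0^\alpha)],K_0^\alpha}}\iso\cpl{\ca{S}_{K_{\Phi_0^\alpha}}(\sigma_0^\alpha)}{\ca{S}_{K_{\Phi_0^\alpha},\sigma_0^\alpha}}$ of Theorem~\ref{thm-lov-mad}(2) gives the second isomorphism; a group-theoretic bookkeeping --- matching the stabilizer in $\Delta^\alpha$ of a given preimage stratum, acting on its completion, with the local group $\Delta_{\lcj{K}{g_0^\alpha g_\alpha}}(P_0,ZP_\Phi)$ via the compatible $\ag$-type exact sequences --- collapses the orbit quotient into a single finite quotient, the third isomorphism; and the definition of $\ca{S}_{\wdtd{K}_\Phi}(\sigma)$ in (1)--(2) as exactly the disjoint union of these quotients yields the fourth, ending at $\cpl{\ca{S}_{\wdtd{K}_\Phi}(\sigma)}{\ca{S}_{\wdtd{K}_\Phi,\sigma}}$, which is (\ref{eq-zp-strata-iso-at-completion}). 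The hard part --- and the one flagged in the footnote --- is precisely this last bookkeeping: correctly enumerating which cusp labels with cones $[(\Phi_0^\alpha,\sigma_0^\alpha)]$ for the Hodge-type datum map to the fixed $[(\Phi,\sigma)]$ under $\sim_{ZP}$, which elements of $\Delta^\alpha$ stabilize which strata, and how the toric coordinates transform, so that the disjoint unions and the finite quotients on the two sides match exactly. Once (3) is established, it feeds into the proof of Theorem~\ref{thm-loc-free-quo} carried out in the remainder of \S\ref{subsubsec-proof-key-thm}.
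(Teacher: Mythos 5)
Your proposal is a faithful reconstruction of the argument; the paper's own proof is almost entirely a citation to \cite{Wu25} (Const.~4.23 for Part~(1), Prop.~4.30 and Const.~4.31 for Part~(2), and Prop.~4.32 for Part~(3)), and what you have written out is precisely the content one should expect those references to supply. Your identification of the third isomorphism in~(\ref{eq-exp-computation}) --- collapsing the $\Delta^\alpha$-orbit quotient into a single quotient by $\Delta_{\lcj{K}{g_0^\alpha g_\alpha}}(P_0,ZP_\Phi)$ --- as the genuinely delicate step matches the single substantive remark the paper adds to the citations.

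One refinement worth noting: you describe that step as ``matching the stabilizer in $\Delta^\alpha$ of a given preimage stratum with the local group $\Delta_{\lcj{K}{g_0^\alpha g_\alpha}}(P_0,ZP_\Phi)$ via the compatible $\ag$-type exact sequences,'' which makes it sound like a formal diagram chase. In fact the paper proceeds in two stages: it first identifies the stabilizer of $\cpl{\ca{S}_{K_{\Phi_0^\alpha}}(\sigma_0^\alpha)}{\ca{S}_{K_{\Phi_0^\alpha},\sigma_0^\alpha}}$ inside $\Delta^\alpha$ with the group $\Delta^\circ_{\lcj{K}{g_0^\alpha g_\alpha}}(P_0,G)$, and then invokes the genuine (not merely formal) identity $\Delta^\circ_{\lcj{K}{g_0^\alpha g_\alpha}}(P_0,G)=\Delta_{\lcj{K}{g_0^\alpha g_\alpha}}(P_0,ZP_\Phi)$, which is \cite[Lem.~4.25(1)]{Wu25}. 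A blind reconstruction would need to prove that identity rather than deduce it from exactness; you correctly flag the issue as the ``hard part,'' but should be aware that it is a lemma about the specific structure of the $ZP_\Phi$-construction, not a consequence of the $\ag$-formalism alone. Apart from this, your argument runs along the same lines as the paper's.
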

\begin{proof}
Part 1 follows from \cite[Const. 4.23]{Wu25}. Part 2 is \cite[Prop. 4.30 and Const. 4.31]{Wu25}. Part 3 follows from \cite[Prop. 4.32]{Wu25}. Note that the fourth line in (\ref{eq-exp-computation}) follows from the fact that the stabilizer of both ${\ca{S}_{K_0^\alpha}(\sigma_0^\alpha)}$ and ${\ca{S}_{K_{\Phi_0^\alpha},\sigma_0^\alpha}}$ is $\Delta^\circ_{\lcj{K}{g_0^\alpha g_\alpha}}(P_0,G)=\Delta_{\lcj{K}{g_0^\alpha g_\alpha}}(P_0,ZP_\Phi)$ by \cite[Lem. 4.25 (1)]{Wu25}.
\end{proof}
From the proposition above and Proposition \ref{prop-torsor-hz} (4), we see that it suffices to consider the quotient by $\Delta^{g_0^\alpha}:=\Delta_{\lcj{K}{g_0^\alpha g_\alpha}}(P_0,ZP_\Phi)$ of the torsor $\mathscr{E}_{G^c_\zbkp}(\sigma_0^\alpha):=\mathscr{E}_{G_{0,\zbkp},K_{\Phi_0^\alpha}}(\sigma_0^\alpha)\times^{G_{0,\zbkp}}G^c_\zbkp$ over $\cpl{\ca{S}_{K_0^\alpha}(\sigma_0^\alpha)}{\ca{S}_{K_{\Phi_0^\alpha},\sigma_0^\alpha}}$.\par
More precisely,
define 
$$\Delta^{g_0^\alpha}_{G^c}:=\ker({}^{G_0}\ag(P_0)\times^{G_0}G^c\to {}^{G^c}\ag(ZP_\Phi)/(ZP_\Phi(\A)\cap\lcj{K}{g_0^\alpha g_\alpha})).$$
\begin{lem}[cf. Lemma \ref{lem-delta-g}]\label{lem-delta-g0a}
The group $\Delta^{g_0^\alpha}_{G^c}$ is profinite, and its projection to $\Delta^{g_0^\alpha}$ is an isomorphism.
\end{lem}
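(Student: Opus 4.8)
The plan is to reproduce, in the mixed Shimura setting, the argument given for Lemma~\ref{lem-delta-g}. The key point is to realize $\Delta^{g_0^\alpha}_{G^c}$ and $\Delta^{g_0^\alpha}$ as the kernels of the middle and right vertical arrows of a commutative diagram of group functors with exact rows
\begin{equation*}
\begin{tikzcd}
1\arrow[r]& G^c\arrow[r]\arrow[d,equal]&{{}^{G_0}\ag(P_0)\times^{G_0}G^c}\arrow[r]\arrow[d]&\ag(P_0)\arrow[r]\arrow[d]&1\\
1\arrow[r]& G^c\arrow[r]&{{}^{G^c}\ag(ZP_\Phi)/(ZP_\Phi(\A)\cap\lcj{K}{g_0^\alpha g_\alpha})}\arrow[r]&{\ag(ZP_\Phi)/(ZP_\Phi(\A)\cap\lcj{K}{g_0^\alpha g_\alpha})}\arrow[r]&1
\end{tikzcd}
\end{equation*}
where the top row is obtained from the extension $1\to G_0\to{}^{G_0}\ag(P_0)\to\ag(P_0)\to1$ by contracting along $G_0\to G^c$, the bottom row is the quotient of $1\to G^c\to{}^{G^c}\ag(ZP_\Phi)\to\ag(ZP_\Phi)\to1$ by the open compact subgroup $ZP_\Phi(\A)\cap\lcj{K}{g_0^\alpha g_\alpha}$, and the vertical maps are induced by the morphism of mixed Shimura data coming from $\pi^b\colon G_0\to G$ (which sends $P_0$ into $ZP_\Phi$); the induced map on the displayed $G^c$-factor is the identity.

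Granting this diagram, the lemma is pure diagram chasing, exactly as for Lemma~\ref{lem-delta-g}. Since the left vertical arrow is an isomorphism, the projection ${}^{G_0}\ag(P_0)\times^{G_0}G^c\to\ag(P_0)$ carries $\Delta^{g_0^\alpha}_{G^c}$, the kernel of the middle vertical arrow, isomorphically onto $\Delta^{g_0^\alpha}$, the kernel of the right vertical arrow. Indeed, an element of $\Delta^{g_0^\alpha}_{G^c}$ that dies in $\ag(P_0)$ lies in the normal $G^c$-subgroup and is then killed by the identity map on the left, hence is trivial (injectivity); conversely, any element of $\Delta^{g_0^\alpha}$ lifts to some $b\in{}^{G_0}\ag(P_0)\times^{G_0}G^c$ whose image in the bottom row lies in the $G^c$-subgroup, and correcting $b$ by that element of $G^c$ (using that the left arrow is the identity) produces an element of $\Delta^{g_0^\alpha}_{G^c}$ over it (surjectivity). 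Finally, $\Delta^{g_0^\alpha}=\Delta_{\lcj{K}{g_0^\alpha g_\alpha}}(P_0,ZP_\Phi)=\ker(\ag(P_0)\to\ag(ZP_\Phi)/(ZP_\Phi(\A)\cap\lcj{K}{g_0^\alpha g_\alpha}))$ is profinite by \cite[Lem.~4.15]{Wu25} (whose proof applies equally to the parabolic groups $\ag(P_0)$ and $\ag(ZP_\Phi)$), so $\Delta^{g_0^\alpha}_{G^c}$ is profinite as well.

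The only step that is not purely formal — and hence the main (minor) obstacle — is checking that the vertical arrows are well defined and that the squares commute. This amounts to the functoriality of the ${}^{(-)}\ag(-)$-construction with respect to $\pi^b$, which is essentially the content of the discussion preceding Lemma~\ref{lem-check-group-quotient}: the inclusion of canonical subgroups $P_0\subset ZP_\Phi$ induced by $\pi^b$ intertwines the relevant conjugation actions and induces the map on cuspidal quotients furnished by Lemma~\ref{lem-hc-func}, while the explicit description of the level group $ZP_\Phi(\A)\cap\lcj{K}{g_0^\alpha g_\alpha}$ recorded in \cite[\S 4.2--4.3]{Wu25} exhibits the bottom row as the stated quotient. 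Once this is in place, everything else follows exactly as in Lemma~\ref{lem-delta-g}.
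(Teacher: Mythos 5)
Your proposal reproduces the paper's proof: the same commutative diagram with exact rows is set up, and the conclusion follows by diagram chasing plus the profiniteness of $\Delta^{g_0^\alpha}$. The only small deviation is the citation — the paper invokes \cite[Lem.~4.22]{Wu25} (the version already stated for the parabolic/mixed setting) rather than arguing that \cite[Lem.~4.15]{Wu25} ``applies equally''; otherwise your diagram chase fills in exactly what the paper leaves implicit.
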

\begin{proof}In fact, as Lemma \ref{lem-delta-g}, 
there is a commutative diagram
\begin{equation*}
    \begin{tikzcd}
    1\arrow[r]& G^c\arrow[r]\arrow[d,equal]&{{}^{G_0}\ag(P_0)\times^{G_0}G^c}\arrow[r]\arrow[d]&\ag(P_0)\arrow[r]\arrow[d]&1\\
    1\arrow[r]& G^c\arrow[r]&{}^{G^c}\ag(ZP_\Phi)/ZP_\Phi(\A)\cap\lcj{K}{g_0^\alpha g_\alpha}\arrow[r]&\ag(ZP_\Phi)/ZP_\Phi(\A)\cap\lcj{K}{g_0^\alpha g_\alpha}\arrow[r]&1
    \end{tikzcd}
\end{equation*}
where the horizontal arrows are exact.
We conclude by replacing \cite[Lem. 4.15]{Wu25} with \cite[Lem. 4.22]{Wu25}.
\end{proof}
\begin{construction}\label{const-delta-g0a-generic}\upshape
There is a natural action of $\Delta^{g_0^\alpha}_{G^c}$ on $\ca{E}_{G^c,K_{\Phi_0^\alpha}}:=\ca{E}_{G_0,K_{\Phi_0^\alpha}}\times^{G_0}G^c\to\sh_{K_{\Phi_0^\alpha}}$ and $\ca{E}_{G^c}(\sigma_0^\alpha):=\ca{E}_{G_0,K_{\Phi_0^\alpha}}(\sigma_0^\alpha)\times^{G_0}G^c\to\sh_{K_{\Phi_0^\alpha}}(\sigma_0^\alpha)$ that factors through $\Delta^{g_0^\alpha}_{G^c}/K_{\Phi_0^\alpha}$, which is finite by Lemma \ref{lem-delta-g0a}. We repeat the arguments in Construction \ref{const-delta-G-act-generic} with some changes.\par
Let $(\mbf{g},p,\gamma^{-1})\in \Delta^{g_0^\alpha}_{G^c}$. Then, from the definition of $\Delta_{G^c}^{g_0^\alpha}$, there is an element $t\in ZP_\Phi(\bb{Q})_+:=\stb_{ZP_\Phi(\bb{Q})}(D_\Phi^+)$ such that $(t,t,1)\sim(1,1,\gamma^{-1})$ in ${}^{G^c}\ag(ZP_\Phi)$ (see \cite[\S 4.1.4]{Wu25}) and such that $(\mbf{g},p,\gamma^{-1})\sim (\mbf{g}t,pt,1)\in(1,\lcj{K}{g_0^\alpha g_\alpha}Z(\bb{Q})^{\overline{\ }}/Z(\bb{Q})^{\overline{\ }},1)$ in ${}^{G^c}\ag(ZP_\Phi)$. This implies that $\mbf{g}t=1$ and $gt\in ZP_\Phi(\A)\cap\lcj{K}{g_0^\alpha g_\alpha}Z(\bb{Q})^{\overline{\ }}/Z(\bb{Q})^{\overline{\ }}$.\par
By Proposition \ref{prop-can-mod-HZM}, we have an action
\begin{equation*}
\begin{split}
&\ca{E}_{G^c,K_{\Phi_0^\alpha}}\xrightarrow{\cdot \mbf{g}}\ca{E}_{G^c,K_{\Phi_0^\alpha}}\times^{G^c}\mbf{g}^{-1}G^c\mbf{g}\xrightarrow{\cdot g} \ca{E}_{G^c,p^{-1}K_{\Phi_0^\alpha} p}\times^{G^c}(\mbf{g}^{-1}G^c\mbf{g})\xrightarrow{\cdot \gamma^{-1}}\\ 
&\ca{E}_{G^c,\gamma p^{-1}K_{\Phi_0^\alpha} p \gamma^{-1}}\times^{G^c}\gamma G^c \gamma^{-1}\times^{\gamma G^c\gamma^{-1}} (\gamma \mbf{g}^{-1} G^c \mbf{g}\gamma^{-1})=\ca{E}_{G^c,K_{\Phi_0^\alpha}}\times^{G^c}G^c=\ca{E}_{G^c,K_{\Phi_0^\alpha}}.
\end{split}
\end{equation*}

For an element $(\mbf{g},p,\gamma^{-1})\sim (1,1,1)$ in ${}^{G^c}\ag(P_0)$, we check over elements $(x,\mbf{g}_0,p_0)\in P_0(\bb{Q})\bss D_{Q_0}\times G^c(\bb{C})\times P_0(\A)/K_{\Phi_0^\alpha}= \ca{E}_{G^c,K_{\Phi_0^\alpha}}(\bb{C})$: The action
\begin{equation*}
\begin{split}
&\ca{E}_{G^c,K_{\Phi_0^\alpha}}(\bb{C})\xrightarrow{\cdot \mbf{g}}\ca{E}_{G^c,K_{\Phi_0^\alpha}}(\bb{C})\times^{G^c}\mbf{g}^{-1}G^c\mbf{g}\xrightarrow{\cdot g} \ca{E}_{G^c,p^{-1}K_{\Phi_0^\alpha} p}\times^{G^c}(\mbf{g}^{-1}G^c\mbf{g})(\bb{C})\xrightarrow{\cdot \gamma^{-1}}\\ 
&\ca{E}_{G^c,\gamma p^{-1}K_{\Phi_0^\alpha}p \gamma^{-1}}\times^{G^c}\gamma G^c \gamma^{-1}\times^{\gamma G^c\gamma^{-1}} (\gamma \mbf{g}^{-1} G^c \mbf{g}\gamma^{-1})(\bb{C})=\ca{E}_{G^c,K_{\Phi_0^\alpha}}\times^{G^c}G^c(\bb{C})=\ca{E}_{G^c,K_{\Phi_0^\alpha}}(\bb{C})
\end{split}
\end{equation*}
is computed as
\begin{equation*}
\begin{split}
& (x,\mbf{g}_0,p_0) \xrightarrow{\cdot \mbf{g}} (x,\mbf{g}_0\mbf{g},p_0)\xrightarrow{\cdot p} (x,\mbf{g}_0\mbf{g},p_0p)\xrightarrow{\cdot \gamma^{-1}}\\ 
& (\gamma x\gamma^{-1},\gamma \mbf{g}_0\mbf{g}\gamma^{-1},\gamma p_0p\gamma^{-1})= (\gamma x,\gamma\mbf{g}_0,\gamma p_0)=(x,\mbf{g}_0,p_0).
\end{split}
\end{equation*}
The last line is due to the facts that $\gamma$ is an element in $P_0(\bb{Q})_+$ and is equal to $p$, and that the image of $\gamma$ in $G^c(\bb{Q})$ is equal to $\mbf{g}$.
This map is an identity map, so we have obtained a well-defined action of $\Delta^{g_0^\alpha}_{G^c}$ on $\ca{E}_{G^c,K_{\Phi_0^\alpha}}.$
The functoriality with respect to the affine torus embeddings and (\ref{eq-ext-W-boundary-e}) then induce an action of $(\mbf{g},p,\gamma^{-1})$ on $\ca{E}_{G^c}(\sigma_0^\alpha)$ that is trivial when $(\mbf{g},p,\gamma^{-1})\sim (1,1,1)$ in ${}^{G^c}\ag(P_0)$.\par
By Lemma \ref{lem-check-group-quotient}, the quotient $\ca{E}_{G^c,K_{\Phi_0^\alpha}}/\Delta^{g_0^\alpha}_{G^c}$ is an open and closed subscheme of the $G^c$-torsor $\ca{E}_{ZP_\Phi^c,K_\Phi}\times^{ZP_\Phi^c}G^c$.\hfill$\square$
\end{construction}
Clearly, there is a natural morphism from $\Delta^{g_0^\alpha}$ (resp. $\Delta^{g_0^\alpha}_{G^c}$) to $\Delta^\alpha$ (resp. $\Delta^\alpha_{G^c}$), which defines an action of $\Delta_{G^c}^{g_0^\alpha}$ on $\mathscr{E}^\can_{G^c_\zbkp,K_0^\alpha}$ by pulling back the action defined in Proposition \ref{prop-action-delta}.
\begin{prop}\label{prop-delta-g0a-action}
There is an action of $\Delta^{g_0^\alpha}_{G^c}$ on $\mathscr{E}_{G_\zbkp^c}(\sigma_0^\alpha)$ covering the action of $\Delta^{g_0^\alpha}$ on $\ca{S}_{K_{\Phi_0^\alpha}}(\sigma_0^\alpha)$. The pullback of $\mathscr{E}_{G_\zbkp^c}(\sigma_0^\alpha)$ and this $\Delta^{g_0^\alpha}_{G^c}$-action to $\cpl{\ca{S}_{K_0^\alpha}(\sigma_0^\alpha)}{\ca{S}_{K_{\Phi_0^\alpha},\sigma_0^\alpha}}$ coincide with the pullback of $\mathscr{E}_{G^c_\zbkp,K_0^\alpha}^\can$ and the $\Delta_{G^c}^{g_0^\alpha}$-action on it defined in Proposition \ref{prop-action-delta} to $\cpl{\ca{S}_{K_0^\alpha}(\sigma_0^\alpha)}{\ca{S}_{K_{\Phi_0^\alpha},\sigma_0^\alpha}}$.
\end{prop}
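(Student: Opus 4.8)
The plan is to produce the $\Delta^{g_0^\alpha}_{G^c}$-action on $\mathscr{E}_{G_\zbkp^c}(\sigma_0^\alpha)$ by exhibiting two compatible pieces --- the generic-fibre action already built in Construction \ref{const-delta-g0a-generic}, and an action on the integral $G^c_\zbkp$-torsor over $\ca{S}_{K_{\Phi_0^\alpha}}(\sigma_0^\alpha)$ --- and then gluing them, in exactly the same spirit as the proof of Proposition \ref{prop-action-delta}. Concretely: $\ca{S}_{K_{\Phi_0^\alpha}}(\sigma_0^\alpha)$ is flat over its base with schematically dense generic fibre $\sh_{K_{\Phi_0^\alpha}}(\sigma_0^\alpha)_{E'}$, and $\mathscr{E}_{G_\zbkp^c}(\sigma_0^\alpha)$ is a smooth separated $G^c_\zbkp$-torsor over it; hence an automorphism of $\mathscr{E}_{G_\zbkp^c}(\sigma_0^\alpha)$ lying over a fixed automorphism of the base is determined by its restriction to the generic fibre, and it suffices to extend the $\Delta^{g_0^\alpha}_{G^c}$-action of Construction \ref{const-delta-g0a-generic} over the boundary covering the $\Delta^{g_0^\alpha}$-action on $\ca{S}_{K_{\Phi_0^\alpha}}(\sigma_0^\alpha)$.

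For the integral action I would argue as follows. Recall that $\mathscr{E}_{G_{0,\zbkp},K_{\Phi_0^\alpha}}(\sigma_0^\alpha)=\ul{\mrm{Isom}}_{\{s_\lambda\otimes 1\mapsto \wdtd{s}_{\lambda,\Phi_0^\alpha,\dr}(\sigma_0^\alpha)\}}(V_\zbkp\otimes_\zbkp\ca{O},\ca{V}_{\Phi_0^\alpha}(\sigma_0^\alpha))$ and that $\mathscr{E}_{G_\zbkp^c}(\sigma_0^\alpha)$ is its contracted product with $G^c_\zbkp$. Given $(\mbf{g},p,\gamma^{-1})\in\Delta^{g_0^\alpha}_{G^c}$, first adjust it by an element of $P_0(\bb{Q})_+$, via a parabolic analogue of \cite[Lem. 2.2.6]{Kis10}, so that the $p$-component of $p$ lies in the hyperspecial-type level $K_{\Phi_0^\alpha,p}$; write $p=p_pp^p$ with $p^p\in P_0(\Ap)$. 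The prime-to-$p$ Hecke correspondence attached to $p^p$ and the conjugation by $\gamma^{-1}\in P_0^\ad(\bb{Q})^+$ act on the tower $\{\ca{S}_{K_{\Phi_0^\alpha}}(\sigma_0^\alpha)\}$ and, by the functoriality of the boundary log $1$-motive $\ca{Q}_{\Phi_0^\alpha}$ of Theorem \ref{thm-lov-mad}(2), also on its covariant de Rham realisation $\ca{V}_{\Phi_0^\alpha}(\sigma_0^\alpha)$, the action at $p$ being trivial by the hyperspecial construction. These automorphisms send $\wdtd{s}_{\lambda,\Phi_0^\alpha,\dr}(\sigma_0^\alpha)$ to itself: on the open dense locus the tensors are pulled back from the Hodge tensors $s_\lambda$ (via \cite[Sec. 3.1]{Mad19} and Theorem \ref{thm-lov-mad}), which are visibly Hecke- and conjugation-equivariant, and by the uniqueness of the extension over the boundary (Lemma \ref{lem-tensor-mixsh-gen}(1)) the automorphism must preserve the extended tensors. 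Hence it induces an automorphism of $\ul{\mrm{Isom}}_{\{s_\lambda\otimes 1\mapsto \wdtd{s}_{\lambda,\Phi_0^\alpha,\dr}(\sigma_0^\alpha)\}}$, and composing with the right translation by $\mbf{g}$ on the contracted product with $G^c_\zbkp$ yields the required transformation of $\mathscr{E}_{G_\zbkp^c}(\sigma_0^\alpha)$. Well-definedness (independence of the adjustment, multiplicativity in $(\mbf{g},p,\gamma^{-1})$, and factorisation through the finite group $\Delta^{g_0^\alpha}_{G^c}/K_{\Phi_0^\alpha}$ of Lemma \ref{lem-delta-g0a}) is then checked on the dense generic fibre, where this construction visibly recovers Construction \ref{const-delta-g0a-generic}, whose $\bb{C}$-point computation already verifies all these identities.

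Finally, for the compatibility asserted in the second sentence, I would pull both actions back to $\cpl{\ca{S}_{K_0^\alpha}(\sigma_0^\alpha)}{\ca{S}_{K_{\Phi_0^\alpha},\sigma_0^\alpha}}$: by Proposition \ref{prop-torsor-hz}(4) (after $\times^{G_{0,\zbkp}}G^c_\zbkp$) the underlying $G^c_\zbkp$-torsors $\mathscr{E}_{G^c_\zbkp,K_0^\alpha}^\can$ and $\mathscr{E}_{G_\zbkp^c}(\sigma_0^\alpha)$ already coincide there, and it remains to see that the two $\Delta^{g_0^\alpha}_{G^c}$-actions agree. On the generic fibre this is the statement that the canonical morphism $\Delta^{g_0^\alpha}_{G^c}\to\Delta^\alpha_{G^c}$ intertwines the action of Construction \ref{const-delta-g0a-generic} with that of Construction \ref{const-delta-G-act-generic}, which follows from the functoriality of canonical models of principal bundles on mixed Shimura varieties (Proposition \ref{prop-can-mod-HZM}) together with the $\mbf{E}_{K_{\Phi_0^\alpha}}$-equivariance of the boundary tensors and torsors (Lemma \ref{lem-tensor-mixsh-gen}(2), Proposition \ref{prop-torsor-hz}(2),(3)), which is precisely the mechanism identifying $\ca{E}_{G^c}(\sigma_0^\alpha)$ with the completion of $\ca{E}^\can_{G^c,K_0^\alpha}$ along the relevant stratum; since both extended actions then agree on the dense generic fibre of the flat formal completion, they agree. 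The main obstacle is the integral step of the middle paragraph: one must know that the prime-to-$p$ Hecke correspondences and the $P_0^\ad(\bb{Q})^+$-conjugation operate on the \emph{integral} model $\ca{S}_{K_{\Phi_0^\alpha}}(\sigma_0^\alpha)$ and on the extended log $1$-motive $\ca{Q}_{\Phi_0^\alpha}$ of \cite{Mad19} --- i.e.\ that Madapusi's boundary integral structures are functorial for these operations, and that an adjustment at $p$ is available for the parabolic $P_0$; the uniqueness of the boundary tensors (Lemma \ref{lem-tensor-mixsh-gen}) is what lets us sidestep a direct verification of tensor-compatibility along the boundary divisor.
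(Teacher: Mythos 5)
Your proposal takes a genuinely different route from the paper, and it is worth being precise about where you diverge.

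The paper's proof does \emph{not} construct a $\Delta^{g_0^\alpha}_{G^c}$-action on the integral boundary torsor $\mathscr{E}_{G_\zbkp^c}(\sigma_0^\alpha)$ from scratch by transporting Hecke and conjugation actions through the log $1$-motive. Instead it observes that two pieces of the action are already at hand --- the generic-fibre action over $\sh_{K_{\Phi_0^\alpha}}(\sigma_0^\alpha)$ from Construction~\ref{const-delta-g0a-generic}, and the action on the formal completion $\cpl{\ca{S}_{K_0^\alpha}(\sigma_0^\alpha)}{\ca{S}_{K_{\Phi_0^\alpha},\sigma_0^\alpha}}$ obtained by transporting the $\Delta^{g_0^\alpha}_{G^c}$-action on $\mathscr{E}^\can_{G^c_\zbkp,K_0^\alpha}$ from Proposition~\ref{prop-action-delta} through the identification of Proposition~\ref{prop-torsor-hz}(4) --- and then glues them, the compatibility on the overlap being verified by an explicit $\bb{C}$-point computation on the analytic chart $U(\Phi_0^\alpha)$. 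The second sentence of the proposition is then automatic from the construction, not a separate verification. You, by contrast, propose to build the integral action directly on $\mathscr{E}_{G_\zbkp^c}(\sigma_0^\alpha)$ by mirroring the Hecke-plus-conjugation argument of Proposition~\ref{prop-action-delta} at the boundary, and to treat the coincidence with $\mathscr{E}^\can_{G^c_\zbkp,K_0^\alpha}$ on the formal completion as a subsequent check. This is a more constructive but more demanding route: as you correctly flag at the end, it requires knowing that the prime-to-$p$ Hecke correspondences and the $P_0^\ad(\bb{Q})^+$-conjugation extend to Madapusi's integral boundary model and its log $1$-motive $\Q_{\Phi_0^\alpha}$, which Theorem~\ref{thm-lov-mad} does not assert and which is not established elsewhere in the paper; you also need a parabolic analogue of \cite[Lem.~2.2.6]{Kis10} to adjust the $p$-component. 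The paper's choice of re-using Proposition~\ref{prop-action-delta} and checking compatibility analytically sidesteps exactly this functoriality problem, which is why the gluing is organised the way it is. Both approaches should ultimately succeed, but yours would require proving those boundary functorialities first, so treat that obstacle as a genuine missing ingredient rather than a routine extension.
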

\begin{proof}
By Construction \ref{const-delta-g0a-generic}, there is an action of $\Delta^{g_0^\alpha}_{G^c}$ on $\ca{E}_{G^c}(\sigma_0^\alpha)$ over $\sh_{K_{\Phi_0^\alpha}}(\sigma_0^\alpha)$. We shall check that this action is compatible with the action of $\Delta^{g_0^\alpha}_{G^c}$ on $\mathscr{E}_{G^c_\zbkp}(\sigma_0^\alpha)|_{\cpl{\ca{S}_{K_0^\alpha}(\sigma_0^\alpha)}{\ca{S}_{K_{\Phi_0^\alpha},\sigma_0^\alpha}}}$ defined by pulling back the action on $\mathscr{E}^\can_{G^c_\zbkp,K_0^\alpha}$ via Proposition \ref{prop-torsor-hz} (4). If we have this compatibility, there is a desired action on the entire $\mathscr{E}_{G^c_\zbkp}(\sigma_0^\alpha)$ by gluing morphisms.\par
It is enough to check the compatibility over the open analytic subspace $V(\Phi_0^\alpha)$ as in \S\ref{subsec-f-u-f} (13). Let us now recall some details about this $V(\Phi_0^\alpha)$ (see \cite[6.10]{Pin89} and \cite[2.1.13]{Mad19}). In fact, there is an open subspace 
$U(\Phi_0^\alpha):=P_{\Phi_0^\alpha}(\bb{Q})_+\bss X_0^+\times P_{\Phi_0^\alpha}(\A)/K_{\Phi_0^\alpha}\to P_{\Phi_0^\alpha}(\bb{Q})_+\bss U_{\Phi_0^\alpha}(\bb{C})\tau(X_0^+)\times P_{\Phi_0^\alpha}(\A)/K_{\Phi_0^\alpha}$.
There is also a morphism, 
$$U(\Phi_0^\alpha)\to G_0(\bb{Q})_+\bss X^+_0\times G_0(\A)/g_0^\alpha K_0^\alpha (g_0^\alpha)^{-1}.$$
which is locally an isomorphism. The open subspace $V(\Phi_0^\alpha)\sbst U(\Phi_0^\alpha)$ is taken so that the displayed morphism is an isomorphism on it. Hence, it suffices to check explicitly on $U(\Phi_0^\alpha)$.\par
We compute the action of $(\mbf{g},p,\gamma^{-1})\in\Delta_{G^c}^{g_0^\alpha}$ on $\ca{E}_{G^c,K_{\Phi_0^\alpha}}|_{U(\Phi_0^\alpha)}$ according to Construction \ref{const-delta-G-act-generic} and Construction \ref{const-delta-g0a-generic}, and they are both given by 
$$[(x,\mbf{g}_0,p_0)]\mapsto (x,\mbf{g}_0\mbf{g},p_0)\mapsto (x,\mbf{g}_0\mbf{g},p_0p)\mapsto (\gamma x\gamma^{-1},\gamma \mbf{g}_0\mbf{g}\gamma^{-1},\gamma p_0p\gamma^{-1}).$$
This finishes the verification of the claim on the compatibility, so the proposition follows.
\end{proof}
Now we need to see that 
\begin{prop}\label{prop-boundary-torsor-quo}
The quotient $\mathscr{E}_{G^c_\zbkp}(\sigma_0^\alpha)/\Delta^{g_0^\alpha}_{G^c}$ is a $G_\zbkp^c$-torsor.
\end{prop}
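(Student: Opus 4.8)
The plan is to work over the completed toric chart $\cpl{\ca{S}_{K_0^\alpha}(\sigma_0^\alpha)}{\ca{S}_{K_{\Phi_0^\alpha},\sigma_0^\alpha}}$ — to which the statement has already been reduced in the paragraphs above — and to exploit Proposition~\ref{prop-torsor-hz}(3). By Proposition~\ref{prop-delta-g0a-action} we may replace $(\mathscr{E}_{G^c_\zbkp}(\sigma_0^\alpha),\Delta^{g_0^\alpha}_{G^c})$ there by the pullback of $(\mathscr{E}^\can_{G^c_\zbkp,K_0^\alpha},\Delta^{g_0^\alpha}_{G^c})$ with the action of Construction~\ref{const-delta-g0a-generic}. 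Two preliminary observations narrow the task: first, away from the boundary divisor the quotient is already a $G^c_\zbkp$-torsor — over the generic fibre by Harris's theory (Theorem~\ref{thm-harris-can-ex-can-model} and Construction~\ref{const-delta-g0a-generic}), over the open torus orbit by the integral models of Lovering (Theorem~\ref{thm-lovering-ab}) — so the only content is localized along the boundary divisor inside the special fibre, a locus of codimension $\geq 2$; second, since $G^c_\zbkp$ is smooth and affine and $\mathscr{E}_{G^c_\zbkp}(\sigma_0^\alpha)$ is affine over the base, being a $G^c_\zbkp$-torsor is étale-local on the base and can be checked on an affine toric chart by producing fppf-local sections of the invariant ring.

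For the chart computation, Proposition~\ref{prop-torsor-hz}(3) says that over $\cpl{\ca{S}_{K_0^\alpha}(\sigma_0^\alpha)}{\ca{S}_{K_{\Phi_0^\alpha},\sigma_0^\alpha}}$ the torsor $\mathscr{E}_{G_{0,\zbkp},K_{\Phi_0^\alpha}}(\sigma_0^\alpha)$, hence also $\mathscr{E}_{G^c_\zbkp}(\sigma_0^\alpha)=\mathscr{E}_{G_{0,\zbkp},K_{\Phi_0^\alpha}}(\sigma_0^\alpha)\times^{G_{0,\zbkp}}G^c_\zbkp$, is the pullback along $\mbf{p}_1(\sigma_0^\alpha)\colon\ca{S}_{K_{\Phi_0^\alpha}}(\sigma_0^\alpha)\to\overline{\ca{S}}_{K_{\Phi_0^\alpha}}$ of a $G^c_\zbkp$-torsor $\overline{\mathscr{E}}$ on $\overline{\ca{S}}_{K_{\Phi_0^\alpha}}$. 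I would pick an étale chart $\spec R\to\overline{\ca{S}}_{K_{\Phi_0^\alpha}}$ trivialising $\overline{\mathscr{E}}$ (possible as $G^c_\zbkp$ is smooth) together with the associated affine toric chart for $\ca{S}_{K_{\Phi_0^\alpha}}(\sigma_0^\alpha)$ with coordinate ring $A=R[\sigma_0^{\alpha,\vee}\cap M]$, $M$ the character group of $\mbf{E}_{K_{\Phi_0^\alpha}}$, so that $\mathscr{E}_{G^c_\zbkp}(\sigma_0^\alpha)\cong\spec(A\otimes_\zbkp\ca{O}_{G^c_\zbkp})$ over this chart. The group $\Delta^{g_0^\alpha}$ fits in an extension $1\to\Lambda\to\Delta^{g_0^\alpha}\to\overline{\Delta}^{g_0^\alpha}\to 1$ with $\Lambda\subset U_{\Phi_0^\alpha}(\bb{Q})$ the lattice defining $\mbf{E}_{K_{\Phi_0^\alpha}}$ and $\overline{\Delta}^{g_0^\alpha}$ finite; reading off the explicit formulas of Construction~\ref{const-delta-g0a-generic} together with the conjugation description of the $\mbf{E}_{K_{\Phi_0^\alpha}}$-action on $\ca{E}_{P_{\Phi_0^\alpha}^c,K_{\Phi_0^\alpha}}$ in Lemma~\ref{lem-u-action-alg}, Corollary~\ref{cor-e-action} and Lemma~\ref{lem-E-action-bd-v}, one finds that in this chart $\Lambda$ acts by translation on the toric factor together with right translation on $G^c_\zbkp$ by a cocycle valued in the unipotent image of $U_{\Phi_0^\alpha}$ whose entries are polynomials in the toric monomials and are constant along $\spec R$, while $\overline{\Delta}^{g_0^\alpha}$ acts through the chart of $\overline{\ca{S}}_{K_{\Phi_0^\alpha}}$.

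The step I expect to be the main obstacle is to deduce from this description that, after completing $A$ along the boundary ideal, the ring of $\Delta^{g_0^\alpha}$-invariants of $A\otimes_\zbkp\ca{O}_{G^c_\zbkp}$ again represents a (necessarily étale-locally trivial) $G^c_\zbkp$-torsor over the completed chart of the quotient: away from the boundary divisor the $\Delta^{g_0^\alpha}$-action is free and faithfully flat descent applies, but along the toric directions the action may fail to be free. What rescues the computation is the combination of the two facts isolated above — the torsor is constant along the torus fibres, and the non-free part of the action twists the $G^c_\zbkp$-factor only by a unipotent, hence ``pro-nilpotent'', cocycle which stays invertible over the toric coordinate ring — so that the invariants can be solved for chart-by-chart and then reassembled; this is the content of Lemma~\ref{lem-quotient-tors-general}. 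Finally, to recognise the resulting flat affine scheme with $G^c_\zbkp$-action as a genuine torsor I would feed the computation into the induction-for-torsors formalism of \S\ref{subsec-ind-torsor} applied to $\ag(P_0)\to\ag(ZP_\Phi)$, exactly as in Lemma~\ref{lem-check-group-quotient}, exhibiting $\mathscr{E}_{G^c_\zbkp}(\sigma_0^\alpha)/\Delta^{g_0^\alpha}_{G^c}$ as the open and closed sub-torsor, over $\ca{S}_{\wdtd{K}_\Phi}(\sigma)$, of an integral model of $\ca{E}_{ZP_\Phi^c,K_\Phi}(\sigma)\times^{ZP_\Phi^c}G^c$ extending the generic-fibre picture of Construction~\ref{const-delta-g0a-generic}; smoothness of $G^c_\zbkp$ then yields the étale-local triviality. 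Running this over $\alpha\in I_{G/G_0}$ and the finitely many relevant cusp labels gives Proposition~\ref{prop-boundary-torsor-quo}, and together with Proposition~\ref{prop-recall-constr-wu25} and Proposition~\ref{prop-torsor-hz}(4) also Theorem~\ref{thm-loc-free-quo}.
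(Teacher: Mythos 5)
You correctly identify the two structural pillars — Proposition~\ref{prop-torsor-hz}(3), which presents $\mathscr{E}_{G^c_\zbkp}(\sigma_0^\alpha)$ as the pullback of a $G^c_\zbkp$-torsor $\overline{\mathscr{E}}_0$ living on the base $\overline{\ca{S}}_{K_{\Phi_0^\alpha}}$ of the $\mbf{E}_{K_{\Phi_0^\alpha}}$-torsor, and the abstract Lemma~\ref{lem-quotient-tors-general} as the device that transports ``torsor'' through the finite quotient. You also correctly isolate the exact sequence $1\to\Lambda\to\Delta^{g_0^\alpha}\to\overline{\Delta}^{g_0^\alpha}\to 1$ with $\Lambda$ acting by torus translation and $\overline{\Delta}^{g_0^\alpha}$ acting freely on $\overline{\ca{S}}_{K_{\Phi_0^\alpha}}$. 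So the skeleton matches the paper.

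However, the crux is misidentified. You claim that on a toric chart the lattice part $\Lambda$ twists the $G^c_\zbkp$-factor by a ``unipotent, hence pro-nilpotent, cocycle which stays invertible,'' and that this invertibility is ``the content of Lemma~\ref{lem-quotient-tors-general}.'' Neither claim is right. The abstract lemma has no mechanism for absorbing a nontrivial cocycle: its hypothesis (Condition~(2)) requires outright that the $\Delta$-action on $\overline{\mathscr{E}}_0$ factor through $\Delta_2=\overline{\Delta}^{g_0^\alpha}$, i.e.\ that the subgroup of $\Delta^{g_0^\alpha}_{G^c}$ acting trivially on $\overline{\ca{S}}_{K_{\Phi_0^\alpha}}$ acts \emph{trivially} on $\overline{\mathscr{E}}_0$; invertibility is not enough, since at a boundary fixed point of the affine toric embedding a nontrivial stabilizer acting on the $G^c_\zbkp$-fibre by translation would destroy the torsor property. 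The entire arithmetic content of the proposition is the verification of that hypothesis, and the paper does it by an explicit computation inside $\ag$-type groups: one shows that if $(\mbf{p},p,1)\in\Delta^{g_0^\alpha}_{G^c}$ projects into the kernel $\ca{K}$ of $\ag(P_0)\to\ag(\overline{P}_0)/\overline{K}_{\Phi_0^\alpha}$, then writing $p=z\cdot k=z_0\cdot u\cdot k_0$ and using neatness of $\lcj{K}{g^\alpha_0 g_\alpha}\cap ZP_\Phi(\A)/W_\Phi(\A)$ forces $z\in Z_{G,ac}(\bb{Q})$, whence $\mbf{p}=1$ in $G^c$. That single deduction — the cocycle is \emph{trivial}, not merely unipotent — is what makes Lemma~\ref{lem-quotient-tors-general} applicable, and it is absent from your proposal.

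Your closing step also has a circularity: you propose to conclude by exhibiting the quotient as an open-and-closed sub-torsor of ``an integral model of $\ca{E}_{ZP_\Phi^c,K_\Phi}(\sigma)\times^{ZP_\Phi^c}G^c$,'' but no such integral model is available a priori — it is precisely what this proposition and the surrounding constructions are producing. Lemma~\ref{lem-check-group-quotient} is used in the paper earlier, in Construction~\ref{const-delta-g0a-generic}, to set up the $\Delta^{g_0^\alpha}_{G^c}$-action on the generic fibre; it does not substitute for the torsor-quotient lemma. Similarly, the observation that ``the torsor property is étale-local'' does not help: the danger at the boundary is not a failure of local triviality but a possible failure of the quotient to be a torsor at all, which can only be ruled out by the group-theoretic verification above.
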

\begin{proof}
This is a special case of Lemma \ref{lem-quotient-tors-general}. We formulate a general lemma below to lighten the heavy notation.\par 
Let us explain why Lemma \ref{lem-quotient-tors-general} is applicable. 
By (\ref{eq-hz-int}), we let $\mathscr{E}_{G_{\zbkp}^c,K_{\Phi_0^\alpha}}(\sigma_0^\alpha)$ be the $\mathscr{E}_0(\sigma_0)$ below and $(\mbf{p}_{1,*}\mathscr{E}_{G_{0,\zbkp},K_{\Phi_0^\alpha}})^{\mbf{E}_{K_{\Phi_0^\alpha}}}$ be the $\overline{\mathscr{E}}_0$ below. 
The group $\Delta^{g_0^\alpha}_{G^c}$ acts through the quotient $\Delta^{g_0^\alpha}_{G^c}/K_{\Phi_0^\alpha}$, which is finite by Lemma \ref{lem-delta-g0a}.
The other symbols are clear in our context. Condition (1) is verified in \cite[Lem. 4.25]{Wu25}.\par
Let us now verify the last sentence in Condition (2). Recall that  $\Delta^{g_0^\alpha}_{G^c}$ is isomorphic to $\Delta^{g_0^\alpha}_{G^c}\sbst \ag(P_0)$ by Lemma \ref{lem-delta-g0a}. A subgroup of $\ag(P_0)$ acts on $\overline{\sh}_{K_{\Phi_0^\alpha}}$ trivially if and only if it is contained in the kernel of $\ag(P_0)\to \ag(\overline{P}_0)/\overline{K}_{\Phi_0^\alpha}$ if and only if it is contained in 
$$\ca{K}:=(Z_{0,ac}(\bb{Q})^{\overline{\ }}U_0(\A)K_{\Phi_0^\alpha}/Z_{0,ac}(\bb{Q})^{\overline{\ }})\times\{1\}$$ 
(cf. \cite[Lem. 4.2]{Wu25}). Here, we denote by $U_0$ the center of unipotent radical of $P_0$, by $\overline{P}_0$ the quotient $P_0/U_0$ and by $Z_0$ the center of $G_0$.
If $(p,1)\in \Delta^{g_0^\alpha}\cap \ca{K}$, $p$ will lie in $(\lcj{K}{g_0^\alpha g_\alpha}\cap ZP_\Phi(\A)/W_\Phi(\A))$ upon adjusting $Z_G(\bb{Q})$, so we have that $p\in Z_G(\bb{Q})(\lcj{K}{g_0^\alpha g_\alpha}\cap ZP_\Phi(\A))$. 
Write 
$$p=z\cdot k=z_0\cdot u\cdot k_0$$ 
for $z\in Z_G(\bb{Q})$, $z_0\in Z_{0,ac}(\bb{Q})$, $u\in U_0(\A)$, $k_0\in K_{\Phi_0^\alpha}$ and $k\in \lcj{K}{g_0^\alpha g_\alpha}\cap ZP_\Phi(\A)$. Then $z\cdot z_0^{-1}\in Z_G(\bb{Q})\cap (\lcj{K}{g_0^\alpha g_\alpha}\cap ZP_\Phi(\A)/W_\Phi(\A))$. Since $\lcj{K}{g_0^\alpha g_\alpha}\cap ZP_\Phi(\A)/W_\Phi(\A)$ is neat, $z\cdot z_0^{-1}\in Z_{G,ac}(\bb{Q})$.
Then this implies that $z\in Z_{G,ac}(\bb{Q})$.\par
Now if $(\mbf{p},p,1)\in \Delta^{g_0^\alpha}_{G^c}$ projects to $(p,1)\in \Delta^{g_0^\alpha}\cap \ca{K}$, then adjusting $z$ trivializes $\mbf{p}$, but $z$ is trivial in $G^c$. So $\mbf{p}$ is trivial. This verifies (2) in Lemma \ref{lem-quotient-tors-general}.
\end{proof}
\begin{lem}\label{lem-quotient-tors-general}
Let $X_0$, $Y_0$, $X$ and $Y$ be quasi-projective schemes over $\zbkp$. Let $\mbf{E}_0$ and $\mbf{E}$ be split tori with an isogeny $\mbf{E}_0\to \mbf{E}$. Suppose that $\mbf{p}_0:X_0\to Y_0$ and $\mbf{p}:X\to Y$ are torsors under $\mbf{E}_0$ and $\mbf{E}$, respectively. 
The setup is:
\begin{enumerate}
    \item Suppose that there is an $(\mbf{E}_0\to \mbf{E})$-equivariant morphism between torsors
\begin{equation}\label{eq-mor-torsors}
\begin{tikzcd}
    X_0\arrow[r,"\mbf{p}_0"]\arrow[d,"/\Delta"]&Y_0\arrow[d,"/\Delta"]\\
    X\arrow[r,"\mbf{p}"]& Y.
    \end{tikzcd}
\end{equation}
We assume that:
\begin{enumerate}
    \item There is an equivariant action of a finite group $\Delta$ on $X_0\to Y_0$ such that the action of $\Delta$ factors through finite groups $\Delta_1$ and $\Delta_2$ on $X_0$ and $Y_0$, respectively. This action commutes with $\mbf{E}_0$-action on $X_0$ as a torsor. 
    \item The action of $\Delta_1$ and $\Delta_2$ on $X_0$ and $Y_0$ are free.
    \item The second torsor $X\to Y$ in the diagram (\ref{eq-mor-torsors}) is obtained by the quotient of $X_0\to Y_0$ by $\Delta$.
\end{enumerate}
Let $\sigma$ be a rational polyhedral cone of the cocharacter group $\mbf{X}_*\mbf{E}_\bb{R}\iso (\mbf{X}_*\mbf{E}_0)_\bb{R}$, and let $\sigma_0$ be its pullback to $(\mbf{X}_*\mbf{E}_{0})_\bb{R}$. Hence, there is an $\mbf{E}(\sigma)$-torsor $\mbf{p}(\sigma):X(\sigma)\to Y$ given by the affine torus embedding with respect to $\sigma$. Similarly, there is a $\mbf{E}_0(\sigma_0)$-torsor $\mbf{p}_0(\sigma_0):X_0(\sigma_0)\to Y_0$.\par
\item Let $H_\zbkp$ be an affine smooth reductive group over $\zbkp$, and let $\overline{\mathscr{E}}_0$ be an $H_\zbkp$-torsor over $Y_0$. Denote by $\mathscr{E}_0(\sigma_0)$ the pullback of $\overline{\mathscr{E}}_0$ to $X_0(\sigma_0)$ under $\mbf{p}_0(\sigma_0)$. Suppose that $\overline{\mathscr{E}}_0\to Y_0$ has an equivariant $\Delta_2$-action and $H_\zbkp$-torsor structure commutes with this action. 
\end{enumerate}
Under all the assumptions above, we have an isomorphism $$\mathscr{E}_0(\sigma_0)/\Delta\iso \mbf{p}(\sigma)^*(\overline{\mathscr{E}}_0/\Delta). $$
Moreover, $\mathscr{E}_0(\sigma_0)/\Delta$ is an $H_\zbkp$-torsor.
\end{lem}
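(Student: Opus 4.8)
The plan is to deduce the torsor property from an explicit identification of the quotient, carried out after a faithfully flat base change. Write $\Delta_0:=\ker(\Delta\to \Delta_2)$ for the subgroup of $\Delta$ acting trivially on $Y_0$. The key preliminary observation is that, by the last hypothesis of Condition (2) --- which asserts that $\overline{\mathscr{E}}_0$ carries a $\Delta_2$-action, not merely a $\Delta$-action --- the group $\Delta_0$ acts trivially on $\overline{\mathscr{E}}_0$, whereas it may act nontrivially on $X_0(\sigma_0)$ through torsion translations of $\mbf{E}_0$ that extend to the affine torus embedding. (All finite-group quotients below exist as quasi-projective $\zbkp$-schemes, since the spaces involved are quasi-projective over $\zbkp$.)

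First I would record the base case. Since $\Delta_2$ acts freely on $Y_0$ and equivariantly on $\overline{\mathscr{E}}_0$, compatibly with its $H_\zbkp$-torsor structure, it acts freely on $\overline{\mathscr{E}}_0$, and hence $\overline{\mathscr{E}}:=\overline{\mathscr{E}}_0/\Delta=\overline{\mathscr{E}}_0/\Delta_2$ is an $H_\zbkp$-torsor over $Y=Y_0/\Delta_2$; this is the standard fact that a quotient of a $G$-torsor by a finite group acting freely and $G$-equivariantly is again a $G$-torsor, verified {\'e}tale-locally after trivializing the $\Delta_2$-torsor $Y_0\to Y$. In particular $\mbf{p}(\sigma)^*\overline{\mathscr{E}}$ is an $H_\zbkp$-torsor over $X(\sigma)$, so the lemma reduces to producing an isomorphism $\mathscr{E}_0(\sigma_0)/\Delta\iso \mbf{p}(\sigma)^*\overline{\mathscr{E}}$.

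Next I would construct the comparison morphism. Since $\mathscr{E}_0(\sigma_0)=X_0(\sigma_0)\times_{Y_0}\overline{\mathscr{E}}_0$ and the quotient maps $X_0(\sigma_0)\to X(\sigma)=X_0(\sigma_0)/\Delta$ and $\overline{\mathscr{E}}_0\to \overline{\mathscr{E}}$ lie over $Y_0\to Y$ --- here using the compatibility of the affine torus embedding with the quotient, built into the constructions of \cite{Wu25}, so that indeed $X_0(\sigma_0)/\Delta=X(\sigma)$ --- there is a natural $\Delta$-invariant morphism $\mathscr{E}_0(\sigma_0)\to X(\sigma)\times_Y\overline{\mathscr{E}}=\mbf{p}(\sigma)^*\overline{\mathscr{E}}$, hence a morphism $\varphi:\mathscr{E}_0(\sigma_0)/\Delta\to \mbf{p}(\sigma)^*\overline{\mathscr{E}}$ of schemes over $X(\sigma)$. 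Being an isomorphism may be checked after the faithfully flat (finite {\'e}tale) base change $Y_0\to Y$. For this I would use the elementary fact that whenever a finite group $\Gamma$ acts on a quasi-projective $\zbkp$-scheme $T$ over $Y_0$, with image $\Delta_2$ in $\mrm{Aut}(Y_0)$ acting freely and kernel $\Gamma_0$, there is a canonical isomorphism $(T/\Gamma)\times_Y Y_0\iso T/\Gamma_0$, both sides being $\Delta_2$-torsors over $T/\Gamma$ joined by a natural morphism. Applying this to $T=\mathscr{E}_0(\sigma_0)$, to $T=X_0(\sigma_0)$, and (with trivial kernel) to $T=\overline{\mathscr{E}}_0$, the base change of $\varphi$ along $Y_0\to Y$ identifies with the natural map
\[
\mathscr{E}_0(\sigma_0)/\Delta_0\longrightarrow \bigl(X_0(\sigma_0)/\Delta_0\bigr)\times_{Y_0}\overline{\mathscr{E}}_0 .
\]
Now $\Delta_0$ acts only on the first factor of $\mathscr{E}_0(\sigma_0)=X_0(\sigma_0)\times_{Y_0}\overline{\mathscr{E}}_0$, and $\overline{\mathscr{E}}_0\to Y_0$ is flat (being an $H_\zbkp$-torsor), so forming $\Delta_0$-invariants commutes with $-\otimes_{\ca{O}_{Y_0}}\ca{O}_{\overline{\mathscr{E}}_0}$ and this map is an isomorphism; hence so is $\varphi$, which gives both assertions of the lemma.

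The routine ingredients are the torsor property in the base case and the commutation of invariants with flat base change. The part demanding care --- and the reason the hypotheses are stated as they are --- is the bookkeeping separating the quotient by $\Delta$ from the quotient by $\Delta_0=\ker(\Delta\to\Delta_2)$: one must check that the three base-change identifications above are canonical, that $\varphi$ really base-changes to the displayed map, and that the compatibility $X_0(\sigma_0)/\Delta=X(\sigma)$ between the torus-embedding and quotient constructions of \cite{Wu25} is in force. Equivalently, the last step can be phrased as a local toric computation: over an affine open of $Y$ trivializing the $\Delta_2$-torsor $Y_0\to Y$ and the torsors $X_0(\sigma_0)$ and $\overline{\mathscr{E}}_0$, the assertion reduces to the fact that the quotient of the affine toric variety $\mbf{E}_0(\sigma_0)$ by a finite group of torus translations is again an affine toric variety, after tensoring up by $H_\zbkp$.
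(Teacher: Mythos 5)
Your argument is correct and rests on the same two observations as the paper's proof: freeness of the $\Delta_2$-action on $Y_0$ makes $\overline{\mathscr{E}}_0/\Delta$ an $H_\zbkp$-torsor with $\overline{\mathscr{E}}_0\iso Y_0\times_Y(\overline{\mathscr{E}}_0/\Delta)$, and flatness of that torsor over $Y$ lets $\Delta$-invariants be commuted with the tensor product. The paper compresses this into a direct short computation of $\ca{O}_{\mathscr{E}_0(\sigma_0)}^\Delta$ as an $\ca{O}_Y$-algebra carried out entirely over $Y$, avoiding the detour through $Y_0$, the comparison morphism, and the reduction to $\Delta_0=\ker(\Delta\to\Delta_2)$-invariants that you perform; your longer faithfully-flat-base-change route lands in the same place and has the small merit of making explicit the compatibility $X_0(\sigma_0)/\Delta=X(\sigma)$ that the paper's last step uses without comment.
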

\begin{proof}
We have 
\begin{equation*}
    \begin{split}
    &\ca{O}_{\mathscr{E}_0(\sigma_0)}^{\Delta}\iso (\ca{O}_{X_0}\otimes_{\ca{O}_{Y_0}}\ca{O}_{\overline{\mathscr{E}}_0})^\Delta \\
    &\iso(\ca{O}_{X_0}\otimes_{\ca{O}_{Y_0}}(\ca{O}_{Y_0}\otimes_{\ca{O}_Y}\ca{O}_{\overline{\mathscr{E}}_0/\Delta}))^\Delta\\
    &\iso (\ca{O}_{X_0}\otimes_{\ca{O}_Y}\ca{O}_{\overline{\mathscr{E}}_0/\Delta})^\Delta\\
    &\iso \ca{O}_X\otimes_{\ca{O}_{Y}}\ca{O}_{\overline{\mathscr{E}}_0/\Delta}.
    \end{split}
\end{equation*}
The first-to-second-line isomorphism holds and $\overline{\mathscr{E}}_0/\Delta$ is again an $H_\zbkp$-torsor because the action of the finite group $\Delta_2$ on $Y_0$ is free.
\end{proof}
\begin{proofof}[Theorem \ref{thm-loc-free-quo}]
Now, one can combine Proposition \ref{prop-boundary-torsor-quo} and Proposition \ref{prop-delta-g0a-action} to conclude that the quotient of $\mathscr{E}^\can_{G_\zbkp^c,K_0^\alpha}$ by $\Delta_{G^c}^{g_0^\alpha}$ on each $\cpl{\ca{S}_{K_0^\alpha}(\sigma_0^\alpha)}{\ca{S}_{K_{\Phi_0^\alpha},\sigma_0^\alpha}}$ is a $G_\zbkp^c$-torsor. By the fourth line to the first line of (\ref{eq-exp-computation}), the quotient by $\Delta_{G^c}^\alpha$ of $\mathscr{E}^\can_{G_\zbkp^c,K_0^\alpha}$ is a $G^c_{\zbkp}$-torsor at each $\cpl{\ca{S}^\Sigma_K}{ \ca{Z}_{[ZP^b({\Phi},\sigma)],K}}$.\par
Choose a cover of $\ca{S}^\Sigma_K$ by finitely many affine open subschemes $U_i$.
We then form an fpqc covering of $\ca{S}_K^{\Sigma}$ by taking the spectrum of the completion of $U_i$ at every $\ca{Z}_{[ZP^b(\Phi,\sigma)],K}$. Since $G^c_\zbkp$ is smooth, it suffices to check $\mathscr{E}^\can_{G^c_\zbkp,K_0^\alpha}$ being a torsor over this fpqc cover. So we are done by the last paragraph.
\end{proofof}
\subsubsection{Proof of main theorem}
\begin{construction}\upshape
Let us now construct a principal $G^c_{\zbkp}$-bundle on $\ca{S}^{\Sigma_2}_{K_2}$. (Recall that $G_{2,\zbkp}\hookrightarrow G_{\zbkp}$ with derived groups an isomorphism. We have $G^c_{2,\zbkp}\hookrightarrow G^c_{\zbkp}$.)\par
Recall that there is an isomorphism $s:\ca{S}_K^\Sigma(G,X_a)_{\ca{O}^{ur}}\iso \ca{S}^\Sigma_K(G,X_b)_{\ca{O}^{ur}}$ and there is an open and closed embedding $\pi^a:\ca{S}_{K_2}^{\Sigma_2}\hookrightarrow \ca{S}^\Sigma_K(G,X_a)$. We pullback the $G^c_\zbkp$-bundle $\mathscr{E}^\can_{G^c_\zbkp,K,\ca{O}^{ur}}$ over $\ca{S}^\Sigma_K(G,X_b)_{\ca{O}^{ur}}$ to $\ca{S}^{\Sigma_2}_{K_2}$ via $s\circ\pi^a_{\ca{O}^{ur}}$. We denote this pullback by $\mathscr{E}^\can_{G^c_\zbkp,K_2,\ca{O}^{ur}}$. This is a $G^c_\zbkp$-bundle, and we claim the bundle obtained by reduction of structure group and Galois descent is what we need.\hfill$\square$
\end{construction}

\begin{proofof}[Theorem \ref{thm-extend-main-theorem}]
By Lemma \ref{lem-generic-compatible}, the generic fiber of $\mathscr{E}^\can_{G^c_\zbkp}$ is $\ca{E}^\can_{G^c,K}(G,X_b)$. Over $E^{\prime,p}$, the pullback of $\ca{E}^\can_{G^c,K}(G,X_b)$ via $s$ is $\ca{E}^\can_{G^c,K}(G,X_a)$ by (\ref{eq-deligne-ind-torsor}) (from which we see that $\ca{E}_{G^c,K}(G,X_a)_{E^{\prime,p}}\iso \ca{E}_{G^c,K}(G,X_b)_{E^{\prime,p}}$) and by Deligne's existence theorem (which uniquely characterizes the canonical extensions).\par
By Theorem \ref{thm-lovering-ab} (2) and by the proof of Lemma \ref{lem-generic-compatible}, the bundle $\mathscr{E}^\can_{G^c_\zbkp,K}$ extends $\mathscr{E}_{G^c,K}(G,X_b)$ over $\ca{S}_{K}(G,X_b)$. After pulling back along $s$, $s^*\mathscr{E}_{G^c,K}(G,X_b)_{\ca{O}^{ur}}$ descends to $\mathscr{E}_{G^c,K}(G,X_a)$ by \cite[Lem. 4.4.9]{Lov17}. Now we have shown the claims when $(G_2,X_2)=(G,X_a)$.\par
Then the descent data for $\ca{E}_{G^c,K,E^{\prime,p}}^\can$ from $E^{\prime,p}$ to $E'$, and for $\mathscr{E}_{G^c_{\zbkp},K}(G,X_a)_{\ca{O}^{ur}}$ from $\ca{O}^{ur}$ to $\ca{O}$ determines a descent datum for $\mathscr{E}_{G^c_{\zbkp},K,\ca{O}^{ur}}^\can$. 
Consequently, we obtain a $G^c_\zbkp$-bundle $\mathscr{E}^\can_{G^c_\zbkp,K}(G,X_a)$ over $\ca{S}^\Sigma_K(G,X_a)_{\ca{O}}$ extending $\mathscr{E}_{G^c_{\zbkp},K}(G,X_a)$ and $\ca{E}_{G^c,K,E'}^\can(G,X_a)$.\par
We pull $\mathscr{E}^\can_{G^c_\zbkp,K}(G,X_a)$ back to $\ca{S}^{\Sigma_2}_{K_2}$ along $\pi^a$. We denote this pullback by $\mathscr{E}^\can_{G_{\zbkp}^c,K_2}$. 
We check that: 
\begin{enumerate}
    \item  $\mathscr{E}^\can_{G^c_\zbkp,K_2}$ has a reduction of structure group to a $G_{2,\zbkp}^c$-torsor $\mathscr{E}_{G^c_{2,\zbkp},K_2}^{\can}$ over $\ca{S}_{K_2}^{\Sigma_2}$,
    \item $\mathscr{E}_{G^c_{\zbkp},K_2}^\can$ extends $\ca{E}_{G^c_2,K_2,E'}^{\can}\times^{G^c_{2,\zbkp}}G^c_\zbkp$, and
    \item $\ca{E}_{G^c_{\zbkp},K_2}^{\can}$ extends $\mathscr{E}_{G^c_{2,\zbkp},K_2,\ca{O}}\times^{G^c_{2,\zbkp}}G^c_\zbkp$.
\end{enumerate} 
The claim (2) follows from Proposition \ref{prop-functoriality-generic}, and the claim (3) follows from Theorem \ref{thm-lovering-ab}. For (1), it suffices to see it has a reduction of structure group over a locus of codimension at least $2$. Then (1) follows from (2) and (3). From this, it follows that $\mathscr{E}_{G^c_{2,\zbkp},K_2}^\can$ extends both $\ca{E}_{G^c_2,K_2,E'}^{\can}$ and $\mathscr{E}_{G^c_{2,\zbkp},K_2,\ca{O}}$.\par
Now we consider the descent datum from $\ca{O}_{E',(v')}$ to $\ca{O}_2$. Again, the $G^c_{2,\zbkp}$-bundle $\mathscr{E}^\can_{G^c_{2,\zbkp},K_2}$ descends to $\mathscr{E}^\can_{G^c_{2,\zbkp},K_2,\ca{O}_2}$ over $\ca{S}^{\Sigma_2}_{K_2,\ca{O}_2}$ since one can check this on the union of $\ca{S}_{K_2}$ and $\sh^{\Sigma_2}_{K_2,E'}$. Again, all properties in the second paragraph of Theorem \ref{thm-extend-main-theorem} also hold over $\ca{O}_2$.
\end{proofof}
The following corollary is a consequence of Theorem \ref{thm-lovering-ab} (2) and Proposition \ref{prop-functoriality-generic}.
\begin{cor}
Let $f:(G_1,X_1,G_{1,\zbkp})\to (G_2,X_2,G_{2,\zbkp})$ be a morphism between abelian-type Shimura data with a map between smooth reductive models. Let $K_{1,p}=G_{1,\zbkp}(\bb{Z}_p)$ and $K_{2,p}:=G_{2,\zbkp}(\bb{Z}_p)$. 
Suppose that there is an inclusion between neat prime-to-$p$ levels $K_1^p\sbst K_2^p$. For a fixed pair of compatible admissible projective cone decompositions $(\Sigma_1,\Sigma_2)$ for $(G_1,X_1,K_1)$ and $(G_2,X_2,K_2)$, there is a pair of refinements $(\Sigma_1',\Sigma_2')$ such that $\mathscr{E}^\can_{G_{1,\zbkp}^c,K_1}\to \ca{S}^{\Sigma_1'}_{K_1}$ and $\mathscr{E}^\can_{G_{2,\zbkp}^c,K_2}\to \ca{S}^{\Sigma_2'}_{K_2}$ are defined.\par 
Then the pullback of $\mathscr{E}^\can_{G_{2,\zbkp}^c,K_2}$ under the map $f:\ca{S}_{K_1}^{\Sigma_1'}\to \ca{S}_{K_2}^{\Sigma_2'}$ is canonically isomorphic to $\mathscr{E}^\can_{G^c_{1,\zbkp},K_1}\times^{G^c_{1,\zbkp}}G^c_{2,\zbkp}$.
\end{cor}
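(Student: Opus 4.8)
The plan is to reduce the statement to the functoriality results already proved in the generic fibre and over the open integral model, and then to propagate the resulting comparison isomorphism across the boundary of the special fibre by a codimension-two argument. First I would choose the cone decompositions: start with a smooth projective refinement $\Sigma_2'$ of $\Sigma_2$ of the type used in \cite{Wu25} Case (HS), so that $\mathscr{E}^\can_{G^c_{2,\zbkp},K_2}$ is defined on $\ca{S}_{K_2}^{\Sigma_2'}$ by Theorem \ref{thm-extend-main-theorem}; then pull $\Sigma_2'$ back along $f$ to an admissible cone decomposition on $(G_1,X_1,K_1)$ and choose a smooth projective refinement $\Sigma_1'$ of it, again of the type needed so that $\mathscr{E}^\can_{G^c_{1,\zbkp},K_1}$ is defined on $\ca{S}_{K_1}^{\Sigma_1'}$. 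With these choices $f$ extends to a morphism $f:\ca{S}_{K_1}^{\Sigma_1'}\to \ca{S}_{K_2}^{\Sigma_2'}$ by the functoriality of the integral toroidal compactifications of \cite{Wu25}, and since $f$ induces $f^c:G^c_{1,\zbkp}\to G^c_{2,\zbkp}$ on the reductive models, the contracted product $\mathscr{F}:=\mathscr{E}^\can_{G^c_{1,\zbkp},K_1}\times^{G^c_{1,\zbkp}}G^c_{2,\zbkp}$ makes sense; by Theorem \ref{thm-extend-main-theorem}, both $\mathscr{F}$ and $f^*\mathscr{E}^\can_{G^c_{2,\zbkp},K_2}$ are $G^c_{2,\zbkp}$-torsors over all of $\ca{S}_{K_1}^{\Sigma_1'}$.

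Next I would build the isomorphism $f^*\mathscr{E}^\can_{G^c_{2,\zbkp},K_2}\iso \mathscr{F}$ over the open subscheme $\ca{S}_{K_1}\cup \sh_{K_1}^{\Sigma_1'}$ of $\ca{S}_{K_1}^{\Sigma_1'}$. Over the generic fibre $\sh_{K_1}^{\Sigma_1'}$ this is exactly Proposition \ref{prop-functoriality-generic}, which supplies the $(G_1^c\to G_2^c)$-equivariant isomorphism $\ca{E}^\can_{G_1^c,K_1}\times^{G_1^c}G_2^c\iso f^*\ca{E}^\can_{G_2^c,K_2}$; here I use that the generic fibres of the two torsors on $\ca{S}_{K_1}^{\Sigma_1'}$ are $\ca{E}^\can_{G_i^c,K_i}$ by Theorem \ref{thm-harris-can-ex-can-model} and Theorem \ref{thm-extend-main-theorem}. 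Over the open integral model $\ca{S}_{K_1}$ this is Theorem \ref{thm-lovering-ab}(2), which gives $f^*\mathscr{E}_{G^c_{2,\zbkp},K_2}\iso \mathscr{E}_{G^c_{1,\zbkp},K_1}\times^{G^c_{1,\zbkp}}G^c_{2,\zbkp}$. The two isomorphisms restrict on the overlap $\sh_{K_1}$ to the same map, namely the canonical functoriality isomorphism of Proposition \ref{prop-can-mod-HZM}: the isomorphism of Proposition \ref{prop-functoriality-generic} is constructed so as to extend it, and the integral canonical model together with its functoriality (Theorem \ref{thm-lovering-ab}) is by definition an integral extension of the characteristic-zero picture. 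Hence they glue to a single isomorphism over $\ca{S}_{K_1}\cup \sh_{K_1}^{\Sigma_1'}$.

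Finally I would extend this isomorphism over all of $\ca{S}_{K_1}^{\Sigma_1'}$. The complement of $\ca{S}_{K_1}\cup \sh_{K_1}^{\Sigma_1'}$ is the intersection of the relative normal crossings boundary divisor with the special fibre, which has codimension $\geq 2$ in the regular (hence normal) scheme $\ca{S}_{K_1}^{\Sigma_1'}$. The sheaf $\ul{\mrm{Isom}}(\mathscr{F},\, f^*\mathscr{E}^\can_{G^c_{2,\zbkp},K_2})$ is a torsor under $\aut(\mathscr{F})$, an inner form of $G^c_{2,\zbkp}$, hence is representable by a scheme smooth and affine over $\ca{S}_{K_1}^{\Sigma_1'}$; by the algebraic Hartogs lemma a section of such a scheme over the open set above extends uniquely to a section over the whole normal scheme, and this section is the desired isomorphism. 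Its uniqueness shows that it is independent of the auxiliary choices, hence \emph{canonical}; compatibility with the integrable log connections, if wanted, then follows from the agreement already over $\sh_{K_1}$, since a log connection on a locally free sheaf over the normal toroidal compactification is determined by restriction to a dense open. I expect the only real subtlety to be the first step, namely arranging refinements $(\Sigma_1',\Sigma_2')$ that are simultaneously compatible with $f$ and of the special shape required by the construction of $\mathscr{E}^\can$ in \cite{Wu25} Case (HS); once both sides are genuine torsors on the whole compactifications, which is the content of Theorem \ref{thm-loc-free-quo}, the comparison is forced.
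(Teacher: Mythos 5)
Your proposal is correct and follows the same route the paper intends: the paper states the corollary as a direct consequence of Theorem~\ref{thm-lovering-ab}(2) and Proposition~\ref{prop-functoriality-generic}, and your fleshed-out version (glue the two functoriality isomorphisms over $\ca{S}_{K_1}\cup\sh^{\Sigma_1'}_{K_1}$, then extend across the remaining codimension-$\ge 2$ locus via the smooth affine $\ul{\mrm{Isom}}$-scheme and normality of $\ca{S}^{\Sigma_1'}_{K_1}$) is exactly the argument implicit there and used throughout the paper (e.g.\ in the proof of Theorem~\ref{thm-extend-main-theorem}).
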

\subsection{Other principal bundles and properties}\label{subsec-auto-vb}
\subsubsection{}
Fix a Shimura datum $(G,X)$. Let $\{\mu\}$ be the associated conjugacy class of Hodge cocharacters; this conjugacy class is defined over $E:=E(G,X)$. Let $H$ be a finite Galois extension of $E$ such that, for one place $\wdtd{v}$ of $H$ unramified over a place $v|p$ of $E$, there is a cocharacter $\mu': \bb{G}_{m,\ca{O}_{H,{(\wdtd{v})}}}\to G_{\ca{O}_{H,(\wdtd{v})}}$ whose base change to $\bb{C}$ is in $\{\mu\}$. There is such a cocharacter by \cite[Prop. 3.2.3]{Lov17}.\par
Let $P_{\mu'}$ be the parabolic subgroup of $G_{\ca{O}_{H,(\wdtd{v})}}$ whose Lie algebra is spanned by all non-negative weight spaces in $\lie G_{\ca{O}_{H,(\wdtd{v})}}$ under the adjoint action of $\mu^{\prime,-1}$; the group of $\bb{C}$-points of $P_{\mu'}$ is defined by $$P_{\mu'}(\bb{C}):=\{g\in G(\bb{C})|\lim_{t\to 0}\mu'(t)^{-1}g\mu'(t)\text{ exists}\}.$$ 
Let $P^c_{\mu'}$ be the quotient $P_{\mu_x}/Z_{ac}(G_{\zbkp})_{\ca{O}_{H,(\wdtd{v})}}$, and let $M_{\mu'}$ (resp. $M^c_{\mu'}$) be the Levi quotient of $P_{\mu'}$ (resp. $P^c_{\mu'}$). Then $M^c_{\mu'}\iso M_{\mu'}/Z_{ac}(G_{\zbkp})_{\ca{O}_{H,(\wdtd{v})}}$. 
Denote by $\mu^{\prime,c}$ the composition of $\mu'$ with $G_{\ca{O}_{H,(\wdtd{v})}}\to G^c_{\ca{O}_{H,(\wdtd{v})}}$.
Then $P^c_{\mu'}=P_{\mu^{\prime,c}}$.\par
Denote $Gr_{\mu^{\prime,c}}:=G_{\ca{O}_{H,(\wdtd{v})}}/P_{\mu'}\iso G^c_{\ca{O}_{H,(\wdtd{v})}}/P_{\mu^{\prime,c}}$. Then $Gr_{\mu'}$ canonically descends to a proper smooth scheme $\ca{GR}_{\mu^c}$ over $\ca{O}_{E,(v)}$ by \cite[Prop. 3.2.6]{Lov17}.\par 
For any {\'e}tale algebra $R$ over $\ca{O}_{E,(v)}$, $\ca{GR}_{\mu^c}(R)=$ 
$$\{P/\spec R\text{\ parabolic\ subgroup\ of\ }G^c_{R}|\text{\ }P\ \text{{\'e}tale locally conjugates to }P^c_{\mu'}\}.$$
It is independent of the choice of $H$, $\mu'$ and $\wdtd{v}$ above. 
\subsubsection{Reduction of structure group}
We come back to the setup in \S\ref{subsec-tor-ab-review}.
Let $L$ be a finite extension of $E_2:=E(G_2,X_2)$. Let $w|v_2$ be a place of $L$ unramified over $v_2$. Suppose that there is a cocharacter over $\ca{O}_{L,(w)}$
$$\mu'_2: \bb{G}_{m,\ca{O}_{L,{({w})}}}\to G_{\ca{O}_{L,({w})}}$$
whose base change to $\bb{C}$ is in the conjugacy class of Hodge cocharacters $\{\mu_2\}$ associated with $X_2$. Let $\mu_2^{\prime,c}$ denote the composition of $\mu'_2$ with $G_{2,\ca{O}_{L,(w)}}\to G^c_{2,\ca{O}_{L,(w)}}$.\par
\begin{prop}\label{prop-reduction-para-mu}
With the conventions above, the canonical extension $\mathscr{E}^\can_{G^c_{2,\zbkp},K_2}$ admits a canonical reduction of structure group to a $P_{\mu'_2}^c$-torsor.  
\end{prop}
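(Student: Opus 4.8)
The plan is to realize the reduction as the sub-torsor of $\mathscr{E}^\can_{G^c_{2,\zbkp},K_2}$ cut out by the extended Hodge filtration, and to build that filtration by the same quotient-and-descent procedure used for $\mathscr{E}^\can_{G^c_{2,\zbkp},K_2}$ in the proof of Theorem \ref{thm-extend-main-theorem}. Recall that a reduction of structure group of a $G$-torsor to a parabolic $P$ is the same datum as a $G$-equivariant morphism to $G/P$; since $\ca{GR}_{\mu_2^c}\otimes_{\ca{O}_2}\ca{O}_{L,(w)}\iso G^c_{2,\ca{O}_{L,(w)}}/P^c_{\mu'_2}$ has the base point $[P^c_{\mu'_2}]$, the proposition is equivalent to producing a $G^c_{2,\ca{O}_2}$-equivariant morphism $\pi_2^\can\colon\mathscr{E}^\can_{G^c_{2,\zbkp},K_2}\to\ca{GR}_{\mu_2^c}$ (equivalently, the $\mu_2^c$-filtration of Corollary \ref{cor-gr-extend}) and then taking the fibre of the corresponding section over $[P^c_{\mu'_2}]$. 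So the task reduces to constructing $\pi_2^\can$.

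First I would construct the analogous morphism in the Hodge-type building block over $\ca{S}^{\Sigma_0^\alpha}_{K_0^\alpha}$. There the de Rham realization $\ca{V}^{\alpha,\can}$ of the log $1$-motive $\Q_{\Phi_0^\alpha}$ carries, besides its weight filtration, the Hodge filtration $\fil^\bullet\ca{V}^{\alpha,\can}$ (with $\fil^0$ the invariant differentials of the semiabelian part); by Theorem \ref{thm-lov-mad}(2) and \cite{Mad19} this is a sub-bundle extending $\fil^\bullet\ca{V}^\alpha$ and compatible with the tensors $\wdtd{s}^{\alpha,\can}_{\lambda,\dr}$, and on the boundary charts $\ca{S}_{K_{\Phi_0^\alpha}}(\sigma_0^\alpha)$ it is $\mbf{E}_{K_{\Phi_0^\alpha}}$-invariant and pulled back from $\overline{\ca{S}}_{K_{\Phi_0^\alpha}}$ by the argument of Lemma \ref{lem-tensor-mixsh-gen}. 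Because $P_{\mu'_0}$ is the stabiliser of the $\mu'_0$-filtration on the faithful lattice $V_\zbkp$, the subsheaf of $\mathscr{E}^\can_{G_{0,\zbkp},K_0^\alpha}$ of isomorphisms carrying this standard filtration to $\fil^\bullet\ca{V}^{\alpha,\can}$ is a smooth $P_{\mu'_0}$-sub-torsor: it is representable and smooth since $\fil^\bullet$ is a sub-bundle, and it is non-empty {\'e}tale-locally because ``being of $\mu'_0$-type'' is a locally constant condition on $\ca{S}^{\Sigma_0^\alpha}_{K_0^\alpha}$ which holds on the generic fibre (dense, by flatness over the corresponding discrete valuation ring), where it is the classical statement of \cite{Har89}. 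Equivalently this is a $G_{0,\zbkp}$-equivariant morphism $\mathscr{E}^\can_{G_{0,\zbkp},K_0^\alpha}\to\ca{GR}_{\mu_0^c}$; as the relevant Grassmannians are canonically identified with the flag scheme of the common adjoint group, forming $\times^{G_{0,\zbkp}}G^c_\zbkp$ and using the functoriality of canonical extensions and of integral canonical models (Proposition \ref{prop-functoriality-generic}, Theorem \ref{thm-lovering-ab}) yields a $G^c_\zbkp$-equivariant morphism $\mathscr{E}^\can_{G^c_\zbkp,K_0^\alpha}\to\ca{GR}_{\mu^c}$. Being assembled from functorial operations only, this morphism is $\Delta^\alpha_{G^c}$-equivariant, so descending along the finite groups and the disjoint union over $I_{G/G_0}$ as in the proof of Theorem \ref{thm-extend-main-theorem}, then pulling back along $\pi^a$ and applying the descent data $\ca{O}^{ur}/\ca{O}$ and $\ca{O}_{E',(v')}/\ca{O}_2$, I obtain $\pi_2^\can$. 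Its section base-changed to $\ca{O}_{L,(w)}$ has a well-defined fibre over $[P^c_{\mu'_2}]$, giving the $P^c_{\mu'_2}$-torsor; this reduction is canonical, being uniquely pinned down by the requirement that it restrict to Lovering's $\mu$-filtration on $\ca{S}_{K_2}$ and to the canonical extension of the Hodge reduction on $\sh^{\Sigma_2}_{K_2}$, two dense loci.

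The hard part is the locus of codimension $\geq 2$ — the boundary of the special fibre — across which the reduction is not a priori defined: a section into a proper bundle need not extend across such a locus (witness the exceptional divisor of a blow-up), so the argument genuinely rests on the fact that $\fil^\bullet\ca{V}^{\alpha,\can}$ is a sub-bundle over the whole Hodge-type toroidal compactification, which is the substantive input extracted from \cite{Mad19} and the boundary analysis underlying Theorem \ref{thm-lov-mad}, together with its $\mbf{E}_{K_{\Phi_0^\alpha}}$-invariance so that it survives the quotient description of Theorem \ref{thm-loc-free-quo} and Proposition \ref{prop-recall-constr-wu25}. The remaining descent along the groups $\Delta^{g_0^\alpha}_{G^c}$, $\Delta^\alpha_{G^c}$ and along the ring extensions is parallel to, and no harder than, what was already done for $\mathscr{E}^\can_{G^c_{2,\zbkp},K_2}$ itself, since the $\mu$-reduction, being canonical, is automatically compatible with all the structures used there.
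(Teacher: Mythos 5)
Your approach is correct but takes a genuinely different route from the paper, and is arguably the sturdier of the two. The paper's proof produces the reduction separately over $\ca{S}_{K_2,\ca{O}_{L,(w)}}$ (via Lovering's $\mu$-filtration, \cite[Thm.~4.8.1]{Lov17}, pulled back through the point $[P^c_{\mu'_2}]\in\ca{GR}_{\mu_2^c}$) and over $\sh^{\Sigma_2}_{K_2,L}$ (via the Harris--Zucker flag bundle, \cite[Lem.~4.4.2]{HZ94}), checks the two agree on $\sh_{K_2,L}$, and then asserts that the reduction on $\sh^{\Sigma_2}_{K_2,L}\cup\ca{S}_{K_2,\ca{O}_{L,(w)}}$ extends over all of $\ca{S}^{\Sigma_2}_{K_2,\ca{O}_{L,(w)}}$ because the complement has codimension $\geq 2$. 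You are right to flag, in your last paragraph, that this is exactly the place where sections of a proper bundle -- equivalently, reductions to a parabolic -- can fail to extend: a reflexive extension of the Hodge sub-bundle need not again be a sub-bundle, so a purity statement at this level of generality would require its own justification. Your construction supplies the geometric input at precisely that step: you build the $\mu_2^c$-flag map $\pi_2^\can$ first (the paper derives Corollary \ref{cor-gr-extend} as a consequence of the proposition, whereas you reverse the logical order), working in the Hodge-type building block where $\fil^\bullet\ca{V}^{\alpha,\can}$ is a genuine sub-bundle by Madapusi's log $1$-motive boundary computation, compatible with the extended tensors and $\mbf{E}_{K_{\Phi_0^\alpha}}$-invariant by the same mechanism as Lemma \ref{lem-E-action-bd-v}; you then descend along the $\Delta^\alpha_{G^c}$-quotient and Galois/ring-extension machinery already set up for $\mathscr{E}^\can_{G^c_{2,\zbkp},K_2}$ in the proof of Theorem \ref{thm-extend-main-theorem}, and finish by taking the fibre over the base point. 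What the paper's version buys is brevity, deferring the flag map to the corollary; what yours buys is an argument that leans directly on the sub-bundle statement from \cite{Mad19} rather than on codimension-two extension of parabolic reductions.
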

\begin{proof}
First, we show this over $\ca{S}_{K_2}$. By \cite[Thm. 4.8.1]{Lov17}, there is a diagram
$$\ca{S}_{K_2}\xleftarrow{\ \ \pi_1\ \ } \mathscr{E}_{G^c_{2,\zbkp},K_2}\xrightarrow{\ \ \pi_2\ \ } \ca{GR}_{\mu^{c}_2}$$
such that the right arrow is $G^c_{2,\zbkp}$-equivariant. We take the fiber product
\begin{equation}\label{eq-def-red-group}
    \begin{tikzcd}
        \mathscr{E}_{P^c_{\mu'_2},K_2}\arrow[rr,dashed]\arrow[d,dashed]&& \mathscr{E}_{G^c_{2,\zbkp},K_2}\arrow[d,"{(\pi_1,\pi_2)}"]\\
        \ca{S}_{K_2}\otimes_{\ca{O}_{(v_2)}}\ca{O}_{L,(w)}\arrow[rr,"\mrm{id}\times x"]&&\ca{S}_{K_2}\times_{\ca{O}_{(v_2)}} \ca{GR}_{\mu_2^c}.
    \end{tikzcd}
\end{equation}
The map $x:\spec \ca{O}_{L,(w)}\to \ca{GR}_{\mu_2^c}$ is the point determined by $P_{\mu^{\prime,c}_2}$. This is indeed a $P^c_{\mu'_2}$-torsor: For any {\'e}tale cover $U$ of $\ca{S}_{K_2}$ that trivializes $\mathscr{E}_{G_{2,\zbkp}^c,K_2}$, the diagram (\ref{eq-def-red-group}) is isomorphic to
\begin{equation*}
    \begin{tikzcd}
     U\times_{\ca{O}_{(v_2)}} P^c_{\mu^{\prime,c}_2}\arrow[rr]\arrow[d]&& U\times_{\ca{O}_{(v_2)}}G^c_{2,\ca{O}_{L,(w)}} \arrow[d,"{(\pi_1,\pi_2)}"]\\
    U\otimes_{\ca{O}_{(v_2)}}\ca{O}_{L,(w)}\arrow[rr,"\mrm{id}\times x"]&&U\times_{\ca{O}_{(v_2)}} \ca{GR}_{\mu_2^c}.
    \end{tikzcd}
\end{equation*}
Next, we show this over $\sh^{\Sigma_2}_{K_2}$. By \cite[Lem. 4.4.2]{HZ94}, there is a diagram
$$\sh^{\Sigma_2}_{K_2}\xleftarrow{\ \ \xi_1\ \ }\ca{E}^\can_{G^c_2,K_2}\xrightarrow{\ \ \xi_2\ \ }\ca{GR}_{\mu_2^c,E_2}.$$
Again, there is a reduction of $\ca{E}^\can_{G_2^c,K_2}$ by pulling back through the point $\wdtd{x}:\spec L\to \ca{GR}_{\mu^c_2,E_2}$ determined by $P_{\mu^{\prime,c}_2, E_2}$, i.e., the fiber product
\begin{equation}\label{eq-def-red-group-gen}
    \begin{tikzcd}
        \ca{E}^\can_{P^c_{\mu'_2},K_2}\arrow[rr]\arrow[d]&& \ca{E}_{G^c_{2},K_2}\arrow[d,"{(\xi_1,\xi_2)}"]\\
        \sh_{K_2,L}^{\Sigma_2}\arrow[rr,"\mrm{id}\times \wdtd{x}"]&&\sh_{K_2}\times_{E_2} \ca{GR}_{\mu_2^c,E_2}.
    \end{tikzcd}
\end{equation}
Since (\ref{eq-def-red-group}) and (\ref{eq-def-red-group-gen}) are compatible on $\sh_{K_2,L}$, we see that $\mathscr{E}^\can_{G^c_{2,\zbkp},K_2,\ca{O}_{L,(w)}}$ admits a reduction of structure group on $\sh^{\Sigma_2}_{K_2,L}\cup \ca{S}_{K_2,\ca{O}_{L,(w)}}$. Then there is a reduction of structure group to a $P^c_{\mu_2'}$-torsor on $\ca{S}^{\Sigma_2}_{K_2,\ca{O}_{L,(w)}}$ since $\sh^{\Sigma_2}_{K_2,L}\cup \ca{S}_{K_2,\ca{O}_{L,(w)}}$ has codimension at least $2$.
\end{proof}
\begin{definition}
    We denote the canonical reduction of structure group by $\mathscr{E}_{P^c_{\mu_2'},K_2}^\can$. We call it a principal $P^c_{\mu_2'}$-bundle. Denote $\mathscr{E}_{M^c_{\mu_2'},K_2}^\can:=\mathscr{E}_{P^c_{\mu_2'},K_2}^\can\times^{P^c_{\mu_2'}}M^c_{\mu_2'}$. We call it a principal $M^c_{\mu_2'}$-bundle.
\end{definition}
\begin{cor}\label{cor-gr-extend}
The maps $\pi_2$ and $\xi_2$ above uniquely extend to a map $\pi_2^\can: \mathscr{E}^\can_{G_{2,\zbkp}^c,K_2}\to \ca{GR}_{\mu_2^c}$.
\end{cor}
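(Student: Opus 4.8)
The plan is to construct $\pi_2^{\can}$ formal-locally at the boundary, reducing to the Hodge-type case, and then to glue by faithfully flat descent, in the same spirit as the construction of $\mathscr{E}^{\can}_{G^c_{2,\zbkp},K_2}$ itself. First I would record that $\pi_2$ and $\xi_2$ agree on $\sh_{K_2}$: both restrict there to the standard $G^c$-equivariant map sending a frame to the parabolic stabilizing the Hodge filtration, by compatibility of Lovering's integral canonical model of the principal bundle with the characteristic-zero theory (\cite[\S 4.8]{Lov17}, \cite[Lem. 4.4.2]{HZ94}). Hence $\pi_2$ and $\xi_2$ glue to a $G^c_{2,\zbkp}$-equivariant morphism $\rho\colon \mathscr{E}^{\can}_{G^c_{2,\zbkp},K_2}|_{U}\to \ca{GR}_{\mu_2^c}$, where $U:=\sh_{K_2}^{\Sigma_2}\cup\ca{S}_{K_2}$ is an open subscheme whose complement has codimension $\ge 2$ (by the construction reviewed in \S\ref{subsec-tor-ab-review}). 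Since $\ca{GR}_{\mu_2^c}$ is separated and $\mathscr{E}^{\can}_{G^c_{2,\zbkp},K_2}$ is reduced with $U$ dense, any extension of $\rho$ to all of $\mathscr{E}^{\can}_{G^c_{2,\zbkp},K_2}$ is automatically unique and $G^c_{2,\zbkp}$-equivariant, so it suffices to produce the extension fpqc-locally and invoke that $\Hom(-,\ca{GR}_{\mu_2^c})$ is an fpqc sheaf.

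For the existence I would pass to the Hodge-type building blocks. For each $\alpha$ and each cusp label representative with cone $(\Phi_0^\alpha,\sigma_0^\alpha)$, the de Rham realization $\ca{V}_{\Phi_0^\alpha}(\sigma_0^\alpha)$ of the log $1$-motive $\Q_{\Phi_0^\alpha}$ carries, besides its weight filtration, a Hodge filtration $\fil^\bullet$ (de Rham realizations of (log) $1$-motives come equipped with such a filtration; cf.\ \cite{Ber05}, \cite{BVS98} and \S\ref{subsubsec-tensors-bd}), which on the open strata is identified with the Hodge filtration of $\ca{V}^{\alpha,\can}$ under the comparison of \cite[Prop. 3.1.6]{Mad19}. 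Moreover $\fil^\bullet$ is of type $\mu_0^c$ and is compatible with the tensors $\wdtd{s}_{\lambda,\Phi_0^\alpha,\dr}(\sigma_0^\alpha)$ of Lemma \ref{lem-tensor-mixsh-gen}; both assertions hold over the dense open stratum, and since $\ca{S}_{K_{\Phi_0^\alpha}}(\sigma_0^\alpha)$ is flat over $\ca{O}$ and $\ca{GR}_{\mu_0^c}$ is separated, they propagate everywhere. Consequently $\fil^\bullet$ defines a $G_{0,\zbkp}$-equivariant morphism $\mathscr{E}_{G_{0,\zbkp},K_{\Phi_0^\alpha}}(\sigma_0^\alpha)\to \ca{GR}_{\mu_0^c}$ extending the one over $\ca{S}_{K_0^\alpha}$ coming from \cite[Thm. 4.8.1]{Lov17}; pushing out along $G_{0,\zbkp}\to G^c_\zbkp$ and using the canonical identification $\ca{GR}_{\mu_0^c}\iso \ca{GR}_{\mu^c}\iso \ca{GR}_{\mu_2^c}$ (because $G_0\to G\leftarrow G_2$ induce isomorphisms on adjoint groups, hence on these partial flag schemes), we obtain $\mathscr{E}_{G_{0,\zbkp},K_{\Phi_0^\alpha}}(\sigma_0^\alpha)\times^{G_{0,\zbkp}}G^c_\zbkp\to \ca{GR}_{\mu_2^c}$.

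It remains to descend this map and to glue. It is invariant under the finite group $\Delta^{g_0^\alpha}_{G^c}$ of \S\ref{subsubsec-proof-key-thm}: the $\Delta^{g_0^\alpha}_{G^c}$-action of Construction \ref{const-delta-g0a-generic} and Proposition \ref{prop-delta-g0a-action} is, on the generic fibre, assembled from Hecke correspondences and conjugation, which preserve Hodge filtrations, and separatedness of $\ca{GR}_{\mu_2^c}$ together with flatness of the torsor over $\ca{O}$ forces invariance on the whole scheme from invariance on the generic fibre. Combining this with Proposition \ref{prop-torsor-hz}(4) and Proposition \ref{prop-delta-g0a-action}, passing through the disjoint unions of \eqref{eq-exp-computation}, and then transporting through $\pi^a$ and the descents along $\ca{O}^{ur}/\ca{O}$ and $\ca{O}_{E',(v')}/\ca{O}_2$ used in the proof of Theorem \ref{thm-extend-main-theorem} (all compatible with $\ca{GR}_{\mu_2^c}$-valued maps, again since such maps are determined by their generic fibres), the map descends to a morphism on the restriction of $\mathscr{E}^{\can}_{G^c_{2,\zbkp},K_2}$ to each formal completion $\cpl{\ca{S}^{\Sigma}_{K}}{\ca{Z}_{[ZP^b(\Phi,\sigma)],K}}$, agreeing with $\rho$ on the open stratum by the compatibility of \cite[Prop. 3.1.6]{Mad19} invoked above. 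Covering $\ca{S}^{\Sigma_2}_{K_2}$ by $U$ together with the spectra of the completions of an affine open cover at the boundary strata, which is an fpqc cover as at the end of the proof of Theorem \ref{thm-loc-free-quo}, and applying fpqc descent of morphisms to the scheme $\ca{GR}_{\mu_2^c}$, we obtain the desired extension $\pi_2^{\can}$, and its uniqueness was noted in the first paragraph.

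The main obstacle is precisely the dependence of the last step on the Hodge-type boundary geometry: a morphism to a proper scheme does not in general extend over a codimension-$\ge 2$ locus, so the codimension bound by itself is useless, and the real content is producing the $\ca{GR}_{\mu_2^c}$-valued map on the Hodge-type boundary charts out of the (log) $1$-motives and checking that it is simultaneously compatible with the interior map $\rho$ and with the finite group actions of \S\ref{subsubsec-proof-key-thm}.
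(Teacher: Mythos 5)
Your proposal is essentially correct but takes a genuinely different route from the paper. The paper's proof of Corollary~\ref{cor-gr-extend} is a very short consequence of Proposition~\ref{prop-reduction-para-mu}: once the reduction of structure group of $\mathscr{E}^\can_{G^c_{2,\zbkp},K_2}$ to a $P^c_{\mu'_2}$-torsor $\mathscr{E}^\can_{P^c_{\mu'_2},K_2}$ is in hand, the identity $\mathscr{E}^\can_{G^c_{2,\zbkp},K_2}\times^{G^c_{2,\zbkp}}W=\mathscr{E}^\can_{P^c_{\mu'_2},K_2}\times^{P^c_{\mu'_2}}W$ endows every associated bundle with the filtration inherited from $\mu_{2}^{\prime,c}$ (because $P^c_{\mu'_2}$ preserves the filtration on $W$), one checks compatibility with $\xi_2$ and $\pi_2$ on the two pieces $\sh^{\Sigma_2}_{K_2,L}$ and $\ca{S}_{K_2,\ca{O}_{L,(w)}}$, and then cites \cite[Lem.~3.3.1]{Lov17}, which says that such a compatible system of filtrations is exactly the same datum as a map $\pi_2^\can$ to $\ca{GR}_{\mu_2^c}$. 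You never invoke Proposition~\ref{prop-reduction-para-mu} at all; instead you rebuild $\pi_2^\can$ from scratch at the boundary, by equipping $\ca{V}_{\Phi_0^\alpha}(\sigma_0^\alpha)$ with the Hodge filtration of the log $1$-motive, verifying tensor-compatibility and $\Delta^{g_0^\alpha}_{G^c}$-invariance there, and gluing fpqc-locally as in the proof of Theorem~\ref{thm-loc-free-quo}. In effect you re-prove (a version of) the reduction of structure group, and then read off $\pi_2^\can$, which is roughly the proof of Proposition~\ref{prop-reduction-para-mu} and of Corollary~\ref{cor-gr-extend} wrapped into one.

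Your last paragraph makes an interesting and valid observation: a map to a proper scheme need not extend across a codimension-$\geq 2$ locus, so one cannot simply glue $\pi_2$ and $\xi_2$ on $\sh^{\Sigma_2}_{K_2}\cup\ca{S}_{K_2}$ and invoke the codimension bound. This is precisely why the paper phrases the corollary through Proposition~\ref{prop-reduction-para-mu} and \cite[Lem.~3.3.1]{Lov17} (filtrations on vector bundles, which extend by torsion-freeness/reflexivity, rather than a $\ca{GR}$-valued map directly). Your alternative circumvents the issue by producing the $\ca{GR}$-valued map on an explicit fpqc cover including the formal boundary charts, which is heavier but more hands-on; the paper's route is faster once Proposition~\ref{prop-reduction-para-mu} is available. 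Two small cautions about your argument: first, the paper only records the weight filtration on $\ca{V}_{\Phi_0^\alpha}(\sigma_0^\alpha)$, so your use of its Hodge filtration leans directly on the log-$1$-motive theory in \cite{Mad19} rather than on a statement recorded in the paper; second, in the fpqc-gluing step you should justify that the boundary-chart map and $\rho$ agree on the \emph{entire} overlap of the two pieces of the cover, not just on the open dense stratum — this does follow from separatedness of $\ca{GR}_{\mu_2^c}$ and schematic density, but it is worth making explicit.
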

\begin{proof}
We work over an {\'e}tale base change to $\ca{O}_{L,(w)}$. Let $R$ be an {\'e}tale $\ca{O}_{L,(w)}$-algebra. For any $W\in \rep_{R}(G^c_{2,\zbkp})$, there is a filtration defined by $\bb{G}_{m,R}\xrightarrow{\mu_{2,R}^{\prime,c}} G^c_{2,R}\to \mrm{GL}(W)$ on $\ul{W}:=\mathscr{E}^{\can}_{G^c_{2,\zbkp},K_2,\ca{O}_{L,(w)}}\times^{G^c_{2,\ca{O}_{L,(w)}}}W=\mathscr{E}_{P^c_{\mu_2'},K_2}\times^{P^c_{\mu_2'}} W$. The last equality follows from Proposition \ref{prop-reduction-para-mu}, and the filtration on $W$ induces a filtration on $\ul{W}$ because the structure group $P^c_{\mu'_2}$ stabilizes the filtration on $W$.\par 
It is clear that its restrictions to $\sh^{\Sigma_2}_{K_2,L}$ and $\ca{S}_{K_2,\ca{O}_{L,(w)}}$ are compatible with the filtrations defined by $\xi_2$ and $\pi_2$ above. This is equivalent to having a map $\pi_2^\can: \mathscr{E}^\can_{G_{2,\zbkp}^c,K_2}\to \ca{GR}_{\mu_2^c}$ by \cite[Lem. 3.3.1]{Lov17}.
\end{proof}

\subsubsection{Subcanonical extensions}
\begin{definition}\label{def-L-bundle}
Let $R$ be a $\zbkp$-algebra. Let $W\in \rep_R(G^c_{2,\zbkp})$. We call $\autoshdr{W}_{K_2}^\can:=\mathscr{E}_{G_{2,\zbkp}^c,K_2}^\can\times^{G^c_{2,\zbkp}} W$ the \textbf{canonical extension} of $\autoshdr{W}_{K_2}:=\mathscr{E}_{G_{2,\zbkp}^c,K_2}\times^{G_{2,\zbkp}^c}W$. We call $\autoshdr{W}_{K_2}^{\mrm{sub}}:=\autoshdr{W}_{K_2}^\can(-D)=\autoshdr{W}_{K_2}\otimes \ca{O}_{\ca{S}^{\Sigma_2}_{K_2}}(-D)$ the \textbf{subcanonical extension} of $\autoshdr{W}_{K_2}$, where $D=(\ca{S}^{\Sigma_2}_{K_2}\bss \ca{S}_{K_2})_{\red}$.\par
Fix $\ca{O}_{L,(w)}$ and $\mu'_2$ as above. Let $R$ be an $\ca{O}_{L,(w)}$-algebra. Let $W\in \rep_R(P^c_{\mu_2'})$. We can also define $\ul{W}_{K_2}^\can:= \mathscr{E}_{P_{\mu_2'}^c,K_2}^\can\times^{P_{\mu_2^c}^c}W$ and $\ul{W}_{K_2}^{\mrm{sub}}:=\ul{W}_{K_2}^\can(-D)$. When $W\in \rep(M^c_{\mu_2'})$, we lift $W$ to a representation of $P^c_{\mu_2'}$. So the corresponding $\ul{W}_{K_2}^\can$ and $\ul{W}^{\mrm{sub}}_{K_2}$ are also defined. 
\end{definition}
\bibliography{references}
\end{document}